\newtheorem{lemma}{Lemma}[section]
\newtheorem{theorem}{Theorem}[section]
\newtheorem{remark}{Remark}[section]
\newtheorem{corollary}{Corollary}[section]
\numberwithin{equation}{section}
\newcommand{\dis}{\displaystyle}
\newcommand{\rmre}{{\rm Re}}
\newcommand{\R}{\mathbb{R}}
\renewcommand{\S}{\mathbb{S}}
\newcommand{\T}{\mathbb{T}}
\newcommand{\CB}{\mathcal{B}}
\newcommand{\CD}{\mathcal{D}}
\newcommand{\CE}{\mathcal{E}}
\newcommand{\CK}{\mathcal{K}}
\newcommand{\CL}{\mathcal{L}}
\newcommand{\ep}{\epsilon}
\newcommand{\na}{\nabla}
\newcommand{\al}{\alpha}
\newcommand{\ga}{\gamma}
\newcommand{\la}{\lambda}
\newcommand{\de}{\delta}
\newcommand{\si}{\sigma}
\newcommand{\pa}{\partial}
\newcommand{\ka}{\kappa}
\newcommand{\Ga}{\Gamma}
\newcommand{\vertiii}[1]{{\left\vert\kern-0.25ex\left\vert\kern-0.25ex\left\vert #1
    \right\vert\kern-0.25ex\right\vert\kern-0.25ex\right\vert}}
\begin{document}
\title[Non-cutoff Boltzmann equation in the whole space]{Time-velocity decay  of solutions to the non-cutoff Boltzmann equation in the whole space}

\author[C. Cao]{Chuqi Cao}
\address[CQC]{Department of Applied Mathematics, The Hong Kong Polytechnic University, Hong Kong, P.R.~China} 
\email{chuqicao@gmail.com}

\author[R.-J. Duan]{Renjun Duan}
\address[RJD]{Department of Mathematics, The Chinese University of Hong Kong,
	Shatin, Hong Kong, P.R.~China}
\email{rjduan@math.cuhk.edu.hk}

\author[Z.-G. Li]{Zongguang Li}
\address[ZGL]{Department of Mathematics, The Chinese University of Hong Kong,
Shatin, Hong Kong, P.R.~China}
\email{zgli@math.cuhk.edu.hk}

\begin{abstract}
In this paper, we consider the perturbed solutions with polynomial tail in large velocities for the non-cutoff Boltzmann equation near global Maxwellians in the whole space. The global in time existence is proved in the weighted Sobolev spaces and the almost optimal time decay is obtained in Fourier transform based low-regularity spaces. The result shows a time-velocity decay structure of solutions that can be decomposed into two parts. One part allows the slow polynomial tail in large velocities, carries the initial data and enjoys the exponential or arbitrarily large polynomial time decay. The other part, with zero initial data, is dominated by the non-negative definite symmetric dissipation and has the exponential velocity decay but only the slow polynomial time decay.
\end{abstract}

	\date{\today}
	
	\subjclass[2020]{35Q20, 35B35}
	
	%35Q20  	Boltzmann equations
	%35B35  	Stability in context of PDEs

	\keywords{Boltzmann equation, angular non-cutoff, large time behavior}
	\maketitle
	\thispagestyle{empty}
	
		\tableofcontents
\section{Introduction}
We consider the following Cauchy problem on the spatially inhomogeneous non-cutoff Boltzmann equation in the whole space:
\begin{eqnarray}\label{BE}
	&\dis \pa_tF+v\cdot \na_x F=Q(F,F),   \quad &\dis F(0,x,v)=F_0(x,v),
\end{eqnarray}
where the unknown $F(t,x,v)\geq0$ stands for the density distribution function of rarefied gas particles with velocity $v\in \R^3$ at position $x\in \R^3$ and time $t> 0$, and initial data $F_0(x,v)\geq 0$ is given. The bilinear Boltzmann collision operator $Q(\cdot,\cdot)$ which acts only on velocity variable is defined by
\begin{align}\notag %\label{defQ}
	Q(G,F)(v)=&\int_{\R^3}\int_{\S^2}B(v-u,\si)\left[ G(u')F(v')-G(u)F(v)\right]\,d\si du, 
\end{align}
	where the post-collision velocities $(v',u')$ denote
	\begin{align*}
		v'=\frac{v+u}{2}+\frac{|v-u|}{2}\sigma,\ \
		u'=\frac{v+u}{2}-\frac{|v-u|}{2}\sigma,\quad \sigma\in \mathbb{S}^2.
	\end{align*}
Moreover, we assume that the non-negative Boltzmann collision kernel $B(v-u,\si)$ takes the form:
	\begin{equation*}%\label{ass.ck}
			B(v-u,\sigma)=\vert v-u\vert^\gamma b(\cos\theta),\  -3<\gamma\leq1,
	\end{equation*}
	where
	\begin{equation*}
		\cos \theta=\frac{v-u}{\vert v-u\vert}\cdot \sigma, \  0<\theta \leq\pi/2,
	\end{equation*}
	and 
	\begin{equation}
		\sin \theta b(\cos\theta)\sim \theta^{-1-2s}\ \text{as }\theta\to 0,
		%(\theta\downarrow 0),
		\  0<s<1.
		\label{def.s}
	\end{equation}
	
Define the global Maxwellian $\mu$ to be
$$
\mu=\mu(v):=(2\pi)^{-3/2}e^{-|v|^2/2 }.
$$
 Under the perturbation near the global Maxwellian, we look for solutions in the form of
\begin{align}\label{pert}
	F=\mu+g,
\end{align}
for the unknown function $g=g(t,x,v)$. Substituting \eqref{pert} into \eqref{BE}, we can rewrite the Cauchy problem on the Boltzmann equation in terms of $g$ as
\begin{eqnarray}\label{rbe}
	&\pa_t g+v\cdot \nabla_x g =\mathcal{L} g+Q(g,g),   \quad &\dis g(0,x,v)=g_0(x,v):=F_0(x,v)-\mu(v),
\end{eqnarray}
where the linearized collision operator $\mathcal{L}$ is given by
\begin{equation*}
	\mathcal{L}g:= Q(\mu,g)+Q(g,\mu).
\end{equation*}	

Next we introduce some notations which will be frequently used later. In this paper, the velocity weight is denoted by 
$$
\langle v\rangle:=\sqrt{1+|v|^2}.
$$
With this notation, given a function $f=f(v)$, we define the weighted Sobolev norm by
$$
\Vert f\Vert_{H^\ell_{v,k}}^2:=\int_{\mathbb{R}^3} \vert\langle v\rangle^k \langle \nabla\rangle^\ell f(v)\vert^2 dv.
$$
Particularly, if $\ell=0$, we then write $\Vert f\Vert_{L^2_{v,k}}:=\Vert f\Vert_{H^0_{v,k}}$. In case of the symmetric dissipation that is to be specified later, it is important to introduce the dissipation norm as in \cite{AMUXY-2011-CMP} that
\begin{align*}
	\Vert f \Vert_{H^{s*}_{v,k}}^2&=\iiint_{\mathbb{R}^3\times \mathbb{R}^3\times \mathbb{S}^2} B(v-u,\sigma) \mu(u) (\langle v'\rangle^k f(v')-\langle v\rangle^kf(v))^2 dvdu d\sigma \\
	&\quad+\iiint_{\mathbb{R}^3\times \mathbb{R}^3\times \mathbb{S}^2} B(v-u,\sigma) \langle u\rangle^{2k}f(u)^2 (\mu(v')^{1/2}-\mu(v)^{1/2})^2 dvdu d\sigma,
\end{align*}
where $0<s<1$ is given as in \eqref{def.s} and we write $\Vert f \Vert_{H^{s*}_v}=\Vert f \Vert_{H^{s*}_{v,0}}$ for $k=0$.
%\Red{Q: 1. Is it better to write $\Vert \cdot\Vert_{H^{s*}_v}$ to emphasise that the norm is taken for functions of variable $v$? 2. Since AMUXY's norm is being used, should we have an extra restriction to $\gamma$ in soft potential case?}
Given a function $f=f(x,v)$, we also denote
\begin{align*}
	\|f\|^2_{H^r_xH^\ell_{v,k}}:=\int_{\R^3}\|\langle\nabla_x\rangle^r f(x,\cdot)\|^2_{H^\ell_{v,k}}dx,
\end{align*}
and write $\|f\|_{L^2_{x,v}}:=\|f\|_{H^0_xH^0_{v,0}}$.

We shall prove the global existence in weighted Sobolev spaces. The corresponding norms are defined by
\begin{align}
	\|f\|^2_{X_k}&:=\|f\|^2_{L^2_xL^2_{v,k}}+\|\nabla^2_xf\|^2_{L^2_xL^2_{v,k-8}},\label{Xk}\\
	\|f\|^2_{X^*_k}&:=\|f\|^2_{L^2_xH^s_{v,k+\ga/2}}+\|\nabla^2_xf\|^2_{L^2_xH^s_{v,k-8+\ga/2}},\label{X*k}\\
	\|f\|^2_{\CE}&:=\|f\|^2_{L^2_{x,v}}+\|\nabla^2_xf\|^2_{L^2_{x,v}},\label{E}\\
	\|f\|^2_{\CD}&:=\|(\mathbf{I}-\mathbf{P})f\|^2_{L^2_xH^{s*}_v}+\|\nabla_x\mathbf{P}f\|^2_{H^1_xL^2_v}+\|\nabla^2_x(\mathbf{I}-\mathbf{P})f\|^2_{L^2_xH^{s*}_v},\label{D}
\end{align}
%\Red{Q: Is it better to make changes $L^2_{v,k}$ to $L^2_{v,k}$ and $H^s_{v,k}$ to $H^s_{v,k}$, and likewise for other similar norms?}
 where the macroscopic projection $\mathbf{P}$ is given by
\begin{align*}
	\mathbf{P}f(x,v)=[a^f(x,v)+b^f(x,v)\cdot v + c^f(x,v)(\vert v\vert^2-3)]\mu^{1/2}(v),
\end{align*}
with
\begin{align*}
	a^f&=a^f(x)=\int_{\R^3}f(x,v)\mu^{1/2}(v)dv,\\
	b^f&=b^f(x)=\int_{\R^3}f(x,v)v\mu^{1/2}(v)dv,\\
	c^f&=c^f(x)=\int_{\R^3}f(x,v)\frac{1}{6}(|v|^2-3)\mu^{1/2}(v)dv.
\end{align*}
To study the time decay of solutions, we shall establish estimates in spaces which rely on the Fourier transform $\hat{f}(t,\xi,v)=\mathcal{F}_x [f(t,\cdot,v)](\xi)$ where $\xi\in \R^3$ is the Fourier variable. Let $0<T\leq \infty$, then we also further define the following norms:
\begin{align*}
	\Vert \hat{f}\Vert_{L^p_\xi L^\infty_T L^2_{v,k}} &:=\big(\int_{\R^3} \sup_{0\le t\le T} \|\hat{f}(t,\xi,\cdot)\|^p_{L^2_{v,k}} d\xi\big)^{1/p},\\ 
	\Vert \hat{f}\Vert_{L^p_\xi L^2_T H^{s*}_{v,k}} &:=\Big(\int_{\R^3}  \big(\int^T_0   \Vert \hat{f}(t,\xi,\cdot)\Vert_{H^{s*}_{v,k}}^2  dt\big)^{p/2} d\xi\Big)^{1/p},\\
	\Vert \hat{f}\Vert_{L^p_\xi L^2_T H^\ell_{v,k}} &:=\Big(\int_{\R^3}  \big(\int^T_0   \Vert \hat{f}(t,\xi,\cdot)\Vert_{H^\ell_{v,k}}^2  dt\big)^{p/2} d\xi\Big)^{1/p},
\end{align*}
for $1\leq p<\infty$, and
\begin{align*}
	\Vert \hat{f}\Vert_{L^\infty_\xi L^\infty_T L^2_{v,k}} &:= \sup_{\xi\in\R^3}\sup_{0\le t\le T} \|\hat{f}(t,\xi,\cdot)\|_{L^2_{v,k}} ,\\
	\Vert \hat{f}\Vert_{L^\infty_\xi L^2_T H^{s*}_{v,k}}& :=\sup_{\xi\in\R^3}\Big(\int^T_0   \Vert \hat{f}(t,\xi,\cdot)\Vert_{H^{s*}_{v,k}}^2  dt\Big)^{1/2},\\
	\Vert \hat{f}\Vert_{L^\infty_\xi L^2_T H^\ell_{v,k}} &:=\sup_{\xi\in\R^3}\Big(\int^T_0   \Vert \hat{f}(t,\xi,\cdot)\Vert_{H^\ell_{v,k}}^2  dt\Big)^{1/2} .
\end{align*}
%\Red{Q: Similarly we change $L^2_{v,k}$ to $L^2_{v,k}$, $H^{s*}_{v,k}$ to $H^{s*}_{v,k}$ and $H^s_{v}$ to $H^s_{v,k}$.}

The main results of this paper are stated below.  First, we are concerned with the global existence of the perturbation $g$ in the space $X_k$.
	\begin{theorem}\label{GE}
		Let $-3<\ga\leq1$, $0<s<1$, $\gamma+2s >-1$ and $k \geq25$. There are $\ep_0>0$ and $C>0$ such that  if it holds that $F_0(x,v)=\mu(v)+g_0(x,v)\geq 0$ with $g_0\in X_k$ satisfying
		\begin{align}\notag%\label{smallnesshard}
			\|g_0\|_{X_{k}}\leq \ep_0,
		\end{align}
		then the Cauchy problem on the Boltzmann equation \eqref{BE} or \eqref{rbe} admits a unique global solution $F(t,x,v)=\mu(v)+g(t,x,v)\geq 0$ with $g\in L^\infty (0,\infty;X_k)$ satisfying the estimate
		\begin{align}\label{Global}
			\|g(t)\|_{X_k}\leq C\|g_0\|_{X_{k}},
		\end{align}
		for all $t\geq0$.
		%where $C_{j,k}$ is a constant which depends only on $j$ and $k$.
		Moreover, $g$ can be decomposed into 
\begin{equation}
\label{decomp}
g(t,x,v)=g_1(t,x,v)+\sqrt{\mu}g_2(t,x,v),
\end{equation}	
where $g_1(t,x,v)$ and $g_2(t,x,v)$ with $g_1(0,x,v)=g_0(x,v)$ and $g_2(0,x,v)\equiv 0$ enjoy the following more precise estimates.
If $0\leq\ga\leq1$, then there are constants $\la>0$ and $C>0$ such that
		\begin{align}\label{GEH}
			&\sup_{0\leq s \leq t}\{\|e^{\la s}g_1(s)\|^2_{{X_k}}+\|g_2(s)\|^2_{\CE}\}+\int_0^t\{\|e^{\la s}g_1(s)\|^2_{{X^*_k}}+\|g_2(s)\|^2_{\CD}\}ds\leq C\|g_0\|^2_{{X_k}},	
		\end{align}
		for all $t\geq 0$.
	If $-3<\ga<0$, then for any $1<\rho <\frac{k-22}{|\ga|}$, there is a constant $C>0$ such that
	\begin{align}\label{GES}
		&\sup_{0\leq s \leq t}\{\|g_1(s)\|^2_{{X_k}}+\|(1+s)^\rho g_1(s)\|^2_{{X_{k+\rho\ga}}}+\|g_2(s)\|^2_{\CE}\}\notag\\&\qquad+\int_0^t\{\|g_1(s)\|^2_{{X^*_k}}+\|(1+s)^\rho g_1(s)\|^2_{{X^*_{k+\rho\ga}}}+\|g_2(s)\|^2_{\CD}\}ds\notag\\
		\leq& C\|g_0\|^2_{{X_k}},	
	\end{align}
	for all $t\geq 0$.
	\end{theorem}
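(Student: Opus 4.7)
The plan is to realise the decomposition $g=g_1+\sqrt{\mu}\,g_2$ directly at the equation level by splitting the linearised collision operator $\mathcal{L}=\mathcal{A}+\mathcal{B}$ in the Gualdani--Mischler--Mouhot spirit: for a large velocity truncation $R$ one takes $\mathcal{A}$ to be a regularised cut-off whose image is supported in $\{|v|\le R\}$ (hence bounded above by any Gaussian times a low-order norm), while $\mathcal{B}=\mathcal{L}-\mathcal{A}$ is hypodissipative in the polynomial-weighted space $L^2_{v,k}$ with a bound of the form $\lag \mathcal{B}f,\lag v\rag^{2k}f\rag\le -c_0\|f\|^2_{H^{s*}_{v,k}}$ up to lower-order terms. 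I would then set up the coupled system
\begin{equation*}
\begin{aligned}
&\pa_t g_1+v\cdot\na_x g_1=\mathcal{B}g_1+Q(g,g_1),\quad g_1(0)=g_0,\\
&\pa_t(\sqrt{\mu}\,g_2)+v\cdot\na_x(\sqrt{\mu}\,g_2)=\mathcal{L}(\sqrt{\mu}\,g_2)+\mathcal{A}g_1+Q(g,\sqrt{\mu}\,g_2),\quad g_2(0)=0,
\end{aligned}
\end{equation*}
with $g=g_1+\sqrt{\mu}\,g_2$, whose sum is exactly \eqref{rbe}.

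Global existence is then established via a linearisation/continuity scheme on the pair $(g_1,g_2)$ under the bootstrap assumption $\|g\|_{X_k}\le \ep_0$. For $g_1$ I would test the first equation against $\lag v\rag^{2k}g_1$ and $\lag v\rag^{2(k-8)}\na^2_x g_1$: the hypodissipativity of $\mathcal{B}$ supplies the $X^*_k$ dissipation, and the non-cutoff weighted trilinear bound $|\lag Q(g,g_1),\lag v\rag^{2k}g_1\rag|\lesssim \|g\|_{X_k}\|g_1\|_{X^*_k}^2$ closes the energy inequality and yields $\|g_1(t)\|_{X_k}^2+\int_0^t\|g_1\|_{X^*_k}^2\,ds\lesssim \|g_0\|_{X_k}^2$. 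For $g_2$ I would apply the Guo-type macro/micro energy method in $\CE/\CD$: the coercivity $\lag \mathcal{L}h,h\rag\gtrsim \|(\mathbf{I}-\mathbf{P})h\|^2_{H^{s*}_v}$ together with the macroscopic equations (to recover $\|\na_x\mathbf{P}g_2\|_{H^1_xL^2_v}$) close against the source $\mathcal{A}g_1$ (which inherits Gaussian velocity localisation and is bounded by $\|g_1\|_{X_k}$) and the standard weighted trilinear control of $Q(g,\sqrt{\mu}\,g_2)$, giving $\|g_2(t)\|_{\CE}^2+\int_0^t\|g_2\|_{\CD}^2\,ds\lesssim \int_0^t\|g_1\|_{X_k}^2\,ds$. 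Combining these and choosing $\ep_0$ small produces \eqref{Global}.

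For the time decay, the hard-potential case $\ga\ge 0$ follows by multiplying the $g_1$ estimate by $e^{2\la t}$: since $\ga/2\ge 0$, the $X^*_k$ dissipation then dominates the $X_k$ energy and absorbs the extra $2\la\|g_1\|_{X_k}^2$ term for $\la$ small, producing \eqref{GEH}; the $g_2$ bound is exponentially controlled by the standard Fourier/semigroup analysis of Boltzmann in the Gaussian-weighted space $\CE$. In the soft-potential case $-3<\ga<0$, I would in addition propagate $(1+t)^\rho\|g_1\|_{X_{k+\rho\ga}}$ by running the same energy method on the $g_1$ equation in the heavier space $X_{k+\rho\ga}$: the extra $\rho(1+t)^{\rho-1}\|g_1\|^2_{X_{k+\rho\ga}}$ term is absorbed via a Strain--Guo weighted interpolation trading heavier velocity weight against the $X^*_k$ dissipation, which converts the weight-budget $\rho<(k-22)/|\ga|$ (the $22$ accounting for the eight-derivative loss in $\na^2_x$ plus the regularity/interpolation offsets) into the algebraic decay in \eqref{GES}.

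The principal obstacle is the construction and verification of the splitting $\mathcal{L}=\mathcal{A}+\mathcal{B}$ together with the non-cutoff weighted trilinear inequalities on $Q(g,g_1)$ at the polynomial weight level: the hypodissipativity of $\mathcal{B}$ in $L^2_{v,k}$ with the $H^{s*}_{v,k}$ dissipation under the non-cutoff singularity \eqref{def.s}, the treatment of the weight loss $\ga/2$, and the matching commutator estimates form the technical core on which the whole scheme rests. The assumption $\ga+2s>-1$ enters precisely in making this dissipation strong enough to close the macroscopic estimates for $g_2$ against the streaming term $v\cdot\na_x$, while $k\ge 25$ is dictated by the interpolation used in the soft-potential decay together with the eight-unit weight loss in the $\na^2_x$ component of $X_k$.
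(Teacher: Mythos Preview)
Your overall architecture matches the paper's: the same $\mathcal{A}/\mathcal{B}$ splitting (the paper writes $\CL_D=\CL-A\chi_M$ and $\CL_B=\mu^{-1/2}A\chi_M$), the same weighted energy method on $g_1$ in $X_k$, the macro--micro method on $g_2$ in $\CE/\CD$, and the Strain--Guo velocity--weight interpolation for soft potentials. The paper distributes the cross nonlinearity slightly differently---it places $Q(g_1,\sqrt{\mu}\,g_2)$ in the $g_1$ equation so that the $g_2$ equation carries only $\Gamma(g_2,g_2)$---but that is a minor variant.

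There is, however, a genuine gap in how you close the $g_2$ estimate. You claim the macro--micro method gives
\[
\|g_2(t)\|_{\CE}^2+\int_0^t\|g_2\|_{\CD}^2\,ds\ \lesssim\ \int_0^t\|g_1\|_{X_k}^2\,ds,
\]
and that this combines with the $g_1$ estimate to yield \eqref{Global}. But the right-hand side is not controlled by the basic $g_1$ energy inequality, which supplies only $\sup_t\|g_1\|_{X_k}^2$ and $\int_0^t\|g_1\|^2_{X^*_k}\,ds$; neither bounds $\int_0^t\|g_1\|_{X_k}^2\,ds$ uniformly in $t$. The obstruction is structural to the whole space: the dissipation $\CD$ controls $\|\nabla_x\mathbf{P}g_2\|$ but not $\|\mathbf{P}g_2\|_{L^2_{x,v}}$ (no Poincar\'e inequality), so the pairing $(\mathcal{A}g_1,g_2)$---in particular its macroscopic part $(\mathcal{A}g_1,\mathbf{P}g_2)$---cannot be absorbed on the left and must be time-integrable as a source.

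The paper resolves this by running the time-weighted estimate on $g_1$ \emph{inside} the existence argument, not afterwards. It writes
\[
|(\CL_B g_1,g_2)|\ \le\ C(1+t)^{-\rho}\,\|(1+t)^\rho g_1\|_{X_8}\,\|g_2\|_{\CE},
\]
integrates using $\int_0^\infty(1+s)^{-\rho}\,ds<\infty$ (this is exactly why $\rho>1$ is needed), and closes against $\sup_s\|(1+s)^\rho g_1(s)\|_{X_8}$, which is supplied simultaneously by the weighted $g_1$ estimate. Thus the time-weighted bounds on $g_1$ in \eqref{GEH}/\eqref{GES} are not a post-hoc refinement but an essential input without which the $g_2$ bound does not close in $\R^3$; your two-step ordering (global existence first, decay second) must be merged into a single bootstrap on the combined functional. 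This also explains why, even for hard potentials, $g_2$ is only \emph{bounded} in \eqref{GEH}: in $\R^3$ the symmetric linearised problem behaves diffusively at low frequency, so there is no ``exponential control'' of $g_2$ as you suggest.
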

	
Notice that in the decomposition \eqref{decomp} the first part $g_1(t,x,v)$ carries the full initial data $g_0(x,v)$ and admits the polynomial decay in large velocity and exponential or arbitrarily large polynomial decay in large time, while the second part $\sqrt{\mu}g_2(t,x,v)$ with zero initial data has the exponential decay in large velocity in the weighted sense.

Furthermore, the long time behavior of the second part $\sqrt{\mu}g_2(t,x,v)$, in particular, the time-decay of $g_2(t,x,v)$, is indeed dominated by the interplay between the transport operator and the degenerate non-negative definite symmetric operator in case of the whole space, which gives only the polynomial rate in large time similar to the one for the heat equation. Motivated by \cite{DSY}, we adopt a Fourier transform based approach to treat the slow time-decay of $g_2(t,x,v)$ or equivalently the original function $g(t,x,v)$ in the low-regularity function space $L^1_\xi  L^2_{v,k}$. To obtain an explicit rate in large time, we additionally require $\widehat{g_0}$ for initial data $g_0(x,v)$ to belong to the space $L^p_\xi L^2_{v,k}$ with $3/2<p\le \infty$. Precisely, the main result is stated as follows.     
	
\begin{theorem}\label{decay}
	Let $-3<\ga\leq1$, $0<s<1$, $\gamma+2s >-1$,  $k>22$, $3/2<p\le \infty$ and 
	\begin{align*}
		\sigma=3\Big(1-\frac{1}{p}\Big) -2\ep_1
	\end{align*}
	with $\ep_1>0$ arbitrarily small.
	
	For $0\leq\ga\leq1$, there are $\ep_0>0$ and $C>0$ such that if
	\begin{align}\notag
		\|\widehat{g_0}\|_{L^p_\xi L^2_{v,k}}+\|g_0\|_{X_{k+10}}\leq \ep_0,
	\end{align}
	%\Red{Q: Change $\hat{g}_0$ to $\widehat{g_0}$} 
	then the solution to the Cauchy problem on the Boltzmann equation \eqref{BE} or \eqref{rbe} obtained in Theorem \ref{GE} satisfies
	\begin{align}\label{decayh}
		\Vert  \hat{g}(t)\Vert_{L^1_\xi  L^2_{v,k}}\leq C(1+t)^{-\sigma/2}(	\|\widehat{g_0}\|_{L^p_\xi L^2_{v,k}}+\|g_0\|_{X_{k+10}}), 
	\end{align}
	for all $t\geq 0$.

For $-3<\ga<0$, there is $\ep_0>0$ and $C>0$ such that if
\begin{align}\notag
	\|\widehat{g_0}\|_{L^p_\xi L^2_{v,k-\ga}}+\|g_0\|_{X_{k+14}}\leq \ep_0,
\end{align}
	then the solution to the Cauchy problem on the Boltzmann equation \eqref{BE} or \eqref{rbe} obtained in Theorem \ref{GE} satisfies
\begin{align}\label{decays}
	\Vert  \hat{g}(t)\Vert_{L^1_\xi L^2_{v,k}}\leq C(1+t)^{-\sigma/2}(\|\widehat{g_0}\|_{L^p_\xi L^2_{v,k-\ga}}+\|g_0\|_{X_{k+14}}), 
\end{align}
for all $t\geq 0$.
\end{theorem}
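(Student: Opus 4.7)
The plan is to use the decomposition $g = g_1 + \sqrt{\mu}g_2$ from Theorem \ref{GE}, estimate the two pieces separately in $L^1_\xi L^2_{v,k}$, and combine a Fourier-side energy method for $g_2$ with a low/high-frequency split and a bootstrap on the nonlinear source. The part $\hat g_1$ can be handled immediately: splitting $\int\|\hat g_1(\xi)\|_{L^2_{v,k}}\,d\xi$ into $|\xi|\le 1$ and $|\xi|>1$, Cauchy--Schwarz and Plancherel reduce the low-frequency piece to $\|g_1\|_{L^2_x L^2_{v,k}}$, while on $|\xi|>1$ the identity $\hat g_1(\xi)=|\xi|^{-2}\widehat{\nabla_x^2 g_1}(\xi)$ reduces the bound to the second-derivative part of the $X_k$ norm; both are controlled by $(1+t)^{-\sigma/2}\varepsilon_0$ via \eqref{GEH}--\eqref{GES}, in fact with much faster rate.

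For $\hat g_2$, I would apply $\mathcal F_x$ to the equation for $g_2$, namely $\partial_t g_2+v\cdot\nabla_x g_2-\mathcal L g_2=N$ with $g_2(0)=0$ and $N$ collecting the rescaled nonlinear contributions from $Q(g_1,g_1)$, $Q(g_1,\sqrt\mu g_2)$, $Q(\sqrt\mu g_2,g_1)$ and $Q(\sqrt\mu g_2,\sqrt\mu g_2)$. Following the philosophy of \cite{DSY}, I would construct an energy functional $E(\hat g_2(t,\xi))$ equivalent to $\|\hat g_2\|_{L^2_{v,k}}^2$, augmented by a Kawashima-type interaction functional in $\xi$ that supplies the missing macroscopic dissipation, so that pointwise in $\xi$
\begin{equation*}
\frac{d}{dt}E(\hat g_2)+c\frac{|\xi|^2}{1+|\xi|^2}E(\hat g_2)+c\|(\mathbf I-\mathbf P)\hat g_2\|_{H^{s*}_{v,k}}^2\le C\|\hat N(t,\xi)\|_{L^2_{v,k}}^2.
\end{equation*}
Duhamel with zero initial data gives $\|\hat g_2(t,\xi)\|_{L^2_{v,k}}\lesssim \int_0^t e^{-c|\xi|^2(t-\tau)/(1+|\xi|^2)}\|\hat N(\tau,\xi)\|_{L^2_{v,k}}\,d\tau$. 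Integrating in $\xi$ and splitting $|\xi|\le 1$ vs.\ $|\xi|>1$, Hölder on low frequencies produces the factor $\|e^{-c|\xi|^2(t-\tau)}\|_{L^{p'}(|\xi|\le 1)}\sim(t-\tau)^{-3(1-1/p)/2}$, which is exactly the source of the rate $\sigma/2=3(1-1/p)/2-\varepsilon_1$, with the small $\varepsilon_1$ loss absorbing the logarithmic divergence of the time convolution; high frequencies contribute an exponential factor of order $e^{-ct/2}$.

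To close, I would run the bootstrap ansatz $\sup_{0\le\tau\le t}(1+\tau)^{\sigma/2}\|\hat g(\tau)\|_{L^1_\xi L^2_{v,k}}\le M$ and show, via trilinear estimates on $Q$ combined with the bounds \eqref{Global}--\eqref{GES}, that the Duhamel contribution of $\hat N$ is bounded by $C\varepsilon_0 M(1+t)^{-\sigma/2}$. The main obstacle is the convolution structure in $\xi$ of $\widehat{Q(g,g)}$, which must be estimated in the weighted dissipative norm $H^{s*}_{v,k+\gamma/2}$ uniformly in $\xi$; the cross terms involving $g_1$ and $\sqrt\mu g_2$ force a delicate trade between the polynomial velocity tails and fast time decay of $g_1$ and the Gaussian velocity decay but slow time decay of $g_2$. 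In the soft-potential regime $\gamma<0$ the dissipation degenerates at large $v$, which is precisely why the statement assumes the stronger initial weights ($L^2_{v,k-\gamma}$ on $\hat g_0$ and $X_{k+14}$ on $g_0$) and requires \eqref{GES} with a suitable polynomial rate $\rho$ to keep $\hat N$ time-integrable.
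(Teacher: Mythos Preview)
Your proposal has a genuine structural gap: you have misread the equation for $g_2$. In the decomposition \eqref{g1}--\eqref{g2} the nonlinear terms $Q(g_1,g_1)$, $Q(\sqrt\mu g_2,g_1)$, $Q(g_1,\sqrt\mu g_2)$ are all absorbed into the $g_1$-equation, not the $g_2$-equation; what remains as the source for $g_2$ is only $\CL_B g_1+\Gamma(g_2,g_2)$, where $\CL_B g_1=\mu^{-1/2}A\chi_M g_1$ is a \emph{linear}, compactly-supported-in-$v$ operator and $\Gamma$ is the symmetric bilinear operator of \eqref{definition L Gamma}. This is the whole point of the splitting: putting $\mu^{-1/2}Q(g_1,g_1)$ into the $g_2$-equation is impossible because $g_1$ has only polynomial tails, so $\mu^{-1/2}Q(g_1,g_1)$ does not even lie in $L^2_v$. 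Correspondingly, the linear operator on $g_2$ is the \emph{symmetric} $L$, not $\CL$.

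Even with the correct source, the pointwise-in-$\xi$ Duhamel scheme you sketch would not close for the non-cutoff kernel. Your proposed inequality $\frac{d}{dt}E(\hat g_2)+\cdots\le C\|\hat N(t,\xi)\|_{L^2_{v,k}}^2$ presumes that the source can be placed in $L^2_v$, but for $0<s<1$ the trilinear bound for $\Gamma$ is of the form \eqref{lem.ibn1}, i.e.\ $|(\hat\Gamma(\hat f,\hat g),\hat h)|\le C\int_{\R^3_\ell}\|\hat f(\xi-\ell)\|_{L^2_v}\|\hat g(\ell)\|_{H^{s*}_v}\|(\mathbf I-\mathbf P)\hat h(\xi)\|_{H^{s*}_v}\,d\ell$: the source cannot be separated from the test function in an $L^2_v$ pairing, and the $\xi$-convolution prevents a pointwise Gr\"onwall. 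The paper therefore abandons Duhamel entirely and instead runs direct energy estimates on $\hat g_2$ in the mixed norms $L^1_\xi L^\infty_T L^2_v$, $L^p_\xi L^\infty_T L^2_v$ and their $(1+t)^{\sigma/2}$-weighted counterparts (Lemmas \ref{lem: micro a priori}--\ref{lemmasoft}), supplementing the micro dissipation with macroscopic control of $(\hat a,\hat b,\hat c)$ via the fluid-type moment system \eqref{mac.law.fourier}. The extra time-weight produces lower-order terms of the type $\int(1+t)^{\sigma-1}\|\hat g_2\|_{L^2_v}^2\,dt$, which are absorbed by a low/high-frequency splitting that exactly accounts for the rate $\sigma=3(1-1/p)-2\ep_1$ and explains the need for the $L^p_\xi$ control of the data. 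Separately, Section~4 derives $L^p_\xi$ estimates on $\hat g_1$ (Lemmas \ref{hatg1h}--\ref{hatg1soft}) that are needed to bound the source $\CL_B\hat g_1$ inside the $g_2$-estimates; your quick $L^1_\xi$ bound on $\hat g_1$ via $X_k$ is fine for the final statement but insufficient to close the coupling.
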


\begin{remark}
For the very soft case $-1<\gamma+2s <0, -3/2+s <\gamma<0$, a similar result was obtained in \cite{CG}. 
\end{remark}

For those solutions constructed in Theorem \ref{GE} that allow the velocity perturbation to be only polynomial instead, we say that they are solutions with polynomial tail in large velocity. The study of this kind of solutions in kinetic theory could trace back to Caflisch \cite{Caf} that first proposed the decomposition of solutions of the form $g=\sqrt{M_1}g_1+\sqrt{M_2}g_2$ with two Maxwellians of distinct temperatures and deduced the corresponding $L^2\cap L^\infty$ estimates in velocity variable for both $g_1$ and $g_2$. In fact, one of two exponential weights can be reduced to be only polynomial, cf.~\cite{DL-arma,DL-cmaa, DL-hard}. Furthermore, the velocity-pointwise estimates on the Boltzmann collision term weighted by the polynomial velocity functions were first considered by Arkeryd-Esposito-Pulvirenti \cite[Proposition 3.1]{AEP-87} and the technique has led to many applications \cite{ELM-94, ELM-95} for the fluid dynamic limit problem on the Boltzmann equation.

In the case of homogeneous Boltzmann equation with hard potentials in the torus, Mouhot \cite{Mo} established the  spectral gap-like estimates  to construct solutions in some enlarged function spaces corresponding to the slow velocity decay. Later, Gualdani-Mischler-Mouhot \cite{GMM} gave a systematic study for estimates on the resolvents and semigroups of non-symmetric operators which can be applied to the linearized Boltzmann equation in the torus. With such general theory, Mischler-Mouhot \cite{MiMo} also obtained the solution with polynomial tail for kinetic Fokker-Planck equation. There have been a lot of works on slow decay solutions after those aforementioned results. Interested readers may refer to Carrapatoso-Mischler \cite{CM} for the nonlinear Landau equation with Coulomb potentials in the torus, Briant-Guo \cite{BG} and Briant \cite{Br} for the cutoff Boltzmann equation in general bounded domains, and the references therein. 

In the non-cutoff Boltzmann with hard potentials case, Alonso-Morimoto-Sun-Yang first got the solutions in Sobolev spaces in \cite{AMSY}  and used the Di Giorgi argument to further deduce the well-posedness in $L^2\cap L^\infty$ space in \cite{AMSY-bd}. See also several recent works by the first author of this paper together with his collaborators including \cite{CHJ,Cao-22} for the non-cutoff Boltzmann equation with soft potentials and \cite{Cao} for Boltzmann equation with soft potentials under the cutoff assumption. 

Among the literature mentioned above, the space variable $x$ always lies in the torus or bounded domain. In case of the whole space $\R^3$, due to the unbounded property of the spatial domain, for instance, the Poincar\'e inequality fails, it seems not direct to adopt the same approach for studying the problem. Motivated by \cite{Caf} as well as \cite{DL-arma}, the second and third authors of this paper together with Liu \cite{DLL} first proved the well-posedness of polynomial tail solutions for the Boltzmann equation with cut-off assumption in the whole space. We should also emphasize that the enlarged functional spaces are useful when we consider the kinetic shear flow problem such that one can control the polynomial growth caused by the shear force, see \cite{DL-arma,DL-cmaa, DL-hard} as mentioned before as well as \cite{DLY} and many references therein.

We also recall that many classical results in the symmetrical linearized Boltzmann operator case have been widely obtained, where the solution to the equation is set in the form of $F=\mu+\sqrt{\mu}f$ such that the linearized operator is self-adjoint in $L^2_v$ without any velocity weight function. In fact, for the general mathematical theory of Boltzmann equation, one may refer to \cite{Cer, CIP, Gl} and references therein. In the specific setting under the perturbation near global Maxwellian, Ukai \cite{Uk} first obtained the global existence, uniqueness and large time behavior of solutions for hard potentials with angular cutoff. For the extensive literature, we only list some of the classical ones, which also provide powerful tools for our current work, see \cite{Guo-soft, SS, SG} for soft potentials, \cite{GuoY} for general bounded domains, and \cite{AMUXY11,AMUXY-2012-JFA,GS} for non-cutoff case.

In the current work, the main difficulties are in three aspects. One is that the linearized operator is not self-adjoint on $L^2_v$. Secondly, the spatial domain we consider is the whole space which is unbounded. The last aspect in contrast to \cite{DLL} is caused by the non-cutoff potentials so that we need to carefully select function spaces for existence and time-decay. If one looks at the cut-off case in the torus, then the semigroup $e^{-\CB t}$ generated by the linearized  operator $\CB:=-v\cdot \nabla_x-\CL$ enjoys the exponential decay
%\begin{equation*}
%	\|e^{-\CB t}g_0\|_{L^2_k}\leq C e^{-\lambda t}\|g_0\|_{L^2_k},
%\end{equation*}
for hard potentials and subexponential decay for soft potentials in $L^2_{x,v}(\langle v\rangle^k)$ such that the nonlinear term can be bounded in terms of the Duhamel's formula. Due to the non-symmetric operator and the lack of spectral gap, it is even harder to obtain the polynomial time decay in case of the unbounded domain $\R^3$. Hence, we introduce a structure to decompose the equation to be a coupling system as in \eqref{g1} and \eqref{g2}. As mentioned before, such structure was used in Caflisch's work \cite{Caf} though the velocity perturbation is still exponential. 
%See also some shear flow or bounded domain problems \cite{DL-arma,DL-cmaa,DLY,DL-hard,Br,BG}. 
%However in the above results, one can still expect exponential time decay. 
We formulate the exact system for the whole space that explains how the slow time-decay in the symmetric case affects the solution, causing a difference with the ones in torus or bounded domain. For the coupling system, we prove that the first unknown $g_1$, which carries the full initial data $g_0$, enjoys polynomial tail in large velocity and fast time decay (either exponential for hard potentials or arbitrarily large polynomial for soft potentials), and the second unknown $g_2$, which satisfies zero initial condition, is dominated by the degenerate symmetric linearized operator and hence has slower time decay due to the interplay with the transport term in $\R^3$. The coupling mechanism makes the time decay of the full solution $g=g_1+\sqrt\mu g_2$ be controlled by the slower one, which corresponds to the symmetric component. This fast vs slow (or exponential vs polynomial) time-velocity decay property was partially used to study the cut-off case in \cite{DLL} and we shall extend it even for the non-cutoff case in this paper. In particular, the faster time decay of $g_1$ can be derived in the existence result and the almost optimal time decay for $g_2$ can be obtained under an extra condition.

Another major difference compared to the cut-off case in \cite{DLL} is caused by the coupling of $g_1$ and $g_2$ in the way that
%the dynamic of what we called $g_2$ that can be described by 
\begin{align*}
	g_2(t)=\int_0^tU(t-s)\CK_b g_1(s)\,ds,
\end{align*}
where $U$ is the operator for the symmetric linearized problem. Hence we see that if one wants to obtain the $L^2$ or $L^\infty$ decay, one needs some $L^1$ control on $g_1$, since $U$ behaves as a heat diffusive operator. But, the equation for $g_1$ contains the nonlinear collision operator which is difficult to do the $L^1$ estimate for the non-cutoff kernel. To overcome this, a Fourier transform based space $L^1_\xi\cap L^p_\xi$ is applied to deduce the time decay. This space was used to study the low-regularity solution of the non-cutoff Boltzmann equation in \cite{DSY} motivated by the Wiener algebra applied in \cite{DLSY} to study the torus situation. Moreover, since the estimate \eqref{fgh} in the non-cutoff case is not optimal, that is, there is an extra $2s$ in the norm on the right hand side which cannot be bounded by the energy or dissipation functional. This causes that the existence and uniqueness cannot be established in such low-regularity function space in this paper. Instead, we use the Sobolev space with the interpolation technique to take care of this difficulty.

\section{Preliminaries}
In this subsection, we consider the Boltzmann collision operator $Q(f, g)$. 
\begin{lemma}[\cite{H}, Theorem 1.1]\label{upper bound for the Boltzmann operator} Suppose $-3<\ga\leq1$, $0<s<1$, $\gamma+2s >-1$. 
	Let $w_1, w_2 \in \R$, $a, b \in[0, 2s]$ with $w_1+w_2 =\gamma+2s$  and $a+b =2s$. Then there exists a constant $C$, for any functions $f, g, h$ we have \\
	(1) if $\gamma + 2s >0$, then 
	\[
	|(Q(g, h), f)| \le C (\Vert g \Vert_{L^1_{v,\gamma+2s +(-w_1)^++(-w_2)^+}}  +\Vert g \Vert_{L^2_v} ) \Vert h \Vert_{H^a_{v,w_1}}   \Vert f \Vert_{H^b_{v,w_2}} ,
	\]
	(2) if $\gamma + 2s = 0$, then 
	\[
	|(Q(g, h), f)|\le C (\Vert g \Vert_{L^1_{v,w_3}} + \Vert g \Vert_{L^2_v}) \Vert h \Vert_{H^a_{v,w_1}}\Vert f \Vert_{H^b_{v,w_2}} ,
	\]
	where $w_3 = \max\{\delta,(-w_1)^+ +(-w_2)^+ \}$, with $\delta>0$ sufficiently small.\\
	(3) if $-1< \gamma + 2s < 0$, then 
	\[
	|(Q(g, h), f)|\le C  (\Vert g \Vert_{L^1_{v,w_4}} + \Vert g \Vert_{L^2_{v,-(\gamma+2s)}}) \Vert h \Vert_{H^a_{v,w_1}}   \Vert f \Vert_{H^b_{v,w_2}},  
	\]
	where $w_4= \max\{-(\gamma+2s), \gamma+2s +(-w_1)^+ +(-w_2)^+ \}$.
\end{lemma}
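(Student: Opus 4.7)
Since this lemma is attributed to He \cite{H}, my plan is to sketch the strategy rather than reproduce a full proof. The estimate is a weighted trilinear upper bound whose proof rests on three classical ingredients: pre/post-collisional symmetrization combined with the Alexandre-Desvillettes-Villani-Wennberg cancellation lemma, a dyadic decomposition of the angular kernel $b(\cos\theta)\sim\theta^{-1-2s}$ that trades angular singularity for fractional Sobolev regularity on $h$ and $f$, and the geometric inequalities $\langle v'\rangle\lesssim \langle v\rangle\langle u\rangle$ together with $|v-u|^\ga \lesssim \langle v\rangle^\ga \langle u\rangle^{|\ga|}$ used to distribute the polynomial weights $\langle v\rangle^{w_1}, \langle v\rangle^{w_2}$ across the three slots.

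The first step is to perform a dyadic splitting $b=\sum_{j\geq 0} b_j$ with $b_j$ concentrated on $\theta\sim 2^{-j}$ and write
\[
(Q_j(g,h),f) = \iiint B_j(v-u,\si)\,g(u)\,h(v)\,\bigl[f(v')-f(v)\bigr]\,du\,dv\,d\si + R_j(g,h,f),
\]
where $R_j$ is the cancellation remainder controlled by the Alexandre-Desvillettes-Villani-Wennberg lemma. Expand $f(v')-f(v)$ as a fractional difference bounded by $\|f\|_{H^b_{v,w_2}}$ with $b\leq 2s$; the analogous operation on $h$ carries weight $a\leq 2s$. Since $|v'-v|\lesssim 2^{-j}|v-u|$, the geometric series in $j$ converges exactly when $a+b=2s$, which is the stated hypothesis. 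The polynomial weights are then propagated by attaching $\langle v\rangle^{w_1}$ to $h$ and $\langle v\rangle^{w_2}$ to $f$, with the mismatch falling onto $g$ as a factor $\langle u\rangle^{(-w_1)^++(-w_2)^+}$. Combining with $|v-u|^\ga$ produces the $L^1_{v,\ga+2s+(-w_1)^++(-w_2)^+}$ norm of $g$ in the hard case $\ga+2s>0$; the alternative $L^2_v$ control for $g$ is obtained by passing to the Bobylev side and applying Hausdorff-Young to trade weight integrability for regularity of $g$.

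The main obstacle is the trichotomy in the conclusion. The case $\ga+2s>0$ is cleanest and only requires the bookkeeping described above. The borderline $\ga+2s=0$ forces the introduction of the small exponent $\de$ to avoid a logarithmic divergence from the near-diagonal part of the integral, which is precisely why $w_3=\max\{\de,(-w_1)^++(-w_2)^+\}$ appears. In the soft case $-1<\ga+2s<0$, the kernel $|v-u|^\ga$ is not locally integrable at the diagonal, so Cauchy-Schwarz against the $g$-factor is only viable after paying a weight $-(\ga+2s)>0$ on $g$; this is the origin of the $L^2_{v,-(\ga+2s)}$ alternative and of the exponent $w_4$. Aside from the near-diagonal analysis in the non-integrable regime, the argument is standard within the Alexandre-Morimoto-Ukai-Xu-Yang non-cutoff framework, so that no genuinely new idea beyond careful weight accounting and a uniform dyadic summation is needed.
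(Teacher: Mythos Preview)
The paper does not prove this lemma at all: it is quoted verbatim from \cite{H} (He, Theorem~1.1) and used as a black box throughout Section~2 and Section~3. There is therefore no proof in the paper against which to compare your sketch.

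That said, your outline captures the standard ingredients of He's argument---dyadic decomposition of the angular kernel, pre/post-collisional change of variables, cancellation-lemma remainders, and weight redistribution via $\langle v'\rangle\lesssim\langle v\rangle\langle u\rangle$---and correctly identifies the origin of the trichotomy on $\gamma+2s$. For the purposes of the present paper this level of detail is more than sufficient, since the authors only invoke the statement and never reprove it.
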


\begin{lemma} [\cite{CHJ}, Lemma 2.3]
	Suppose that $-3 <\gamma \le 1$. For any $k \ge 14$, and functions $g, h$, we have
	\begin{align*}
		\quad\,|(Q (h, \mu), g \langle v \rangle^{2k}) |
		&\le   \Vert b(\cos\theta) \sin^{k-\frac {3+\gamma} 2} \frac \theta 2 \Vert_{L^1_\theta}    \Vert h \Vert_{L^2_{v,k+\gamma/2}}\Vert g \Vert_{L^2_{v,k+\gamma/2}} \\
		&\qquad\qquad+ C_k \Vert h \Vert_{L^2_{v,k+\gamma/2-1/2}}\Vert g \Vert_{L^2_{v,k+\gamma/2-1/2}}
		\\
		&\le   \Vert b(\cos\theta) \sin^{k- 2} \frac \theta 2 \Vert_{L^1_\theta}    \Vert h \Vert_{L^2_{v,k+\gamma/2}}\Vert g \Vert_{L^2_{v,k+\gamma/2}} \\
		&\qquad\qquad  + C_k \Vert h \Vert_{L^2_{v,k+\gamma/2-1/2}}\Vert g \Vert_{L^2_{v,k+\gamma/2-1/2}},
	\end{align*}
	for some constant $C_k>0$.  
\end{lemma}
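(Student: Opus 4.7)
The plan is to begin from
\begin{align*}
(Q(h,\mu),g\langle v\rangle^{2k})=\iiint B(v-u,\sigma)\bigl[h(u')\mu(v')-h(u)\mu(v)\bigr]g(v)\langle v\rangle^{2k}\,d\sigma\,du\,dv
\end{align*}
and apply the classical pre-post collisional change of variables $(u,v,\sigma)\leftrightarrow(u',v',\sigma^{\ast})$ to the gain term, an involution of Jacobian one that preserves $B$; this converts the above into
\begin{align*}
(Q(h,\mu),g\langle v\rangle^{2k})=\iiint B\,h(u)\mu(v)\bigl[g(v')\langle v'\rangle^{2k}-g(v)\langle v\rangle^{2k}\bigr]\,d\sigma\,du\,dv,
\end{align*}
so the non-cutoff singularity at $\theta\to 0$ is now packaged inside the bracketed difference, with the Gaussian factor $\mu(v)$ free to absorb any loss in velocity weight.

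I would then split the bracket via the algebraic identity $AB-CD=\tfrac12(A-C)(B+D)+\tfrac12(A+C)(B-D)$ into a \emph{weight-commutator} piece proportional to $\langle v'\rangle^{2k}-\langle v\rangle^{2k}$ and a \emph{value-difference} piece proportional to $g(v')-g(v)$. For the weight-commutator piece, I Taylor-expand $\langle v'\rangle^{2k}-\langle v\rangle^{2k}$ around $v$ using $v'-v=\tfrac{|v-u|}{2}(\sigma-\kappa)$ with $\kappa=(v-u)/|v-u|$. After $\sigma$-integration at fixed $\theta$, both the first-order and the second-order pieces acquire a factor of $\sin^{2}(\theta/2)$ (via $\int_{0}^{2\pi}(\sigma-\kappa)\,d\phi=-4\pi\sin^{2}(\theta/2)\kappa$), giving pointwise bounds of the type $|v-u|^{\gamma+1}\sin^{2}(\theta/2)\langle v\rangle^{2k-1}$ and $|v-u|^{\gamma+2}\sin^{2}(\theta/2)(\langle v\rangle^{2k-2}+\langle u\rangle^{2k-2})$. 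The convergent angular integral $\int b(\cos\theta)\sin^{2}(\theta/2)\sin\theta\,d\theta<\infty$ (finite since $s<1$), paired with $\mu(v)$ and Cauchy--Schwarz in $(u,v)$, yields the lower-weight remainder $C_{k}\|h\|_{L^{2}_{v,k+\gamma/2-1/2}}\|g\|_{L^{2}_{v,k+\gamma/2-1/2}}$; the $1/2$-weight drop comes from the extra $|v-u|^{1/2}$-factor purchased by the symmetrization.

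For the value-difference piece, since the right-hand side of the lemma allows only $L^{2}$-norms of $g$, I cannot retain $g(v')-g(v)$ as a fractional difference. Instead, I split it as
\begin{align*}
\tfrac12\iiint Bh(u)\mu(v)g(v')\bigl[\langle v'\rangle^{2k}+\langle v\rangle^{2k}\bigr]\,d\sigma\,du\,dv\;-\;\tfrac12\iiint Bh(u)\mu(v)g(v)\bigl[\langle v'\rangle^{2k}+\langle v\rangle^{2k}\bigr]\,d\sigma\,du\,dv,
\end{align*}
and estimate each by Cauchy--Schwarz in $(u,v,\sigma)$, using for the $g(v')$-integral a further change of variable that brings $g$ to a pure integration variable. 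The key geometric input is the pointwise transfer $\langle v'\rangle^{2k}\lesssim\cos^{2k}(\theta/2)\langle v\rangle^{2k}+\langle u\rangle^{2k}$, allowing the Gaussian $\mu$ to absorb the $\langle u\rangle^{2k}$-excess. Distributing $|v-u|^{\gamma}$ symmetrically as $\langle v\rangle^{\gamma/2}\langle u\rangle^{\gamma/2}$ via Peetre's inequality then extracts the angular factor $\sin^{k-(3+\gamma)/2}(\theta/2)$, and Cauchy--Schwarz yields the main contribution $\|b(\cos\theta)\sin^{k-(3+\gamma)/2}(\theta/2)\|_{L^{1}_{\theta}}\|h\|_{L^{2}_{v,k+\gamma/2}}\|g\|_{L^{2}_{v,k+\gamma/2}}$. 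The weaker $\sin^{k-2}(\theta/2)$ variant then follows from $\sin(\theta/2)\le 1$ together with $(3+\gamma)/2\le 2$ under $\gamma\le 1$.

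The main obstacle is the simultaneous bookkeeping of velocity weights: verifying that the $\langle u\rangle$-excess from the weight transfer $\langle v'\rangle^{2k}\lesssim\cos^{2k}(\theta/2)\langle v\rangle^{2k}+\langle u\rangle^{2k}$ is exactly absorbed by $\mu(v)$, that the symmetric split of $|v-u|^{\gamma}$ lands precisely the weight $k+\gamma/2$ on both $h$ and $g$, and that the resulting angular factor tames the non-cutoff singularity $b(\cos\theta)\sin\theta\sim\theta^{-1-2s}$. This last point reduces to the convergence of $\int_{0}^{\pi/2}\theta^{k-(5+\gamma)/2-2s}\,d\theta$, which is comfortably ensured by $k\ge 14$ and $s<1$.
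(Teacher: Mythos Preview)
The paper does not prove this lemma; it is quoted from \cite{CHJ}, so there is no in-paper argument to compare against. Evaluating your sketch on its own, the value-difference piece has a genuine gap.

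After the pre-post change you have $h(u)\mu(v)g(v')\langle v'\rangle^{2k}$, so the Gaussian sits at $v$, not $u$; the claim that ``$\mu$ absorbs the $\langle u\rangle^{2k}$-excess'' is a variable mismatch. One can instead pair $\langle u\rangle^{2k}$ with $h(u)$, and $\int|v-u|^{\gamma}\mu(v)\,dv\sim\langle u\rangle^{\gamma}$ does give the correct $\|h\|_{L^{2}_{v,k+\gamma/2}}$ on the $h$-factor of a Cauchy--Schwarz split; but the companion $g$-factor still carries $g$ at the post-collisional variable $v'$ together with $\mu(v)$, and you have not specified which change of variables reduces it to $\|g\|_{L^{2}_{v,k+\gamma/2}}$. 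The natural candidates ($v\mapsto v'$ at fixed $u,\sigma$, or the involution $(u,v)\leftrightarrow(u',v')$) either have $\theta$-dependent Jacobians that you have not tracked, or simply move $\mu$ to $v'$ and re-introduce the same difficulty.

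More fundamentally, the exponent $k-(3+\gamma)/2$ on $\sin(\theta/2)$ is the entire content of the sharp bound, and your outline does not derive it: Peetre's inequality on $|v-u|^{\gamma}$ produces no $\theta$-dependence whatsoever, so the sentence ``distributing $|v-u|^{\gamma}$ symmetrically via Peetre then extracts $\sin^{k-(3+\gamma)/2}(\theta/2)$'' is an assertion, not an argument. The correct route (as in \cite{CHJ}) keeps the gain term in the form $h(u')\mu(v')g(v)\langle v\rangle^{2k}$ so that $\mu$ is at $v'$ and genuinely absorbs $\langle v'\rangle^{2k}$, uses the geometric identity $|v-v'|=|v-u|\sin(\theta/2)$ to write $\langle v\rangle^{2k}\lesssim\langle v'\rangle^{2k}+|v-u|^{2k}\sin^{2k}(\theta/2)$, and then executes a singular change of variables whose Jacobian contributes the $(3+\gamma)/2$ correction. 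None of these steps is present in your sketch, so the leading term cannot be obtained as stated.
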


\begin{lemma}[\cite{CHJ}, Theorem 3.1] 
	Suppose that $-3<\gamma\le 1$, $0<s<1$, $\gamma+2s>-1$, $k\ge 14$ and $G= \mu +g \ge 0$. If there exist $A_1,A_2>0$ such that 
	\begin{align*}
		G \ge 0,\quad  \Vert G \Vert_{L^1_v} \ge A_1, \quad \Vert G \Vert_{L^1_{v,2}} +\Vert G \Vert_{L \log L} \le A_2,
	\end{align*}
	where
	$\Vert F \Vert_{L \log L}=\int_{\R^3}|F(v)|\log(1+|F(v)|)dv$, then there exist some constants $\gamma_1, C_k>0$, such that
	\begin{align}\label{Gff}
		(Q(G, f), f \langle v \rangle^{2k} )
		&\le   - \frac {1} {8} \Vert  b(\cos \theta) \sin^2 \frac \theta 2\Vert_{L^1_\theta}\Vert f \Vert_{L^2_{v,k+\gamma/2}}^2  - \gamma_1 \Vert f \Vert_{H^s_{v,k+\gamma/2}}^2 + C_k  \Vert f \Vert_{L^2_v}^2
		\notag\\
		&\quad+C_k\Vert f \Vert_{L^2_{v,14}} \Vert g \Vert_{H^s_{v, k+\gamma/2 }}\Vert f \Vert_{H^s_{v, k + \gamma/2}} +C_k\Vert g \Vert_{L^2_{v,14} } \Vert f \Vert_{H^s_{v, k + \gamma/2}}^2.
	\end{align}
\end{lemma}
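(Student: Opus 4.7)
The plan is to decompose $G = \mu + g$ and analyse the two resulting contributions $(Q(\mu, f), f\langle v\rangle^{2k})$ and $(Q(g, f), f\langle v\rangle^{2k})$ separately; the Maxwellian contribution will produce the three dissipative/error terms on the first line of \eqref{Gff}, while the perturbative contribution will produce the two nonlinear cross terms on the second line.

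For the Maxwellian contribution, I would apply the standard pre-post symmetrization to write
\[
2(Q(\mu, f), f\langle v\rangle^{2k}) = -\iiint B\,\mu(u)\bigl(f(v')\langle v'\rangle^k - f(v)\langle v\rangle^k\bigr)^2\,d\sigma\,du\,dv + (\text{cross terms}),
\]
which, after rewriting the remaining $f(u)f(v')$-contributions, realises the left-hand side as $-\|f\|_{H^{s*}_{v,k}}^2$ modulo errors absorbable into $C_k\|f\|_{L^2_v}^2$. I would then invoke the non-cutoff coercivity for the symmetric dissipation (as developed by Alexandre-Morimoto-Ukai-Xu-Yang and extended to polynomial weights by Gressman-Strain and Alonso-Morimoto-Sun-Yang) to conclude
\[
\|f\|_{H^{s*}_{v,k}}^2 \gtrsim \tfrac{1}{8}\|b(\cos\theta)\sin^2(\theta/2)\|_{L^1_\theta}\|f\|_{L^2_{v,k+\gamma/2}}^2 + \gamma_1\|f\|_{H^s_{v,k+\gamma/2}}^2 - C_k\|f\|_{L^2_v}^2,
\]
with $\gamma_1 > 0$ obtained from the hypothesised mass and entropy bounds on $G$ via a Mouhot-Villani style compactness/continuity argument. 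The sharp coefficient $1/8$ is extracted from the isotropic part of the quadratic form that controls the pure weight change $\langle v'\rangle^k - \langle v\rangle^k$.

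For the perturbation contribution, I would apply the commutator identity
\[
\langle v\rangle^k Q(g, f)(v) = Q(g, f\langle v\rangle^k)(v) - \CR(g, f)(v),\qquad \CR(g, f)(v) := \iint B\, g(u)\, f(v')\bigl[\langle v'\rangle^k - \langle v\rangle^k\bigr]\,d\sigma\, du,
\]
so that $(Q(g, f), f\langle v\rangle^{2k})$ splits as $(Q(g, f\langle v\rangle^k), f\langle v\rangle^k) - (\CR(g, f), f\langle v\rangle^k)$. The main piece is estimated by Lemma \ref{upper bound for the Boltzmann operator} with the allocation $w_1 = w_2 = \gamma/2 + s$, $a = b = s$, yielding $C_k\|g\|_{L^2_{v,14}}\|f\|_{H^s_{v, k+\gamma/2}}^2$; here the fixed weight $14$ on $g$ comes from choosing the $L^1$-weight $w_3$ (resp.\ $w_4$) in the three cases of Lemma \ref{upper bound for the Boltzmann operator} large enough to dominate $-(\gamma+2s)$ uniformly across the admissible range of $\gamma$ and $s$. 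The commutator $\CR$ is handled by Taylor-expanding $\langle v'\rangle^k - \langle v\rangle^k$ to extract a $\sin(\theta/2)$ factor that tames the $\theta^{-1-2s}$ angular singularity, followed by another application of Lemma \ref{upper bound for the Boltzmann operator} with the weight allocation shifted onto $g$, producing the mixed term $C_k\|f\|_{L^2_{v,14}}\|g\|_{H^s_{v, k+\gamma/2}}\|f\|_{H^s_{v, k+\gamma/2}}$.

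The main obstacle is keeping the $\langle v\rangle$-weights balanced on the $f$-factors so as to avoid the extra $\langle v\rangle^s$ that a naive application of Lemma \ref{upper bound for the Boltzmann operator} (with its constraint $w_1 + w_2 = \gamma + 2s$) would otherwise impose on $f$; this is precisely the ``extra $2s$'' obstruction flagged in the introduction as the reason the nonlinear well-posedness must be set in Sobolev rather than low-regularity spaces. The redistribution is made possible only because $k \ge 14$ gives enough slack in the $g$-norm to absorb the surplus $\langle v\rangle^{2s}$, which is what forces the asymmetric $\|g\|_{L^2_{v,14}}$ versus $\|f\|_{H^s_{v,k+\gamma/2}}$ pairing in the last two terms of \eqref{Gff}. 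A secondary subtlety is the extraction of the sharp constant $1/8$, which requires an explicit decomposition of $(f(v')\langle v'\rangle^k - f(v)\langle v\rangle^k)^2$ into a pure-weight piece (giving the $L^2_{v,k+\gamma/2}$ control with the explicit prefactor) and an anisotropic piece (giving the $H^s_{v,k+\gamma/2}$ control), with the careful tracking of lower-order errors into $C_k\|f\|_{L^2_v}^2$.
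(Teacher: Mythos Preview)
The paper does not prove this lemma; it is quoted verbatim from \cite{CHJ}, Theorem~3.1, and used as a black box (see the Corollary immediately following it, which combines this lemma with Lemma~2.3). So there is no ``paper's own proof'' to compare against.

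Your plan is broadly sensible, but there is a conceptual inconsistency in the first half. You split $G=\mu+g$ and say the coercivity term $-\gamma_1\|f\|_{H^s_{v,k+\gamma/2}}^2$ comes from analysing $(Q(\mu,f),f\langle v\rangle^{2k})$, yet you also say $\gamma_1$ is ``obtained from the hypothesised mass and entropy bounds on $G$''. These two claims are incompatible: if the coercivity is extracted from $Q(\mu,\cdot)$ alone, then $\gamma_1$ depends only on $\mu$ and the hypotheses $\|G\|_{L^1}\ge A_1$, $\|G\|_{L^1_{v,2}}+\|G\|_{L\log L}\le A_2$ are never used. The reason those hypotheses appear in the statement is that the natural route (and the one taken in \cite{CHJ}) keeps $G$ intact for the coercivity step---one writes $(Q(G,f),f\langle v\rangle^{2k})=(Q(G,f\langle v\rangle^k),f\langle v\rangle^k)+\text{commutator}$, applies the ADVW-type lower bound to the first term (this is where the mass/energy/entropy bounds on $G$ enter to produce $\gamma_1=\gamma_1(A_1,A_2)$), and only then splits $G=\mu+g$ inside the commutator to isolate the $g$-dependent cross terms via Lemma~\ref{upper bound for the difference of the Boltzmann operator}. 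Your treatment of the perturbative commutator piece is essentially this second step and is fine; you just need to reroute the coercivity through $G$ rather than $\mu$ to make the argument consistent with the stated hypotheses.
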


Gathering the two lemmas above, we have
\begin{corollary}
	Suppose that $-3<\gamma\le 1, s \in (0, 1), \gamma+2s>-1$, $k\ge 14$ and $G= \mu +g \ge 0$. If there exist $A_1,A_2>0$ such that 
	\begin{align*}
		G \ge 0,\quad  \Vert G \Vert_{L^1_v} \ge A_1, \quad \Vert G \Vert_{L^1_{v,2}} +\Vert G \Vert_{L \log L} \le A_2,
	\end{align*}
	then there exist some constants $\de, C_k>0$, such that
	\begin{align}\label{cor1}
		(L f, f \langle v \rangle^{2k})   +   (Q(g, f), f \langle v \rangle^{2k} )  = &   (Q(\mu+g, f), f \langle v \rangle^{2k} ) + (Q(f, \mu), f \langle v \rangle^{2k} ) 
		\notag\\
		\le &  - \de \Vert f \Vert_{H^s_{v,k+\gamma/2}}^2 + C_k  \Vert f \Vert_{L^2_v}^2+C_k\Vert g \Vert_{L^2_{v,14} } \Vert f \Vert_{H^s_{v, k + \gamma/2}}^2
		\notag\\
		&\qquad +C_k\Vert f \Vert_{L^2_{v,14}} \Vert g \Vert_{H^s_{v, k+\gamma/2 }}\Vert f \Vert_{H^s_{v, k + \gamma/2}}.
	\end{align}
\end{corollary}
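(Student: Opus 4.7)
The plan is to first rewrite the left-hand side using the decomposition $Lf = Q(\mu, f) + Q(f, \mu)$ of the linearized collision operator, which directly gives $(Lf, f\langle v\rangle^{2k}) + (Q(g, f), f\langle v\rangle^{2k}) = (Q(\mu+g, f), f\langle v\rangle^{2k}) + (Q(f, \mu), f\langle v\rangle^{2k})$ by linearity of $Q$ in its first entry. This is already the first equality in the corollary. The first term on the right is then treated by invoking \eqref{Gff} with $G = \mu + g$, whose hypotheses on $\|G\|_{L^1_v}$, $\|G\|_{L^1_{v,2}}$ and $\|G\|_{L\log L}$ are precisely those postulated here; the second term is treated by the second displayed inequality in the preceding lemma, applied with both $h$ and $g$ equal to our $f$.

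Applying \eqref{Gff} produces, besides the two $g$-dependent cross terms that already appear on the right of the target inequality and a $C_k\|f\|_{L^2_v}^2$ remainder, the coercive pair $-\tfrac{1}{8}\|b(\cos\theta)\sin^2(\theta/2)\|_{L^1_\theta}\|f\|_{L^2_{v, k+\ga/2}}^2 - \ga_1 \|f\|_{H^s_{v, k+\ga/2}}^2$. Applying the preceding lemma to $(Q(f,\mu), f\langle v\rangle^{2k})$ contributes the positive $\|b(\cos\theta)\sin^{k-2}(\theta/2)\|_{L^1_\theta}\|f\|_{L^2_{v, k+\ga/2}}^2 + C_k\|f\|_{L^2_{v, k+\ga/2-1/2}}^2$. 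On $\theta \in (0, \pi/2]$ one has the elementary pointwise estimate $\sin^{k-2}(\theta/2) = \sin^{k-4}(\theta/2)\sin^2(\theta/2) \le 2^{-(k-4)/2}\sin^2(\theta/2)$, so for $k \ge 14$ the new angular factor is bounded by $2^{-5}\|b(\cos\theta)\sin^2(\theta/2)\|_{L^1_\theta} = \tfrac{1}{32}\|b(\cos\theta)\sin^2(\theta/2)\|_{L^1_\theta}$, which is strictly smaller than $\tfrac{1}{8}$ of the same integral, leaving the combined $L^2_{v, k+\ga/2}$ coefficient strictly negative (at least $-\tfrac{3}{32}\|b(\cos\theta)\sin^2(\theta/2)\|_{L^1_\theta}$).

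It remains to dispose of the lower-weight residual $C_k\|f\|_{L^2_{v, k+\ga/2 - 1/2}}^2$. For any $\eta > 0$, splitting the $v$-integral at a radius $R$ chosen so that $\langle v\rangle^{-1} \le \eta$ on $|v| \ge R$ yields the standard weighted interpolation $\|f\|_{L^2_{v, k+\ga/2-1/2}}^2 \le \eta\|f\|_{L^2_{v, k+\ga/2}}^2 + C_\eta\|f\|_{L^2_v}^2$. Choosing $\eta$ small enough that $C_k\eta$ is swallowed by the remaining strictly negative $L^2_{v, k+\ga/2}$ coefficient, and then using the trivial embedding $\|f\|_{L^2_{v, k+\ga/2}} \le \|f\|_{H^s_{v, k+\ga/2}}$, all the negative pieces consolidate into a single $-\de \|f\|_{H^s_{v, k+\ga/2}}^2$ for some $\de > 0$, while $C_k C_\eta\|f\|_{L^2_v}^2$ is absorbed into the existing $C_k\|f\|_{L^2_v}^2$ term and the $g$-dependent cross terms pass through from \eqref{Gff} unchanged. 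The only delicate point in the whole argument is the angular-integral comparison in the previous paragraph, which is exactly what pins down the hypothesis $k \ge 14$; once that comparison is recorded, nothing else in the proof is more than bookkeeping.
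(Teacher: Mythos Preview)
Your proof is correct and is precisely the fleshing-out of the paper's own argument, which consists only of the sentence ``Gathering the two lemmas above, we have''; you have supplied exactly the missing details (the angular comparison $\sin^{k-2}(\theta/2)\le 2^{-(k-4)/2}\sin^2(\theta/2)$ and the weighted interpolation to absorb the $\|f\|_{L^2_{v,k+\gamma/2-1/2}}^2$ remainder). One small imprecision: the constraint $k\ge 14$ is not actually forced by your angular comparison (which only needs $k>10$), but rather is inherited from the two cited lemmas themselves.
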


The following estimates are about a commutator on the collision operator $Q$ with weight $\langle  v\rangle^k$.

\begin{lemma}[\cite{CHJ}, Lemma 2.4]\label{upper bound for the difference of the Boltzmann operator}
	Suppose $\gamma\in(-3,1],\gamma +2s>-1$ and $k\ge  14$, $g, f, h$ are smooth. Then we have
	\begin{align}%\label{0order}
	|(\langle v\rangle^kQ(g, f)-Q(g,\langle v\rangle ^k  f),h\langle v\rangle^k)|&\le C_k\Vert f\Vert_{L^2_{v,14}}\Vert g\Vert_{H^s_{v,k+\gamma/2}}\Vert h\Vert_{H^s_{v,k+\gamma/2}}\notag\\
	&\quad+C_k\Vert g\Vert_{L^2_{v,14}}  \Vert f\Vert_{H^s_{v,k+\gamma/2}}\Vert h\Vert_{H^s_{v,k+\gamma/2}}.\notag
	\end{align}
\end{lemma}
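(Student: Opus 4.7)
The plan is to reduce the commutator to an integral of the weight increment $W_k(v)-W_k(v')$, with $W_k(v):=\langle v\rangle^k$, tested against the full collision kernel, and then to Taylor-expand $W_k(v)-W_k(v')$ to second order in $v-v'$. Writing out the definition of $Q$, the loss terms cancel exactly and one is left with
\begin{align*}
\bigl(\langle v\rangle^kQ(g,f)-Q(g,\langle v\rangle^k f),\,h\langle v\rangle^k\bigr)
=\iiiint B(v-u,\sigma)\,g(u')f(v')\bigl[W_k(v)-W_k(v')\bigr]h(v)W_k(v)\,\dd\sigma\,\dd u\,\dd v.
\end{align*}
Taylor's formula gives $W_k(v)-W_k(v')=(v-v')\cdot\nabla W_k(v')+R_2$ with $|R_2|\lesssim |v-v'|^2(\langle v\rangle+\langle u\rangle)^{k-2}$, while the collision geometry yields $|v-v'|\lesssim|v-u|\sin(\theta/2)$ and $\langle v'+\tau(v-v')\rangle\lesssim\langle v\rangle+\langle u\rangle$ uniformly in $\tau\in[0,1]$.

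For the second-order remainder the integrand is dominated by
\begin{align*}
|v-u|^{\ga+2}b(\cos\theta)\sin^2(\theta/2)\,|g(u')f(v')h(v)|\,(\langle v\rangle+\langle u\rangle)^{2k-2}\langle v\rangle^k,
\end{align*}
and since $b(\cos\theta)\sin^2(\theta/2)\in L^1_\theta$, the resulting trilinear integral is of Grad-cutoff type and is controlled by Lemma~\ref{upper bound for the Boltzmann operator} with $a+b=2s$ and $w_1+w_2=\ga+2s$, after redistributing the polynomial weights via $\langle v'\rangle,\langle u'\rangle\lesssim\langle v\rangle\langle u\rangle$. This contribution already fits under the claimed bound.

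The first-order term $(v-v')\cdot\nabla W_k(v')$ is the main obstacle, because $b(\cos\theta)\sin(\theta/2)\notin L^1_\theta$ when $s\ge 1/2$. It is handled by the classical angular symmetrization of \cite{AMUXY11}: the involution on $\mathbb{S}^2$ that reflects $\sigma$ across the axis $\frac{v-u}{|v-u|}$ preserves $B(v-u,\sigma)$ and $|v-v'|$, but reverses the component of $v-v'$ orthogonal to $v-u$, so after symmetrizing in $\sigma$ only the projection of $\nabla W_k(v')$ onto $v-u$ survives, producing an extra factor $\sin^2(\theta/2)$. One is then reduced to exactly the same integrable-angle trilinear bound as for the second-order term, with the factor $\langle v'\rangle^{k-1}$ coming from $\nabla W_k(v')$ absorbed either into $\|f\|_{L^2_{v,14}}$ (valid since $k\ge 14$ leaves the remaining weight $k-1+\ga/2$ available for $f(v')$) or, after the regular pre-/post-collisional change of variables $(v,v',\sigma)\leftrightarrow(v',v,\sigma')$ of Jacobian comparable to one, into $\|g\|_{L^2_{v,14}}$; the two choices produce the two terms on the right-hand side, the remaining two factors being controlled in $H^s_{v,k+\ga/2}$ by Lemma~\ref{upper bound for the Boltzmann operator} with $(a,b)=(s,s)$ and $(w_1,w_2)=(k+\ga/2,k+\ga/2)$. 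The whole argument is essentially the non-cutoff weight commutator computation of \cite{CHJ}, and the symmetrization step is the only place where the grazing-angle singularity $\theta^{-1-2s}$ is genuinely fought.
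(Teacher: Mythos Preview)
The paper does not prove this lemma; it is quoted from \cite{CHJ} without argument, so there is no in-paper proof to compare against. Your overall strategy---cancel the loss terms, Taylor-expand $W_k(v)-W_k(v')$, and dispose of the second-order remainder using $b(\cos\theta)\sin^2(\theta/2)\in L^1_\theta$---is the standard one and is sound in outline.

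The gap is in the first-order term. The reflection of $\sigma$ across $(v-u)/|v-u|$ does preserve $B$ and reverse $(v-v')_\perp$, but it also moves $v'$ and $u'$, so the factors $g(u')$, $f(v')$ and $\nabla W_k(v')$ in your integrand all change under it. Symmetrizing therefore does \emph{not} simply leave ``the projection of $\nabla W_k(v')$ onto $v-u$'' as you claim; the $\sigma$-dependence of those three factors obstructs the cancellation. To make an argument of this type go through one usually (i) expands around $v$ rather than $v'$ so that $\nabla W_k(v)$ is $\sigma$-independent, and (ii) isolates the remaining $\sigma$-dependence, for instance by writing $g(u')f(v')=g(u)f(v)+[\text{difference}]$ or by doing the pre/post change of variables first. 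After the azimuthal cancellation on the $\sigma$-independent piece, a residual term survives whose angular singularity is absorbed by the $H^s$ regularity of two of the three functions---which is precisely why $\|g\|_{H^s_{v,k+\gamma/2}}\|h\|_{H^s_{v,k+\gamma/2}}$ and its symmetric variant appear on the right-hand side. Your sketch elides this step, and without it the grazing singularity for $s\ge 1/2$ is not actually controlled.
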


\begin{lemma}
	Suppose $\gamma \in (-3, 1], s \in (0, 1), \gamma+2s >-1$. For any functions $f, g, h$ and $k \ge 14$, we have
	\begin{align}\label{fgh}
		(Q(f, g), h \langle v \rangle^{2k} )\leq &C_k \Vert f \Vert_{L^2_{v,14}} \min\{\|g\|_{H^s_{v,k+\gamma/2}}\|h\|_{H^s_{v,k+\gamma/2+2s}},\,\Vert g \Vert_{H^s_{v,k+\gamma/2+2s }}  \Vert h \Vert_{H^s_{v,k+\gamma/2}}\}\notag\\
		&+  \Vert g \Vert_{L^2_{v,14}}  \Vert f \Vert_{H^s_{v,k+\gamma/2}}  \Vert h \Vert_{H^s_{v,k+\gamma/2}} .
	\end{align}	
\end{lemma}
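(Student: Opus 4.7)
The plan is to reduce the desired bound to the two preceding lemmas by peeling off the weight $\langle v\rangle^{2k}$ via the standard commutator trick. Specifically, I would begin by writing
\begin{align*}
(Q(f,g),\,h\langle v\rangle^{2k})
= \bigl(Q(f,\langle v\rangle^k g),\,h\langle v\rangle^k\bigr)
+ \bigl(\langle v\rangle^k Q(f,g) - Q(f,\langle v\rangle^k g),\,h\langle v\rangle^k\bigr),
\end{align*}
so that the first term is a ``clean'' $Q$-pairing whose weights have been redistributed onto $g$ and $h$, and the second is precisely the weight-commutator handled by Lemma \ref{upper bound for the difference of the Boltzmann operator}.

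For the clean term I would apply Lemma \ref{upper bound for the Boltzmann operator} with the symmetric choice $a=b=s$, so the only remaining freedom is the constraint $w_1+w_2=\gamma+2s$. The key observation is that we may put the extra $2s$ of polynomial weight entirely on either argument: taking $(w_1,w_2)=(\gamma/2,\,\gamma/2+2s)$, and commuting the smooth factor $\langle v\rangle^k$ through the Sobolev norms, gives
\[
|(Q(f,\langle v\rangle^k g),\,h\langle v\rangle^k)| \le C\bigl(\|f\|_{L^1_{v,w}}+\|f\|_{L^2_{v,w'}}\bigr)\|g\|_{H^s_{v,k+\gamma/2}}\|h\|_{H^s_{v,k+\gamma/2+2s}},
\]
where $w,w'$ are the weights dictated by the three regimes of $\gamma+2s$ in that lemma. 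The swapped choice $(w_1,w_2)=(\gamma/2+2s,\,\gamma/2)$ produces the symmetric alternative, and taking the minimum of the two bounds reproduces the first line of \eqref{fgh}. In every regime the relevant $L^1$/$L^2$ weights $w,w'$ are bounded by a universal constant below $14$ for the prescribed range of $\gamma,s$, so a Cauchy--Schwarz argument absorbs both norms into $C_k\|f\|_{L^2_{v,14}}$.

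The commutator term is exactly the quantity bounded by Lemma \ref{upper bound for the difference of the Boltzmann operator} (applied to the pair $(g,f)$ there playing the role of $(f,g)$ here), which yields
\[
C_k\|g\|_{L^2_{v,14}}\|f\|_{H^s_{v,k+\gamma/2}}\|h\|_{H^s_{v,k+\gamma/2}}
+C_k\|f\|_{L^2_{v,14}}\|g\|_{H^s_{v,k+\gamma/2}}\|h\|_{H^s_{v,k+\gamma/2}}.
\]
The first summand is precisely the second line of \eqref{fgh}, while the second is dominated by the min-term on the first line via the trivial monotonicity $\|g\|_{H^s_{v,k+\gamma/2}}\le\|g\|_{H^s_{v,k+\gamma/2+2s}}$. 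Summing the three contributions gives \eqref{fgh}. The main technical obstacle is just the weight bookkeeping in Lemma \ref{upper bound for the Boltzmann operator}: in particular one must verify that in the very soft regime $-1<\gamma+2s<0$ the weight $-(\gamma+2s)$ forced on $\|f\|_{L^2}$ together with the lower-order $L^1$ weight still fit under $\|f\|_{L^2_{v,14}}$, but this is immediate from $k\ge 14$ and $\gamma+2s>-1$.
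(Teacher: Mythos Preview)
Your proof is correct and follows essentially the same approach as the paper: split $(Q(f,g),h\langle v\rangle^{2k})$ into the clean term $(Q(f,\langle v\rangle^k g),h\langle v\rangle^k)$ and the weight commutator, apply Lemma~\ref{upper bound for the Boltzmann operator} to the former with the two symmetric choices of $(w_1,w_2)$, and apply Lemma~\ref{upper bound for the difference of the Boltzmann operator} to the latter. Your observation that the residual commutator term $C_k\|f\|_{L^2_{v,14}}\|g\|_{H^s_{v,k+\gamma/2}}\|h\|_{H^s_{v,k+\gamma/2}}$ is dominated by the $\min$-term (since it is bounded by \emph{both} branches) is the right way to absorb it; the paper simply cites the two lemmas without spelling this out.
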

\begin{proof}
	It is straightforward to see
	$$
	(Q(f, g), h \langle v \rangle^{2k} )\leq |(Q(g,\langle v\rangle ^k  f),h\langle v\rangle^k)|+|(\langle v\rangle^kQ(g, f)-Q(g,\langle v\rangle ^k  f),h\langle v\rangle^k)|.
	$$
	One has from Lemma \ref{upper bound for the Boltzmann operator} that
	$$
	|(Q(g,\langle v\rangle ^k  f),h\langle v\rangle^k)|\leq C_k \Vert f \Vert_{L^2_{v,14}} \min\{\|g\|_{H^s_{v,k+\gamma/2}}\|h\|_{H^s_{v,k+\gamma/2+2s}},\,\Vert g \Vert_{H^s_{v,k+\gamma/2+2s}}  \Vert h \Vert_{H^s_{v,k+\gamma/2}}\},
	$$
	which, together with Lemma \ref{upper bound for the difference of the Boltzmann operator}, yields \eqref{fgh}.
\end{proof}

\section{Global existence}
In this section, we prove Theorem \ref{GE}. First resolve the problem \eqref{rbe} into a coupling system of $g_1=g_1(t,x,v)$ and $g_2=g_2(t,x,v)$ where $g_1$ and $g_2$ satisfy
	\begin{align}
		\pa_t g_1+v\cdot\nabla_x g_1 =&\CL_D g_1+Q(g_1,g_1)+Q(\sqrt{\mu}g_2,g_1)+Q(g_1,\sqrt{\mu}g_2), \label{g1} \\
		\pa_t g_2 +v\cdot\na_x g_2 =&Lg_2+\CL_B g_1+\Ga(g_2,g_2),  \label{g2}
	\end{align}
	with
	\begin{equation}\notag
		%\label{initial}
		g_1(0, x, v)=g_{0}(x, v)=F_0(x,v)-\mu(v),\quad g_2(0,x,v)=0.
    \end{equation}
The linear and nonlinear operators $L$ and $\Gamma$ above are respectively defined by
\begin{equation}
\label{definition L Gamma}
L f  := \frac 1 {\sqrt{\mu} }Q ( \mu ,  \sqrt{\mu} f ) + \frac 1 {\sqrt{\mu} }Q (  \sqrt{\mu} f, \mu ) ,\quad\Gamma( g, f) :=  \frac 1 {\sqrt{\mu} }Q (\sqrt{\mu} g,  \sqrt{\mu} f ),
\end{equation}
we note that $L$ and $\CL$ are different operators. 
The linear operators $\CL_B$ and $\CL_D$ above are respectively defined by
\begin{align}\label{defLB}
	\CL_Bg_1(t,x,v):=\mu^{-1/2}(v)A\chi_M(v) g_1(t,x,v),
\end{align}
and
\begin{align}\label{defLD}
	\CL_D g_1(t,x,v):=(\CL-A\chi_M(v)) g_1(t,x,v),
\end{align}
where $0\le \chi_M \le 1$ is a smooth cutoff function such that for any $M>0$, $\chi_M(v)=1$ if $|v|\leq M$ and $\chi_M(v)=0$ if $|v|\ge2M$. Here the constants $A$ and $M$ are chosen in Lemma \ref{Q x nonlinear estimate 1} such that \eqref{estLD} holds. By setting $g=g_1+\sqrt{\mu}g_2$, it is direct to see  that $g$ is the solution to \eqref{rbe}.
In order to prove the global existence, we first estimate $g_1$ using \eqref{g1}. The following lemmas are important for bounding the nonlinear terms.
\begin{lemma}\label{leLD}Suppose that $-3<\gamma\le 1$, $0<s<1$, $\gamma+2s>-1$ and $G= \mu +g \ge 0$. Then there is a constant $\de>0$ such that if there exist $A_1$, $A_2>0$ satisfying
	\begin{align*}
		G \ge 0,\quad  \Vert G \Vert_{L^1_v} \ge A_1, \quad \Vert G \Vert_{L^1_{v,2}} +\Vert G \Vert_{L \log L} \le A_2,
	\end{align*}
	then for $k\geq14$, there are $A$ and $M$ for the operator $\CL_D$, and a constant $C_k$, such that
	\begin{align}\label{leld}
		(\CL_D f, f \langle v \rangle^{2k})   +   (Q(g, f), f \langle v \rangle^{2k} )  \le&   - \de \Vert f \Vert_{H^s_{v,k+\gamma/2}}^2
		+C_k\Vert f \Vert_{L^2_{v,14}} \Vert g \Vert_{H^s_{v,k+\gamma/2 }}\Vert f \Vert_{H^s_{v,k + \gamma/2}}\notag\\
		& +C_k\Vert g \Vert_{L^2_{v,14} } \Vert f \Vert_{H^s_{v,k + \gamma/2}}^2.
	\end{align}
\end{lemma}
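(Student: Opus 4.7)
The plan is to reduce the lemma to the corollary \eqref{cor1} by showing that the zeroth-order term $C_k\|f\|_{L^2_v}^2$ appearing there can be absorbed once we subtract a sufficiently large localized multiplication operator $A\chi_M$. Since $\CL_D = \CL - A\chi_M$ by \eqref{defLD}, we split
\begin{align*}
(\CL_D f, f\langle v\rangle^{2k}) + (Q(g,f), f\langle v\rangle^{2k}) = (\CL f, f\langle v\rangle^{2k}) + (Q(g,f), f\langle v\rangle^{2k}) - A(\chi_M f, f\langle v\rangle^{2k}),
\end{align*}
and apply \eqref{cor1} to the first two terms on the right. This yields the bound
\begin{align*}
&-\de \|f\|_{H^s_{v,k+\gamma/2}}^2 + C_k\|f\|_{L^2_v}^2 - A(\chi_M f, f\langle v\rangle^{2k}) \\
&\qquad + C_k\|g\|_{L^2_{v,14}}\|f\|_{H^s_{v,k+\gamma/2}}^2 + C_k\|f\|_{L^2_{v,14}}\|g\|_{H^s_{v,k+\gamma/2}}\|f\|_{H^s_{v,k+\gamma/2}},
\end{align*}
so the only work is to absorb $C_k\|f\|_{L^2_v}^2$ into the remaining negative contributions.

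Next I would split $\|f\|_{L^2_v}^2 = \int \chi_M f^2\,dv + \int (1-\chi_M)f^2\,dv$ and treat the two pieces separately. For the inner part, since $\langle v\rangle^{2k}\ge 1$,
\begin{align*}
C_k \int \chi_M f^2\,dv \le C_k (\chi_M f, f\langle v\rangle^{2k}),
\end{align*}
so choosing $A \ge 2C_k$ lets $-A(\chi_M f, f\langle v\rangle^{2k})$ swallow this contribution and still leave a nonpositive remainder. For the outer part, using $1-\chi_M$ is supported on $|v|\ge M$ together with $k+\gamma/2 > 0$ (guaranteed by $k\ge 14$, $\gamma>-3$),
\begin{align*}
\int (1-\chi_M)f^2\,dv \le \langle M\rangle^{-(2k+\gamma)} \|f\|_{L^2_{v,k+\gamma/2}}^2 \le \langle M\rangle^{-(2k+\gamma)} \|f\|_{H^s_{v,k+\gamma/2}}^2.
\end{align*}

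Finally, pick $M$ large so that $C_k\langle M\rangle^{-(2k+\gamma)} \le \de/2$; this lets $-\de\|f\|_{H^s_{v,k+\gamma/2}}^2$ absorb the outer contribution, leaving $-(\de/2)\|f\|_{H^s_{v,k+\gamma/2}}^2$, which after relabeling $\de/2 \to \de$ gives exactly \eqref{leld}. The only subtle point is simply the order of quantifiers: $A$ must be chosen after $C_k$ from the corollary is fixed, and $M$ must then be chosen large depending on $C_k$ and $\de$; since both constants ultimately depend only on $A_1$, $A_2$, $k$, and the kernel, no circularity arises. There is no real obstacle beyond bookkeeping, and the argument is essentially the standard decomposition used to convert a Carleman-type gain-plus-lower-order estimate into a coercive one for the modified operator $\CL_D$.
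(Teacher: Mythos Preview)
Your proposal is correct and follows essentially the same approach as the paper: apply \eqref{cor1}, then split $\|f\|_{L^2_v}^2$ into a compactly supported part absorbed by $-A(\chi_M f, f\langle v\rangle^{2k})$ for $A$ large and a far-field part absorbed by $-\tfrac{\de}{2}\|f\|_{H^s_{v,k+\gamma/2}}^2$ for $M$ large. The paper organizes the same absorption as a single pointwise-in-$v$ inequality $\tfrac{\de}{2}\langle v\rangle^{2k+\gamma}(1-\chi_M) + (\tfrac{\de}{2}-C_k+CA)\chi_M \ge 0$, but the content is identical.
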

\begin{proof}
We have from the definition of $\CL_D$ in \eqref{defLD} that
\begin{align*}
	&(\CL_D f, f \langle v \rangle^{2k})   +   (Q(g, f), f \langle v \rangle^{2k} )
	\\
	=&(\CL f, f \langle v \rangle^{2k})   +   (Q(g, f), f \langle v \rangle^{2k} )-(A\chi_M(v)f,f),
	\end{align*}	
which, combining with \eqref{cor1}, yields
\begin{align}\label{leLD1}&(\CL_D f, f \langle v \rangle^{2k})   +   (Q(g, f), f \langle v \rangle^{2k} )
	\notag\\\le &  - \de \Vert f \Vert_{H^s_{v,k+\gamma/2}}^2 + C_k  \Vert f \Vert_{L^2_v}^2-CA\|\chi_M(v)f\|_{L^2_v}^2
+C_k\Vert f \Vert_{L^2_{v,14}} \Vert g \Vert_{H^s_{v,k+\gamma/2 }}\Vert f \Vert_{H^s_{v,k + \gamma/2}}\notag\\
&\qquad +C_k\Vert g \Vert_{L^2_{v,14} } \Vert f \Vert_{H^s_{v,k + \gamma/2}}^2. 
\end{align}
	A direct calculation shows that
\begin{align*}
	& \frac{\de}{2} \Vert f \Vert_{H^s_{v,k+\gamma/2}}^2-C_k\Vert f \Vert_{L^2_v}^2+CA\|\chi_M(v)f\|_{L^2_v}^2\\
		\geq& \frac{\de}{2}\int_{\R^3}\int_{\R^3}\big(\langle v\rangle^{2k+\ga}(1-\chi_M(v))+\chi_M(v)\big)|f(t,x,v)|^2dv-C_k\Vert f \Vert_{L^2_v}^2+CA\|\chi_M(v)f\|_{L^2_v}^2\\
	=& \int_{\R^3}\int_{\R^3}\big(\frac{\de}{2}-C_k\langle v\rangle^{-2k-\ga}\big)\langle v\rangle^{2k+\ga}(1-\chi_M(v))|f(t,x,v)|^2dv\\
	&\quad+\int_{\R^3}\int_{\R^3}\big(\frac{\de}{2}-C_k+CA\big)\chi_M(v)|f(t,x,v)|^2dv.
\end{align*}
For suitably large $M$ and $A$, we obtain $(\frac{\de}{2}-C_k\langle v\rangle^{-2k-\ga})(1-\chi_M(v))>0$ and $\frac{\de}{2}-C_k+CA>0$, which leads to
\begin{align}\label{leLD2}
	& \frac{\de}{2} \Vert f \Vert_{H^s_{v,k+\gamma/2}}^2-C_k\Vert f \Vert_{L^2_v}^2+CA\|\chi_M(v)f\|_{L^2_v}^2\geq0.
\end{align}
Therefore, by \eqref{leLD1} and \eqref{leLD2}, one gets
\begin{align*}&(\CL_D f, f \langle v \rangle^{2k})   +   (Q(g, f), f \langle v \rangle^{2k} )
	\\\le &  - \frac{\de}{2} \Vert f \Vert_{H^s_{v,k+\gamma/2}}^2 
	+C_k\Vert f \Vert_{L^2_{v,14}} \Vert g \Vert_{H^s_{v,k+\gamma/2 }}\Vert f \Vert_{H^s_{v,k + \gamma/2}}+C_k\Vert g \Vert_{L^2_{v,14} } \Vert f \Vert_{H^s_{v,k + \gamma/2}}^2. 
\end{align*}
Renaming $\frac{\de}{2}$ to be $\de$ since it is an arbitrarily small constant which is independent of $k$, then \eqref{leld} holds. The proof of Lemma \ref{leLD} is complete.
\end{proof}

\begin{lemma}\label{Q x nonlinear estimate 1}
	Let $\alpha$ be any multi-index such that $|\alpha|=0, 2$ and $G=\mu+g+\sqrt{\mu}h$ for $g=g(t,x,v)$ and $h=h(t,x,v)$. Then there is a constant $\de>0$ such that if there exist $A_1,A_2>0$ satisfying
	\begin{align*}
		G \ge 0,\quad  \Vert G \Vert_{L^1} \ge A_1, \quad \Vert G \Vert_{L^1_{v,2}} +\Vert G \Vert_{L \log L} \le A_2,
	\end{align*}
	then for $k\geq22$, there are $A$ and $M$ for the operator $\CL_D$ and a constant $C_k$, such that
	\begin{align}\label{estLD}
		&\int_{\R^3}\int_{\R^3}   \CL_D  \partial^\alpha_x f +   Q(g,  \partial^\alpha_x f)    + Q(\sqrt{\mu}h,  \partial^\alpha_x f) \langle v \rangle^{2k-8|\alpha |}   \partial^\alpha_x f   dxdv
		\notag\\
		\le&   - \de \Vert f \Vert_{X^*_k}^2
		+C_k\Vert f \Vert_{X_k} \Vert g \Vert_{X^*_k}\Vert f \Vert_{X^*_k} +C_k\Vert g \Vert_{X_k} \Vert f \Vert_{X^*_k}^2\notag\\&+ C_k  \Vert f \Vert_{X_k}   \Vert h \Vert_{\CD}  \Vert f \Vert_{X^*_k}  +C_k\Vert h \Vert_{\CE} \Vert f \Vert_{X^*_k}^2.
	\end{align}
\end{lemma}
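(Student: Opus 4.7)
The plan is to apply Lemma \ref{leLD} pointwise in $x$ to the function $\partial^\alpha_x f$, with the $k$ in that lemma replaced by $k-4|\alpha|$ so that the weight $\langle v\rangle^{2k-8|\alpha|}$ matches. For $|\alpha|=2$ this requires $k-8\ge 14$, which is precisely the hypothesis $k\ge 22$. The resulting pointwise-in-$x$ bound produces a dissipation $-\delta\|\partial^\alpha_x f\|_{H^s_{v,k-4|\alpha|+\gamma/2}}^2$ together with two nonlinear cross terms having $\|g\|_{L^2_{v,14}}$ or $\|\partial^\alpha_x f\|_{L^2_{v,14}}$ as a factor.

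I would then integrate in $x$. Summing over $|\alpha|=0,2$, the dissipation terms reassemble into $-\delta\|f\|_{X_k^*}^2$ by the definition \eqref{X*k}. For each nonlinear cross term, I would place the $L^2_{v,14}$ factor in $L^\infty_x$ via the Sobolev embedding $H^2(\R^3)\hookrightarrow L^\infty(\R^3)$: since $k-8\ge 14$, one has $\|g\|_{L^\infty_x L^2_{v,14}}\lesssim\|g\|_{H^2_x L^2_{v,14}}\lesssim\|g\|_{X_k}$, and likewise for $f$; the remaining two $H^s_v$ factors stay in $L^2_x$, and H\"older's inequality returns exactly the first two nonlinear contributions $C_k\|f\|_{X_k}\|g\|_{X_k^*}\|f\|_{X_k^*}+C_k\|g\|_{X_k}\|f\|_{X_k^*}^2$ of \eqref{estLD}.

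For the remaining $Q(\sqrt{\mu}h,\partial^\alpha_x f)$ contribution, Lemma \ref{leLD} does not apply directly; instead I would invoke the general upper bound \eqref{fgh} with $\sqrt{\mu}h$ placed in the first slot. Crucially, the Gaussian factor $\mu^{1/2}(v)$ absorbs any polynomial weight and any fractional $v$-derivative, so that $\|\sqrt{\mu}h\|_{L^2_{v,14}}$, $\|\sqrt{\mu}h\|_{H^s_{v,k+\gamma/2}}$, and even the otherwise harmful $\|\sqrt{\mu}h\|_{H^s_{v,k+\gamma/2+2s}}$ coming from the non-cutoff estimate are all controlled by $C_k\|h\|_{L^2_v}$. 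I would then split $h=\mathbf{P}h+(\mathbf{I}-\mathbf{P})h$ when integrating in $x$: for $\mathbf{P}h$, which is still Gaussian in $v$, use $\|\mathbf{P}h\|_{L^\infty_x L^2_v}\lesssim\|h\|_{H^2_x L^2_v}\lesssim\|h\|_\CE$ to produce the contribution $C_k\|h\|_\CE\|f\|_{X_k^*}^2$; for $(\mathbf{I}-\mathbf{P})h$, combine the $H^{s*}_v$ dissipation built into $\CD$ with a Gagliardo--Nirenberg interpolation controlling $\|\mathbf{P}h\|_{L^\infty_x}$ by $\|\nabla_x\mathbf{P}h\|_{H^1_x L^2_v}$, which together supply the cross term $C_k\|f\|_{X_k}\|h\|_\CD\|f\|_{X_k^*}$.

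The hard part will be this last splitting. Because the spatial domain $\R^3$ admits no Poincar\'e inequality, $\mathbf{P}h$ cannot be controlled by its gradient alone, which forces the asymmetric appearance of both $\CE$ and $\CD$ on the right-hand side of \eqref{estLD}: the weaker $\CE$-norm on $h$ must be used whenever both $f$-factors sit in the dissipation $X_k^*$, while the stronger $\CD$-norm is only available in the pairing $\|f\|_{X_k}\|h\|_\CD\|f\|_{X_k^*}$, where one non-dissipative $\|f\|_{X_k}$ factor lets us move a derivative onto $\nabla_x\mathbf{P}h$. Balancing these two controls while eliminating the extra $2s$ weight produced by \eqref{fgh} through the Gaussian decay of $\sqrt{\mu}h$ is the delicate step of the argument.
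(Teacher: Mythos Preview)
Your plan to treat $Q(\sqrt{\mu}h,\partial^\alpha_x f)$ separately via \eqref{fgh} contains a misreading of that estimate. In \eqref{fgh} the extra $\langle v\rangle^{2s}$ always falls on the \emph{second or third} argument (via the $\min$), never on the first; since you place $\sqrt{\mu}h$ in the first slot and $\partial^\alpha_x f$ in both the second and third, the unavoidable factor is $\|\partial^\alpha_x f\|_{H^s_{v,k-4|\alpha|+\gamma/2+2s}}$, which is \emph{not} controlled by $\|f\|_{X^*_k}$ and cannot be cured by any Gaussian decay of $h$. The quantity $\|\sqrt{\mu}h\|_{H^s_{v,k+\gamma/2+2s}}$ you claim to absorb simply never appears. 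There is also a second issue: applying Lemma~\ref{leLD} with $g$ alone requires $\mu+g\ge 0$, which is not part of the hypothesis (only $\mu+g+\sqrt{\mu}h\ge 0$ is given).

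The paper sidesteps both problems by applying Lemma~\ref{leLD} with the \emph{full} perturbation $g+\sqrt{\mu}h$ in the role of ``$g$''; this is legitimate precisely because $G=\mu+(g+\sqrt{\mu}h)\ge 0$ is the assumed hypothesis. The whole point of \eqref{leld} (ultimately \eqref{Gff}) as opposed to the cruder \eqref{fgh} is that positivity of $G$ yields the coercive term without the $2s$ loss, so the cross terms on the right carry only $\|\sqrt{\mu}h\|_{L^2_{v,14}}$ and $\|\sqrt{\mu}h\|_{H^s_{v,k+\gamma/2}}$, both of which the Gaussian handles. Your ``hard part'' then collapses to a one-line trick: use $\|f\|_{L^\infty_x L^2_{v,14}}\le C\min\{\|f\|_{X_k},\|f\|_{X^*_k}\}$ and $\|\sqrt{\mu}h\|_{L^2_xH^s_{v,k+\gamma/2}}\le C(\|h\|_\CE+\|h\|_\CD)$, pairing the $\CE$ piece with the $X^*_k$ copy of the $\min$ and the $\CD$ piece with the $X_k$ copy. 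No Gagliardo--Nirenberg or derivative transfer onto $\nabla_x\mathbf{P}h$ is needed.
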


\begin{proof}
	For the case $\alpha =0$, applying \eqref{leld} and $L^\infty-L^2-L^2$ H\"older's inequality, one gets
	\begin{align}\label{Ldxv1}
		&(\CL_D f, f \langle v \rangle^{2k})   +   (Q(\sqrt{\mu }h+ g, f), f \langle v \rangle^{2k} )
		\notag\\
		\le & - \de \Vert f \Vert_{L^2_x H^s_{v,k+\gamma/2}}^2
		+C_k\Vert f \Vert_{L^\infty_x L^2_{v,14}} \Vert g \Vert_{L^2_x   H^s_{v,k+\gamma/2 }}\Vert f \Vert_{L^2_x H^s_{v,k + \gamma/2}} 
		\notag\\
		&+C_k\Vert g \Vert_{L^\infty_x L^2_{v,14} } \Vert f \Vert_{L^2_xH^s_{v,k + \gamma/2}}^2+C_k\Vert f \Vert_{L^\infty_x L^2_{v,14}} \Vert \sqrt{\mu} h \Vert_{L^2_x   H^s_{v,k+\gamma/2 }}\Vert f \Vert_{L^2_x H^s_{v,k + \gamma/2}} \notag\\
		&+C_k\Vert \sqrt{\mu} h  \Vert_{L^\infty_x L^2_{v,14} } \Vert f \Vert_{L^2_xH^s_{v,k + \gamma/2}}^2.
	\end{align}
Using Sobolev imbedding, it holds that  
\begin{align}\label{sobolev1}
	\Vert f \Vert_{L^\infty_xL^2_{v,10}} \le C \Vert f \Vert_{H^2_xL^2_{v,10}} \le C\min\{ \Vert f \Vert_{H^2_xL^2_{v,10}}, \Vert f \Vert_{H^2_xH^s_{v,10}} \} \le C \min \{ \Vert f \Vert_{X_k}, \Vert f \Vert_{X^*_k} \}.
\end{align}
It is direct to see
\begin{align}\label{controlmuh}
	\Vert \sqrt{\mu}  h \Vert_{L^2_xH^s_{v,k+\gamma/2}} &\le C \Vert h \Vert_{L^2_xH^{s}_{v,-5}}    \le C \Vert   Ph  \Vert_{L^2_xL^{2}_v}    +    C \Vert   (\mathbf{I}-\mathbf{P})  h  \Vert_{L^2_x H^{s}_{v,-5}}    \notag\\&\le C  \Vert h \Vert_{H^2_xL^2_v}     +    C\Vert   (\mathbf{I}-\mathbf{P})  h  \Vert_{H^2_x H^{s*}_v}   \notag\\
	&\le C \Vert h \Vert_{\CE} +  C\Vert h \Vert_{\CD}.
\end{align}
Substituting \eqref{sobolev1} and \eqref{controlmuh} into \eqref{Ldxv1}, we obtain
	\begin{align}\label{LD0order}
		&(\CL_D f, f \langle v \rangle^{2k})   +   (Q(\sqrt{\mu }h+ g, f), f \langle v \rangle^{2k} )
		\notag\\
		\le &       - \de \Vert f \Vert_{X^*_k}^2
		+C_k\Vert f \Vert_{X_k} \Vert g \Vert_{X^*_k}\Vert f \Vert_{X^*_k} +C_k\Vert g \Vert_{X_k} \Vert f \Vert_{X^*_k}^2 
		\notag\\
		&+C_k \min \{ \Vert f \Vert_{X_k}, \Vert f \Vert_{X^*_k} \}   (\Vert h \Vert_{\CE} +  \Vert h \Vert_{\CD}  )\Vert f \Vert_{X^*_k} +C_k\Vert h \Vert_{\CE} \Vert f \Vert_{X^*_k}^2 
		\notag\\
		\le & - \de\Vert f \Vert_{X^*_k}^2
		+C_k\Vert f \Vert_{X_k} \Vert g \Vert_{X^*_k}\Vert f \Vert_{X^*_k} +C_k\Vert g \Vert_{X_k} \Vert f \Vert_{X^*_k}^2 \notag\\
		&+ C_k  \Vert f \Vert_{X_k}   \Vert h \Vert_{\CD}  \Vert f \Vert_{X^*_k}  +C_k\Vert h \Vert_{\CE} \Vert f \Vert_{X^*_k}^2  .
	\end{align}
	For the case $\alpha = 2$, similarly it holds
	\begin{align*}
		\Vert f \Vert_{H^2_xL^2_{v,10}} \le C\min\{ \Vert f \Vert_{H^2_xL^2_{v,10}}, \Vert f \Vert_{H^2_xH^s_{v,10}} \} \le C \min \{ \Vert f \Vert_{X_k}, \Vert f \Vert_{X^*_k} \},
	\end{align*}
	and
	\begin{align*}
		\Vert \sqrt{\mu}  h \Vert_{H^2_xH^s_{v,k+\gamma/2}} \le C  \Vert h \Vert_{H^2_xL^2_v}     +   C \Vert   (\mathbf{I}-\mathbf{P})  h  \Vert_{H^2_x H^{s*}_v}\leq C   \Vert h \Vert_{\CE} +  C\Vert h \Vert_{\CD}.
	\end{align*}
	We have
	\begin{align}\label{LD2order}
		&(\CL_D  \partial^\alpha_x f,  \partial^\alpha_x f \langle v \rangle^{2k-8|\alpha|})   +   (Q(\sqrt{\mu }h + g,  \partial^\alpha_x f),  \partial^\alpha_x f \langle v \rangle^{2k-8|\alpha| } ) 
		\notag\\
		\le & - \de \Vert f \Vert_{L^2_x H^s_{v,k+\gamma/2-8}}^2
		+C_k\Vert  \partial^\alpha_x f \Vert_{L^2_x L^2_{v,14}} \Vert g \Vert_{L^\infty_x   H^s_{v,k+\gamma/2-8}}\Vert  \partial^\alpha_x f \Vert_{L^2_x H^s_{v,k+\gamma/2-8}}\notag\\
		& +C_k\Vert g \Vert_{L^\infty_x L^2_{v,14} } \Vert  \partial^\alpha_x f \Vert_{L^2_xH^s_{v,k+\gamma/2-8}}^2+C_k\Vert \sqrt{\mu }h \Vert_{L^\infty_x L^2_{v,14} } \Vert  \partial^\alpha_x f \Vert_{L^2_xH^s_{v,k+\gamma/2-8}}^2\notag\\
		& 
		+C_k\Vert  \partial^\alpha_x f \Vert_{L^2_x L^2_{v,14}} \Vert \sqrt{\mu }h \Vert_{L^\infty_x   H^s_{v,k+\gamma/2-8}}\Vert  \partial^\alpha_x f \Vert_{L^2_x H^s_{v,k+\gamma/2-8}}
		\notag\\
		\le &       - \de \Vert f \Vert_{X^*_k}^2
		+C_k\Vert f \Vert_{X_k} \Vert g \Vert_{X^*_k}\Vert f \Vert_{X^*_k} +C_k\Vert g \Vert_{X_k} \Vert f \Vert_{X^*_k}^2\notag\\
		&+ C_k  \Vert f \Vert_{X_k}   \Vert h \Vert_{\CD}  \Vert f \Vert_{X^*_k}  +C_k\Vert h \Vert_{\CE} \Vert f \Vert_{X^*_k}^2.
	\end{align}
	Combining \eqref{LD0order} and \eqref{LD2order}, one gets \eqref{estLD}. The proof of Lemma \ref{Q x nonlinear estimate 1} is complete.
\end{proof}

\begin{lemma}%\label{Q x nonlinear estimate 2}
	Let $k\ge  22$ and $\alpha$ be any multi-index such that $|\alpha|=0, 2$. Then there exists a constant $C_k$ such that
	\begin{align}\label{pagfh}
		\left|\int_{\R^3}\int_{\R^3}   \partial^\alpha_x  Q(g, \sqrt{\mu} f)   \langle v \rangle^{2k-8|\alpha |}\partial^\alpha_x h   dxdv\right|  \le C_k&    \Vert f \Vert_{\CE}  \Vert  g\Vert_{X^*_k} \Vert  h\Vert_{X^*_k}   +    \Vert g \Vert_{X_k}  \Vert f \Vert_{\CD}     \Vert  h\Vert_{X^*_k} 
	\end{align}
\end{lemma}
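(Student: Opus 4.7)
My plan is first to expand the spatial derivative by Leibniz,
\begin{equation*}
\partial^\alpha_x Q(g,\sqrt\mu f)=\sum_{\beta\leq\alpha}\binom{\alpha}{\beta}\,Q(\partial^\beta_x g,\sqrt\mu\,\partial^{\alpha-\beta}_x f),
\end{equation*}
and to apply the bilinear estimate \eqref{fgh} (with its $k$ replaced by $k-4|\alpha|$, so that its LHS weight matches our $\langle v\rangle^{2k-8|\alpha|}$) to each summand pointwise in $x$. Because $\sqrt\mu$ absorbs any polynomial weight in $v$, in the $\min\{\cdot,\cdot\}$ of \eqref{fgh} I always choose the option that routes the extra $2s$ velocity regularity onto the Gaussian-damped factor $\sqrt\mu\,\partial^{\alpha-\beta}_x f$ rather than onto $h$. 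This produces two kinds of contributions: one with $\|\partial^\beta_x g\|_{L^2_{v,14}}$ paired with an $H^s_v$-norm of $\sqrt\mu\,\partial^{\alpha-\beta}_x f$ carrying the extra $2s$ weight, and another with $\|\sqrt\mu\,\partial^{\alpha-\beta}_x f\|_{L^2_{v,14}}$ paired with the standard $H^s_v$-norms on the other two factors.

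Next I integrate in $x$ by H\"older, putting the factor with the fewest spatial derivatives into $L^\infty_x$ (via Sobolev $H^2_x\hookrightarrow L^\infty_x$) and the other two into $L^2_x$; this handles the cases $|\beta|\in\{0,|\alpha|\}$ directly. The crucial conversion is that whenever a weighted $H^s_v$-norm of $\sqrt\mu\,\partial^j_x f$ appears, it is controlled by $\|f\|_{\CD}$. This uses the micro-macro split $f=\mathbf{P}f+(\mathbf{I}-\mathbf{P})f$: on the hydrodynamic piece $\sqrt\mu\,\mathbf{P}f$ the velocity norm reduces to a constant times $|a^f|+|b^f|+|c^f|$, whose $L^\infty_x$-norm is bounded by the homogeneous Sobolev-type inequality $\|u\|_{L^\infty(\R^3)}\le C(\|\nabla u\|_{L^2}+\|\nabla^2 u\|_{L^2})$ and hence by $\|\nabla_x\mathbf{P}f\|_{H^1_xL^2_v}\le\|f\|_{\CD}$; on the microscopic piece, the Gaussian absorbs the positive velocity weight and trades it for $H^{s*}_v$ as in \eqref{controlmuh}, so $(\mathbf{I}-\mathbf{P})f$ enters through an $H^2_xH^{s*}_v$-type norm, which is bounded by $\|f\|_{\CD}$ after interpolating to cover the intermediate $H^1_xH^{s*}_v$ piece. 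This produces the second target term $C_k\|g\|_{X_k}\|f\|_{\CD}\|h\|_{X^*_k}$. For the contribution with $\|\sqrt\mu\,\partial^{\alpha-\beta}_x f\|_{L^2_{v,14}}$ no $v$-Sobolev is needed on $\sqrt\mu f$, so the simpler bound $\|\sqrt\mu\,\partial^j_x f\|_{L^\infty_x L^2_{v,14}}\le C\|f\|_{\CE}$ from Sobolev alone delivers the first target term $C_k\|f\|_{\CE}\|g\|_{X^*_k}\|h\|_{X^*_k}$.

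The main obstacle is the middle case $|\alpha|=2$, $|\beta|=1$, in which both $\partial^\beta_x g$ and $\partial^{\alpha-\beta}_x f$ carry a single spatial derivative, and the $L^\infty_x L^2_x L^2_x$ H\"older would push a third $x$-derivative onto one of them, exceeding the two-derivative ceiling of $\CE,\CD,X_k,X^*_k$. I resolve this by switching to the $L^3_xL^6_xL^2_x$ H\"older: the middle factor $\sqrt\mu\,\partial_x f$ is bounded in $L^6_x$ by $H^1_x\hookrightarrow L^6_x$ combined with the $\sqrt\mu$-conversion above, producing a factor of $\|f\|_{\CE}+\|f\|_{\CD}$, while $\partial_x g$ in $L^3_xL^2_{v,14}$ is controlled by the Gagliardo-Nirenberg interpolation $\|u\|_{L^3_x}\le C\|u\|_{L^2_x}^{1/2}\|\nabla u\|_{L^2_x}^{1/2}$ applied to $\partial_x g$, which together with $\nabla^2_xg\in L^2_xL^2_{v,k-8}$ built into $X_k$ (the hypothesis $k\ge 22$ ensures $k-8\ge 14$) gives $\|g\|_{X_k}$. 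Summing over $\beta$ then yields the claimed estimate \eqref{pagfh}.
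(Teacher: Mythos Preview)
Your approach is correct and reaches the target; it is more explicit than, and tactically a bit different from, the paper's argument. The paper bypasses \eqref{fgh} and applies Lemma~\ref{upper bound for the Boltzmann operator} directly, obtaining a single trilinear bound of the form $C_k\|g\|_{H^2_xL^2_{v,5}}\,\|\sqrt\mu f\|_{H^2_xH^s_{v,\cdot}}\,\|h\|_{H^2_xH^s_{v,\cdot}}$; since the low-weight $g$-factor is controlled by $\min\{\|g\|_{X_k},\|g\|_{X^*_k}\}$ and the Gaussian-damped factor by $\|f\|_{\CE}+\|f\|_{\CD}$, the split $\min\{X_k,X^*_k\}(\CE+\CD)\le X^*_k\cdot\CE+X_k\cdot\CD$ yields both target terms at once. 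Your route via \eqref{fgh} instead produces two kinds of contributions from the outset and assigns each directly to the appropriate budget, which avoids the $\min$ device but requires the case-by-case H\"older analysis you outline.

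One slip to correct: in the $|\alpha|=2$, $|\beta|=1$ case you say the factor $\sqrt\mu\,\partial_x f$ in $L^6_x$ produces $\|f\|_{\CE}+\|f\|_{\CD}$. For your (A)-type term this would generate an extra $\|g\|_{X_k}\|f\|_{\CE}\|h\|_{X^*_k}$ piece that is \emph{not} dominated by the stated right-hand side (one cannot trade $\|g\|_{X_k}$ for $\|g\|_{X^*_k}$ here when $\gamma<0$ and $k-8+\gamma/2<14$). The fix is already implicit in your own macro argument: use the \emph{homogeneous} embedding $\|u\|_{L^6(\R^3)}\le C\|\nabla_x u\|_{L^2(\R^3)}$, so that $\|\sqrt\mu\,\partial_x f\|_{L^6_xH^s_{v,\cdot}}\le C\|\sqrt\mu\,\nabla^2_x f\|_{L^2_xH^s_{v,\cdot}}$, whose macro part lands in $\|\nabla^2_x\mathbf{P}f\|_{L^2_xL^2_v}\le\|f\|_{\CD}$ and whose micro part is $\|\nabla^2_x(\mathbf{I}-\mathbf{P})f\|_{L^2_xH^{s*}_v}\le\|f\|_{\CD}$, with no $\CE$ appearing. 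With this adjustment your argument closes.
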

\begin{proof}
	Using the facts that
	\begin{align*}
		\Vert g \Vert_{H^2_xL^2_{v,10}} \le C \min\{ \Vert g \Vert_{H^2_xL^2_{v,10}}, \Vert g \Vert_{H^2_xH^s_{v,10}} \} \le C\min \{ \Vert g \Vert_{X_k}, \Vert g \Vert_{X^*_k} \},
	\end{align*}
	and
	\begin{align*}
		\Vert \sqrt{\mu}  f \Vert_{H^2_xH^{s}_{v,2l+\gamma/2+2s}}  \le C  \Vert f \Vert_{H^2_x}     +    C\Vert   (\mathbf{I}-\mathbf{P})  f  \Vert_{H^2_x H^{s,*}_v}  \le C  \Vert f \Vert_{\CE} +  C\Vert f \Vert_{\CD},
	\end{align*}
	by Lemma \ref{upper bound for the Boltzmann operator}, we have
	\begin{align*}
		(Q(g, \sqrt{\mu}f), \langle  v \rangle^{2k} h)_{H^2_xL^2_v} \le&C_k   \Vert g \Vert_{H^2_xL^2_{v,5}}     \Vert  \sqrt{\mu}f \Vert_{H^2_xL^2_{v,2k + \gamma+2s}}     \Vert h \Vert_{H^2_xH^{s}_v}  
		\\
		\le& C_k\min \{ \Vert g \Vert_{X_k}, \Vert g \Vert_{X^*_k} \}   ( \Vert f \Vert_{\CE} +  \Vert f \Vert_{\CD}) \Vert  h\Vert_{X^*_k}  \\
		\le&C_k  \Vert f \Vert_{\CE}  \Vert  g\Vert_{X^*_k} \Vert  h\Vert_{X^*_k}   +    C_k\Vert g \Vert_{X_k}  \Vert f \Vert_{\CD}     \Vert  h\Vert_{X^*_k}. 
	\end{align*}
	The lemma is thus proved. 
\end{proof}

\begin{lemma}\label{Q x nonlinear estimate 3}
	Let $k\ge  22$ and $\alpha$ be any multi-index such that $|\alpha|=2$ and $F=\mu+f$. Then there exists a constant $C_k$ such that
	\begin{align}\label{2order}
		&\left|\int_{\R^3}\int_{\R^3}(\partial^\alpha_x  Q(F, g)-Q(F,\partial^\alpha_x g))   \langle v \rangle^{2k-8|\alpha |}\partial^\alpha_x h   dxdv\right|\notag\\
		\leq& C_k(\Vert f \Vert_{X_k} \Vert g \Vert_{X^*_k}+\Vert g \Vert_{X_k} \Vert f \Vert_{X^*_k})\Vert h \Vert_{X^*_k}.
	\end{align}
\end{lemma}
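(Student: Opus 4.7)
The plan is to expand the commutator by the Leibniz rule and estimate each summand via the trilinear bound \eqref{fgh}, with the non-cutoff $2s$-weight excess handled through a velocity-weight interpolation in the Fourier variable $\xi$. Since $F=\mu+f$ and $\partial^\beta_x\mu\equiv 0$ for $|\beta|\geq 1$, for $|\alpha|=2$ the commutator reduces to
\begin{align*}
\partial^\alpha_x Q(F,g)-Q(F,\partial^\alpha_x g)=\sum_{0<\beta\leq\alpha}\binom{\alpha}{\beta}Q(\partial^\beta_x f,\partial^{\alpha-\beta}_x g),
\end{align*}
which splits into the top-order term $Q(\partial^\alpha_x f,g)$ and the cross terms $Q(\partial_i f,\partial_j g)$ with $|\beta|=|\alpha-\beta|=1$. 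I would pair each summand with $\partial^\alpha_x h\,\langle v\rangle^{2(k-8)}$ and invoke \eqref{fgh} pointwise in $x$ with $k$ replaced by $k-8$, choosing in each case the second slot of the min so that the $2s$-excess sits on the $g$-factor of $Q$.

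The $x$-integrations are then a H\"older exercise with Sobolev embeddings in $\R^3$. For the top-order piece I would distribute as $L^2_x\cdot L^\infty_x\cdot L^2_x$, placing the un-differentiated $g$ in $L^\infty_x$; for the cross pieces I would distribute as $L^6_x\cdot L^3_x\cdot L^2_x$, invoking $H^1_x\hookrightarrow L^6_x$ and $H^{1/2}_x\hookrightarrow L^3_x$. The $L^2_{v,14}$-factors fold into $\|f\|_{X_k}$ or $\|g\|_{X_k}$ thanks to $k\geq 22 \Rightarrow 14\leq k-8$, and the $H^s_{v,k-8+\gamma/2}$-factors fold into the $\nabla^2_x$-piece of $\|\cdot\|_{X^*_k}$.

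The non-routine step is the bound
\begin{align*}
\|g\|_{L^\infty_x H^s_{v,k-8+\gamma/2+2s}}+\|g\|_{L^3_x H^s_{v,k-8+\gamma/2+2s}}\leq C\|g\|_{X^*_k},
\end{align*}
which is the manifestation of the $2s$-gap flagged in the introduction. I would establish it by exploiting the higher velocity weight $k+\gamma/2$ that $X^*_k$ supplies on the zero-derivative slot. For each $\xi\in\R^3$, H\"older in the velocity weight yields
\begin{align*}
\|\hat g(\xi,\cdot)\|_{H^s_{v,k-8+\gamma/2+2s}}\leq \|\hat g(\xi,\cdot)\|_{H^s_{v,k+\gamma/2}}^{s/4}\,\|\hat g(\xi,\cdot)\|_{H^s_{v,k-8+\gamma/2}}^{1-s/4},
\end{align*}
since $k-8+\gamma/2+2s=\tfrac{s}{4}(k+\gamma/2)+(1-\tfrac{s}{4})(k-8+\gamma/2)$. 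Squaring, multiplying by $(1+|\xi|^2)^\ell$ and applying H\"older in $\xi$ with conjugate exponents $4/s$ and $4/(4-s)$ produces
\begin{align*}
\|g\|_{H^\ell_x H^s_{v,k-8+\gamma/2+2s}}\leq C\|g\|_{X^*_k}\qquad\text{for every } 0\leq\ell\leq 2-\tfrac{s}{2}.
\end{align*}
Because $s<1$ gives $2-\tfrac{s}{2}>\tfrac{3}{2}$, the embeddings $H^{2-s/2}_x\hookrightarrow L^\infty_x$ and $H^{3/2}_x\hookrightarrow L^3_x$ close both chains.

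The main obstacle is precisely this $2s$-loss: $X^*_k$ only controls $\nabla^2_x g$ at velocity weight $k-8+\gamma/2$, so the clean $H^2_x\hookrightarrow L^\infty_x$ pathway is not available. The Fourier weight-interpolation above rescues the estimate by trading a little $x$-regularity (from $\ell=2$ down to $\ell=2-\tfrac{s}{2}$) for the missing velocity weight, and it succeeds precisely because $s<1$ is strict. Summing the two pieces of \eqref{fgh} over the Leibniz decomposition then yields exactly $C_k(\|f\|_{X_k}\|g\|_{X^*_k}+\|g\|_{X_k}\|f\|_{X^*_k})\|h\|_{X^*_k}$, i.e.\ \eqref{2order}.
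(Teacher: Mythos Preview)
Your proposal is correct and follows essentially the same route as the paper: Leibniz expansion of the commutator, the trilinear bound (you use the packaged form \eqref{fgh} while the paper splits into $T_1+T_2$ via Lemmas \ref{upper bound for the Boltzmann operator} and \ref{upper bound for the difference of the Boltzmann operator}), H\"older--Sobolev distribution in $x$, and then an interpolation trading $x$-regularity for the $2s$ velocity-weight deficit. Your Fourier--H\"older interpolation with exponents $4/s$ and $4/(4-s)$ is exactly the same mechanism as the paper's interpolation of the $\langle v\rangle^{k-4|\alpha|}$ scale at the fractional index $|\alpha|=3/2$ (respectively $3/2+\delta$ with $\delta=(1-s)/2$ for the $L^\infty_x$ endpoint), and both succeed precisely because $s<1$.
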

\begin{proof}
	Before the proof of this lemma, we remark that the special weight $\langle v \rangle^{k-4|\alpha|}\partial^\alpha_x$ which is reduced with derivatives is in order to overcome the growth of weight comes from collision operator $Q$, and the reduced coefficient is not essential; in fact, we only need that it is bigger than $4s$. Here, we choose the reduced coefficient as $4$.
	By the Leibniz rule for the bilinear operator and since $\partial^\alpha_x \mu =0$, $|\alpha| \ge 1$, the commutator satisfies
	\[
	\partial^{\alpha}_xQ(F, g)-Q(F,\partial^\alpha_x g)=\sum_{|\alpha_1|\neq  0}C_{\alpha_1,\alpha_2}Q(\partial^{\alpha_1}_x  F ,\partial^{\alpha_2}_x g)=\sum_{|\alpha _1 |   \neq0}C_{\alpha_1,\alpha_2}Q(\partial ^{\alpha_1}_x  f ,\partial^{\alpha_2}_x g),
	\]
	where $\alpha_2 =\alpha-\alpha_1$. For each $(\alpha_1,\alpha_2)    \neq(0,2)$, we have that
	\begin{align*}
		&\quad\left|\int_{\R^3}Q(\partial^{\alpha_1}_x   f,\partial^{\alpha_2}_x   g)   \langle v \rangle^{2k-8|\alpha|  }\partial ^\alpha_x    h  dv\right|	\\
		&\leq \left|\int_{\R^3}Q(\partial^{\alpha_1}_x f,\langle v \rangle^{k-4|\alpha|}\partial^{\alpha_2}_x  g)\langle v \rangle^{k-4|\alpha|       }\partial^\alpha_x h d v  \right|\\
		&\qquad
	+\left|\int_{\R^3}(\langle v \rangle^{k-4|\alpha  |}Q(\partial^{\alpha_1}_x  f,\partial ^{\alpha _2}_x  g)-Q(\partial ^{\alpha _1}_x   f, \langle v \rangle^{k-4|\alpha|  }\partial ^{\alpha_2}_x  g))\langle v \rangle^{k-4|\alpha |}\partial^\alpha_x   h   dv\right|\\
	&:=T_1+T_2.
	\end{align*}
	By Lemma \ref{upper bound for the Boltzmann operator} we have
	\[
	T_1\le C_k\|\partial ^{\alpha _1}_x f\|_{L^2_{v,5}}\|\langle v \rangle^{k-4|\alpha |}\partial^{\alpha_2}_x g\|_{H^s_{v,\gamma/2+2s}}\|\langle v \rangle^{k-4|\alpha|  }\partial^\alpha_x h\|_{H^s_{v,\gamma/2}}.
	\]
	For the term $\int_{\R^3}T_1dx$, we consider two cases: $|\alpha_1|=|\alpha_2|=1$ and $\alpha_2=0$. First, if $|\alpha_1|=|\alpha_2|=1$,  due to Sobolev embeddings, we have
	\[
	H^1(\R^3)\hookrightarrow L^6(\R^3),\quad H^{1/2}(\R^3)\hookrightarrow L^3(\R^3),
	\]
	which implies
	\begin{align*}
		&\int_{\R^3}\|\partial^{\alpha_1}_x f\|^2_{L^2_5}\|\langle v \rangle^{k-4|\alpha|}\partial^{\alpha_2}_x g\|^2_{H^s_{v,\gamma/2+2s}}dx\notag\\
		\le&C\left(\int_{\R^3}\|\langle v \rangle^5\partial^{\alpha_1}_x  f\|^6_{L^2_{ v}}dx\right)^{\frac 1 3}\left(\int_{\T^3}\|\langle v \rangle ^{k-8+2s}\partial ^{\alpha_2}_x   g\|^3_{H^s_{v,\gamma/2}}dx\right)^{\frac 23}
		\\
		\le&C\|\langle v \rangle ^5\langle \nabla_x\rangle^2f\|^2_{L^2_{x, v}  }   \int_{\R^3}\|\langle v \rangle^{k-4\times\frac 32}\langle \nabla_x\rangle^{\frac 3   2}g\|^2_{H^s_{v,\gamma/2}}dx,
	\end{align*}
	which implies that
\begin{align}\label{T1}
	\int_{\R^3}  T_1dx\le C_k\|\langle v \rangle^5\langle \nabla_x\rangle^2f\|_{L^2_{x, v}}   & \left(\sum_{|\alpha|=0, 2}\|\langle v \rangle^{k-4|\alpha|}\partial^\alpha_x g\|_{L^2_xH^s_{v,\gamma/2}}\right)\notag\\
	&\qquad\times\left(\sum_{|\alpha|=0, 2}\|\langle v \rangle^{k-4|\alpha|}\partial^\alpha_x h\|_{L^2_xH^s_{v,\gamma/2}}\right).
\end{align}
	The bound for $T_1$ with $\alpha_2=0$ also follows from the Sobolev embedding $H^{3/2+\delta}(\R^3)\hookrightarrow L^\infty(\R^3)$ with any $\delta>0$. In this case, we have
	\begin{align*}
	\sup_{\R^3}\|\langle v \rangle^{k-4|\alpha|}g\|^2_{H^s_{v,\gamma/2+2s}}&\le \sup_{\R^3}\|\langle      v\rangle^{k-8+2s}g\|^2_{H^s_{v,\gamma/2}}\notag\\
	&\le   \int_{\R^3}\|    \langle v \rangle^{k-4(3/2+\delta)+2s-2+4\delta}\langle D_x\rangle^{3/2+\delta}g\|^2_{H^s_{v,\gamma/2}}dx.
	\end{align*}
	Choosing $\delta=(1-s)/2$, the $T_1$ term is estimated. Then we deal with $T_2$ term. Due to Lemma \ref{upper bound for the difference of the Boltzmann operator} and the same argument before, we can easily derive that
	\begin{align}\label{T2}
		\int_{\R^3}T_2dx\le& C_k\|\langle v \rangle ^{14}\langle \nabla_x\rangle^2f\|_{L^2_{x, v}}\left(\sum_{\alpha}\|\langle v \rangle^{k-4|\alpha|   }\partial^\alpha_x   g\|_{L^2_xH^s_{v,\gamma/2}}\right)\notag\\
		&\qquad\qquad\qquad\qquad\qquad\qquad\times\left(\sum_{\alpha}\|\langle v \rangle^{k-4|\alpha|}\partial^\alpha_x h\|_{L^2_xH^s_{v,\gamma/2}}\right)
		\notag\\
		&+C_k\|\langle v \rangle^{14}\langle \nabla_x\rangle^2g\|_{L^2_{x, v}}\left(\sum_{\alpha}\|\langle v \rangle^{k-4|\alpha|}\partial^\alpha_x   f\|_{L^2_xH^s_{v,\gamma/2}}\right)\notag\\
		&\qquad\qquad\qquad\qquad\qquad\qquad\times\left(\sum_{\alpha}\|\langle v \rangle ^{k-4|\alpha|}\partial^\alpha_x   h\|_{L^2_xH^s_{v,\gamma/2}}\right).
	\end{align}
	We combine \eqref{T1} and \eqref{T2} to get
	\begin{align}
		&\left|\int_{\R^3}\int_{\R^3}(\partial^\alpha_x  Q(F, g)-Q(F,\partial^\alpha_x g))   \langle v \rangle^{2k-8|\alpha |}\partial^\alpha_x h   dxdv\right|
		\notag\\
		\le& C_k\|    \langle v \rangle^{14}   \langle \nabla_x\rangle^2f\|_{L^2_{x, v}}\left(\sum_{\alpha}\|   \langle v \rangle^{k-4|\alpha|}\partial^\alpha_x g\|_{L^2_xH^s_{v,\gamma/2}}\right)\left(\sum_{\alpha}\|   \langle v \rangle^{k-4|\alpha|}\partial^\alpha_x h\|_{L^2_xH^s_{v,\gamma/2}}\right)
		\notag\\
		&+C_k\|   \langle v \rangle^{14}\langle \nabla_x\rangle^2g\|_{L^2_{x, v}}\left(\sum_{\alpha}\|   \langle v \rangle^{k-4|\alpha| }\partial^\alpha_x   f\|_{L^2_xH^s_{v,\gamma/2}}\right)\left(\sum_{\alpha}\|    \langle v \rangle^{k - 4|\alpha|   }\partial^\alpha_x   h  \|_{L^2_xH^s_{v,\gamma/2}}\right),\notag
	\end{align}
	which implies the desired results. It ends the proof of this lemma.
\end{proof}

With the preparations above, we now focus on equation \eqref{g1}.
\begin{lemma}
	Let $-3<\ga\leq1$, $0<s<1$, $\ga+2s>-1$ and $k\ge22$. There exists a constant $C_k$ such that it holds
	\begin{align}\label{estg1}
			&\|g_1(t)\|^2_{{X_k}}+\int_0^t\|g_1(s)\|^2_{{X^*_k}}ds\notag\\
		\leq &\|g_0\|^2_{{X_k}}+C_k\big(\sup_{0\leq s \leq t}\|g_1(s)\|_{{X_k}}\int_0^t\|g_1(s)\|^2_{{X^*_k}}ds\notag\\&\qquad\qquad\qquad+\sup_{0\leq s \leq t}\|g_1(s)\|_{{X_k}}\int_0^t\|g_2(s)\|^2_{{\CD}}ds+\sup_{0\leq s \leq t}\|g_2(s)\|_{{\CE}}\int_0^t\|g_1(s)\|^2_{{X^*_k}}ds\big),
	\end{align}
for $0\leq t< \infty$.
\end{lemma}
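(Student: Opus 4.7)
The plan is weighted energy estimates at orders $|\alpha|=0$ and $|\alpha|=2$. I apply $\partial^\alpha_x$ to \eqref{g1}, test against $\partial^\alpha_x g_1\langle v\rangle^{2(k-8|\alpha|)}$ in $L^2_{x,v}$, and sum, producing $\tfrac{1}{2}\tfrac{d}{dt}\|g_1\|^2_{X_k}$ from the time derivative. The transport term vanishes after integration by parts in $x$ since the weight is $x$-independent, so $v\cdot\nabla_x$ remains skew-symmetric in the weighted inner product.

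For the collision contributions, I split each $\partial^\alpha_x Q(A,B)$ as $Q(A,\partial^\alpha_x B)+[\partial^\alpha_x Q(A,B)-Q(A,\partial^\alpha_x B)]$ and regroup. The ``main'' part $\CL_D\partial^\alpha_x g_1+Q(g_1,\partial^\alpha_x g_1)+Q(\sqrt{\mu}g_2,\partial^\alpha_x g_1)$ fits directly into \eqref{estLD} with $f=g_1$, $g=g_1$, $h=g_2$, yielding the dissipation $-\delta\|g_1\|^2_{X^*_k}$ together with cross terms of the forms $\|g_1\|_{X_k}\|g_1\|^2_{X^*_k}$, $\|g_1\|_{X_k}\|g_2\|_\CD\|g_1\|_{X^*_k}$, and $\|g_2\|_\CE\|g_1\|^2_{X^*_k}$. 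The remaining piece $\partial^\alpha_x Q(g_1,\sqrt{\mu}g_2)$ is handled as a whole by \eqref{pagfh} with $g=g_1$, $f=g_2$, $h=g_1$. For the $|\alpha|=2$ commutators, Lemma \ref{Q x nonlinear estimate 3} with $F=\mu+g_1$ disposes of $\partial^\alpha_x Q(g_1,g_1)-Q(g_1,\partial^\alpha_x g_1)$. The mixed commutator $\partial^\alpha_x Q(\sqrt{\mu}g_2,g_1)-Q(\sqrt{\mu}g_2,\partial^\alpha_x g_1)$ reduces by Leibniz to a finite sum of $Q(\sqrt{\mu}\partial^{\alpha_1}_x g_2,\partial^{\alpha_2}_x g_1)$ with $|\alpha_1|\ge 1$, which I bound via Lemma \ref{upper bound for the Boltzmann operator} together with the Sobolev embedding $H^2_x\hookrightarrow L^\infty_x$, using the control $\|\sqrt{\mu}\partial^{\alpha_1}_x g_2\|\le C(\|g_2\|_\CE+\|g_2\|_\CD)$ in the relevant $H^s_v$-type norm (as in \eqref{controlmuh}) to absorb all contributions into the same three cross-term types.

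Collecting all of the above gives the pointwise-in-$t$ differential inequality
\begin{equation*}
\tfrac{1}{2}\tfrac{d}{dt}\|g_1\|^2_{X_k}+\delta\|g_1\|^2_{X^*_k}\le C_k\|g_1\|_{X_k}\|g_1\|^2_{X^*_k}+C_k\|g_1\|_{X_k}\|g_2\|_\CD\|g_1\|_{X^*_k}+C_k\|g_2\|_\CE\|g_1\|^2_{X^*_k}.
\end{equation*}
Integrating from $0$ to $t$, pulling $\sup_{0\le s\le t}\|g_1(s)\|_{X_k}$ and $\sup_{0\le s\le t}\|g_2(s)\|_\CE$ outside the time integrals, and applying Cauchy--Schwarz followed by Young's inequality $ab\le \tfrac{1}{2}(a^2+b^2)$ to the mixed cross term $\int_0^t \|g_1\|_{X_k}\|g_2\|_\CD\|g_1\|_{X^*_k}\,ds$ produces precisely the three bracketed terms on the right-hand side of \eqref{estg1}, after absorbing the dissipation constant $\delta$ into $C_k$. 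The principal technical obstacle is the $|\alpha|=2$ mixed commutator for $Q(\sqrt{\mu}g_2,g_1)$, which is not directly covered by the stated lemmas and requires reproducing the Sobolev-embedding argument from the proof of Lemma \ref{Q x nonlinear estimate 3}, exploiting the $\sqrt{\mu}$ factor to recover the $\CE$ and $\CD$ norms of $g_2$ through the macro-micro decomposition.
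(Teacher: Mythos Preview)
Your proposal is correct and follows essentially the same route as the paper: apply $\partial^\alpha_x$ for $|\alpha|=0,2$, test against $\langle v\rangle^{2(k-4|\alpha|)}\partial^\alpha_x g_1$, invoke \eqref{estLD} for the main part, \eqref{pagfh} for $\partial^\alpha_x Q(g_1,\sqrt\mu g_2)$, and \eqref{2order} for the $|\alpha|=2$ commutators, then sum and integrate in time. One small simplification: the mixed commutator $\partial^\alpha_x Q(\sqrt\mu g_2,g_1)-Q(\sqrt\mu g_2,\partial^\alpha_x g_1)$ \emph{is} covered by Lemma~\ref{Q x nonlinear estimate 3} applied with $f=\sqrt\mu g_2$; the resulting $\|\sqrt\mu g_2\|_{X_k}$ and $\|\sqrt\mu g_2\|_{X^*_k}$ are then converted to $\|g_2\|_\CE$ and $\|g_2\|_\CE+\|g_2\|_\CD$ exactly as in \eqref{controlmuh}, and the spurious cross term $\|g_1\|_{X_k}\|g_2\|_\CE\|g_1\|_{X^*_k}$ is absorbed via the $\min\{\|\cdot\|_{X_k},\|\cdot\|_{X^*_k}\}$ refinement visible in the proofs of both Lemmas~\ref{Q x nonlinear estimate 1} and~\ref{Q x nonlinear estimate 3} (cf.\ \eqref{sobolev1})---so you need not redo the Leibniz expansion by hand, though your direct argument is equally valid.
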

\begin{proof}
We apply $\pa^\al_x$ with $|\al|=0,2$ on both sides of \eqref{g1} to get
\begin{align*}
	\pa_t \pa^\al_xg_1+v\cdot\nabla_x \pa^\al_xg_1 =&\pa^\al_x\CL_D g_1+\pa^\al_xQ(g_1,g_1)+\pa^\al_xQ(\sqrt{\mu}g_2,g_1)+\pa^\al_xQ(g_1,\sqrt{\mu}g_2).
\end{align*}
Multiplying the above equation with $\langle v\rangle^{2k-8|\al|} \pa^\al_xg_1$, and taking integration over $\R^3_x\times\R^3_v$, it follows from \eqref{estLD}, \eqref{pagfh} and \eqref{2order} that
\begin{align}\label{estpag1}
	&\frac{1}{2}\frac{d}{dt}\|\pa^\al_xg_1(t)\|^2_{L^2_xL^2_{v,k-4|\al|}}+\de\|\pa^\al_xg_1(t)\|^2_{L^2_xH^s_{v,k-4|\al|+\ga/2}}\notag\\\leq&
	C_k\big(\|g_1(t)\|_{{X_k}}\|g_1(t)\|^2_{{X^*_k}}+ \|g_1(t)\|_{{X_k}}\|g_2(t)\|_{\CD}\|g_1(t)\|_{{X^*_k}}+\|g_2(t)\|_{\CE}\|g_1(t)\|^2_{{X^*_k}}\big).
\end{align}
By taking summation of the resulting equation over $|\al|=0,2$, one has
\begin{align*}
	\frac{d}{dt}\|g_1(t)\|^2_{{X_k}}+\|g_1(t)\|^2_{{X^*_k}}\leq& C_k\big(\|g_1(t)\|_{{X_k}}\|g_1(t)\|^2_{{X^*_k}}+\|g_1(t)\|_{{X_k}}\|g_2(t)\|_{\CD}\|g_1(t)\|_{{X^*_k}}\notag\\
	&\qquad\qquad+ \|g_2(t)\|_{\CE}\|g_1(t)\|^2_{{X^*_k}}\big),
 \end{align*}
which yields \eqref{estg1}.
\end{proof}
Next we prove the time decay estimates $g_1$ in case of hard and soft potentials respectively.
\begin{lemma}
	Let $-3<\ga\le1$, $0<s<1$, $\ga+2s>-1$ and $k\geq22$. 
	
	For $0\leq\ga\leq1$, there exists a constant $\la>0$ such that
	\begin{align}\label{estg1hard}
	&\|e^{\la t}g_1(t)\|^2_{{X_k}}+\int_0^t\|e^{\la s}g_1(s)\|^2_{{X^*_k}}ds\notag\\
\leq& \|g_0\|^2_{{X_k}}+C_k\big(\sup_{0\leq s \leq t}\|e^{\la s}g_1(s)\|_{{X_k}}\notag\\
&\qquad\qquad\qquad+\sup_{0\leq s \leq t}\|g_1(s)\|_{{X_k}}+\sup_{0\leq s \leq t}\|g_2(s)\|_{{\CE}}\big)\int_0^t\|e^{\la s}g_1(s)\|^2_{{X^*_k}}ds\notag\\&+C_k\sup_{0\leq s \leq t}\|e^{\la s}g_1(s)\|_{{X_k}}\int_0^t\|g_1(s)\|^2_{{X^*_k}}ds+C_k\sup_{0\leq s \leq t}\|e^{\la s}g_1(s)\|_{{X_k}}\int_0^t\|g_2(s)\|^2_{{\CD}}ds,
	\end{align}
for $0\le t<\infty$. 

For $-3<\ga<0$ and $\rho>1$, it holds that
\begin{align}\label{estg1soft}
	&\|(1+t)^\rho g_1(t)\|^2_{{X_k}}+\int_0^t\|(1+s)^\rho g_1(s)\|^2_{{X^*_k}}ds\notag\\
	\leq &\|g_0\|^2_{{X_k}}+C_k\int_0^t\| g_1(s)\|^2_{{X^*_{k-\rho\ga}}}ds+C_k\sup_{0\leq s \leq t}\|(1+s)^\rho g_1(s)\|_{{X_k}}\int_0^t\|g_2(s)\|^2_{{\CD}}ds\notag\\&+C_k\big(\sup_{0\leq s \leq t}\|(1+s)^\rho g_1(s)\|_{{X_k}}+\sup_{0\leq s \leq t}\|g_2(s)\|_{{\CE}}\big)\int_0^t\|(1+s)^\rho g_1(s)\|^2_{{X^*_k}}ds,
\end{align}
for $0\le t<\infty$.
	\end{lemma}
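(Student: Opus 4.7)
My plan is to use the weighted energy framework encoded in \eqref{estpag1} and introduce the time weight directly at the level of the basic energy inequality, then distribute the weight carefully across the trilinear nonlinearities on the right-hand side. Summing \eqref{estpag1} over $|\alpha|=0,2$ yields the unweighted base identity
$$
\frac{d}{dt}\|g_1(t)\|_{X_k}^2 + 2\delta\|g_1(t)\|_{X_k^*}^2 \le 2C_k\big(\|g_1\|_{X_k}\|g_1\|_{X_k^*}^2 + \|g_1\|_{X_k}\|g_2\|_{\CD}\|g_1\|_{X_k^*} + \|g_2\|_{\CE}\|g_1\|_{X_k^*}^2\big),
$$
which is the common starting point for both cases. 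The two estimates \eqref{estg1hard} and \eqref{estg1soft} differ only in the choice of time weight ($e^{\lambda s}$ versus $(1+s)^\rho$) and in how the free error generated by differentiating this weight is absorbed.

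For the hard potential case $0\le\gamma\le 1$, I would multiply the base inequality by $e^{2\lambda s}$. Differentiating $e^{2\lambda s}\|g_1\|_{X_k}^2$ produces the extra term $2\lambda e^{2\lambda s}\|g_1\|_{X_k}^2$. The crucial observation is that $\gamma\ge 0$ gives $\langle v\rangle^{k}\le\langle v\rangle^{k+\gamma/2}$ pointwise (and the analogous bound with $k-8$ in the derivative piece), so $\|g_1\|_{X_k}\le\|g_1\|_{X_k^*}$; choosing $\lambda<\delta$ then lets this free term be absorbed by the dissipation. It remains to distribute the factor $e^{2\lambda s}$ across each trilinear term so that at most two copies of $e^{\lambda s}$ attach to quantities that will later be pulled out as $\sup_s$. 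Writing, for instance,
$$
e^{2\lambda s}\|g_1\|_{X_k}\|g_1\|_{X_k^*}^2 = \|e^{\lambda s}g_1\|_{X_k}\cdot\|e^{\lambda s}g_1\|_{X_k^*}\cdot\|g_1\|_{X_k^*},
$$
taking $\sup_s$ on the first factor and applying AM-GM on the product of the other two gives contributions of the form $\sup\|e^{\lambda s}g_1\|_{X_k}\int(\|e^{\lambda s}g_1\|_{X_k^*}^2+\|g_1\|_{X_k^*}^2)\,ds$ on the right-hand side of \eqref{estg1hard}. The $g_1$-$g_2$-$g_1$ term is similarly split as $\|e^{\lambda s}g_1\|_{X_k}\cdot\|g_2\|_{\CD}\cdot\|e^{\lambda s}g_1\|_{X_k^*}$ and handled by Cauchy-Schwarz in time, while the $g_2$-$g_1$-$g_1$ term collapses directly to $\sup\|g_2\|_{\CE}\int\|e^{\lambda s}g_1\|_{X_k^*}^2\,ds$. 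Integrating in time yields \eqref{estg1hard}.

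For the soft potential case $-3<\gamma<0$, the above absorption fails since now $\langle v\rangle^{k+\gamma/2}<\langle v\rangle^{k}$. Instead I would multiply the base inequality by $(1+s)^{2\rho}$, obtaining
$$
\frac{d}{ds}\big[(1+s)^{2\rho}\|g_1\|_{X_k}^2\big] + 2\delta(1+s)^{2\rho}\|g_1\|_{X_k^*}^2 \le 2\rho(1+s)^{2\rho-1}\|g_1\|_{X_k}^2 + 2C_k(1+s)^{2\rho}(\text{trilinear}).
$$
The hardest step is the loss $2\rho(1+s)^{2\rho-1}\|g_1\|_{X_k}^2$, which I plan to tame by trading a fraction of the time decay for extra velocity weight. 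With $\theta:=(2\rho-1)/(2\rho)$, the identity $k=\theta(k+\gamma/2)+(1-\theta)(k-\rho\gamma+\gamma/2)$ and H\"older in $v$ give
$$
\|g_1\|_{L^2_{v,k}}^2 \le \|g_1\|_{L^2_{v,k+\gamma/2}}^{2\theta}\,\|g_1\|_{L^2_{v,k-\rho\gamma+\gamma/2}}^{2(1-\theta)},
$$
followed by a scaled Young inequality with a small parameter $\eta>0$:
$$
(1+s)^{2\rho-1}\|g_1\|_{L^2_{v,k}}^2 \le \eta\,(1+s)^{2\rho}\|g_1\|_{L^2_{v,k+\gamma/2}}^2 + C_\eta\,\|g_1\|_{L^2_{v,k-\rho\gamma+\gamma/2}}^2,
$$
together with the analogous bound for $\nabla_x^2 g_1$ at weight shifted by $-8$. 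Choosing $\eta$ so small that $2\rho\eta<\delta$ absorbs the first piece into the dissipation, while the second piece becomes the term $C_k\int_0^t\|g_1\|_{X^*_{k-\rho\gamma}}^2\,ds$ in \eqref{estg1soft}. The trilinear terms are distributed by writing $(1+s)^{2\rho}\|g_1\|_{X_k}\|g_1\|_{X_k^*}^2 = \|g_1\|_{X_k}\|(1+s)^\rho g_1\|_{X_k^*}^2$, using $\sup\|g_1\|_{X_k}\le\sup\|(1+s)^\rho g_1\|_{X_k}$, and splitting the mixed $g_1$-$g_2$ term as $\|(1+s)^\rho g_1\|_{X_k}\cdot\|g_2\|_{\CD}\cdot\|(1+s)^\rho g_1\|_{X_k^*}$ before applying Cauchy-Schwarz in time; integrating over $[0,t]$ then yields \eqref{estg1soft}. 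The main obstacle throughout is the weight-trading step: the exponent $\theta$ is forced by the requirement that the two velocity moments appearing on the right straddle the target weight $k$, and the presence of $\|g_1\|_{X^*_{k-\rho\gamma}}$ on the right of \eqref{estg1soft} is exactly what will later impose the restriction $\rho<(k-22)/|\gamma|$ in Theorem \ref{GE}.
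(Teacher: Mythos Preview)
Your proposal is correct and, for the hard potential case $0\le\gamma\le1$, follows essentially the same route as the paper: both arguments introduce the exponential weight at the level of the differential energy inequality \eqref{estpag1}, use $\|g_1\|_{X_k}\le\|g_1\|_{X_k^*}$ (valid since $\gamma\ge0$) to absorb the commutator $2\lambda e^{2\lambda s}\|g_1\|_{X_k}^2$ into the dissipation for $\lambda$ small, and then distribute the remaining factor of $e^{2\lambda s}$ across the trilinear terms before applying Cauchy--Schwarz in time.

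For the soft potential case your weight-trading step differs from the paper's. You use the H\"older interpolation
\[
\|g_1\|_{L^2_{v,k}}^2 \le \|g_1\|_{L^2_{v,k+\gamma/2}}^{2\theta}\,\|g_1\|_{L^2_{v,k-\rho\gamma+\gamma/2}}^{2(1-\theta)},\qquad \theta=\tfrac{2\rho-1}{2\rho},
\]
followed by Young's inequality with a small parameter. The paper instead performs a pointwise splitting in the $(t,v)$ region: on $\{1+t\ge \kappa^{-1}\langle v\rangle^{-\gamma}\}$ one gains a factor $\langle v\rangle^{\gamma}$ from $(1+t)^{-1}$ and absorbs into the dissipation, while on the complementary region $(1+t)^{2\rho-1}\le C_\kappa\langle v\rangle^{-(2\rho-1)\gamma}$ produces exactly the extra weight $\langle v\rangle^{-\rho\gamma+\gamma/2}$. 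Both arguments land on the same term $C_k\int_0^t\|g_1\|_{X^*_{k-\rho\gamma}}^2\,ds$ and both are standard devices (the pointwise splitting goes back to Strain--Guo); your interpolation route is perhaps slightly more systematic, while the paper's splitting is more elementary and makes the origin of the weight shift $-\rho\gamma$ transparent without computing $\theta$.
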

\begin{proof}
We first consider $0\leq\ga\leq1$. From \eqref{g1}, for a small constant $\la$ which will be chosen later, we can write the equation of $e^{\la t}g_1$ as follows:
\begin{align*}
	\pa_t e^{\la t}g_1+v\cdot\nabla_x e^{\la t}g_1 -\la e^{\la t}g_1=& \CL_D e^{\la t}g_1+Q(g_1,e^{\la t}g_1)\notag\\&\qquad+Q(\sqrt{\mu}g_2,e^{\la t}g_1)+Q(e^{\la t}g_1,\sqrt{\mu}g_2).
\end{align*}
Similar calculation as in \eqref{estpag1} shows that for $|\al|=0,2$,
\begin{align*}
	&\frac{1}{2}\frac{d}{dt}\|\pa^\al_xe^{\la t}g_1(t)\|^2_{L^2_xL^2_{v,k-4|\al|}}+\de\|\pa^\al_xe^{\la t}g_1(t)\|^2_{L^2_xH^s_{v,k-4|\al|+\ga/2}}\notag\\
	&\qquad-\la\|\pa^\al_xe^{\la t}g_1(t)\|^2_{L^2_xL^2_{v,k-4|\al|}}\notag\\
	\leq&
	C_k\big(\|e^{\la t}g_1(t)\|_{{X_k}}\|e^{\la t}g_1(t)\|_{{X^*_k}}\|g_1(t)\|_{{X^*_k}}+\|g_1(t)\|_{{X_k}}\|e^{\la t}g_1(t)\|^2_{{X^*_k}}\notag\\
	&\qquad\qquad+ \|e^{\la t}g_1(t)\|_{{X_k}}\|g_2(t)\|_{\CD}\|e^{\la t}g_1(t)\|_{{X^*_k}}+\|g_2(t)\|_{\CE}\|e^{\la t}g_1(t)\|^2_{{X^*_k}}\big).
\end{align*}
We choose $\la$ so small that
\begin{align}%\label{estpaeg1}
	&\frac{1}{2}\frac{d}{dt}\|\pa^\al_xe^{\la t}g_1(t)\|^2_{L^2_xL^2_{v,k-4|\al|}}+\frac{\de}{2}\|\pa^\al_xe^{\la t}g_1(t)\|^2_{L^2_xH^s_{v,k-4|\al|+\ga/2}}\notag\\\leq&
	C_k\big(\|e^{\la t}g_1(t)\|_{{X_k}}\|e^{\la t}g_1(t)\|_{{X^*_k}}\|g_1(t)\|_{{X^*_k}}+\|g_1(t)\|_{{X_k}}\|e^{\la t}g_1(t)\|^2_{{X^*_k}}\notag\\
	&\qquad\qquad+ \|e^{\la t}g_1(t)\|_{{X_k}}\|g_2(t)\|_{\CD}\|e^{\la t}g_1(t)\|_{{X^*_k}}+\|g_2(t)\|_{\CE}\|e^{\la t}g_1(t)\|^2_{{X^*_k}}\big).\notag
\end{align}
Then by taking summation over $|\al|=0,2$, one gets
\begin{align}
	&\frac{d}{dt}\|e^{\la t}g_1(t)\|^2_{{X_k}}+\|e^{\la t}g_1(t)\|^2_{{X^*_k}}\notag\\\leq&
	C_k\big(\|e^{\la t}g_1(t)\|_{{X_k}}\|e^{\la t}g_1(t)\|_{{X^*_k}}\|g_1(t)\|_{{X^*_k}}+\|g_1(t)\|_{{X_k}}\|e^{\la t}g_1(t)\|^2_{{X^*_k}}\notag\\&\qquad\qquad
	+ \|e^{\la t}g_1(t)\|_{{X_k}}\|g_2(t)\|_{\CD}\|e^{\la t}g_1(t)\|_{{X^*_k}}+\|g_2(t)\|_{\CE}\|e^{\la t}g_1(t)\|^2_{{X^*_k}}\big).\notag	
\end{align}
Taking time integration and using Cauchy-Schwarz inequality, \eqref{estg1hard} holds.

We now turn to the case $-3<\ga<0$. It is straightforward to get
\begin{align*}
	&\pa_t (1+t)^\rho g_1+v\cdot\nabla_x (1+t)^\rho g_1 -\rho(1+t)^{\rho-1}g_1\notag\\= &\CL_D (1+t)^\rho g_1+ Q( g_1,(1+t)^\rho g_1)+Q(\sqrt{\mu}g_2,(1+t)^\rho g_1)+Q((1+t)^\rho g_1,\sqrt{\mu}g_2).
\end{align*}
Similar calculation as in \eqref{estpag1} shows that for $|\al|=0,2$ and $\rho>1$,
\begin{align}\label{timeg1}
	&\frac{1}{2}\frac{d}{dt}\|\pa^\al_x(1+t)^\rho g_1(t)\|^2_{L^2_xL^2_{v,k-4|\al|}}+\de\|\pa^\al_x(1+t)^\rho g_1(t)\|^2_{L^2_xH^s_{v,k-4|\al|+\ga/2}}\notag\\
	&\qquad-\rho\|\pa^\al_x(1+t)^{\rho-1/2}g_1(t)\|^2_{L^2_xL^2_{v,k-4|\al|}}\notag\\
	\leq&
	C_k\big(\|(1+t)^\rho g_1(t)\|_{{X_k}}\|(1+t)^\rho g_1(t)\|_{{X^*_k}}\| g_1(t)\|_{{X^*_k}}+\| g_1(t)\|_{{X_k}}\|(1+t)^\rho g_1(t)\|^2_{{X^*_k}} \notag\\&\quad+\|(1+t)^\rho g_1(t)\|_{{X_k}}\|g_2(t)\|_{\CD}\|(1+t)^\rho g_1(t)\|_{{X^*_k}}+\|g_2(t)\|_{\CE}\|(1+t)^\rho g_1(t)\|^2_{{X^*_k}}\big).
\end{align}
We should treat the term $$\de\|\pa^\al_x(1+t)^\rho g_1(t)\|^2_{L^2_xH^s_{v,k-4|\al|+\ga/2}}-\rho\|\pa^\al_x(1+t)^{\rho-1/2}g_1(t)\|^2_{L^2_xL^2_{v,k-4|\al|}}$$ carefully. For the case that $1+t\geq \frac{1}{\ka(1+|v|)^{\ga}}$, one has
\begin{align}\label{larget}
\rho\|\pa^\al_x(1+t)^{\rho-1/2}g_1(t)\|^2_{L^2_xL^2_{v,k-4|\al|}}\leq \ka\rho\|\pa^\al_x(1+t)^{\rho}g_1(t)\|^2_{L^2_xL^2_{v,k-4|\al|+\ga/2}}.
\end{align}
Then we choose $\ka$ so small that
\begin{align}\label{tlarge}
&\de\|\pa^\al_x(1+t)^\rho g_1(t)\|^2_{L^2_xH^s_{v,k-4|\al|+\ga/2}}-\ka\rho\|\pa^\al_x(1+t)^{\rho}g_1(t)\|^2_{L^2_xL^2_{v,k-4|\al|+\ga/2}}\notag\\\leq& \frac{\de}{2}\|\pa^\al_x(1+t)^\rho g_1(t)\|^2_{L^2_xH^s_{v,k-4|\al|+\ga/2}}.
\end{align}
For $1+t\leq \frac{1}{\ka(1+|v|)^{\ga}}$, we have
\begin{align}\label{tsmall}
	\rho\|\pa^\al_x(1+t)^{\rho-1/2}g_1(t)\|^2_{L^2_xL^2_{v,k-4|\al|}}\leq C\|\pa^\al_x(1+|v|)^{-\rho\ga+\ga/2}g_1(t)\|^2_{L^2_xL^2_{v,k-4|\al|}}.
\end{align}
Collecting \eqref{timeg1}, \eqref{tlarge} and \eqref{tsmall}, we obtain \eqref{estg1soft}. 
\end{proof}

To prove the existence of $g$, we also need the stability of $g_2$. Recall \eqref{E} and \eqref{D}.
 
\begin{lemma}
	Let $0<s<1$, $-3<\ga\leq1$ and $\ga+2s>-1$. For any $\rho>1$, it holds that
	\begin{align}\label{estg2}
			&\|g_2(t)\|^2_{\CE}+\int_0^t\|g_2(s)\|^2_{\CD}ds\notag\\
		\leq& C_k\big(\sup_{0\leq s \leq t}\|g_2(s)\|_{\CE}\int_0^t\|g_2(s)\|^2_{\CD}ds+\sup_{0\leq s \leq t}\|g_2(s)\|_{\CE}\sup_{0\leq s \leq t}\|(1+s)^\rho g_1(s)\|_{X_8}\big),
	\end{align}
	for $0\leq t\leq \infty$.
\end{lemma}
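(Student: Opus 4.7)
The plan is a standard Guo--Strain-type energy-dissipation estimate on equation \eqref{g2}, combined with a careful treatment of the coupling source $\CL_B g_1$ that exploits the polynomial time decay of $g_1$. I will apply $\partial^\alpha_x$ for $|\alpha|=0,2$ to \eqref{g2}, test against $\langle v\rangle^0 \partial^\alpha_x g_2$ in $L^2_{x,v}$ (no velocity weight is needed since $\CE$, $\CD$ have no $\langle v\rangle$ power), and sum. The transport term vanishes after integration by parts in $x$. The classical non-cutoff coercivity of $L$ gives $-\langle L\partial^\alpha_x g_2,\partial^\alpha_x g_2\rangle\ge c_0\|(\mathbf{I}-\mathbf{P})\partial^\alpha_x g_2\|^2_{L^2_x H^{s*}_v}$, producing the microscopic part of $\|g_2\|^2_\CD$. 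A standard trilinear estimate for $\Gamma$ (via Lemma \ref{upper bound for the Boltzmann operator} applied to $\sqrt{\mu}g_2$-arguments) combined with the Sobolev embedding $H^2_x\hookrightarrow L^\infty_x$ yields $|\langle \partial^\alpha_x\Gamma(g_2,g_2),\partial^\alpha_x g_2\rangle|\le C\|g_2\|_\CE\|g_2\|^2_\CD$, which after time integration gives $C\sup_{0\le s\le t}\|g_2(s)\|_\CE\int_0^t\|g_2(s)\|^2_\CD\,ds$, the first term on the right of \eqref{estg2}.

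To recover the missing macroscopic piece $\|\nabla_x \mathbf{P}g_2\|^2_{H^1_x L^2_v}$ of $\CD$, I will construct a Guo-type interaction functional $\CI(g_2)$ built from the macroscopic coefficients $a^{g_2}$, $b^{g_2}$, $c^{g_2}$ by testing the local conservation laws for \eqref{g2} against suitable moment functions of $v$. Standard manipulations yield an auxiliary inequality of the form $\frac{d}{dt}\CI(g_2)+c\|\nabla_x\mathbf{P}g_2\|^2_{H^1_x L^2_v}\le C(\|(\mathbf{I}-\mathbf{P})g_2\|^2_{H^2_x H^{s*}_v}+\text{source}+\text{nonlinear})$, with $|\CI(g_2)|\le C\|g_2\|^2_\CE$. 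Adding a small multiple of this to the basic energy identity closes the full $\|g_2\|^2_\CD$ on the left while retaining an equivalent of $\|g_2\|^2_\CE$ as the energy.

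For the coupling term, the compact support of $\chi_M$ in $\CL_B g_1=\mu^{-1/2}(v)A\chi_M(v)g_1$ makes $\mu^{-1/2}\chi_M$ uniformly bounded in $v$, so $|\langle \partial^\alpha_x \CL_B g_1,\partial^\alpha_x g_2\rangle_{L^2_{x,v}}|\le C\|g_1\|_{X_8}\|g_2\|_\CE$ for $|\alpha|=0,2$. Inserting the weight $(1+s)^{-\rho}(1+s)^{\rho}$ and integrating in time,
\[
\int_0^t \|g_1(s)\|_{X_8}\|g_2(s)\|_\CE\, ds\le C\sup_{0\le s\le t}\|g_2(s)\|_\CE\sup_{0\le s\le t}\|(1+s)^\rho g_1(s)\|_{X_8},
\]
since $\int_0^\infty(1+s)^{-\rho}\,ds<\infty$ for $\rho>1$; this yields exactly the second term of \eqref{estg2}. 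Integrating the combined differential inequality over $[0,t]$ and using the zero initial datum $g_2(0,x,v)\equiv 0$ delivers \eqref{estg2}.

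The main obstacle is the macroscopic estimate in the whole space, where Poincar\'e fails: one must design $\CI(g_2)$ carefully so that when $\partial_t g_2$ is replaced via \eqref{g2}, the terms generated by $L g_2$, $\CL_B g_1$, and $\Gamma(g_2,g_2)$ can be absorbed either into $\|(\mathbf{I}-\mathbf{P})g_2\|^2_{H^2_x H^{s*}_v}$ (already controlled by the micro-coercivity), into a small multiple of $\|\nabla_x\mathbf{P}g_2\|^2_{H^1_x L^2_v}$, or into the $\sup_s\|g_2\|_\CE\cdot\sup_s\|(1+s)^\rho g_1\|_{X_8}$ structure on the right-hand side. Since $\mu^{-1/2}\chi_M g_1$ is Schwartz in $v$, its contribution through $\CI(g_2)$ is again controlled by $\|g_1\|_{X_8}\|g_2\|_\CE$ and treated identically, so no new difficulty arises.
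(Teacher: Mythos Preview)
Your proposal is correct and follows essentially the same route as the paper: the paper applies $\partial^\alpha_x$ with $|\alpha|=0,2$ to \eqref{g2}, tests against $\partial^\alpha_x g_2$, invokes the standard non-cutoff energy--dissipation inequality (coercivity of $L$ plus the macroscopic interaction-functional argument, which it outsources to \cite{AMUXY-2011-CMP,AMUXY-2012-JFA,GS}) to obtain \eqref{g2g2}, and then handles $\CL_B g_1$ exactly as you do via $|(\CL_B\partial^\alpha_x g_1,\partial^\alpha_x g_2)|\le C_k(1+t)^{-\rho}\|(1+t)^\rho g_1\|_{X_8}\|g_2\|_{\CE}$ and time integration. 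The only difference is that you spell out the interaction-functional construction for the macroscopic part explicitly, whereas the paper compresses it into the cited references.
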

\begin{proof}
Applying $\pa^\al$ and multiplying $\pa^\al_xg_2$ to \eqref{g2}, taking inner product over $\R^3_x\times\R^3_v$ and from arguments in \cite{AMUXY-2011-CMP,AMUXY-2012-JFA, GS}, we have
\begin{align}\label{g2g2}
	&\frac{1}{2}\frac{d}{dt}\|g_2(t)\|^2_{\CE}+\de\|g_2(t)\|^2_{\CD}\notag\\\leq& C\|g_2(t)\|_{\CE}\|g_2(t)\|^2_{\CD}+C|(\CL_B g_1(t),g_2(t))|+C\sum_{|\al|=2}|(\CL_B \pa^\al_xg_1(t),\pa^\al_xg_2(t))|.
\end{align}
Using the definitions of $\CL_B$ and $X_k$ norm in \eqref{defLB} and \eqref{Xk}, one has
\begin{align}\label{estLb}
	|(\CL_B g_1,g_2)|\leq& C_k\int_{\R^3}\int_{\R^3}|g_1(t,x,v)g_2(t,x,v)|dxdv\leq C_k\|g_1(t)\|_{L^2_{x,v}}\|g_2(t)\|_{L^2_{x,v}}\notag\\
	\leq& C_k(1+t)^{-\rho}\|(1+t)^\rho g_1(t)\|_{{X_8}}\|g_2(t)\|_{\CE}.
\end{align}
Similarly,
\begin{align}\label{estpaLb}
|(\CL_B \pa^\al_xg_1(t),\pa^\al_xg_2(t))|\leq C_k(1+t)^{-\rho}\|(1+t)^\rho g_1(t)\|_{{X_8}}\|g_2(t)\|_{\CE}.
\end{align}
Combining \eqref{g2g2}, \eqref{estLb} and \eqref{estpaLb}, we obtain
\begin{align*}
	&\frac{1}{2}\frac{d}{dt}\|g_2(t)\|^2_{\CE}+\de\|g_2(t)\|^2_{\CD}\notag\\
	\leq& C\|g_2(t)\|_{\CE}\|g_2(t)\|^2_{\CD}+C_k(1+t)^{-\rho}\|(1+t)^\rho g_1(t)\|_{{X_k}}\|g_2(t)\|_{\CE},
\end{align*}
which gives \eqref{estg2} by taking integration over $[0,t]$.
\end{proof}

With these lemmas in hand, we are ready to prove Theorem \ref{GE} now.
\begin{proof}[Proof of Theorem \ref{GE}]
For $0\leq\ga\leq1$ and $k\geq22$, a suitable linear combination of \eqref{estg1}, \eqref{estg1hard} and \eqref{estg2} shows that 
\begin{align*}
		&\sup_{0\leq s \leq t}\{\|g_1(s)\|^2_{{X_k}}+\|e^{\la s}g_1(s)\|^2_{{X_k}}+\|g_2(t)\|^2_{\CE}\}\notag\\
		&\qquad\qquad+\int_0^t\{\|g_1(s)\|^2_{{X^*_k}}+\|e^{\la s}g_1(s)\|^2_{{X^*_k}}+\|g_2(t)\|^2_{\CD}\}ds\notag\\
	\leq& C_k\|g_0\|^2_{{X_k}}+C_k\sup_{0\leq s \leq t}\{\|g_1(s)\|^2_{{X_k}}+\|e^{\la s}g_1(s)\|^2_{{X_k}}+\|g_2(t)\|^2_{\CE}\}^\frac{1}{2}\notag\\
	&\qquad\qquad\qquad\qquad\times\int_0^t\{\|g_1(s)\|^2_{{X^*_k}}+\|e^{\la s}g_1(s)\|^2_{{X^*_k}}+\|g_2(t)\|^2_{\CD}\}ds.
\end{align*}
Therefore, there exists a constant $\ep_0$ such that if $\|g_0\|^2_{{X_k}}\leq \ep_0$ then 
		\begin{align*}
	&\sup_{0\leq s \leq t}\{\|g_1(s)\|^2_{{X_k}}+\|e^{\la s}g_1(s)\|^2_{{X_k}}+\|g_2(t)\|^2_{\CE}\}\notag\\
	&\qquad+\int_0^t\{\|g_1(s)\|^2_{{X^*_k}}+\|e^{\la s}g_1(s)\|^2_{{X^*_k}}+\|g_2(t)\|^2_{\CD}\}ds\notag\\
	\leq& C\|g_0\|^2_{{X_k}},	
\end{align*}
which yields \eqref{GEH}.
Therefore, we obtain \eqref{Global} for hard potentials by \eqref{GEH} and the fact that
\begin{align}\label{g1+g2}
	\|g(t)\|_{X_k}\leq& \|g_1(t)\|_{X_k}+\|\sqrt{\mu}g_2(t)\|_{X_k}\notag\\
	\leq& \|g_1(t)\|_{X_k}+C_k\|g_2(t)\|_{\CE}.
\end{align}
For soft potentials $-3<\ga<0$, we have from \eqref{estg1}, \eqref{estg1soft} and \eqref{estg2} that for $\rho>1$, it holds
	\begin{align*}
	&\sup_{0\leq s \leq t}\{\|g_1(s)\|^2_{{X_k}}+\|(1+s)^\rho g_1(s)\|^2_{{X_{k+\rho\ga}}}+\|g_2(s)\|^2_{\CE}\}\notag\\&\qquad+\int_0^t\{\|g_1(s)\|^2_{{X^*_k}}+\|(1+s)^\rho g_1(s)\|^2_{{X^*_{k+\rho\ga}}}+\|g_2(s)\|^2_{\CD}\}ds\notag\\
	\leq& C_k\|g_0\|^2_{{X_k}}+C_k\sup_{0\leq s \leq t}\{\|g_1(s)\|^2_{{X_k}}+\|(1+s)^\rho g_1(s)\|^2_{{X_{k+\rho\ga}}}+\|g_2(s)\|^2_{\CE}\}^\frac{1}{2}\notag\\&\qquad\qquad\qquad\qquad\times\int_0^t\{\|g_1(s)\|^2_{{X^*_k}}+\|(1+s)^\rho g_1(s)\|^2_{{X^*_{k+\rho\ga}}}+\|g_2(s)\|^2_{\CD}\}ds.	
\end{align*}
Thus, there exists a constant $\ep_0$ such that if $\|g_0\|^2_{{X_k}}\leq \ep_0$ such that \eqref{GES} holds.
Then \eqref{Global} follows from \eqref{g1+g2} and \eqref{GES}. Also we let $\rho<\frac{k-22}{|\ga|}$ such that the condition $k+\rho\ga\geq22$ is satisfied. Also we require $k>25$ such that $1<\frac{k-22}{|\ga|}$. Thus, the proof of Theorem \ref{GE} is complete.
\end{proof}

\section{Time decay of $g_1$ in $L^p_\xi$}
To obtain the time decay of the solution $g$, we should study $g_1$ and $g_2$ respectively. From our proof of global existence, we expect exponential time decay for $g_1$ when $0\le\ga\le1$, and arbitrarily large polynomial decay when $-3<\ga<0$. Taking the Fourier transform to \eqref{g1} and \eqref{g2} with respect to $x$, it holds
\begin{align}
	\pa_t \widehat{g_1}+iv\cdot\xi \widehat{g_1} =&\CL_D \widehat{g_1}+\widehat{Q}(\widehat{g_1},\widehat{g_1})+\widehat{Q}(\sqrt{\mu}\widehat{g_2},\widehat{g_1})+\widehat{Q}(\widehat{g_1},\sqrt{\mu}\widehat{g_2}), \label{hatg1} \\
	\pa_t \widehat{g_2} +iv\cdot\xi \widehat{g_2} =&L\widehat{g_2}+\CL_B \widehat{g_1}+\hat{\Ga}(\widehat{g_2},\widehat{g_2}),  \label{hatg2}
\end{align}
with the initial data
$$
\widehat{g_1}(0,\xi,v)=\widehat{g_0}(\xi,v)=\hat{F}_0(\xi,v)-\mu(v),\quad \widehat{g_2}(0,\xi,v)=0.
$$
Here we denote
\begin{equation*}
	\widehat{Q}(\hat{f},\hat{g})(\xi,v)=\int_{\mathbb{R}^3}\int_{\mathbb{S}^2} B(v-u,\sigma) [ \hat{f}(u')*_\xi \hat{g}(v')(\xi)- \hat{f}(u)*_\xi \hat{g}(v)(k)]d\sigma du,
\end{equation*}
and
\begin{equation*}
	\hat{\Gamma}(\hat{f},\hat{g})(\xi,v)=\int_{\mathbb{R}^3}\int_{\mathbb{S}^2} B(v-u,\sigma)\mu^{1/2}(u) [ \hat{f}(u')*_\xi \hat{g}(v')(\xi)- \hat{f}(u)*_\xi \hat{g}(v)(k)]d\sigma du,
\end{equation*}
where 
\begin{equation*}
	\hat{f}(u)*_\xi \hat{g}(v)(\xi)=\int_{\mathbb{R}^3} \hat{f}(\xi-\ell,u)\hat{g}(\ell,v)\, d\ell.
\end{equation*}
We mention that the Fourier transform prevents us from directly getting the existence in $L^1_\xi\cap L^p_\xi$. After Fourier transform, it is difficult to obtain inequality like \eqref{Gff}, because now the integral becomes
\begin{align*}
	&\vert (\widehat{Q}(\widehat{g_1},\widehat{g_1})(\xi),\langle v\rangle^{2k}\widehat{g_1}(\xi))_{L^2_v}\vert\\
	=&\vert \int_{\mathbb{R}^3} \int_{\mathbb{R}^3} \int_{\mathbb{S}^{2}} B \int_{\mathbb{R}^3} (\widehat{g_1}(\xi-\ell,v'_*)\hat{g}(\ell,v')-\widehat{g_1}(\xi-\ell,v_*)\widehat{g_1}(\ell,v))d\ell d\sigma dv_* \langle v\rangle^{2k}\bar{\hat{g}}_1(\xi,v)dv\vert.
\end{align*}
We notice the functions $\widehat{g_1}(\ell,v)$ and $\widehat{g_1}(\xi,v)$ are no longer the same function, so \eqref{Gff} is not valid now. There is an additional $\langle v\rangle^{2s}$ in the dissipation norm which can not be absorbed by the left hand side. We need some new approach to control the nonlinear terms by the obtained existence in Theorem \ref{GE}.

\subsection{Estimates for $0\leq\ga\leq1$}
We prove the exponential decay for $g_1$ now.

\begin{lemma}\label{hatg1h}
Let $0\leq\ga\leq1$, $k>14, p \ge 1$ and $g_1$ be a solution to \eqref{hatg1}, then there exists a constant $C_k>0$ such that	
\begin{align}\label{esthatg1h}			
	&\|e^{\la t}\widehat{g_1}\|_{L^p_\xi L^\infty_TL^2_{v,k}}+\Vert e^{\la t}\widehat{g_1}\Vert_{L^p_\xi L^2_TH^s_{v,k+\gamma/2}}\notag\\
	\le&C_k\|\widehat{g_0}\|_{L^p_\xi L^2_{v,k}}+ C_{k} \big(\Vert e^{\la t}\widehat{g_1} \Vert_{L^p_\xi L^\infty_TL^2_{v,k}}+\Vert \widehat{g_2} \Vert_{L^p_\xi L^\infty_TL^2_v} \big)\big(\int^T_0 \Vert e^{\la t}g_1 \Vert^2_{X^*_{k+8+2s}}dt\big)^\frac{1}{2} \notag\\
	&\qquad+C_k\Vert e^{\la t}\widehat{g_1} \Vert_{L^p_\xi L^\infty_TL^2_{v,k}} \Vert \widehat{g_2} \Vert_{L^1_\xi L^2_TH^{s*}_v}.
\end{align}
\end{lemma}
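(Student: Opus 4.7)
I would run an $L^2_v$ energy estimate at each frequency $\xi\in\R^3$ for the equation satisfied by $e^{\la t}\widehat{g_1}$, then integrate in time and apply the $L^p_\xi$ norm. Pairing \eqref{hatg1} with $\overline{e^{\la t}\widehat{g_1}(\xi,v)}\langle v\rangle^{2k}$ in $L^2_v$ and taking the real part eliminates the transport $iv\cdot\xi$. The linear dissipation is supplied by Lemma \ref{leLD} applied with the dummy function $g\equiv 0$: its hypotheses hold trivially for $G=\mu$, and it yields the pointwise-in-$\xi$ bound $\mathrm{Re}(\CL_D\widehat{g_1},\widehat{g_1}\langle v\rangle^{2k})_{L^2_v}\le-\de\|\widehat{g_1}(\xi)\|^2_{H^s_{v,k+\ga/2}}$. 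Since $\ga\ge 0$, the term $\la\|e^{\la t}\widehat{g_1}\|^2_{L^2_{v,k}}$ coming from differentiating $e^{\la t}$ is absorbed into this dissipation by choosing $\la$ small, using the embedding $L^2_{v,k}\hookrightarrow H^s_{v,k+\ga/2}$. Integrating over $[0,T]$, taking $\sup_t$, then a square root, then an $L^p_\xi$ norm produces
\begin{align*}
\|e^{\la t}\widehat{g_1}\|_{L^p_\xi L^\infty_TL^2_{v,k}}+\|e^{\la t}\widehat{g_1}\|_{L^p_\xi L^2_TH^s_{v,k+\ga/2}}\le C\|\widehat{g_0}\|_{L^p_\xi L^2_{v,k}}+C\sum_{i=1}^{3}\Big\|\Big(\int_0^T|I_i|\,dt\Big)^{1/2}\Big\|_{L^p_\xi},
\end{align*}
where the $I_i$ are the three nonlinear $L^2_v$ pairings coming from $\widehat{Q}(\widehat{g_1},\widehat{g_1})$, $\widehat{Q}(\sqrt{\mu}\widehat{g_2},\widehat{g_1})$, and $\widehat{Q}(\widehat{g_1},\sqrt{\mu}\widehat{g_2})$.

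\textbf{Nonlinear estimates.} Since the Fourier transform turns each $\widehat{Q}$ into a $\xi$-convolution, e.g.\ $I_1=\int_{\R^3}(Q(\widehat{g_1}(\xi-\ell),\widehat{g_1}(\ell)),\widehat{g_1}(\xi)\langle v\rangle^{2k})_{L^2_v}d\ell$, I would apply \eqref{fgh} at fixed $(\xi,\ell)$, choosing the $\min$ on the RHS so the extra $+2s$ velocity weight falls on the $\widehat{g_1}(\ell)$ argument. A Cauchy--Schwarz in $t$ then splits off $U(\xi):=\|e^{\la t}\widehat{g_1}(\xi)\|_{L^2_TH^s_{v,k+\ga/2}}$ from the rest, call it $V(\xi)$. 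The pointwise inequality $\sqrt{UV}\le\tfrac{\eta}{2}U+\tfrac{1}{2\eta}V$ absorbs the $U$-part back into the LHS dissipation for $\eta$ small, so it remains to bound $\|V\|_{L^p_\xi}$. Minkowski's integral inequality in $\ell$ together with a second Cauchy--Schwarz in $t$ dominate $V(\xi)$ by a convolution $(\tilde F\ast B)(\xi)$, where $\tilde F(\ell)$ is a sup-in-$t$ norm of the outer factor and $B(\ell)$ is an $L^2_t$ norm of the inner factor. Young's convolution inequality gives $\|V\|_{L^p_\xi}\le\|\tilde F\|_{L^p_\xi}\|B\|_{L^1_\xi}$, and the $L^1_\xi$ norm of $B$ is controlled by $\|\langle\xi\rangle^{-2}\|_{L^2(\R^3)}\|\langle\xi\rangle^2 B\|_{L^2_\xi}<\infty$; by Plancherel the last $L^2_\xi$ norm is a square root of $\int_0^T\|e^{\la t}g_1\|^2_{X^*_{k+8+2s}}dt$, since the $\langle\xi\rangle^4$ factor matches the $\nabla^2_x$ piece of \eqref{X*k} (whose velocity weight is lowered by $8$ in the definition) and the extra $+2s$ matches the loss from the $\min$. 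For $I_2,I_3$ the same mechanism applies, noting that the Gaussian $\sqrt{\mu}$ absorbs polynomial velocity weights so that $\|\sqrt{\mu}\widehat{g_2}\|_{L^2_{v,14}}\lesssim\|\widehat{g_2}\|_{L^2_v}$ and $\|\sqrt{\mu}\widehat{g_2}\|_{H^s_{v,k+\ga/2}}\lesssim\|\widehat{g_2}\|_{H^{s*}_v}$ (after the $\mathbf{P}$-splitting of \eqref{controlmuh}); the first bound produces the $\|\widehat{g_2}\|_{L^p_\xi L^\infty_TL^2_v}$ factor paired with the physical-space $X^*_{k+8+2s}$ term, and the second produces the $\|\widehat{g_2}\|_{L^1_\xi L^2_TH^{s*}_v}$ factor paired with $\|e^{\la t}\widehat{g_1}\|_{L^p_\xi L^\infty_TL^2_{v,k}}$.

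\textbf{Main obstacle.} The essential difficulty, as flagged at the opening of Section 4, is that the coercive lower bound \eqref{Gff} that closes the existence theory does not survive Fourier transformation: the three copies of $\widehat{g_1}$ in $I_1$ live at three distinct frequencies $\xi-\ell,\ell,\xi$ and no pointwise-in-$\xi$ positivity remains. One is forced to use the upper bound \eqref{fgh}, which costs a factor $\langle v\rangle^{2s}$ in velocity that the $H^s_{v,k+\ga/2}$ dissipation cannot absorb on its own. The remedy, built into the mixed Fourier/physical-space form of the conclusion, is to pay this loss with $(\int_0^T\|e^{\la t}g_1\|^2_{X^*_{k+8+2s}}dt)^{1/2}$ inherited from Theorem \ref{GE} and with $\|\widehat{g_2}\|_{L^1_\xi L^2_TH^{s*}_v}$ from the coupling equation \eqref{hatg2}; the weight jump $k\to k+8+2s$ is exact bookkeeping of the $+8$ (for two $x$-derivatives in \eqref{X*k}) and the $+2s$ (for the non-cutoff loss in \eqref{fgh}).
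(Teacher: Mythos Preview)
Your proposal is correct and follows essentially the same route as the paper: energy estimate in $L^2_{v,k}$ at each frequency $\xi$, dissipation from Lemma~\ref{leLD} with $g=0$, absorption of the $\la$-term (using $\ga\ge 0$), the trilinear bound \eqref{fgh} with the $+2s$ placed on the inner convolution variable, Cauchy--Schwarz in $t$ to peel off $\|e^{\la t}\widehat{g_1}(\xi)\|_{L^2_TH^s_{v,k+\ga/2}}$ and absorb it via $\sqrt{UV}\le \eta U+C_\eta V$, Minkowski in $\ell$ to reach a convolution, Young for the $L^p_\xi$ norm, and finally a Cauchy--Schwarz in $\xi$ to pass from $L^1_\xi$ to $L^2_\xi$/physical $H^2_x$ and land in $X^*_{k+8+2s}$. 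The only cosmetic difference is that the paper uses the weight $\langle\xi\rangle^{-3/2-}$ (barely $L^2(\R^3)$) in that last step while you use $\langle\xi\rangle^{-2}$; both give the same bound since the definition of $X^*_k$ already carries the full $\nabla_x^2$ piece.
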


\begin{proof}
	A direct calculation shows that
	\begin{align*}
		\pa_t e^{\la t}\widehat{g_1}+iv\cdot\xi e^{\la t}\widehat{g_1}-\la e^{\la t}\widehat{g_1} =&\CL_D e^{\la t}\widehat{g_1}+\widehat{Q}(e^{\la t}\widehat{g_1},\widehat{g_1})\notag\\
		&+\widehat{Q}(\sqrt{\mu}\widehat{g_2},e^{\la t}\widehat{g_1})+\widehat{Q}(e^{\la t}\widehat{g_1},\sqrt{\mu}\widehat{g_2}). %\label{hateg1}
	\end{align*}
	We multiply $\langle v\rangle^{2k}e^{\la t}\bar{\hat{g}}_1$ to \eqref{hatg1} and take real part and integrate over $[0,T]\times\R^3_v$ to get
	\begin{align}\label{I1234}			&\frac{1}{2\sqrt{2}}\|e^{\la t}\widehat{g_1}(t,\xi)\|_{L^2_{v,k}}-\sqrt{\la}\big(\int^T_0\|e^{\la t}\widehat{g_1}(t,\xi)\|^2_{L^2_{v,k}}dt\big)^\frac{1}{2}\notag\\
		\le &\sqrt{2}\|\widehat{g_0}(\xi)\|_{L^2_{v,k}}+\big(\int^T_0\rmre(\CL_De^{\la t}\widehat{g_1},\langle v\rangle^{2k}e^{\la t}\widehat{g_1})(t,\xi)dt\big)^\frac{1}{2}\notag\\
		&+\big(\int^T_0\rmre(\widehat{Q}(e^{\la t}\widehat{g_1},\widehat{g_1}),\langle v\rangle^{2k}e^{\la t}\widehat{g_1})(t,\xi)dt\big)^\frac{1}{2}\notag\\&+\big(\int^T_0\rmre(\widehat{Q}(\sqrt{\mu}\widehat{g_2},e^{\la t}\widehat{g_1}),\langle v\rangle^{2k}e^{\la t}\widehat{g_1})(t,\xi)dt\big)^\frac{1}{2}\notag\\&+\big(\int^T_0\rmre(\widehat{Q}(e^{\la t}\widehat{g_1},\sqrt{\mu}\widehat{g_2}),\langle v\rangle^{2k}e^{\la t}\widehat{g_1})(t,\xi)dt\big)^\frac{1}{2}\notag\\
		:=&\sqrt{2}\|\widehat{g_0}(\xi)\|_{L^2_{v,k}}+I_1+I_2+I_3+I_4.
	\end{align}
For $I_1$, by letting $g=0$ in \eqref{leld}, we have
\begin{align}\label{I1}
	I_1\leq - \sqrt{\de} \Vert e^{\la t}\widehat{g_1}(\xi) \Vert_{L^2_TH^s_{v,k+\gamma/2}}.
\end{align}
Using \eqref{fgh}, it holds that
\begin{align}\label{I21}
&\vert (\widehat{Q}(e^{\la t}\widehat{g_1},\widehat{g_1})(t,\xi),\langle v\rangle^{2k}e^{\la t}\widehat{g_1}(t,\xi))\vert\notag\\
 \leq& C_k \int_{\mathbb{R}^3}\Vert e^{\la t}\widehat{g_1}(t,\xi-\ell) \Vert_{L^2_{v,14}} \Vert \widehat{g_1}(t,\ell) \Vert_{H^s_{v,k+\gamma/2+2s }}  \Vert e^{\la t}\widehat{g_1}(t,\xi) \Vert_{H^s_{v,k+\gamma/2}}dl\notag\\
&\quad+  C_k\int_{\mathbb{R}^3}\Vert \widehat{g_1}(t,\xi-\ell) \Vert_{L^2_{v,14}}  \Vert e^{\la t}\widehat{g_1}(t,\ell) \Vert_{H^s_{v,k+\gamma/2}}  \Vert e^{\la t}\widehat{g_1}(t,\xi) \Vert_{H^s_{v,k+\gamma/2}}dl .
\end{align}
Then it is direct to see
\begin{align}\label{I22}
	I_2&\leq C_k\big(\int^T_0 \int_{\mathbb{R}^3}\Vert e^{\la t}\widehat{g_1}(t,\xi-\ell) \Vert_{L^2_{v,14}} \Vert \widehat{g_1}(t,\ell) \Vert_{H^s_{v,k+\gamma/2+2s }}  \Vert e^{\la t}\widehat{g_1}(t,\xi) \Vert_{H^s_{v,k+\gamma/2}} d\ell dt\big)^\frac{1}{2} \notag\\
	&\quad +C_k\big(\int^T_0 \int_{\mathbb{R}^3}\Vert \widehat{g_1}(t,\xi-\ell) \Vert_{L^2_{v,14}}  \Vert e^{\la t}\widehat{g_1}(t,\ell) \Vert_{H^s_{v,k+\gamma/2}}  \Vert e^{\la t}\widehat{g_1}(t,\xi) \Vert_{H^s_{v,k+\gamma/2}}d\ell dt\big)^\frac{1}{2}.
	\end{align}
By Cauchy-Schwarz inequality, one has that
\begin{align}\label{I23}
	I_2&\le C_k\Big( \int^T_0 \Big( \int_{\mathbb{R}^3} \Vert e^{\la t}\widehat{g_1}(t,\xi-\ell) \Vert_{L^2_{v,14}} \Vert \widehat{g_1}(t,\ell) \Vert_{H^s_{v,k+\gamma/2+2s }} d\ell\Big)^2dt \Big)^{1/4}\notag\\
	&\qquad\times \Big( \int^T_0 \Vert e^{\la t}\widehat{g_1}(t,\xi) \Vert^2_{H^s_{v,k+\gamma/2}} dt\Big)^{1/4}\notag\\
	&\quad+C_k\Big( \int^T_0 \Big( \int_{\mathbb{R}^3}\Vert \widehat{g_1}(t,\xi-\ell) \Vert_{L^2_{v,14}}  \Vert e^{\la t}\widehat{g_1}(t,\ell) \Vert_{H^s_{v,k+\gamma/2}} d\ell\Big)^2dt \Big)^{1/4} \notag\\
	&\qquad\times\Big( \int^T_0 \Vert e^{\la t}\widehat{g_1}(t,\xi) \Vert^2_{H^s_{v,k+\gamma/2}} dt\Big)^{1/4}\notag\\
	&\le \eta \Vert e^{\la t}\widehat{g_1}(\xi) \Vert_{L^2_TH^s_{v,k+\gamma/2}}\notag\\
	&\quad+C_{k,\eta} \Big(\int^T_0 \Big( \int_{\mathbb{R}^3} \Vert e^{\la t}\widehat{g_1}(t,\xi-\ell) \Vert_{L^2_{v,14}} \Vert \widehat{g_1}(t,\ell) \Vert_{H^s_{v,k+\gamma/2+2s }} d\ell\Big)^2dt\Big)^\frac{1}{2} \notag\\
	&\quad+C_{k,\eta}\Big(\int^T_0 \Big( \int_{\mathbb{R}^3} \Vert \widehat{g_1}(t,\xi-\ell) \Vert_{L^2_{v,14}}  \Vert e^{\la t}\widehat{g_1}(t,\ell) \Vert_{H^s_{v,k+\gamma/2}} d\ell\Big)^2dt\Big)^\frac{1}{2}.
\end{align}
We further use Minkowski  inequality to get
\begin{align}%\label{I2}
	I_2&\le  \eta \Vert e^{\la t}\widehat{g_1}(\xi) \Vert_{L^2_TH^s_{v,k+\gamma/2}} \notag\\
	&\quad+C_{k,\eta}   \int_{\mathbb{R}^3}\big(\int^T_0 \Vert e^{\la t}\widehat{g_1}(t,\xi-\ell) \Vert^2_{L^2_{v,14}} \Vert \widehat{g_1}(t,\ell) \Vert^2_{H^s_{v,k+\gamma/2+2s }}dt\big)^\frac{1}{2} d\ell \notag\\
	&\quad+C_{k,\eta}\int_{\mathbb{R}^3}\big(\int^T_0  \Vert \widehat{g_1}(t,\xi-\ell) \Vert_{L^2_{v,14}}^2  \Vert e^{\la t}\widehat{g_1}(t,\ell) \Vert_{H^s_{v, k+\gamma/2}}^2   dt \big)^\frac{1}{2} d\ell,\notag
\end{align}
which further yields
\begin{align}\label{I2}
	I_2&\le \eta \Vert e^{\la t}\widehat{g_1}(\xi) \Vert_{L^2_TH^s_{v,k+\gamma/2}} +C_{k,\eta} \int_{\mathbb{R}^3}\Vert e^{\la t}\widehat{g_1}(\xi-\ell) \Vert_{L^\infty_TL^2_{v,k}} \Vert \widehat{g_1}(\ell) \Vert_{L^2_TH^s_{v,k+\gamma/2+2s }}d\ell \notag\\
	&\quad+C_{k,\eta}\int_{\mathbb{R}^3}\Vert \widehat{g_1}(\xi-\ell) \Vert_{L^\infty_TL^2_{v,k}}  \Vert e^{\la t}\widehat{g_1}(\ell) \Vert_{L^2_TH^s_{v,k+\gamma/2}}d\ell.
\end{align}
Similar arguments as in \eqref{I21}, \eqref{I22}, \eqref{I23} and \eqref{I2} show that
\begin{align*}
	I_3+I_4&\le \eta \Vert e^{\la t}\widehat{g_1}(\xi) \Vert^2_{L^2_TH^s_{v,k+\gamma/2}} \notag\\
	&\quad+C_{k,\eta} \int_{\mathbb{R}^3}\Vert \sqrt{\mu}\widehat{g_2}(\xi-\ell) \Vert_{L^\infty_TL^2_{v,k}} \Vert e^{\la t}\widehat{g_1}(\ell) \Vert_{L^2_TH^s_{v,k+\gamma/2+2s }}d\ell \notag\\
	&\quad+C_{k,\eta}\int_{\mathbb{R}^3}\Vert e^{\la t}\widehat{g_1}(\xi-\ell) \Vert_{L^\infty_TL^2_{v,k}}  \Vert \sqrt{\mu}\widehat{g_2}(\ell) \Vert_{L^2_TH^s_{v,k+\gamma/2}}d\ell\notag\\
	&\quad+C_{k,\eta} \int_{\mathbb{R}^3}\Vert e^{\la t}\widehat{g_1}(\xi-\ell) \Vert_{L^\infty_TL^2_{v,k}} \Vert \sqrt{\mu}\widehat{g_2}(\ell) \Vert_{L^2_TH^s_{v,k+\gamma/2+2s }}d\ell\notag\\
	&\quad+C_{k,\eta}\int_{\mathbb{R}^3}\Vert \sqrt{\mu}\widehat{g_2}(\xi-\ell) \Vert_{L^\infty_TL^2_{v,k}}  \Vert e^{\la t}\widehat{g_1}(\ell) \Vert_{L^2_TH^s_{v,k+\gamma/2}}d\ell.
\end{align*}
From the facts that
\begin{align*}
	\Vert \sqrt{\mu}\widehat{g_2}(\ell) \Vert_{L^2_TH^s_{v,k+\gamma/2}}+\Vert \sqrt{\mu}\widehat{g_2}(\ell) \Vert_{L^2_TH^s_{v,k+\gamma/2+2s }}\leq C_k\Vert \widehat{g_2}(\ell) \Vert_{L^2_TH^{s*}_v},
\end{align*}
and
\begin{align*}
	\Vert \sqrt{\mu}\widehat{g_2}(\xi-\ell) \Vert_{L^\infty_TL^2_{v,k}}\leq C_k\Vert \widehat{g_2}(\xi-\ell) \Vert_{L^\infty_TL^2_v},
\end{align*}
we have
\begin{align}\label{I3I4}
	I_3+I_4&\le \eta \Vert e^{\la t}\widehat{g_1}(\xi) \Vert_{L^2_TH^s_{v,k+\gamma/2}} +C_{k,\eta} \int_{\mathbb{R}^3}\Vert \widehat{g_2}(\xi-\ell) \Vert_{L^\infty_TL^2_v} \Vert e^{\la t}\widehat{g_1}(\ell) \Vert_{L^2_TH^s_{v,k+\gamma/2+2s }}d\ell \notag\\
	&\quad+C_{k,\eta}\int_{\mathbb{R}^3}\Vert e^{\la t}\widehat{g_1}(\xi-\ell) \Vert_{L^\infty_TL^2_{v,k}} \Vert \widehat{g_2}(\ell) \Vert_{L^2_TH^{s*}_v}d\ell.
\end{align}
Combining \eqref{I1234}, \eqref{I1}, \eqref{I2} and \eqref{I3I4}, it holds
\begin{align}\label{I1I2I3I4}			
	&\frac{1}{2\sqrt{2}}\|e^{\la t}\widehat{g_1}(t,\xi)\|_{L^2_{v,k}}+\sqrt{\de} \Vert e^{\la t}\widehat{g_1}(\xi) \Vert_{L^2_TH^s_{v,k+\gamma/2}}-\sqrt{\la}\|e^{\la t}\widehat{g_1}(\xi)\|_{L^2_TL^2_{v,k}}\notag\\
	\le&\sqrt{2}\|\widehat{g_0}(\xi)\|_{L^2_{v,k}}+ \eta \Vert e^{\la t}\widehat{g_1}(\xi) \Vert_{L^2_TH^s_{v,k+\gamma/2}}\notag\\
	&\quad+C_{k,\eta} \int_{\mathbb{R}^3}\Vert e^{\la t}\widehat{g_1}(\xi-\ell) \Vert_{L^\infty_TL^2_{v,k}} \Vert \widehat{g_1}(\ell) \Vert_{L^2_TH^s_{v,k+\gamma/2+2s }}d\ell \notag\\
	&\quad+C_{k,\eta}\int_{\mathbb{R}^3}\Vert \widehat{g_1}(\xi-\ell) \Vert_{L^\infty_TL^2_{v,k}}  \Vert e^{\la t}\widehat{g_1}(\ell) \Vert_{L^2_TH^s_{v,k+\gamma/2}}d\ell\notag\\
	&\quad+C_{k,\eta} \int_{\mathbb{R}^3}\Vert \widehat{g_2}(\xi-\ell) \Vert_{L^\infty_TL^2_v} \Vert e^{\la t}\widehat{g_1}(\ell) \Vert_{L^2_TH^s_{v,k+\gamma/2+2s }}d\ell \notag\\
	&\quad+C_{k,\eta}\int_{\mathbb{R}^3}\Vert e^{\la t}\widehat{g_1}(\xi-\ell) \Vert_{L^\infty_TL^2_{v,k}} \Vert \widehat{g_2}(\ell) \Vert_{L^2_TH^{s*}_v}d\ell.
\end{align}
Noticing $\Vert \widehat{g_1}(\xi-\ell) \Vert_{L^\infty_TL^2_{v,k}}\le\Vert e^{\la t}\widehat{g_1}(\xi-\ell) \Vert_{L^\infty_TL^2_{v,k}}$ and $$\max\{\Vert \widehat{g_1}(\ell) \Vert_{L^2_TH^s_{v,k+\gamma/2+2s }},\Vert e^{\la t}\widehat{g_1}(\ell) \Vert_{L^2_TH^s_{v,k+\gamma/2}}\}\leq \Vert e^{\la t}\widehat{g_1}(\ell) \Vert_{L^2_TH^s_{v,k+\gamma/2+2s }},$$ we first choose $\la$ and $\eta$ small enough to get
\begin{align*}			
	&\|e^{\la t}\widehat{g_1}(\xi)\|_{L^\infty_TL^2_{v,k}}+\de\Vert e^{\la t}\widehat{g_1}(\xi) \Vert_{L^2_TH^s_{v,k+\gamma/2}}\notag\\
	\le&\|\widehat{g_0}(\xi)\|_{L^2_{v,k}}+ C_k \int_{\mathbb{R}^3}\Vert e^{\la t}\widehat{g_1}(\xi-\ell) \Vert_{L^\infty_TL^2_{v,k}} \Vert e^{\la t}\widehat{g_1}(\ell) \Vert_{L^2_TH^s_{v,k+\gamma/2+2s }}d\ell \notag\\
	&+ C_k\int_{\mathbb{R}^3}\Vert \widehat{g_2}(\xi-\ell) \Vert_{L^\infty_TL^2_v} \Vert e^{\la t}\widehat{g_1}(\ell) \Vert_{L^2_TH^s_{v,k+\gamma/2+2s }}d\ell\notag\\
	&+C_k\int_{\mathbb{R}^3}\Vert e^{\la t}\widehat{g_1}(\xi-\ell) \Vert_{L^\infty_TL^2_{v,k}} \Vert \widehat{g_2}(\ell) \Vert_{L^2_TH^{s*}_v}d\ell.
\end{align*}
Then taking $L^p_\xi$ norm and using Minkowski inequality again, we have
\begin{align}\label{g1lpxih}			
	&\|e^{\la t}\widehat{g_1}\|_{L^p_\xi L^\infty_TL^2_{v,k}}+\de\Vert e^{\la t}\widehat{g_1}\Vert_{L^p_\xi L^2_TH^s_{v,k+\gamma/2}}\notag\\
	\le&\|\widehat{g_0}\|_{L^p_\xi L^2_{v,k}}+ C_k(\Vert e^{\la t}\widehat{g_1} \Vert_{L^p_\xi L^\infty_TL^2_{v,k}}+\Vert \widehat{g_2} \Vert_{L^p_\xi L^\infty_TL^2_v} ) \Vert e^{\la t}\widehat{g_1} \Vert_{L^1_\xi L^2_TH^s_{v,k+\gamma/2+2s }} \notag\\
	&+ C_k\Vert e^{\la t}\widehat{g_1} \Vert_{L^p_\xi L^\infty_TL^2_{v,k}} \Vert \widehat{g_2} \Vert_{L^1_\xi L^2_TH^{s*}_v}.
\end{align}
By definition of $L^1_\xi L^2_TH^s_{v,k+\gamma/2+2s }$ norm, one has
\begin{align*}
	\Vert e^{\la t}\widehat{g_1} \Vert_{L^1_\xi L^2_TH^s_{v,k+\gamma/2+2s }}&=\int_{\R^3}\big(\int^T_0   \Vert e^{\la t}\widehat{g_1}(t,\xi) \Vert^2_{H^s_{v,k+\gamma/2+2s}} dt\big)^\frac{1}{2}d\xi\notag\\
	&= \int_{\R^3}\langle\xi\rangle^{-\frac{3}{2}-}\big(\int^T_0 \langle\xi\rangle^{3+}  \Vert e^{\la t}\widehat{g_1}(t,\xi) \Vert^2_{H^s_{v,k+\gamma/2+2s}} dt\big)^\frac{1}{2}d\xi,
\end{align*}
where $-\frac{3}{2}-$ and $3+$ are defined to be $-\frac{3}{2}-\ka_0$ and $3+\ka_0$ for some sufficiently small $\ka_0>0$. Using Cauchy-Schwarz inequality, we obtain
\begin{align}\label{txiinterpo}
	\Vert e^{\la t}\widehat{g_1} \Vert_{L^1_\xi L^2_TH^s_{v,k+\gamma/2+2s }}&\leq C \big(\int_{\R^3}\langle\xi\rangle^{-3-}d\xi\big)^\frac{1}{2}\notag\\
	&\qquad\times\big(\int^T_0  \int_{\R^3} \langle\xi\rangle^{3+}\Vert e^{\la t}\widehat{g_1}(t,\xi) \Vert^2_{H^s_{v,k+\gamma/2+2s}} d\xi dt\big)^\frac{1}{2}\notag\\
	&=C\big(\int^T_0 \Vert e^{\la t}\langle\xi\rangle^{\frac{3}{2}+}\widehat{g_1}(t,\xi) \Vert^2_{L^2_\xi H^s_{v,k+\gamma/2+2s}}dt\big)^\frac{1}{2}.
\end{align}
By Plancherel Theorem, Sobolev embedding and the definition of $X^*_k$ in \eqref{X*k}, one gets
\begin{align}\label{g1l1xih}
	\Vert e^{\la t}\widehat{g_1} \Vert_{L^1_\xi L^2_TH^s_{v,k+\gamma/2+2s }}&\leq C\big(\int^T_0 \Vert e^{\la t}g_1 \Vert^2_{H^2_x H^s_{v,k+\gamma/2+2s}}dt\big)^\frac{1}{2}\notag\\
	&\leq C\big(\int^T_0 \Vert e^{\la t}g_1 \Vert^2_{X^*_{k+8+2s}}dt\big)^\frac{1}{2}.
\end{align}
Hence, \eqref{esthatg1h} follows from \eqref{g1lpxih} and \eqref{g1l1xih}. The proof of Lemma \ref{hatg1h} is complete.
\end{proof}

\subsection{Estimates for $-3<\ga<0$}
\begin{lemma}\label{hatg1s}
	Let $-3<\ga<0$, $k>18$ and $g_1$ be a solution to \eqref{hatg1}. For any $1<\rho<\frac{k-14}{|\ga|}$, there exists a constant $C_k>0$ such that 	
	\begin{align}\label{esthatg1s}			
		&\|(1+t)^\rho \widehat{g_1}\|_{L^p_\xi L^\infty_TL^2_{v,k}}+\Vert (1+t)^\rho \widehat{g_1}\Vert_{L^p_\xi L^2_TH^s_{v,k+\gamma/2}}\notag\\
		\le&C_k\|\widehat{g_0}\|_{L^p_\xi L^2_{v,k}}+C_k\|\widehat{g_1}\|_{L^p_\xi L^2_TL^2_{v,k+\ga/2-\rho\ga}}\notag\\
		&+ C_{k} \big(\Vert (1+t)^\rho \widehat{g_1} \Vert_{L^p_\xi L^\infty_TL^2_{v,k}}+\Vert \widehat{g_2} \Vert_{L^p_\xi L^\infty_TL^2_v} \big)\big(\int^T_0 \Vert (1+t)^\rho g_1 \Vert^2_{X^*_{k+8+2s}}dt\big)^\frac{1}{2} \notag\\
		&+C_k\Vert (1+t)^\rho \widehat{g_1} \Vert_{L^p_\xi L^\infty_TL^2_{v,k}} \Vert \widehat{g_2} \Vert_{L^1_\xi L^2_TH^{s*}_v}.
	\end{align}
\end{lemma}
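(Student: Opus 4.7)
The strategy mirrors Lemma \ref{hatg1h} but replaces the exponential weight $e^{\la t}$ with the polynomial weight $(1+t)^\rho$, and the extra difficulty coming from soft potentials $-3<\ga<0$ is handled by the time-velocity splitting already used in \eqref{larget}--\eqref{tsmall}. First, I would derive from \eqref{hatg1} the equation
\begin{align*}
\pa_t \bigl((1+t)^\rho \widehat{g_1}\bigr) + iv\cdot\xi\, (1+t)^\rho \widehat{g_1} - \rho(1+t)^{\rho-1}\widehat{g_1}
&= \CL_D \bigl((1+t)^\rho\widehat{g_1}\bigr) + \widehat{Q}\bigl((1+t)^\rho\widehat{g_1},\widehat{g_1}\bigr) \\
&\quad + \widehat{Q}\bigl(\sqrt{\mu}\widehat{g_2},(1+t)^\rho\widehat{g_1}\bigr) + \widehat{Q}\bigl((1+t)^\rho\widehat{g_1},\sqrt{\mu}\widehat{g_2}\bigr),
\end{align*}
take the real part of the inner product with $\langle v\rangle^{2k}(1+t)^\rho\bar{\widehat{g_1}}$, and integrate over $[0,T]\times\R^3_v$. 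The linear term contributes, via \eqref{leld} with $g=0$, the dissipation $\de\Vert (1+t)^\rho\widehat{g_1}(\xi)\Vert^2_{L^2_T H^s_{v,k+\ga/2}}$; the three nonlinear terms are treated by \eqref{fgh} followed by Cauchy--Schwarz and Minkowski exactly as in the $I_2$, $I_3$, $I_4$ estimates of Lemma \ref{hatg1h}, producing the contributions involving $\Vert \widehat{g_2}\Vert_{L^p_\xi L^\infty_T L^2_v}$ and $\Vert \widehat{g_2}\Vert_{L^1_\xi L^2_T H^{s*}_v}$.

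The main new obstacle is the boundary term $\rho\int_0^T\Vert (1+t)^{\rho-1/2}\widehat{g_1}(t,\xi)\Vert^2_{L^2_{v,k}}\,dt$ coming from $\pa_t(1+t)^\rho=\rho(1+t)^{\rho-1}$. I split the $t$-integral by the velocity-dependent threshold $1+t=\frac{1}{\ka(1+|v|)^\ga}$ (well defined since $\ga<0$). On the large-time region $1+t\geq \frac{1}{\ka(1+|v|)^\ga}$, one has $(1+t)^{-1}\le \ka(1+|v|)^\ga$, so the integrand is bounded by $\ka(1+|v|)^{\ga}|(1+t)^\rho\widehat{g_1}|^2$; choosing $\ka$ small enough that $\ka\rho\le \de/2$, this contribution is absorbed into $\frac{\de}{2}\Vert(1+t)^\rho\widehat{g_1}(\xi)\Vert^2_{L^2_T H^s_{v,k+\ga/2}}$. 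On the small-time region $1+t\le \frac{1}{\ka(1+|v|)^\ga}$, the factor $(1+t)^{\rho-1/2}$ is bounded by $C(1+|v|)^{-\ga(\rho-1/2)}$, so the integrand is bounded by $C(1+|v|)^{-2\rho\ga+\ga}|\widehat{g_1}|^2$, leaving a residue controlled by $C\Vert \widehat{g_1}(\xi)\Vert^2_{L^2_T L^2_{v,k+\ga/2-\rho\ga}}$.

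Combining the two regions, taking a square root in $t$ and then the $L^p_\xi$ norm (with Minkowski applied to the convolutions in $\xi$ as in the derivation of \eqref{g1lpxih}), I arrive at the bound
\begin{align*}
& \|(1+t)^\rho \widehat{g_1}\|_{L^p_\xi L^\infty_TL^2_{v,k}}+\Vert (1+t)^\rho \widehat{g_1}\Vert_{L^p_\xi L^2_TH^s_{v,k+\ga/2}} \\
&\qquad\le C_k\|\widehat{g_0}\|_{L^p_\xi L^2_{v,k}}+ C_k\|\widehat{g_1}\|_{L^p_\xi L^2_T L^2_{v,k+\ga/2-\rho\ga}} + (\text{nonlinear terms}),
\end{align*}
and the nonlinear terms involve the convolution-in-$\xi$ factor $\Vert (1+t)^\rho\widehat{g_1}\Vert_{L^1_\xi L^2_T H^s_{v,k+\ga/2+2s}}$. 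As in \eqref{txiinterpo}--\eqref{g1l1xih}, I insert $\langle\xi\rangle^{-3/2-}\langle\xi\rangle^{3/2+}$, apply Cauchy--Schwarz in $\xi$, and invoke Plancherel together with the Sobolev-type embedding $H^{3/2+}\hookrightarrow L^\infty$ to bound this $L^1_\xi$ factor by $\bigl(\int_0^T\Vert (1+t)^\rho g_1\Vert^2_{X^*_{k+8+2s}}\,dt\bigr)^{1/2}$. The restriction $1<\rho<\frac{k-14}{|\ga|}$ keeps the shifted weight $k-\rho\ga$ inside the admissible range ($\geq 14$) required by Lemma \ref{upper bound for the Boltzmann operator} and \eqref{fgh}. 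Assembling these estimates yields \eqref{esthatg1s}.
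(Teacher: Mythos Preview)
Your proposal is correct and follows essentially the same route as the paper: derive the weighted equation for $(1+t)^\rho\widehat{g_1}$, run the energy estimate with $\langle v\rangle^{2k}$ and treat the linear and nonlinear terms exactly as in Lemma \ref{hatg1h}, then handle the extra term $\rho(1+t)^{2\rho-1}$ via the same time--velocity splitting at the threshold $1+t=\frac{1}{\ka(1+|v|)^{\ga}}$ used in \eqref{larget}--\eqref{tsmall}, and finally pass to $L^p_\xi$ and convert the $L^1_\xi$ convolution factor via \eqref{txiinterpo}--\eqref{g1l1xih}. The only cosmetic difference is that the paper takes square roots and the $L^p_\xi$ norm \emph{before} performing the splitting (see \eqref{tg1s}--\eqref{smallts}), whereas you split at the squared, pointwise-in-$\xi$ level and then pass to $L^p_\xi$; both orders are equivalent here.
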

\begin{proof}
	It is straightforward to see from \eqref{hatg1} that
	\begin{align}
		&\pa_t (1+t)^\rho \widehat{g_1}+iv\cdot\xi (1+t)^\rho \widehat{g_1}-\rho (1+t)^{\rho-1} \widehat{g_1}\notag\\
		 =&\CL_D (1+t)^\rho \widehat{g_1}+\widehat{Q}((1+t)^\rho \widehat{g_1},\widehat{g_1})+\widehat{Q}(\sqrt{\mu}\widehat{g_2},(1+t)^\rho \widehat{g_1})+\widehat{Q}((1+t)^\rho \widehat{g_1},\sqrt{\mu}\widehat{g_2}).
	\end{align}
Similar arguments as in \eqref{I1234}, \eqref{I1}, \eqref{I21}, \eqref{I22}, \eqref{I23}, \eqref{I2}, \eqref{I3I4} and \eqref{I1I2I3I4} show that for $k>14$,
\begin{align*}		
	&\|(1+t)^\rho \widehat{g_1}(t,\xi)\|_{L^2_{v,k}}+\de \Vert (1+t)^\rho \widehat{g_1}(\xi) \Vert_{L^2_TH^s_{v,k+\gamma/2}}-\sqrt{\rho}\|(1+t)^{\rho-1/2} \widehat{g_1}(\xi)\|_{L^2_TL^2_{v,k}}\notag\\
	\le&\|\widehat{g_0}(\xi)\|_{L^2_{v,k}}+ \eta \Vert (1+t)^\rho \widehat{g_1}(\xi) \Vert_{L^2_TH^s_{v,k+\gamma/2}}\notag\\
	&\quad+C_{k,\eta} \int_{\mathbb{R}^3}\Vert (1+t)^\rho \widehat{g_1}(\xi-\ell) \Vert_{L^\infty_TL^2_{v,k}} \Vert \widehat{g_1}(\ell) \Vert_{L^2_TH^s_{v,k+\gamma/2+2s }}d\ell \notag\\
	&\quad+C_{k,\eta}\int_{\mathbb{R}^3}\Vert \widehat{g_1}(\xi-\ell) \Vert_{L^\infty_TL^2_{v,k}}  \Vert (1+t)^\rho \widehat{g_1}(\ell) \Vert_{L^2_TH^s_{v,k+\gamma/2}}d\ell\notag\\
	&\quad+C_{k,\eta} \int_{\mathbb{R}^3}\Vert \widehat{g_2}(\xi-\ell) \Vert_{L^\infty_TL^2_v} \Vert (1+t)^\rho \widehat{g_1}(\ell) \Vert_{L^2_TH^s_{v,k+\gamma/2+2s }}d\ell \notag\\
	&\quad+C_{k,\eta}\int_{\mathbb{R}^3}\Vert (1+t)^\rho \widehat{g_1}(\xi-\ell) \Vert_{L^\infty_TL^2_{v,k}} \Vert \widehat{g_2}(\ell) \Vert_{L^2_TH^{s*}_v}d\ell.
\end{align*}
Then we take $L^p_\xi$ norm and use Minkowski inequality, Plancherel Theorem and Sobolev embedding as in \eqref{g1lpxih}, \eqref{txiinterpo} and \eqref{g1l1xih} to get
\begin{align}\label{tg1s}
	&\|(1+t)^\rho \widehat{g_1}\|_{L^p_\xi L^\infty_TL^2_{v,k}}+\de\Vert (1+t)^\rho \widehat{g_1}\Vert_{L^p_\xi L^2_TH^s_{v,k+\gamma/2}}-\sqrt{\rho}\|(1+t)^{\rho-1/2} \widehat{g_1}\|_{L^p_\xi L^2_TL^2_{v,k}}\notag\\
	\le&\|\widehat{g_0}\|_{L^p_\xi L^2_{v,k}}+ C_{k} \big(\Vert (1+t)^\rho \widehat{g_1} \Vert_{L^p_\xi L^\infty_TL^2_{v,k}}+\Vert \widehat{g_2} \Vert_{L^p_\xi L^\infty_TL^2_v} \big)\big(\int^T_0 \Vert (1+t)^\rho g_1 \Vert^2_{X^*_{k+8+2s}}dt\big)^\frac{1}{2} \notag\\
	&\qquad+C_k\Vert (1+t)^\rho \widehat{g_1} \Vert_{L^p_\xi L^\infty_TL^2_{v,k}} \Vert \widehat{g_2} \Vert_{L^1_\xi L^2_TH^{s*}_v}.
\end{align}
For the term $\de\Vert (1+t)^\rho \widehat{g_1}\Vert_{L^p_\xi L^2_TH^s_{v,k+\gamma/2}}-\sqrt{\rho}\|(1+t)^{\rho-1/2} \widehat{g_1}\|_{L^p_\xi L^2_TL^2_{v,k}}$, it follows from similar arguments as in \eqref{larget} and \eqref{tlarge} that if  $1+t\geq \frac{1}{\ka(1+|v|)^{\ga}}$ for some $\ka>0$, one has
\begin{align}\label{largets}
	&\sqrt{\rho}\|(1+t)^{\rho-1/2} \widehat{g_1}\|_{L^p_\xi L^2_TL^2_{v,k}}\leq \sqrt{\ka\rho}\Vert (1+t)^\rho \widehat{g_1}\Vert_{L^p_\xi L^2_TL^2_{v,k+\gamma/2}}.
\end{align}
If $1+t\leq \frac{1}{\ka(1+|v|)^{\ga}}$, it holds
\begin{align}\label{smallts}
	&\sqrt{\rho}\|(1+t)^{\rho-1/2} \widehat{g_1}\|_{L^p_\xi L^2_TL^2_{v,k}}\leq \sqrt{\rho}\Vert \widehat{g_1}\Vert_{L^p_\xi L^2_TL^2_{v,k+\gamma/2-\rho\ga}}.	
\end{align}
Then choosing $\ka$ to be small, \eqref{esthatg1soft} holds from \eqref{tg1s}, \eqref{largets} and \eqref{smallts}. Note that as we will prove later, the estimate on $g_1$ requires the order of velocity weight to be larger that $14$, so here we also let $k>18$ and $\rho<\frac{k-14}{|\ga|}$ to guarantee  $k+\gamma/2-\rho\ga>14$ and $1<\frac{k-14}{|\ga|}$.
\end{proof}
From \eqref{esthatg1s}, we see that for $\ga<0$, in order to obtain the time decay, we also need to bound $\Vert \widehat{g_1}\Vert_{L^p_\xi L^2_TL^2_{k}}$ for some large $k$. The following lemma provides the estimate of such term. The details are almost the same as Lemma \ref{hatg1s} and thus omitted.

\begin{lemma}\label{hatg1soft}
	Let $-3<\ga<0$, $k>14$ and $g_1$ be a solution to \eqref{hatg1}. There exist constants $\de,C_k>0$ such that 	
	\begin{align}\label{esthatg1soft}			
		&\| \widehat{g_1}\|_{L^p_\xi L^\infty_TL^2_{v,k}}+\de\Vert \widehat{g_1}\Vert_{L^p_\xi L^2_TH^s_{v,k+\gamma/2}}\notag\\
		\le&\|\widehat{g_0}\|_{L^p_\xi L^2_{v,k}}+ C_{k} \big(\Vert  \widehat{g_1} \Vert_{L^p_\xi L^\infty_TL^2_{v,k}}+\Vert \widehat{g_2} \Vert_{L^p_\xi L^\infty_TL^2_v} \big)\big(\int^T_0 \Vert  g_1 \Vert^2_{X^*_{k+8+2s}}dt\big)^\frac{1}{2} \notag\\
		&\qquad+C_k\Vert \widehat{g_1} \Vert_{L^p_\xi L^\infty_TL^2_{v,k}} \Vert \widehat{g_2} \Vert_{L^1_\xi L^2_TH^{s*}_v}.
	\end{align}
\end{lemma}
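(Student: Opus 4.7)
The plan is to mirror the argument for Lemma \ref{hatg1s}, dropping the time-weight $(1+t)^\rho$ and thus avoiding the whole splitting of the time integral according to whether $1+t\gtrless \ka^{-1}\langle v\rangle^{-\gamma}$. Concretely, starting from the Fourier-transformed equation \eqref{hatg1}, I would take the $L^2_v$ inner product against $\langle v\rangle^{2k}\overline{\widehat{g_1}(t,\xi,\cdot)}$, take the real part, and integrate in $t\in[0,T]$. The transport term $iv\cdot\xi\,\widehat{g_1}$ contributes zero after taking the real part, while applying Lemma \ref{leLD} with $g=0$ to the $\CL_D$ term produces the coercive contribution $-\de\|\widehat{g_1}(\xi)\|_{L^2_T H^s_{v,k+\gamma/2}}^2$ pointwise in $\xi$.

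For the three bilinear contributions $\widehat{Q}(\widehat{g_1},\widehat{g_1})$, $\widehat{Q}(\sqrt{\mu}\widehat{g_2},\widehat{g_1})$, $\widehat{Q}(\widehat{g_1},\sqrt{\mu}\widehat{g_2})$, I would write out the convolution structure in $\xi$ and apply the estimate \eqref{fgh} from the Preliminaries pointwise in the outer and inner Fourier variables, exactly as in the treatment of $I_2,I_3,I_4$ in Lemma \ref{hatg1h}. Then two successive applications of Cauchy--Schwarz in $t$ (with a small parameter $\eta$ to absorb the top-order factor $\|\widehat{g_1}(\xi)\|_{L^2_T H^s_{v,k+\gamma/2}}$ into the left-hand side) and Minkowski in the convolution variable $\ell$ produce bounds of the form $\int_{\R^3}\|\widehat{g_1}(\xi-\ell)\|_{L^\infty_T L^2_{v,k}}\|\widehat{g_1}(\ell)\|_{L^2_T H^s_{v,k+\gamma/2+2s}}\,d\ell$, plus analogous terms involving $\widehat{g_2}$ estimated via $\|\sqrt{\mu}\widehat{g_2}\|_{L^2_T H^s_{v,k+\gamma/2+2s}}\le C_k\|\widehat{g_2}\|_{L^2_T H^{s*}_v}$ and $\|\sqrt{\mu}\widehat{g_2}\|_{L^\infty_T L^2_{v,k}}\le C_k\|\widehat{g_2}\|_{L^\infty_T L^2_v}$.

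Taking the $L^p_\xi$ norm of the resulting pointwise inequality and invoking Minkowski's inequality for the convolutions converts those to products of an $L^p_\xi$ norm with an $L^1_\xi$ norm. The only remaining step is to dispose of the $L^1_\xi L^2_T H^s_{v,k+\gamma/2+2s}$ factor on $\widehat{g_1}$: inserting the weight $\langle\xi\rangle^{-3/2-\ka_0}\langle\xi\rangle^{3/2+\ka_0}$ for a small $\ka_0>0$ and applying Cauchy--Schwarz in $\xi$ gives, after Plancherel and the Sobolev embedding $H^{3/2+\ka_0}_x\hookrightarrow L^\infty_x$, a control by $(\int_0^T \|g_1\|_{X^*_{k+8+2s}}^2\,dt)^{1/2}$, as in \eqref{g1l1xih}. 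This yields the stated bound.

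Since the time-weight related terms and the corresponding dichotomy on $v$ from Lemma \ref{hatg1s} are absent here, there is no real obstacle beyond a careful bookkeeping of the convolution and weight exponents. The only point that requires minor care is the choice of $\eta$ small enough to absorb $\eta\|\widehat{g_1}\|_{L^p_\xi L^2_T H^s_{v,k+\gamma/2}}$ into the dissipative term $\de\|\widehat{g_1}\|_{L^p_\xi L^2_T H^s_{v,k+\gamma/2}}$ on the left-hand side, after which renaming $\de-\eta$ to $\de$ delivers \eqref{esthatg1soft}.
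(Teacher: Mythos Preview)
Your proposal is correct and follows essentially the same approach as the paper, which simply states that the details are almost the same as Lemma~\ref{hatg1s} and omits them. Dropping the time weight $(1+t)^\rho$ from that argument and repeating the $I_1,\dots,I_4$ estimates of Lemma~\ref{hatg1h} together with the passage \eqref{txiinterpo}--\eqref{g1l1xih} is precisely what is needed.
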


\section{Time decay of $g_2$ in $L^1_\xi\cap L^p_\xi$}
We can prove the stability of $g_2$ for both hard and soft potentials. However, the time decay should be proved in two cases, $\ga+2s\ge0$ and $\ga+2s<0$ respectively. The important inequality $\|f\|_{L^2_v}\leq \|f\|_{L^2_{v,\ga+2s}}\leq C\|f\|_{H^{s*}_v}$ will be frequently used in case of $\ga+2s\geq0$. On the other hand, such inequality no longer holds when $\ga+2s<0$. Therefore, we need extra velocity weight to  compensate the dissipation, as we will see in the third subsection.
\subsection{$L^1_\xi\cap L^p_\xi$ estimates on $g_2$}
We first bound the nonlinear term in \eqref{hatg2}. The following lemma is given in \cite[Lemma 3.1]{DSY}.
\begin{lemma}%\label{ineq: Boltzmann nonlinear}
	For $0<s<1$ and $\gamma>\max\{-3,-3/2-2s\}$, it holds that
	\begin{equation}\label{lem.ibn1}
		\left| \left(\hat{\Gamma}(\hat{f},\hat{g})(\xi),\widehat{H}(\xi)\right)\right|\le C\int_{\mathbb{R}^3_\ell}\Vert \hat{f}(\xi-\ell)\Vert_{L^2_v}\Vert \hat{g}(\ell)\Vert_{H^{s*}_v}  \Vert  (\mathbf{I}-\mathbf{P})  \widehat{H}(\xi)\Vert_{H^{s*}_v} d\ell.
	\end{equation}
\end{lemma}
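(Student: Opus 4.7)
The essential structural observation is that the $\xi$-convolution built into $\hat\Gamma$ decouples from the $v$-action. By Fubini,
\begin{equation*}
\bigl(\hat\Gamma(\hat f,\hat g)(\xi),\widehat H(\xi)\bigr)_{L^2_v}=\int_{\mathbb{R}^3_\ell}\bigl(\Gamma(\hat f(\xi-\ell,\cdot),\hat g(\ell,\cdot)),\widehat H(\xi,\cdot)\bigr)_{L^2_v}\,d\ell,
\end{equation*}
where for each fixed $\xi$ and $\ell$ the integrand is the physical-space trilinear form for $\Gamma$ evaluated on the $v$-slices $\hat f(\xi-\ell,\cdot)$ and $\hat g(\ell,\cdot)$, tested against $\widehat H(\xi,\cdot)$. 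The whole task is therefore to prove, for arbitrary $v$-functions $F,G,H$, the pointwise trilinear estimate
\begin{equation}\label{plan.trilin}
\bigl|(\Gamma(F,G),H)_{L^2_v}\bigr|\le C\,\|F\|_{L^2_v}\,\|G\|_{H^{s*}_v}\,\|(\mathbf I-\mathbf P)H\|_{H^{s*}_v},
\end{equation}
and then integrate over $\ell$, pulling the $\xi$-dependent factor $\|(\mathbf I-\mathbf P)\widehat H(\xi)\|_{H^{s*}_v}$ outside of the $\ell$-integral.

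The appearance of $\mathbf I-\mathbf P$ on the rightmost factor reflects the conservation of mass, momentum and energy of $Q$. Using $\mu(v)\mu(u)=\mu(v')\mu(u')$, one rewrites
\begin{equation*}
\Gamma(F,G)(v)=\iint B(v-u,\sigma)\sqrt{\mu(u)}\bigl[F(u')G(v')-F(u)G(v)\bigr]\,d\sigma\,du,
\end{equation*}
so that for every collision invariant $\phi\in\{1,v_1,v_2,v_3,|v|^2\}$ the weak form of $Q$ yields $(\Gamma(F,G),\phi\sqrt\mu)_{L^2_v}=0$. Since $\mathbf P H$ lies in the span of $\{\phi\sqrt\mu\}$, this gives $(\Gamma(F,G),\mathbf P H)_{L^2_v}=0$, so $H$ may be replaced by $(\mathbf I-\mathbf P)H$ in \eqref{plan.trilin} at no cost.

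The main obstacle is the underlying unprojected bound $|(\Gamma(F,G),K)_{L^2_v}|\lesssim \|F\|_{L^2_v}\|G\|_{H^{s*}_v}\|K\|_{H^{s*}_v}$, which is where the hypothesis $\gamma>-3/2-2s$ enters. Here I would follow the non-cutoff strategy of Alexandre--Morimoto--Ukai--Xu--Yang and Gressman--Strain: decompose
\begin{equation*}
F(u')G(v')-F(u)G(v)=F(u')\bigl[G(v')-G(v)\bigr]+\bigl[F(u')-F(u)\bigr]G(v),
\end{equation*}
handle the angular singularity at $\theta\to 0$ by the cancellation lemma, and then apply Cauchy--Schwarz against the measure $B(v-u,\sigma)\mu(u)\,du\,dv\,d\sigma$. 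The first term produces the difference $G(v')-G(v)$ paired with $K(v')-K(v)$, which integrates exactly to the first piece of $\|G\|_{H^{s*}_v}\|K\|_{H^{s*}_v}$. For the second term, the pre-/post-collision change of variables transfers the $F$-difference into the Maxwellian difference $\sqrt{\mu(u)}-\sqrt{\mu(u')}$, producing the second piece of the $H^{s*}_v$ norm of $G$, while the remaining $u$-integral against $F$ is controlled by $\|F\|_{L^2_v}$ thanks to the Gaussian weight, which overcomes the $|v-u|^\gamma$ singularity precisely under $\gamma+2s>-3/2$. Once \eqref{plan.trilin} is established, integrating over $\ell$ yields the stated inequality.
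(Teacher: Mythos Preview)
The paper does not give its own proof of this lemma; it simply quotes it from \cite[Lemma 3.1]{DSY}. Your plan is exactly the standard argument carried out there: Fubini reduces the statement to the purely $v$-variable trilinear bound $|(\Gamma(F,G),K)_{L^2_v}|\lesssim\|F\|_{L^2_v}\|G\|_{H^{s*}_v}\|K\|_{H^{s*}_v}$ from the AMUXY and Gressman--Strain theory (this is precisely where the restriction $\gamma>-3/2-2s$ enters), combined with the orthogonality $\mathbf P\,\Gamma(F,G)=0$ that lets one replace the test function by its microscopic part before integrating in $\ell$.
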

Then we have the following inequality for $g_2$.
\begin{lemma}\label{lem: micro a priori}
	Let $\max\{-3,-3/2-2s\}<\ga\leq1$, $0<s<1$, $p>1$, $\rho>1$ and $g_2$ be a solution to \eqref{hatg2}. Recalling $A$ and $M$ are two constants in the definition of $L_B$ in \eqref{defLB}, and 
	$$
	\sigma=3\Big(1-\frac{1}{p}\Big) -2\ep_1, 
	$$
	where $\ep_1>0$ is arbitrarily small. There are constants $C>0$ and $C_{A,M}$ such that 
		\begin{align}
		&\Vert \widehat{g_2}\Vert_{L^1_\xi L^\infty_T L^2_v}+\Vert (\mathbf{I}-\mathbf{P})\widehat{g_2} \Vert_{L^1_\xi L^2_T H^{s*}_v}\notag\\
		\le &C_{A,M}\Vert \widehat{g_2}\Vert^\frac{1}{2}_{L^1_\xi L^\infty_T L^2_v}\Big(\int^T_0\|(1+t)^\rho g_1(t)\|^2_{X^*_{8-\ga/2}}dt\Big)^\frac{1}{4}\notag\\
		&+C\Vert \widehat{g_2}\Vert_{L^1_\xi L^\infty_T L^2_v}\Vert (\mathbf{I}-\mathbf{P})\widehat{g_2} \Vert_{L^1_\xi L^2_T H^{s*}_v}+C\Vert \widehat{g_2}\Vert_{L^1_\xi L^\infty_T L^2_v}\Vert  (1+t)^{\sigma/2} \widehat{g_2}\Vert_{L^1_\xi L^\infty_T L^2_v},\label{ineq: L^1_k micro a priori}
	\end{align}
	and 
	\begin{align}
		&\Vert \widehat{g_2}\Vert_{L^p_\xi L^\infty_T L^2_v}+\Vert (\mathbf{I}-\mathbf{P})\widehat{g_2} \Vert_{L^p_\xi L^2_T H^{s*}_v}\notag\\
		\le &C_{A,M}\Vert \widehat{g_2}\Vert^\frac{1}{2}_{L^p_\xi L^\infty_T L^2_v}\Vert (1+t)^\rho\widehat{g_1} \Vert^\frac{1}{2}_{L^p_\xi L^2_TL^2_v}+C\Vert \widehat{g_2}\Vert_{L^p_\xi L^\infty_T L^2_v} \Vert (\mathbf{I}-\mathbf{P})\widehat{g_2}\Vert_{L^1_\xi L^2_T H^{s*}_v}\notag\\
		&+C\Vert \widehat{g_2}\Vert_{L^p_\xi L^\infty_T L^2_v}\Vert  (1+t)^{\sigma/2} \widehat{g_2}\Vert_{L^1_\xi L^\infty_T L^2_v} , \label{ineq: L^infty_k micro a priori}
	\end{align}
	for any $T>0$ and $\rho>1$, where $C_{A,M}$ depends only on $A$ and $M$.
\end{lemma}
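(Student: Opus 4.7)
The strategy is a pointwise-in-$\xi$ $L^2_v$ energy estimate for \eqref{hatg2}, exploiting the hypocoercive structure of the symmetric linearized operator $L$, followed by Young's inequality to absorb the cross-term and $L^1_\xi$ or $L^p_\xi$ integration of the square-rooted inequality. Multiplying \eqref{hatg2} by $\overline{\widehat{g_2}(t,\xi)}$, integrating over $v$, and taking the real part, the streaming term $iv\cdot\xi|\widehat{g_2}|^2$ is imaginary and vanishes, while the standard coercivity of the symmetric operator $L$ (as in \cite{AMUXY-2011-CMP,AMUXY-2012-JFA,GS}) yields $\mathrm{Re}\,(L\widehat{g_2},\widehat{g_2}) \le -\delta\|(\mathbf{I}-\mathbf{P})\widehat{g_2}\|^2_{H^{s*}_v}$. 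Integrating on $t\in[0,T]$ against $\widehat{g_2}(0,\xi)=0$, taking $\sup_{0\le t\le T}$, and abbreviating
\begin{align*}
A(\xi)&:=\|\widehat{g_2}(\xi)\|_{L^\infty_T L^2_v},\quad B(\xi):=\|(\mathbf{I}-\mathbf{P})\widehat{g_2}(\xi)\|_{L^2_T H^{s*}_v},\\
C(\xi)&:=\|(1+t)^{\sigma/2}\widehat{g_2}(\xi)\|_{L^\infty_T L^2_v},\quad I(\xi):=\int_0^T\!\|\widehat{g_1}(t,\xi)\|_{L^2_v}\,dt,
\end{align*}
the reduction is to the pointwise $\xi$-inequality
\begin{equation*}
A(\xi)^2+2\delta\,B(\xi)^2 \;\le\; 2C_{A,M}\,A(\xi)\,I(\xi) \;+\; 2C\,B(\xi)\bigl[(A{*}B)(\xi)+(A{*}C)(\xi)\bigr].
\end{equation*}
The linear source bound uses that $\mathcal{L}_B=A\chi_M(v)\mu^{-1/2}(v)$ is a bounded multiplier (by the compact support of $\chi_M$), so $|(\mathcal{L}_B\widehat{g_1},\widehat{g_2})|\le C_{A,M}\|\widehat{g_1}\|_{L^2_v}\|\widehat{g_2}\|_{L^2_v}$. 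The trilinear contribution comes from applying \eqref{lem.ibn1} with $\hat f=\hat g=\hat H=\widehat{g_2}$ and decomposing the middle factor $\widehat{g_2}(\ell)=\mathbf{P}\widehat{g_2}(\ell)+(\mathbf{I}-\mathbf{P})\widehat{g_2}(\ell)$: since $\mathbf{P}$ has finite-dimensional Maxwellian-weighted range, $\|\mathbf{P}\widehat{g_2}(\ell)\|_{H^{s*}_v}\le C\|\widehat{g_2}(\ell)\|_{L^2_v}$; the microscopic piece pairs with $B(\ell)$ after Cauchy-Schwarz in $t$, whereas the macroscopic piece uses the weight split $(1+t)^{-\sigma/2}\cdot(1+t)^{\sigma/2}$ together with $\int_0^T(1+t)^{-\sigma}\,dt<\infty$ to produce $C(\ell)$.

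To close the estimate, Young's inequality applied to the cross-term $2CBD$ with $D=(A{*}B)+(A{*}C)$ gives $2CBD\le\delta B^2+C^2D^2/\delta$, absorbing $\delta B^2$ into the left side. The resulting inequality $A^2+\delta B^2\le 2C_{A,M}AI+C^2D^2/\delta$, after a pointwise square root using $\sqrt{a+b}\le\sqrt a+\sqrt b$ together with $A+\sqrt\delta B\le\sqrt 2\sqrt{A^2+\delta B^2}$, becomes
\begin{equation*}
A(\xi)+\sqrt\delta\,B(\xi)\le 2\sqrt{C_{A,M}}\,\sqrt{A(\xi)I(\xi)}+\tfrac{\sqrt 2\,C}{\sqrt\delta}\bigl[(A{*}B)(\xi)+(A{*}C)(\xi)\bigr].
\end{equation*}
Taking the $L^1_\xi$ norm, Cauchy-Schwarz in $\xi$ gives $\int\!\sqrt{AI}\,d\xi\le\|A\|_{L^1_\xi}^{1/2}\|I\|_{L^1_\xi}^{1/2}$ and Young's convolution inequality gives $\|A{*}B\|_{L^1_\xi}\le\|A\|_{L^1_\xi}\|B\|_{L^1_\xi}$ (and similarly for $A{*}C$), which produce exactly the two nonlinear products in \eqref{ineq: L^1_k micro a priori}. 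For $\|I\|_{L^1_\xi}$, a $t$-Cauchy-Schwarz with $(1+t)^{\mp\rho}$ (finite for $\rho>1/2$) followed by a weighted $\xi$-Cauchy-Schwarz with kernel $\langle\xi\rangle^{\mp 2}$ turns $\int\|\widehat{g_1}(t,\xi)\|_{L^2_v}d\xi$ into $\|g_1(t)\|_{H^2_xL^2_v}$, which is dominated by $\|g_1(t)\|_{X^*_{8-\gamma/2}}$ via the embedding $L^2_v\subset H^s_{v,8}$ present in that norm; this yields the desired factor $\bigl(\int_0^T\|(1+t)^\rho g_1\|^2_{X^*_{8-\gamma/2}}dt\bigr)^{1/4}$. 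The $L^p_\xi$ version \eqref{ineq: L^infty_k micro a priori} proceeds identically, with H\"older in $\xi$ replacing Cauchy-Schwarz ($\|\sqrt{AI}\|_{L^p_\xi}\le\|A\|_{L^p_\xi}^{1/2}\|I\|_{L^p_\xi}^{1/2}$) and Young's $L^p*L^1\to L^p$ convolution ($\|A{*}B\|_{L^p_\xi}\le\|A\|_{L^p_\xi}\|B\|_{L^1_\xi}$); no spatial Sobolev conversion is needed since $\|I\|_{L^p_\xi}\le C\|(1+t)^\rho\widehat{g_1}\|_{L^p_\xi L^2_T L^2_v}$ directly by Minkowski and Cauchy-Schwarz in $t$.

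The main obstacle is the macroscopic component $\mathbf{P}\widehat{g_2}(\ell)$ inside the nonlinear convolution: having no intrinsic dissipation control, it must be tamed by the a~priori polynomial time-decay weight $(1+t)^{\sigma/2}$, whose companion factor $(1+t)^{-\sigma/2}$ must be square-integrable in $t$. This is precisely what forces $\sigma>1$, and hence $p>3/2$, in the hypotheses of the lemma.
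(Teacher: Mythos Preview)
Your proposal is correct and follows essentially the same route as the paper: the $L^2_v$ energy identity on \eqref{hatg2} with the coercivity of $L$, the bound $|(\mathcal L_B\widehat{g_1},\widehat{g_2})|\le C_{A,M}\|\widehat{g_1}\|_{L^2_v}\|\widehat{g_2}\|_{L^2_v}$, the trilinear estimate \eqref{lem.ibn1}, Cauchy--Schwarz/Minkowski in $t$, an $\eta$-absorption of the $B(\xi)$ cross-term, a pointwise square root, and then $L^1_\xi$ (resp.\ $L^p_\xi$) integration via Young's convolution inequality; the $\widehat{g_1}$ source is converted to the $X^*$-norm through a $\langle\xi\rangle^{-(3/2+)}$ Cauchy--Schwarz, and the macroscopic part of the convolution factor is tamed by the $(1+t)^{\pm\sigma/2}$ split with $\sigma>1$. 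The only cosmetic difference is the order of operations: you perform the macro/micro decomposition of the middle factor $\|\widehat{g_2}(\ell)\|_{H^{s*}_v}$ inside the convolution \emph{before} $\xi$-integration, whereas the paper first obtains the full product $\|\widehat{g_2}\|_{L^1_\xi L^\infty_T L^2_v}\,\|\widehat{g_2}\|_{L^1_\xi L^2_T H^{s*}_v}$ and only then splits the second factor as in \eqref{g2L1xi}; both orderings give the same two terms on the right of \eqref{ineq: L^1_k micro a priori}.
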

\begin{proof}
Multiplying $\bar{\hat{g}}_2$ to \eqref{hatg2}, taking real part and integrating over $[0,T]\times\R^3_v$ , one gets
\begin{align*}
	\frac{1}{2}\|\widehat{g_2}(t,\xi)\|^2_{L^2_v}=&\int^T_0\rmre(L\widehat{g_2},\widehat{g_2})(t,\xi)dt+\int^T_0\rmre(\CL_B\widehat{g_1},\widehat{g_2})(t,\xi)dt\notag\\&\qquad\qquad\qquad\qquad\qquad\qquad+\int^T_0\rmre(\hat{\Ga}(\widehat{g_2},\widehat{g_2}),\widehat{g_2})(t,\xi)dt.
\end{align*}
We have the coercivity estimate from~\cite[Proposition 2.1]{AMUXY-2012-JFA} that for some small constant $\de$,
\begin{align*}
	(Lg,g)_{L^2_v}\leq - \delta \Vert (\mathbf{I}-\mathbf{P})g \Vert_{H^{s*}_v}^2,
\end{align*}
which, together with Cauchy-Schwarz inequality, yields
\begin{align}\label{hardg20}
	&\frac{1}{2}\|\widehat{g_2}(\xi)\|_{L^\infty_TL^2_v}+\de\Big(\int^T_0\|\{\mathbf{I}-\mathbf{P}\}\widehat{g_2}(t,\xi)\|_{H^{s*}_v}dt\Big)^\frac{1}{2}\notag\\
	\leq& \Big(\int^T_0\rmre(\CL_B\widehat{g_1},\widehat{g_2})(t,\xi)dt\Big)^\frac{1}{2}+\Big(\int^T_0\rmre(\hat{\Ga}(\widehat{g_2},\widehat{g_2}),\widehat{g_2})(t,\xi)dt\Big)^\frac{1}{2}.
\end{align}
By the definition of $\CL_B$ in \eqref{defLB}, one has
\begin{align}\label{LBg1g2}
	\Big(\int^T_0\rmre(\CL_B\widehat{g_1},\widehat{g_2})(t,\xi)dt\Big)^\frac{1}{2}\leq&\Big(\int^T_0\int_{\R^3}\mu^{-1/2}(v)A\chi_M|\widehat{g_1}(t,\xi,v)\widehat{g_2}(t,\xi,v)|dvdt\Big)^\frac{1}{2}\notag\\
	\leq& C_{A,M}\Big(\int^T_0\|\widehat{g_1}(t,\xi)\|_{L^2_v}\|\widehat{g_2}(t,\xi)\|_{L^2_v}dt\Big)^\frac{1}{2}\notag\\
	\leq& C_{A,M}\|\widehat{g_2}(\xi)\|^\frac{1}{2}_{L^\infty_TL^2_v}\Big(\int^T_0\|\widehat{g_1}(t,\xi)\|_{L^2_v}dt\Big)^\frac{1}{2}.
\end{align}
Using \eqref{lem.ibn1} and Cauchy-Schwarz inequality, we have
\begin{align}\label{3g20}
	&\Big(\int^T_0 \vert (\hat{\Gamma}(\widehat{g_2},\widehat{g_2}), \widehat{g_2})(t,\xi)\vert dt\Big)^{1/2}\notag \\
	\le& C\Big( \int^T_0 \big( \int_{\mathbb{R}^3} \Vert \widehat{g_2}(t,\xi-\ell) \Vert_{L^2_v} \Vert \widehat{g_2}(t,\ell)\Vert_{H^{s*}_v} d\ell\big)^2dt \Big)^{1/4} \big( \int^T_0 \Vert (\mathbf{I}-\mathbf{P}) \widehat{g_2}(t,\xi)\Vert_{H^{s*}_v}^2 dt\big)^{1/4}\notag\\
	\le& \eta \Big(\int^T_0 \Vert (\mathbf{I}-\mathbf{P}) \widehat{g_2}(t,\xi)\Vert_{H^{s*}_v}^2 dt \Big)^{1/2}\notag\\
	&\quad+C_{\eta} \Big( \int^T_0 \Big( \int_{\mathbb{R}^3} \Vert \widehat{g_2}(t,\xi-\ell) \Vert_{L^2_v} \Vert \widehat{g_2}(t,\ell)\Vert_{H^{s*}_v} d\ell\Big)^2dt \Big)^{1/2}.
\end{align}
We further use the Minkowski  inequality and H\"older's inequality to get
\begin{align}\label{ga3g2}
	&\Big(\int^T_0 \vert (\hat{\Gamma}(\widehat{g_2},\widehat{g_2}), \widehat{g_2})(t,\xi)\vert dt\Big)^{1/2}\notag \\
	&\le \eta \Big(\int^T_0 \Vert (\mathbf{I}-\mathbf{P}) \widehat{g_2}(t,\xi)\Vert_{H^{s*}_v}^2 dt \Big)^{1/2}+C_{\eta} \int_{\R^3} \|\widehat{g_2}(t,\xi-\ell)\|_{L^\infty_TL^2_v}\|\widehat{g_2}(t,\ell)\|_{L^2_TH^{s*}_v}\,d\ell.
\end{align}
Then by substituting \eqref{LBg1g2} and \eqref{ga3g2} into \eqref{hardg20} and choosing $\eta$ to be small enough, one gets
\begin{align}\label{g23}
	&\|\widehat{g_2}(\xi)\|_{L^\infty_TL^2_v}+\|\{\mathbf{I}-\mathbf{P}\}\widehat{g_2}(\xi)\|_{L^2_TH^{s*}_v}\notag\\
	\leq& C_{A,M}\|\widehat{g_2}(\xi)\|^\frac{1}{2}_{L^\infty_TL^2_v}\Big(\int^T_0\|\widehat{g_1}(t,\xi)\|_{L^2_v}dt\Big)^\frac{1}{2}\notag\\
	&\quad+C\int_{\R^3} \|\widehat{g_2}(t,\xi-\ell)\|_{L^\infty_TL^2_v}\|\widehat{g_2}(t,\ell)\|_{L^2_TH^{s*}_v}\,d\ell.
\end{align}
It holds by taking integral over $\R^3_\xi$ and using Cauchy-Schwarz inequality that
	\begin{align}\label{g21}
	&\Vert \widehat{g_2}\Vert_{L^1_\xi L^\infty_T L^2_v}+\Vert (\mathbf{I}-\mathbf{P})\widehat{g_2} \Vert_{L^1_\xi L^2_T H^{s*}_v}\notag\\
	\le& C_{A,M}\int_{\R^3}\|\widehat{g_2}(\xi)\|^\frac{1}{2}_{L^\infty_TL^2_v}\Big(\int^T_0\|\widehat{g_1}(t,\xi)\|_{L^2_v}dt\Big)^\frac{1}{2}d\xi+C\Vert \widehat{g_2}\Vert_{L^1_\xi L^\infty_T L^2_v} \Vert \widehat{g_2}\Vert_{L^1_\xi L^2_T H^{s*}_v}\notag\\
	\le &C_{A,M}\Vert \widehat{g_2}\Vert_{L^1_\xi L^\infty_T L^2_v}\Big(\int_{\R^3}\int^T_0\|\widehat{g_1}(t,\xi)\|_{L^2_v}dtd\xi\Big)^\frac{1}{2}+C\Vert \widehat{g_2}\Vert_{L^1_\xi L^\infty_T L^2_v} \Vert \widehat{g_2}\Vert_{L^1_\xi L^2_T H^{s*}_v}.
\end{align}
Applying Cauchy-Schwarz inequality as in \eqref{txiinterpo}, one has
\begin{align}\label{g22}
	\Big(\int_{\R^3}\int^T_0\|\widehat{g_1}(t,\xi)\|^2_{L^2_v}dtd\xi\Big)^\frac{1}{2}\leq& C\Big(\int^T_0\int_{\R^3}(1+t)^\rho \langle\xi\rangle^{3+}\|\widehat{g_1}(t,\xi)\|^2_{L^2_v}d\xi dt\Big)^\frac{1}{4}\notag\\
	\leq& C\Big(\int^T_0\|(1+t)^\rho g_1(t)\|^2_{H^2_xL^2_v}dt\Big)^\frac{1}{4}.
\end{align}
Then we obtain from \eqref{g21}, \eqref{g22} and the definition of $X_k^*$ in \eqref{X*k} that
\begin{align}\label{g2l1}
	&\Vert \widehat{g_2}\Vert_{L^1_\xi L^\infty_T L^2_v}+\Vert (\mathbf{I}-\mathbf{P})\widehat{g_2} \Vert_{L^1_\xi L^2_T H^{s*}_v}\notag\\
	&\le C_{A,M}\Vert \widehat{g_2}\Vert^\frac{1}{2}_{L^1_\xi L^\infty_T L^2_v}\Big(\int^T_0\|(1+t)^\rho g_1(t)\|^2_{X^*_{8-\ga/2}}dt\Big)^\frac{1}{4}+C\Vert \widehat{g_2}\Vert_{L^1_\xi L^\infty_T L^2_v} \Vert \widehat{g_2}\Vert_{L^1_\xi L^2_T H^{s*}_v}.
\end{align}
Since $\si>1$, then 
\begin{align}\label{g2L1xi}
\Vert \widehat{g_2}\Vert_{L^1_\xi L^2_T H^{s*}_v}\leq& \Vert (\mathbf{I}-\mathbf{P})\widehat{g_2}\Vert_{L^1_\xi L^2_T H^{s*}_v}+C \|(\hat{a},\hat{b},\hat{c})\|_{L^1_\xi L^2_T}\notag\\
\leq& \|(\mathbf{I}-\mathbf{P})\widehat{g_2}\Vert_{L^1_\xi L^2_T H^{s*}_v}+C\int_{\R^3} \sup_{0<t<T}(1+t)^{\frac{\si}{2}}|{(\hat{a},\hat{b},\hat{c})}(t,\xi)|\,d\xi\notag\\
\leq&   \|(\mathbf{I}-\mathbf{P})\widehat{g_2}\Vert_{L^1_\xi L^2_T H^{s*}_v}+C\Vert(1+t)^{\sigma/2} \widehat{g_2}\Vert_{L^1_\xi L^\infty_T L^2_v}.
\end{align}
We deduce \eqref{ineq: L^1_k micro a priori} from \eqref{g2l1} and \eqref{g2L1xi}.

By the fact that 
	\begin{equation}\notag
		%\label{lem.map.a1}
		\left\|\|\widehat{g_2}\|_{L^\infty_TL^2_v}\ast_\xi\|\widehat{g_2}\|_{L^2_TH^{s*}_v}\right\|_{L^p(\R^3_\xi)}\leq \Vert \widehat{g_2}\Vert_{L^p_\xi L^\infty_T L^2_v} \Vert \widehat{g_2}\Vert_{L^1_\xi L^2_T H^{s*}_v},
	\end{equation}
similar arguments as in \eqref{LBg1g2}, \eqref{3g20}, \eqref{ga3g2} and \eqref{g23} show that
\begin{align}\label{g2Lp1}
	&\Vert \widehat{g_2}\Vert_{L^p_\xi L^\infty_T L^2_v}+\Vert (\mathbf{I}-\mathbf{P})\widehat{g_2} \Vert_{L^p_\xi L^2_T H^{s*}_v}\notag\\
	&\le C_{A,M}\Big(\int_{\R^3}\big(\int^T_0\|\widehat{g_1}(t,\xi)\|_{L^2_v}\|\widehat{g_2}(t,\xi)\|_{L^2_v}dt\big)^\frac{p}{2}d\xi\Big)^\frac{1}{p}+C\Vert \widehat{g_2}\Vert_{L^p_\xi L^\infty_T L^2_v} \Vert \widehat{g_2}\Vert_{L^1_\xi L^2_T H^{s*}_v}.
\end{align}
A direct application of Cauchy-Schwarz inequality gives
\begin{align}\label{g2Lp2}
	&\Big(\int_{\R^3}\big(\int^T_0\|\widehat{g_1}(t,\xi)\|_{L^2_v}\|\widehat{g_2}(t,\xi)\|_{L^2_v}dt\big)^\frac{p}{2}d\xi\Big)^\frac{1}{p}\notag\\
	\leq& C\Big(\int_{\R^3}\|\widehat{g_2}(\xi)\|^\frac{p}{2}_{L^\infty_TL^2_v}\big(\int^T_0\|(1+t)^\rho \widehat{g_1}(t,\xi)\|^2_{L^2_v}dt\big)^\frac{p}{4}d\xi\Big)^\frac{1}{p}\notag\\
	\leq& C\Vert \widehat{g_2}\Vert^\frac{1}{2}_{L^p_\xi L^\infty_T L^2_v}\Vert (1+t)^\rho\widehat{g_1} \Vert^\frac{1}{2}_{L^p_\xi L^2_TL^2_v},
\end{align}
for any $\rho>1$.
We have from \eqref{g2Lp1} and \eqref{g2Lp2} that
\begin{align}\label{g2Lp}
	\Vert \widehat{g_2}\Vert_{L^p_\xi L^\infty_T L^2_v}+\Vert (\mathbf{I}-\mathbf{P})\widehat{g_2} \Vert_{L^p_\xi L^2_T H^{s*}_v}
	&\le C_{A,M}\Vert \widehat{g_2}\Vert^\frac{1}{2}_{L^p_\xi L^\infty_T L^2_v}\Vert (1+t)^\rho g_1 \Vert^\frac{1}{2}_{L^p_\xi L^2_TL^2_v}\notag\\
	&\qquad+C\Vert \widehat{g_2}\Vert_{L^p_\xi L^\infty_T L^2_v} \Vert \widehat{g_2}\Vert_{L^1_\xi L^2_T H^{s*}_v}. 
\end{align}
Then \eqref{ineq: L^infty_k micro a priori} follows from \eqref{g2L1xi} and \eqref{g2Lp}. Hence, the proof of Lemma \ref{lem: micro a priori} is complete.
\end{proof}
From \eqref{ineq: L^1_k micro a priori} and \eqref{ineq: L^infty_k micro a priori}, one notices in order to close the apriori estimate, we should control the term $\Vert  (1+t)^{\sigma/2} \widehat{g_2}\Vert_{L^1_\xi L^\infty_T L^2_v}$. As we will see later, when we estimate $\Vert  (1+t)^{\sigma/2} \widehat{g_2}\Vert_{L^1_\xi L^\infty_T L^2_v}$, we will further need to bound $\Big\Vert  \frac{\vert \nabla_x \vert}{\langle \nabla_x \rangle } (\hat{a},\hat{b},\hat{c}) \Big\Vert_{L^p_\xi L^2_T}$. Therefore, we should estimate the macroscopic term before we turn to the time-weighted term. Then it is necessary for us to study the problem
\begin{align}\label{eq: linearized BE}
	\begin{cases}
		\partial_t f +v\cdot \nabla_x f -Lf=H,\\
		f(0,x,v)=f_0(x,v).
	\end{cases}
\end{align}
In the symmetric case, it is easily seen that we should take $H=\Gamma(f,f)$ when we turn to the nonlinear problem. Thus, $H$ should be microscopic. However, when we use \eqref{eq: linearized BE} to study $g_2$ which satisfies \eqref{g2}, we should let $H=\CL_B g_1+\Ga(g_2,g_2)$, which is no longer purely microscopic. Then the way we study the equation \eqref{eq: linearized BE} is slightly different from \cite{D,DSY} that we remove the condition $\mathbf{P}H\equiv 0$.

\begin{lemma}\label{lem: macro}
	Let $\max\{-3,-3/2-2s\}<\ga\leq1$, $0<s<1$, $1\le p\leq \infty$ and $f$ be a solution to \eqref{eq: linearized BE} with an inhomogeneous term $H=H(t,x,v)$. Then it holds that
	\begin{align}\label{macro}
		\Big\Vert \frac{\vert \nabla_x \vert}{\langle \nabla_x \rangle } (\hat{a}^f,\hat{b}^f,\hat{c}^f) \Big\Vert_{L^p_\xi L^2_T} 
		&\le C \Vert \hat{f}_0 \Vert_{L^p_\xi L^2_v} + \Vert \hat{f}\Vert_{L^p_\xi L^\infty_T L^2_v} + \Vert (\mathbf{I}-\mathbf{P})\hat{f}\Vert_{L^p_\xi L^2_T H^{s*}_v} \notag\\
		&\quad+\Big(\int_{\mathbb{R}^3} \Big(\int^T_0 \frac{1}{1+\vert \xi\vert^2}\vert (\widehat{H}, \mu^{1/4})_{L^2_v}\vert^2 dt \Big)^\frac{p}{2} d\xi\Big)^\frac{1}{p}, 
	\end{align}
for any $T>0$.
\end{lemma}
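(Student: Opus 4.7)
The plan is to derive a macroscopic dissipation estimate in Fourier variables, following the strategy developed for the linearized non-cutoff Boltzmann equation in \cite{DSY}. The key observation is that although the symmetric operator $L$ has a five-dimensional kernel spanned by $\{\mu^{1/2}, v_i\mu^{1/2}, (|v|^2-3)\mu^{1/2}\}$, the interplay between the transport operator $iv\cdot\xi$ and the moment structure of $\mathbf{P}\hat{f}$ produces a macroscopic coercivity of order $|\xi|/\langle\xi\rangle$ once one tests the Fourier-transformed equation against carefully chosen velocity functions.

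First, I would take the Fourier transform of \eqref{eq: linearized BE} in $x$ to obtain
\begin{align*}
\partial_t \hat{f} + iv\cdot\xi \hat{f} - L\hat{f} = \widehat{H},
\end{align*}
and project against the five moments $\mu^{1/2}$, $v_i\mu^{1/2}$, $\frac{1}{6}(|v|^2-3)\mu^{1/2}$ to derive local conservation-law-like equations for $(\hat{a}^f,\hat{b}^f,\hat{c}^f)$. These equations couple the macroscopic fields to one another through $i\xi$ and to high-order moments of $(\mathbf{I}-\mathbf{P})\hat{f}$, and also pick up source terms from $\widehat{H}$. Since $\mu^{1/4}$ dominates every polynomial-times-$\sqrt{\mu}$ element of the range of $\mathbf{P}$, each of these source contributions is pointwise controlled by $|(\widehat{H}, \mu^{1/4})_{L^2_v}|$; this is precisely the point at which we exploit the author's weakening of the usual assumption $\mathbf{P}H\equiv 0$.

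Second, I would introduce auxiliary test functions $\Psi^a, \Psi^b_{ij}, \Psi^c_i$ of Guo/Duan--Sakamoto--Yang type (polynomials in $v$ times $\sqrt\mu$, combined with the Fourier multiplier $i\xi_j/(1+|\xi|^2)$), pair each with the Fourier-transformed equation, integrate in $v$, and sum in a manner that produces $|\xi|^2/(1+|\xi|^2)\cdot|(\hat{a}^f,\hat{b}^f,\hat{c}^f)|^2$ on the left-hand side after cancellation. Integrating in $t\in[0,T]$, the boundary-in-$t$ contributions are bounded by $\|\hat{f}_0(\xi)\|_{L^2_v}^2 + \|\hat{f}(\xi)\|_{L^\infty_T L^2_v}^2$; the pairings with $L\hat{f}$ and the microscopic terms generated by the transport produce $\|(\mathbf{I}-\mathbf{P})\hat{f}(\xi)\|_{L^2_T H^{s*}_v}^2$; and the source pairing yields exactly $\int_0^T \frac{1}{1+|\xi|^2}|(\widehat{H},\mu^{1/4})_{L^2_v}|^2\,dt$, the $(1+|\xi|^2)^{-1}$ weight arising from the $\xi/(1+|\xi|^2)$ factor carried by the test functions.

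Taking square roots pointwise in $\xi$ and then the $L^p_\xi$ norm via Minkowski's inequality yields \eqref{macro}. The main technical obstacle will be the bookkeeping of the $1/(1+|\xi|^2)$ weight: the test functions must be designed so that their transport pairing against $\mathbf{P}\hat{f}$ reproduces precisely $|\xi|^2/(1+|\xi|^2)\cdot|(\hat{a}^f,\hat{b}^f,\hat{c}^f)|^2$, matching the low-frequency degeneracy of the hydrodynamic dissipation while simultaneously absorbing the non-microscopic part of $\widehat{H}$ through the $\mu^{1/4}$-weighted bound. A secondary point is that the non-cutoff dissipation $H^{s*}_v$ only provides a fractional gain in $v$, so in treating the interaction with $L\hat{f}$ one needs a duality step to pair the singular kernel against the Schwartz test functions; because the test functions decay rapidly in $v$, this step is safe and the microscopic remainder closes.
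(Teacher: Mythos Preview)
Your proposal is correct and follows essentially the same approach as the paper: derive the fluid-type moment system in Fourier variables, test the higher-moment equations $\Lambda_j$, $\Theta_{jm}$ and the $b$-equation against multipliers of the form $i\xi_j\,\hat c^f/(1+|\xi|^2)$, $i\xi_j\,\hat b^f_m/(1+|\xi|^2)$, $i\xi\,\hat a^f/(1+|\xi|^2)$ respectively, integrate in $t$, and close by a hierarchical linear combination (first $c$, then $b$, then $a$) with small parameters before taking the $L^p_\xi$ norm. The only cosmetic difference is that the paper writes out the five-equation moment system explicitly and works inside it, whereas you phrase the same computation as ``testing the full equation against compound test functions $\Psi^a,\Psi^b,\Psi^c$''; the content is identical.
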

\begin{proof}
	Taking inner product of the first equation of \eqref{eq: linearized BE} with the $5$ velocity moments
	\begin{align*}
		\mu^{\frac{1}{2}}, 
		v_j\mu^{\frac{1}{2}}, 
		\frac{1}{6}(|v|^2-3)\mu^{\frac{1}{2}},
		(v_j{v_m}-1)\mu^{\frac{1}{2}}, 
		\frac{1}{10}(|v|^2-5)v_j \mu^{\frac{1}{2}}
	\end{align*}
	with {$1\leq j,m\leq 3$} for the first equation of \eqref{eq: linearized BE}, we obtain the fluid-type system 
	\begin{align}\notag%\label{mac.law}
		\begin{cases}
			\partial_t a^f +\nabla_x b^f=(\mu^{1/2},H),\\
			\partial_t b^f +\nabla_x (a^f+2c^f)+\nabla_x\cdot \Theta ((\mathbf{I}-\mathbf{P}) f)=(v\mu^{1/2},H),\\
			\partial_t c^f +\frac{1}{3}\nabla_x\cdot b^f +\frac{1}{6}\nabla_x\cdot \Lambda ((\mathbf{I}-\mathbf{P}) f)=(\frac{1}{6}(|v|^2-3)\mu^{1/2},H),\\
			\partial_t[\Theta_{jm}((\mathbf{I}-\mathbf{P}) f)+2c^f\delta_{ jm}]+\partial_j b^f_m+\partial_m b^f_j=\Theta_{jm}(\mathbbm{r}+\mathbbm{h}),\\
			\partial_t \Lambda_j((\mathbf{I}-\mathbf{P}) f)+\partial_j c^f = \Lambda_j(\mathbbm{r}+\mathbbm{h}),
		\end{cases}
	\end{align}
	where $\Theta_{jm} (f)=((v_jv_m-1)\mu^{1/2},f)$, $\Theta(f)=(\Theta_{jm}(f))_{1\le j,m\le 3}$, $\Lambda_i(f)=\frac{1}{10}((\vert v\vert^2-5)v_j \mu^{1/2},f)$, $\Lambda(f)=(\Lambda_j (f))_{1\le j\le 3}$ and
	\begin{align*}
		\mathbbm{r}= -v\cdot \nabla_x (\mathbf{I}-\mathbf{P})f,\ \mathbbm{h}=-L (\mathbf{I}-\mathbf{P})f+H.
	\end{align*}
	It is direct to see that
	\begin{align*}
		\vert \Theta(f)\vert, \vert \Lambda (f)\vert\le C\min\{\Vert f\Vert_{L^2_v}, \Vert f\Vert_{H^{s*}_v}\}.
	\end{align*}
	
Let $\hat{a}^f$, $\hat{b}^f:=(\hat{b}_1^f,\hat{b}_2^f,\hat{b}_3^f)$, $\hat{c}^f$ and $\hat{f}$ denote the Fourier transformation of $a^f,b^f,c^f$ and $f$ respectively, then we rewrite the above system in terms of $\hat{a}^f$, $\hat{b}^f$, $\hat{c}^f$ and $\hat{f}$:	
	\begin{align}\label{mac.law.fourier}
		\begin{cases}
			\partial_t \hat{a}^f +i\xi\cdot \hat{b}^f=(\mu^{1/2},\widehat{H}),\\
			\partial_t \hat{b}^f +i\xi  (\hat{a}^f+2\hat{c}^f)+i\xi\cdot \Theta ((\mathbf{I}-\mathbf{P}) \hat{f})=(v\mu^{1/2},\widehat{H}),\\
			\partial_t \hat{c}^f +\frac{1}{3}i\xi\cdot \hat{b}^f +\frac{1}{6}i\xi \cdot \Lambda ((\mathbf{I}-\mathbf{P}) \hat{f})=(\frac{1}{6}(|v|^2-3)\mu^{1/2},\widehat{H}),\\
			\partial_t[\Theta_{jm}((\mathbf{I}-\mathbf{P}) \hat{f})+2\hat{c}^f\delta_{ jm}]+i\xi_j \hat{b}^f_m+i\xi_m \hat{b}^f_j=\Theta_{jm}(\hat{\mathbbm{r}}+\hat{\mathbbm{h}}),\\
			\partial_t \Lambda_j((\mathbf{I}-\mathbf{P}) \hat{f})+i\xi_j \hat{c}^f = \Lambda_j(\hat{\mathbbm{r}}+\hat{\mathbbm{h}}).
		\end{cases}
	\end{align}
From the fifth equation of \eqref{mac.law.fourier}, we integrate by parts to get
\begin{align}
	\Big(\partial_t \Lambda_j ((\mathbf{I}-\mathbf{P})\hat{f}), \frac{i\xi_j \hat{c}^f}{1+\vert \xi\vert^2} \Big)+\frac{\xi_j^2}{1+\vert \xi\vert^2} \vert \hat{c}^f\vert^2 =\Big(\Lambda_j (\hat{\mathbbm{r}}+\hat{\mathbbm{h}}), \frac{i\xi_j \hat{c}^f}{1+\vert \xi\vert^2} \Big).\label{ap.adp1}
\end{align}
Combining \eqref{ap.adp1} and the third equation of \eqref{mac.law.fourier} gives
\begin{align*}
	&\partial_t \Big(\Lambda_j ((\mathbf{I}-\mathbf{P})\hat{f}), \frac{i\xi_j \hat{c}^f}{1+\vert \xi\vert^2}\Big)
	 +\frac{\xi_j^2}{1+\vert \xi\vert^2} \vert \hat{c}^f\vert^2\\
	=&\Big(\Lambda_j ((\mathbf{I}-\mathbf{P})\hat{f}), \frac{\xi_j}{1+\vert \xi\vert^2}\Big[ \frac{1}{3}\xi\cdot \hat{b}^f+\frac{1}{6}\xi\cdot \Lambda((\mathbf{I}-\mathbf{P})\hat{f}-(\frac{1}{6}(|v|^2-3)\mu^{1/2},\widehat{H}))\Big]\Big)\notag\\
	&+\Big(\Lambda_j (\hat{\mathbbm{r}}+\hat{\mathbbm{h}}), \frac{i\xi_j \hat{c}^f}{1+\vert \xi\vert^2}\Big).
\end{align*}
Integrating the above equation over $[0,T]$, one has
\begin{align*}
	&\int^T_0 \frac{\xi_j^2}{1+\vert \xi\vert^2}\vert \hat{c}^f\vert^2 dt \notag\\
	 \le& C\Vert \hat{f}_0\Vert^2_{L^2_v}+ C\Vert  \hat{f} \Vert^2_{L^\infty_T L^2_v}  +\ka_1^2\int^T_0 \frac{\vert \xi\vert^2}{1+\vert \xi\vert^2}(\vert \hat{b}^f\vert^2+\vert \hat{c}^f\vert^2)dt\\
	& +C_{\ka_1}\int^T_0 \frac{\vert \Lambda_j((\mathbf{I}-\mathbf{P})\hat{f})\vert^2}{1+\vert \xi\vert^2}dt+C\int^T_0 \frac{\vert \xi\vert^2}{1+\vert \xi\vert^2}\vert \Lambda ((\mathbf{I}-\mathbf{P})\hat{f})\vert^2dt\\
	&+\int^T_0 \frac{1}{1+\vert \xi\vert^2}\vert (\frac{1}{6}(|v|^2-3)\mu^{1/2},\widehat{H})\vert^2dt+ C_{\ka_1}\int^T_0 \frac{\vert \Lambda_j(\hat{\mathbbm{r}}+\hat{\mathbbm{h}})\vert^2}{1+\vert \xi\vert^2} dt,
\end{align*}
where $\ka_1>0$ is a small constant which will be chosen later.
We take the summation of $j=1$, $2$, $3$  to get
\begin{align*}%\label{macro.c}
	&\frac{\vert \xi\vert }{\sqrt{1+\vert \xi\vert^2}} \Big(\int^T_0 \vert \hat{c}^f\vert^2 dt\Big)^{1/2} \notag\\
	\le& C\Vert \hat{f}_0\Vert_{L^2_v}+ C\Vert  \hat{f} \Vert_{L^\infty_T L^2_v}\notag\\
	& +C\ka_1 \frac{\vert \xi\vert}{\sqrt{1+\vert \xi\vert^2}}\Big[ \Big( \int^T_0 \vert \hat{b}^f\vert^2dt\Big)^{1/2}+ \Big(\int^T_0 \vert \hat{c}^f\vert^2 dt\Big)^{1/2}\Big]\notag\\
	& +C_{\ka_1} \Big(\int^T_0 \frac{\vert \Lambda((\mathbf{I}-\mathbf{P})\hat{f})\vert^2}{1+\vert \xi\vert^2}dt \Big)^{1/2}+C\frac{\vert \xi\vert}{\sqrt{1+\vert \xi\vert^2}} \Vert (\mathbf{I}-\mathbf{P})\hat{f}\Vert_{L^2_T H^{s*}_v} \notag \\
	&+\Big(\int^T_0 \frac{1}{1+\vert \xi\vert^2}\vert (\widehat{H}, \mu^{1/4})\vert^2 dt \Big)^{1/2} +C_{\ka_1}  \Big( \int^T_0 \frac{\vert \Lambda(\hat{\mathbbm{r}}+\hat{\mathbbm{h}})\vert^2}{1+\vert \xi\vert^2} dt\Big)^{1/2}.
\end{align*}
Then by the fact that
\begin{align*}
	&\Big(\int^T_0 \frac{\vert \Lambda(\hat{\mathbbm{r}}+\hat{\mathbbm{h}})\vert^2}{1+\vert \xi\vert^2}dt +\int^T_0 \frac{\vert \Theta(\hat{\mathbbm{r}}+\hat{\mathbbm{h}})\vert^2}{1+\vert \xi\vert^2}dt \Big)^{1/2}\notag\\\le& C \Vert (\mathbf{I}-\mathbf{P})f\Vert_{ L^2_T H^{s*}_v}+C \Big(\int^T_0 \frac{\vert (\widehat{H}, \mu^{1/4})\vert^2}{1+\vert \xi\vert^2}dt \Big)^{1/2},
\end{align*}
it holds
\begin{align}\label{macro.c}
	&\frac{\vert \xi\vert }{\sqrt{1+\vert \xi\vert^2}} \Big(\int^T_0 \vert \hat{c}^f\vert^2 dt\Big)^{1/2} \notag\\
	\le& C\Vert \hat{f}_0\Vert_{L^2_v}+ C\Vert  \hat{f} \Vert_{L^\infty_T L^2_v} +C\ka_1 \frac{\vert \xi\vert}{\sqrt{1+\vert \xi\vert^2}} \Big[\Big( \int^T_0 \vert \hat{b}^f\vert^2dt\Big)^{1/2}+\Big(\int^T_0 \vert \hat{c}^f\vert^2 dt\Big)^{1/2}\Big]\notag\\
	& +C_{\ka_1} \Vert (\mathbf{I}-\mathbf{P})\hat{f}\Vert_{L^2_T H^{s*}_v}+C_{\ka_1}\Big(\int^T_0 \frac{1}{1+\vert \xi\vert^2}\vert (\widehat{H}, \mu^{1/4})\vert^2 dt \Big)^{1/2}.
\end{align}
Similar arguments show that
\begin{align}\label{macro.b}
	&\frac{\vert \xi\vert}{\sqrt{1+\vert \xi\vert^2}}\Big(\int^T_0 \vert \hat{b}^f\vert^2dt \Big)^{1/2} 
	 \notag   \\\le& C\Vert \hat{f}_0\Vert_{L^2_v}+C\Vert  \hat{f} \Vert_{L^\infty_T L^2_v}+C\ka_2 \frac{\vert \xi\vert}{\sqrt{1+\vert \xi\vert^2}} \Big[\Big( \int^T_0 \vert \hat{a}^f\vert^2dt\Big)^{1/2}+\Big(\int^T_0 \vert \hat{b}^f\vert^2 dt\Big)^{1/2}\Big]   \notag   \\
	 & +C_{\ka_2} \Vert (\mathbf{I}-\mathbf{P})\hat{f}\Vert_{L^2_T H^{s*}_v}+C_{\ka_2}\Big(\int^T_0 \frac{1}{1+\vert \xi\vert^2}\vert (\widehat{H}, \mu^{1/4})\vert^2 dt \Big)^{1/2}\notag\\
	 &+C_{\ka_2}\frac{\vert \xi\vert}{\sqrt{1+\vert \xi\vert^2}} \Big(\int^T_0 \vert \hat{c}^f\vert^2dt \Big)^{1/2},
\end{align}
and 
\begin{align}\label{macro.a}
	&\frac{\vert \xi\vert}{\sqrt{1+\vert \xi\vert^2}}\Big( \int^T_0 \vert \hat{a}^f\vert^2dt\Big)^{1/2}\notag\\ 
	\le& C\Vert \hat{f}_0\Vert_{L^2_v} +C\Vert  \hat{f} \Vert_{L^\infty_T L^2_v}+C \frac{\vert \xi\vert}{\sqrt{1+\vert \xi\vert^2}} \Big[\Big( \int^T_0 \vert \hat{b}^f\vert^2dt\Big)^{1/2}+\Big(\int^T_0 \vert \hat{c}^f\vert^2 dt\Big)^{1/2}\Big] \notag \\
	& +C \Vert (\mathbf{I}-\mathbf{P})\hat{f}\Vert_{L^2_T H^{s*}_v}.
\end{align}
Hence, by a linear combination of \eqref{macro.c}, \eqref{macro.b} and \eqref{macro.a}, then choosing $\ka_1$ and $\ka_2$ to be small enough and taking $L^p_\xi$ norm, we obtain \eqref{macro}.
\end{proof}

 Notice that our study of macroscopic quantities is valid for $\max\{-3,-3/2-2s\}<\ga\leq1$ and will be used later when we estimate the soft potentials. Substituting $f=g_2$ and $H=\CL_B g_1+\Ga(g_2,g_2)$ into \eqref{macro}, we obtain the following result.
\begin{lemma}\label{lem: macro a priori}
	Let $\max\{-3,-3/2-2s\}<\ga\leq1$ and $1\le p\leq \infty$.
	Let $g_2$ be a solution to \eqref{hatg2} with an inhomogeneous term $H=H(t,x,v)$. Recalling $A$ and $M$ are two constants in the definition of $L_B$ in \eqref{defLB}. Then it holds that
	\begin{align}\label{lowmacro}
		\Big\Vert \frac{\vert \nabla_x \vert}{\langle \nabla_x \rangle } (\hat{a},\hat{b},\hat{c}) \Big\Vert_{L^p_\xi L^2_T} 
		&\le \Vert \widehat{g_2}\Vert_{L^p_\xi L^\infty_T L^2_v} + \Vert (\mathbf{I}-\mathbf{P})\widehat{g_2}\Vert_{L^p_\xi L^2_T H^{s*}_v} \notag\\
		&\quad+C_{A,M}\|\widehat{g_1}\|_{L^p_\xi L^2_T L^2_v}+C \Vert \widehat{g_2}\Vert_{L^p_\xi L^\infty_T L^2_v}\Vert \widehat{g_2}\Vert_{L^1_\xi L^2_T H^{s*}_v}, 
	\end{align}
	for any $T>0$, where $C_{A,M}$ depends only on $A$ and $M$.
\end{lemma}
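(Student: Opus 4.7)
The plan is to apply Lemma \ref{lem: macro} to $f = g_2$ with forcing $H = \CL_B g_1 + \Ga(g_2, g_2)$, which is exactly the equation \eqref{hatg2} satisfied by $\widehat{g_2}$. Since the initial data $g_2(0,x,v) \equiv 0$, the $\|\hat{f}_0\|_{L^p_\xi L^2_v}$ term drops, and the first two terms on the right of \eqref{macro} directly produce the first two terms on the right of \eqref{lowmacro}. It remains to control
\begin{equation*}
\mathcal{J} := \Big(\int_{\R^3}\Big(\int_0^T \frac{|(\widehat{H}(t,\xi),\mu^{1/4})_{L^2_v}|^2}{1+|\xi|^2} dt\Big)^{p/2} d\xi\Big)^{1/p}
\end{equation*}
by $C_{A,M}\|\widehat{g_1}\|_{L^p_\xi L^2_T L^2_v}+C\|\widehat{g_2}\|_{L^p_\xi L^\infty_T L^2_v}\|\widehat{g_2}\|_{L^1_\xi L^2_T H^{s*}_v}$, which splits naturally into a linear and a nonlinear contribution.

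For the $\CL_B g_1$ piece, I would use the definition \eqref{defLB} to write $\CL_B g_1 = A\mu^{-1/2}\chi_M g_1$. Since $\chi_M \mu^{-1/4}$ is bounded and compactly supported, Cauchy--Schwarz in $v$ yields $|(\widehat{\CL_B g_1}(\xi), \mu^{1/4})_{L^2_v}| \le C_{A,M}\|\widehat{g_1}(\xi)\|_{L^2_v}$. Dropping the harmless weight $\frac{1}{1+|\xi|^2}\le 1$, integrating in $t$, and taking $L^p_\xi$ then yields the desired bound $C_{A,M}\|\widehat{g_1}\|_{L^p_\xi L^2_T L^2_v}$.

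For the nonlinear piece, I would invoke \eqref{lem.ibn1} with the constant-in-$\xi$ test function $\widehat{H}(\xi,v) \equiv \mu^{1/4}(v)$. Since $\mu^{1/4}$ is Schwartz and $\mathbf{P}$ has finite rank, $\|(\mathbf{I}-\mathbf{P})\mu^{1/4}\|_{H^{s*}_v}$ is a finite absolute constant, so
\begin{equation*}
|(\hat{\Ga}(\widehat{g_2},\widehat{g_2})(t,\xi),\mu^{1/4})| \le C\int_{\R^3}\|\widehat{g_2}(t,\xi-\ell)\|_{L^2_v}\|\widehat{g_2}(t,\ell)\|_{H^{s*}_v}\,d\ell.
\end{equation*}
Squaring, integrating in $t$, applying Minkowski, and using the crude bound $\|\widehat{g_2}(t,\xi-\ell)\|_{L^2_v}\le\|\widehat{g_2}(\xi-\ell)\|_{L^\infty_T L^2_v}$ give
\begin{equation*}
\Big(\int_0^T|(\hat{\Ga},\mu^{1/4})|^2 dt\Big)^{1/2}\le C\int_{\R^3}\|\widehat{g_2}(\xi-\ell)\|_{L^\infty_T L^2_v}\|\widehat{g_2}(\ell)\|_{L^2_T H^{s*}_v}\,d\ell,
\end{equation*}
and Young's convolution inequality in $\xi$ then produces $C\|\widehat{g_2}\|_{L^p_\xi L^\infty_T L^2_v}\|\widehat{g_2}\|_{L^1_\xi L^2_T H^{s*}_v}$, completing the estimate for $\mathcal{J}$. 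The only real care needed is to verify that Lemma \ref{lem: macro} applies with a non-purely-microscopic $H$ (which the statement explicitly allows, unlike the earlier treatments in \cite{D,DSY}) and to note that the factor $\frac{1}{1+|\xi|^2}\le 1$ is benign here since no spatial derivative gain is required of $\CL_B g_1$ or $\Ga(g_2,g_2)$ separately.
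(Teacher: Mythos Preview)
Your proposal is correct and follows essentially the same route as the paper: apply Lemma~\ref{lem: macro} with $f=g_2$, $H=\CL_B g_1+\Ga(g_2,g_2)$ and $f_0=0$, then bound the $\CL_B g_1$ contribution exactly as you do, and bound the $\Ga$ contribution by a convolution estimate followed by Young's inequality. The only cosmetic difference is that the paper quotes the ready-made inequality from \cite[Lemma~3.4]{DSY} for $(\hat\Ga(\hat f,\hat g),\phi)$ with Schwartz $\phi$, whereas you re-derive that special case directly from \eqref{lem.ibn1}; the arguments are otherwise identical.
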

\begin{proof}
	We have the inequality which is proved in \cite[Lemma 3.4]{DSY} that for any $\phi \in \mathcal{S}(\mathbb{R}^3_v)$ and $1\leq p\leq\infty$, there exists $C_\phi>0$ such that it holds
	\begin{align*}
		\Big\Vert  \Big( \int^T_0 \vert (\hat{\Gamma}(\hat{f},\hat{g}),\phi )\vert^2 dt \Big)^{1/2} \Big\Vert_{L^p_\xi}  \le C_\phi\Vert f \Vert_{L^p_\xi L^\infty_T L^2_v}  \Vert g\Vert_{L^1_\xi L^2_T H^{s*}_v},
	\end{align*}
	for any $T>0$.
	Then it is straightforward to get 
	\begin{align}\label{macroga}
		\Big(\int_{\mathbb{R}^3} \Big(\int^T_0 \frac{1}{1+\vert \xi\vert^2}\vert (\hat{\Gamma}(\widehat{g_2},\widehat{g_2}), \mu^{1/4})\vert^2 dt \Big)^{p/2} d\xi\Big)^{1/p}\leq C \Vert \widehat{g_2}\Vert_{L^p_\xi L^\infty_T L^2_v}\Vert \widehat{g_2}\Vert_{L^1_\xi L^2_T H^{s*}_v}.
	\end{align}
	By the definition of $\CL_B$ in \eqref{defLB}, one has
	\begin{align}\label{macrog1}
			&\Big(\int_{\mathbb{R}^3} \Big(\int^T_0 \frac{1}{1+\vert \xi\vert^2}\vert (\CL_B \widehat{g_1}, \mu^{1/4})\vert^2 dt \Big)^{p/2} d\xi\Big)^{1/p}\notag\\
			&\leq C_{A,M} \Big(\int_{\mathbb{R}^3} \Big(\int^T_0 \frac{1}{1+\vert \xi\vert^2} \|\widehat{g_1}\|_{L^2_v}^2 dt \Big)^{p/2} d\xi\Big)^{1/p}\notag\\
			&\leq C_{A,M}\|\widehat{g_1}\|_{L^p_\xi L^2_T L^2_v}.
	\end{align}
	Therefore, \eqref{lowmacro} follows from \eqref{macroga} and \eqref{macrog1}.
\end{proof}
Then the following lemma is a direct result from Lemma \ref{lem: micro a priori} and Lemma \ref{lem: macro a priori}.
\begin{lemma}\label{g2hard}
	Let $\max\{-3,-3/2-2s\}<\ga\leq1$, $p>1$ and $g_2$ be a solution to \eqref{hatg2}.
	There exists $C>0$ such that 
	\begin{align}
		&\Vert \widehat{g_2}\Vert_{L^1_\xi L^\infty_T L^2_v}+\Vert (\mathbf{I}-\mathbf{P})\widehat{g_2} \Vert_{L^1_\xi L^2_T H^{s*}_v}+\Big\Vert \frac{\vert \nabla_x \vert}{\langle \nabla_x \rangle } (\hat{a},\hat{b},\hat{c}) \Big\Vert_{L^1_\xi L^2_T} \notag\\
		\le &C_{A,M}\Vert \widehat{g_2}\Vert^\frac{1}{2}_{L^1_\xi L^\infty_T L^2_v}\big(\int^T_0\|(1+t)^\rho g_1(t)\|^2_{X^*_{8-\ga/2}}dt\big)^\frac{1}{4}+C\Vert \widehat{g_2}\Vert_{L^1_\xi L^\infty_T L^2_v}\Vert (\mathbf{I}-\mathbf{P})\widehat{g_2} \Vert_{L^1_\xi L^2_T H^{s*}_v}\notag\\
		&+C_{A,M}\|\widehat{g_1}\|_{L^1_\xi L^2_T L^2_v}+C\Vert \widehat{g_2}\Vert_{L^1_\xi L^\infty_T L^2_v}\Vert  (1+t)^{\sigma/2} \widehat{g_2}\Vert_{L^1_\xi L^\infty_T L^2_v},\label{g2hardl1}
	\end{align}
	and 
	\begin{align}
		&\Vert \widehat{g_2}\Vert_{L^p_\xi L^\infty_T L^2_v}+\Vert (\mathbf{I}-\mathbf{P})\widehat{g_2} \Vert_{L^p_\xi L^2_T H^{s*}_v}+\Big\Vert \frac{\vert \nabla_x \vert}{\langle \nabla_x \rangle } (\hat{a},\hat{b},\hat{c}) \Big\Vert_{L^p_\xi L^2_T} \notag\\
		\le &C_{A,M}\Vert \widehat{g_2}\Vert^\frac{1}{2}_{L^p_\xi L^\infty_T L^2_v}\Vert (1+t)^\rho\widehat{g_1} \Vert^\frac{1}{2}_{L^p_\xi L^2_TL^2_v}+C\Vert \widehat{g_2}\Vert_{L^p_\xi L^\infty_T L^2_v} \Vert (\mathbf{I}-\mathbf{P})\widehat{g_2}\Vert_{L^1_\xi L^2_T H^{s*}_v}\notag\\
		&+C_{A,M}\|\widehat{g_1}\|_{L^p_\xi L^2_T L^2_v}+C\Vert \widehat{g_2}\Vert_{L^p_\xi L^\infty_T L^2_v}\Vert  (1+t)^{\sigma/2} \widehat{g_2}\Vert_{L^1_\xi L^\infty_T L^2_v} , \label{g2hardlp}
	\end{align}
	for any $T>0$ and $\rho>1$, where $C_{A,M}$ depends only on $A$ and $M$ which are two constants in the definition of $L_B$.
	\end{lemma}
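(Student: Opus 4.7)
\textbf{Proof proposal for Lemma \ref{g2hard}.} The plan is to combine the microscopic a priori bounds already established in Lemma \ref{lem: micro a priori} with the macroscopic estimate of Lemma \ref{lem: macro a priori}, then absorb the overlapping linear quantities and rewrite one residual nonlinear factor using the decomposition \eqref{g2L1xi}. All of the genuine analytical work (coercivity of $L$, estimate of $\hat\Gamma$ via \eqref{lem.ibn1}, macroscopic fluid-type identities) has already been carried out in the previous three lemmas, so what remains is essentially algebraic bookkeeping.

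First, I would apply \eqref{lowmacro} to $g_2$ with $p=1$. Its right-hand side contains the two linear norms $\|\widehat{g_2}\|_{L^1_\xi L^\infty_T L^2_v}$ and $\|(\mathbf{I}-\mathbf{P})\widehat{g_2}\|_{L^1_\xi L^2_T H^{s*}_v}$, which are already bounded by the RHS of the micro estimate \eqref{ineq: L^1_k micro a priori}. Multiplying \eqref{ineq: L^1_k micro a priori} by a sufficiently large constant and adding \eqref{lowmacro} allows me to absorb those two linear terms into the LHS, leaving the macroscopic quantity $\|\tfrac{|\nabla_x|}{\langle\nabla_x\rangle}(\hat a,\hat b,\hat c)\|_{L^1_\xi L^2_T}$ with a positive coefficient on the LHS and the source term $C_{A,M}\|\widehat{g_1}\|_{L^1_\xi L^2_T L^2_v}$ on the RHS.

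Next I would treat the remaining nonlinear contribution $C\|\widehat{g_2}\|_{L^1_\xi L^\infty_T L^2_v}\|\widehat{g_2}\|_{L^1_\xi L^2_T H^{s*}_v}$ coming from \eqref{lowmacro}. Using \eqref{g2L1xi}, namely
\[
\|\widehat{g_2}\|_{L^1_\xi L^2_T H^{s*}_v}\le \|(\mathbf{I}-\mathbf{P})\widehat{g_2}\|_{L^1_\xi L^2_T H^{s*}_v}+C\|(1+t)^{\sigma/2}\widehat{g_2}\|_{L^1_\xi L^\infty_T L^2_v},
\]
this product splits into the two nonlinear pieces that already appear in the stated bound \eqref{g2hardl1}. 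Assembling everything yields \eqref{g2hardl1}. For \eqref{g2hardlp} I would repeat the same steps verbatim, starting from \eqref{ineq: L^infty_k micro a priori} and from \eqref{lowmacro} with the exponent $p>1$; note that the nonlinear factor in \eqref{lowmacro} is in the $L^1_\xi$ norm in $\xi$ regardless of $p$, so the decomposition via \eqref{g2L1xi} is applied in exactly the same way.

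I do not expect a genuine obstacle here, since the coercivity, collision, and fluid-moment estimates have already been absorbed into the preceding lemmas. The only point to watch is the choice of the absorbing constant when combining \eqref{lowmacro} with \eqref{ineq: L^1_k micro a priori} or \eqref{ineq: L^infty_k micro a priori}: it must be large enough to dominate the coefficients of the linear $\widehat{g_2}$ terms on the right-hand side of \eqref{lowmacro} but independent of $T$, $\xi$, and the solution itself, so that after absorption only the quantities listed in \eqref{g2hardl1} and \eqref{g2hardlp} remain on the right.
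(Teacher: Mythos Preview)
Your proposal is correct and matches the paper's approach: the paper states that Lemma \ref{g2hard} ``is a direct result from Lemma \ref{lem: micro a priori} and Lemma \ref{lem: macro a priori}'' and gives no further detail, which is precisely the linear combination you describe (add a large multiple of \eqref{ineq: L^1_k micro a priori} or \eqref{ineq: L^infty_k micro a priori} to \eqref{lowmacro}, then invoke \eqref{g2L1xi} on the residual factor $\|\widehat{g_2}\|_{L^1_\xi L^2_T H^{s*}_v}$).
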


	Later we will need the time-weighted macroscopic estimate. We can prove the following lemma for $\max\{-3,-3/2-2s\}<\ga\leq1$, which will be later used in both hard and soft cases.
	\begin{lemma}
			Let $\max\{-3,-3/2-2s\}<\ga\leq1$, $0<s<1$, $3/2<p\le\infty$, $\sigma=3(1-1/p)-2\ep$ where $\ep>0$ is arbitrarily small and $f$ be a solution to \eqref{eq: linearized BE} with an inhomogeneous term $H=H(t,x,v)$. Then it holds that
		\begin{align}\label{tmacro}
		 &\Big\Vert (1+t)^{\si/2}\frac{\vert \nabla_x \vert}{\langle \nabla_x \rangle } (\hat{a}^f,\hat{b}^f,\hat{c}^f) \Big\Vert_{L^1_\xi L^2_T} \notag\\
			\le& C \Vert \hat{f}_0 \Vert_{L^1_\xi L^2_v} + \Vert(1+t)^{\si/2} \hat{f}\Vert_{L^1_\xi L^\infty_T L^2_v} + \Vert(1+t)^{\si/2} (\mathbf{I}-\mathbf{P})\hat{f}\Vert_{L^1_\xi L^2_T H^{s*}_v} \notag\\
			&+\big\Vert \frac{\vert \nabla_x \vert}{\langle \nabla_x \rangle } (\hat{a}^f,\hat{b}^f,\hat{c}^f)\Big \Vert_{L^p_\xi L^2_T}+\int_{\mathbb{R}^3} \Big(\int^T_0(1+t)^{\si/2} \frac{1}{1+\vert \xi\vert^2}\vert (\widehat{H}, \mu^{1/4})\vert^2 dt \big)^{1/2} d\xi, 
		\end{align}
		for any $T>0$.
	\end{lemma}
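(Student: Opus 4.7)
The plan is to adapt the proof of Lemma \ref{lem: macro} by inserting the time weight $(1+t)^\sigma$ into each Kawashima-type compensator identity before the time integration by parts. Starting from the Fourier fluid-type system \eqref{mac.law.fourier}, I pair the fifth equation with $(1+t)^\sigma \frac{-i\xi_j\overline{\hat c^f}}{1+|\xi|^2}$, take real parts, and integrate over $[0,T]$. Integration by parts in $t$ produces, besides the time-weighted analogues of the terms already present in the proof of \eqref{macro.c}, the new commutator
\begin{align*}
\mathcal R(\xi)=\sigma\int_0^T (1+t)^{\sigma-1}\,\rmre\Big(\Lambda_j((\mathbf I-\mathbf P)\hat f),\,\frac{-i\xi_j\,\overline{\hat c^f}}{1+|\xi|^2}\Big)\,dt
\end{align*}
coming from $\partial_t$ falling on the weight; analogous identities and commutators are derived from the fourth and third equations of \eqref{mac.law.fourier} for the $\hat b^f$ and $\hat a^f$ pieces.

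The boundary contributions at $t=0$ and $t=T$ are bounded pointwise in $\xi$ by $C\|\hat f_0\|_{L^2_v}^2+C\|(1+t)^{\sigma/2}\hat f\|_{L^\infty_T L^2_v}^2$ via $|\Lambda|,|\Theta|\le C\|\hat f\|_{L^2_v}$ and $|\xi|/(1+|\xi|^2)\le 1$. The standard bulk terms are treated by the same Cauchy--Schwarz in $t$ as in the proof of Lemma \ref{lem: macro}, now carrying the weight $(1+t)^\sigma$ throughout; after taking square roots, summing over $j$, and integrating in $\xi\in\R^3$, they produce on the right-hand side the first, second, fourth and fifth contributions in \eqref{tmacro}, together with small absorbable multiples of the left-hand side controlled by a $\kappa$-parameter exactly as in the proof of \eqref{macro.c}.

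The main obstacle is controlling the commutator $\mathcal R$. Cauchy--Schwarz in $t$ together with $(1+t)^{\sigma-1}\le (1+t)^\sigma$ gives
\begin{align*}
\mathcal R(\xi)\le C\,\|(1+t)^{\sigma/2}(\mathbf I-\mathbf P)\hat f\|_{L^2_T L^2_v}\cdot\frac{|\xi|}{1+|\xi|^2}\,\|(1+t)^{\sigma/2}\hat c^f\|_{L^2_T}.
\end{align*}
The factor $|\xi|/(1+|\xi|^2)$ alone is not integrable in $\xi$, so to pass to $L^1_\xi$ I split by H\"older with exponents $p$ and $p'=p/(p-1)$: one factor is absorbed into $\|\phi(\hat a^f,\hat b^f,\hat c^f)\|_{L^p_\xi L^2_T}$ (which is the third term on the right of \eqref{tmacro} and is available from Lemma \ref{lem: macro}) after interpolating in $t$ between the time-weighted macroscopic quantity and its non-weighted counterpart, using log-convexity of the weight when $\sigma\ge 2$ and monotonicity when $\sigma<2$; the other factor reduces to an $L^{p'}_\xi$-norm of a power $(1+|\xi|^2)^{-\alpha}$, whose finiteness is equivalent to $\alpha p'>3/2$. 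The assumption $\sigma<3(1-1/p)$ is precisely what guarantees this last condition once the various exponents are tracked through the interpolation, and it is exactly the delicate bookkeeping between the time-interpolation exponent and the H\"older exponent in $\xi$ that constitutes the main technical difficulty. Combining all contributions and choosing the absorption parameters small enough then yields \eqref{tmacro}.
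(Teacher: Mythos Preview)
Your overall strategy of inserting $(1+t)^\sigma$ into the Kawashima compensator identities and integrating by parts in $t$ is correct and matches the paper. But you misidentify where the difficulty lies. The commutator $\mathcal R$ you display for $\hat c^f$ is the \emph{easy} one: since one factor is microscopic, your bound can be closed by Young's inequality alone --- the $\hat c^f$ piece carries $\tfrac{|\xi|}{1+|\xi|^2}\le\tfrac{|\xi|}{\sqrt{1+|\xi|^2}}$ and is absorbed into the left-hand side, while the $(\mathbf I-\mathbf P)\hat f$ piece goes directly to the time-weighted microscopic dissipation $\|(1+t)^{\sigma/2}(\mathbf I-\mathbf P)\hat f\|_{L^1_\xi L^2_T H^{s*}_v}$ on the right. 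No H\"older in $\xi$ or time interpolation is needed, and this is exactly what the paper does (and likewise for the $\hat b^f$ commutator).

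The genuine obstacle is the commutator from the $\hat a^f$ compensator, which you dismiss as ``analogous''. It reads
\[
\sigma\int_0^T (1+t)^{\sigma-1}\,\rmre\Big(\hat b^f,\frac{i\xi\,\hat a^f}{1+|\xi|^2}\Big)\,dt,
\]
with both factors macroscopic, so neither can be sent to the dissipation. Your crude step $(1+t)^{\sigma-1}\le(1+t)^\sigma$ discards precisely the factor $(1+t)^{-1}$ that is needed; the resulting residual, at low frequency, is short one power of $|\xi|$ compared to anything available on the right of \eqref{tmacro}. The weight $(1+|\xi|^2)^{-\alpha}$ you propose is $\sim 1$ near $\xi=0$ and supplies nothing there, and time interpolation cannot manufacture a spatial derivative. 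The paper instead retains $(1+t)^{-1}$ and splits into the regions $|\xi|\ge1$, $\{|\xi|\le1,\ (1+t)^{-1}\le|\xi|\}$, and $\{|\xi|\le1,\ (1+t)^{-1}\ge|\xi|\}$: in the first two the missing factor of $|\xi|$ is recovered from $|\xi|\le|\xi|^2$ or $(1+t)^{-1}\le|\xi|$, giving an absorbable contribution; in the third the time weight is traded away via $(1+t)^{\sigma-1}\le|\xi|^{1-\sigma}$, after which H\"older in $\xi$ against $|\xi|^{-\sigma/2}\in L^{p'}(|\xi|\le1)$ (valid exactly because $\sigma p'/2<3$) produces the non-weighted $L^p_\xi$ macroscopic term in \eqref{tmacro}. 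This $t$--$\xi$ trade-off is the key device missing from your proposal.
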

	\begin{proof}
		Integrating by parts, it is direct to get
		\begin{align}\label{ap.adp2}
			&(1+t)^\sigma \partial_t \big(\Lambda_j ((\mathbf{I}-\mathbf{P})\hat{f}), \frac{i\xi_j \hat{c}^f}{1+\vert \xi\vert^2}\big)\notag\\
			=&\partial_t \big[  (1+t)^\sigma \big(\Lambda_j ((\mathbf{I}-\mathbf{P})\hat{f}), \frac{i\xi_j \hat{c}^f}{1+\vert \xi\vert^2}\big)\Big]-\sigma (1+t)^{\sigma-1} \Big(\Lambda_j ((\mathbf{I}-\mathbf{P})\hat{f}), \frac{i\xi_j \hat{c}^f}{1+\vert \xi\vert^2}\Big).
		\end{align}
		From \eqref{ap.adp2} and the third equation of \eqref{mac.law.fourier}, one has
		\begin{align*}
			&\partial_t \Big[  (1+t)^\sigma \Big(\Lambda_j ((\mathbf{I}-\mathbf{P})\hat{f}), \frac{i\xi_j \hat{c}^f}{1+\vert \xi\vert^2}\Big)\Big]
			+(1+t)^\sigma\frac{\xi_j^2}{1+\vert \xi\vert^2} \vert \hat{c}^f\vert^2\\
			=&\big(\Lambda_j ((\mathbf{I}-\mathbf{P})\hat{f}),(1+t)^\sigma \frac{\xi_j}{1+\vert \xi\vert^2}\Big[ \frac{1}{3}\xi\cdot \hat{b}^f+\frac{1}{6}\xi\cdot \Lambda((\mathbf{I}-\mathbf{P})\hat{f}-(\frac{1}{6}(|v|^2-3)\mu^{1/2},\widehat{H}))\Big]\big)\notag\\
			&+\Big(\Lambda_j (\hat{\mathbbm{r}}+\hat{\mathbbm{h}}),(1+t)^\sigma \frac{i\xi_j \hat{c}^f}{1+\vert \xi\vert^2}\Big)+\sigma (1+t)^{\sigma-1} \Big(\Lambda_j ((\mathbf{I}-\mathbf{P})\hat{f}), \frac{i\xi_j \hat{c}}{1+\vert \xi\vert^2}\Big),
		\end{align*}
		which further yields by similar calculations as in the proof of Lemma \ref{lem: macro} that
		\begin{align}\label{tmacro.c}
			&\frac{\vert \xi\vert }{\sqrt{1+\vert \xi\vert^2}} \Big(\int^T_0 (1+t)^\sigma\vert \hat{c}^f\vert^2 dt\Big)^{1/2} \notag\\
			\le& C\Vert \hat{f}_0(\xi)\Vert_{L^2_v}+ C\Vert(1+t)^\sigma  \hat{f}(\xi) \Vert_{L^\infty_T L^2_v}\notag\\
			& +C\ka_1 \frac{\vert \xi\vert}{\sqrt{1+\vert \xi\vert^2}} \Big[\Big( \int^T_0(1+t)^\sigma \vert \hat{b}^f\vert^2dt\Big)^{1/2}+\Big(\int^T_0 (1+t)^\sigma\vert \hat{c}^f\vert^2 dt\Big)^{1/2}\Big]\notag\\
			& +C_{\ka_1} \Vert(1+t)^\sigma (\mathbf{I}-\mathbf{P})\hat{f}\Vert_{L^2_T H^{s*}_v}+C_{\ka_1}\Big(\int^T_0 (1+t)^\sigma\frac{1}{1+\vert \xi\vert^2}\vert (\widehat{H}, \mu^{1/4})\vert^2 dt \Big)^{1/2}\notag\\
			&+\Big(\int^T_0 \sigma (1+t)^{\sigma-1} \Big(\Lambda_j ((\mathbf{I}-\mathbf{P})\hat{f}), \frac{i\xi_j \hat{c}^f}{1+\vert \xi\vert^2}\Big)dt \Big)^{1/2}.
		\end{align}
		We can see that except for the last term on the right hand side above, other terms can be estimated in the same way as in Lemma \ref{lem: macro}. Hence, we now estimate the last term above as follows:
		\begin{align*}
			&\int^T_0 (1+t)^{\sigma-1} \Big(\Lambda_j ((\mathbf{I}-\mathbf{P})\hat{f}), \frac{i\xi_j \hat{c}^f}{1+\vert \xi\vert^2}\Big) dt \\
			&\le \int^T_0 (1+t)^\sigma \Big( \ka_1\frac{\vert \xi\vert^2}{1+\vert \xi\vert^2} \vert \hat{c}^f\vert^2+ C_{\ka_1} \frac{\Vert (\mathbf{I}-\mathbf{P})\hat{f} \Vert_{H^{s*}_v}^2}{1+\vert \xi\vert^2} \Big)dt.
		\end{align*}
		We substitute the above inequality into \eqref{tmacro.c} and integrate over $\R^3_\xi$ to get
		\begin{align}\label{macrot.c}
			&\int_{\R^3}\frac{\vert \xi\vert }{\sqrt{1+\vert \xi\vert^2}} \Big(\int^T_0 (1+t)^\sigma\vert \hat{c}^f\vert^2 dt\Big)^{1/2}d\xi \notag\\
			\le& C\Vert \hat{f}_0\Vert_{L^1_\xi L^2_v}+ C\Vert(1+t)^\sigma  \hat{f} \Vert_{L^1_\xi L^\infty_T L^2_v}+C_{\ka_1} \Vert(1+t)^\sigma (\mathbf{I}-\mathbf{P})\hat{f}\Vert_{L^1_\xi L^2_T H^{s*}_v}\notag\\
			& +C\ka_1 \int_{\R^3}\frac{\vert \xi\vert}{\sqrt{1+\vert \xi\vert^2}} \Big[\Big( \int^T_0(1+t)^\sigma \vert \hat{b}^f\vert^2dt\Big)^{1/2}+\Big(\int^T_0 (1+t)^\sigma\vert \hat{c}^f\vert^2 dt\Big)^{1/2}\Big]d\xi\notag\\
			& +C_{\ka_1}\int_{\R^3}\Big(\int^T_0 (1+t)^\sigma\frac{1}{1+\vert \xi\vert^2}\vert (\widehat{H}, \mu^{1/4})\vert^2 dt \Big)^{1/2}d\xi.
		\end{align}
		Similarly,
		\begin{align}\label{macrot.b}
			&\frac{\vert \xi\vert}{\sqrt{1+\vert \xi\vert^2}}\Big(\int^T_0(1+t)^\sigma \vert \hat{b}^f\vert^2dt \Big)^{1/2} 
			\notag   \\\le& C\Vert \hat{f}_0\Vert_{L^2_v}+C\Vert (1+t)^\sigma \hat{f} \Vert_{L^\infty_T L^2_v}\notag\\
			&+C\ka_2 \frac{\vert \xi\vert}{\sqrt{1+\vert \xi\vert^2}} \Big[\Big( \int^T_0 (1+t)^\sigma\vert \hat{a}^f\vert^2dt\Big)^{1/2}+\Big(\int^T_0(1+t)^\sigma \vert \hat{b}^f\vert^2 dt\Big)^{1/2}\Big]  \notag   \\
			& +C_{\ka_2} \Vert(1+t)^\sigma (\mathbf{I}-\mathbf{P})\hat{f}\Vert_{L^2_T H^{s*}_v}+C_{\ka_2}\Big(\int^T_0(1+t)^\sigma \frac{1}{1+\vert \xi\vert^2}\vert (\widehat{H}, \mu^{1/4})\vert^2 dt \Big)^{1/2}\notag\\
			&+C_{\ka_2}\frac{\vert \xi\vert}{\sqrt{1+\vert \xi\vert^2}} \Big(\int^T_0(1+t)^\sigma \vert \hat{c}^f\vert^2dt \Big)^{1/2}.
		\end{align}
		However, the estimate of $\hat{a}^f$ is slightly different from above. We first repeat similar procedure as above to get
		\begin{align}\label{tmacro.a}
			&\frac{\vert \xi\vert}{\sqrt{1+\vert \xi\vert^2}}\Big( \int^T_0(1+t)^\sigma \vert \hat{a}^f\vert^2dt\Big)^{1/2}\notag\\ 
			\le& C\Vert \hat{f}_0\Vert_{L^2_v} +C\Vert(1+t)^\sigma  \hat{f} \Vert_{L^\infty_T L^2_v}\notag\\
			&+C \frac{\vert \xi\vert}{\sqrt{1+\vert \xi\vert^2}} \Big[\Big( \int^T_0(1+t)^\sigma \vert \hat{b}^f\vert^2dt\Big)^{1/2}+\Big(\int^T_0(1+t)^\sigma \vert \hat{c}^f\vert^2 dt\Big)^{1/2}\Big] \notag \\
			& +C \Vert(1+t)^\sigma (\mathbf{I}-\mathbf{P})\hat{f}\Vert_{L^2_T H^{s*}_v}+\Big(\int^T_0 \sigma  (1+t)^{\sigma-1} \big(\hat{b}^f, \frac{i\xi\hat{a}^f}{1+\vert \xi\vert^2}\big) dt\Big)^\frac{1}{2}.
		\end{align}
		We should take care of the last term above for high and low frequency parts. If $\vert \xi\vert \ge 1$, by Cauchy-Schwarz inequality and the fact that $|\xi|\leq|\xi|^2$, it is straightforward to see
		\begin{align}\label{highpart}
			&\Big[ \int^T_0 (1+t)^{\sigma-1} \Big(\hat{b}^f, \frac{i\xi\hat{a}^f}{1+\vert \xi\vert^2}\Big) dt \Big]^{1/2} \notag\\
			\le& \frac{\vert \xi \vert}{\sqrt{1+\vert \xi\vert^2}} \Big[ \ka_3 \Big(\int^T_0 (1+t)^\sigma \vert \hat{a}^f\vert^2 dt\Big)^{1/2}
			+C_{\ka_3} \Big( \int^T_0(1+t)^\sigma \vert \hat{b}^f\vert^2 dt\Big)^{1/2} \Big].
		\end{align}
		If $\vert \xi\vert \le 1$ and $\frac{1}{1+t}\leq |\xi|$, it holds
				\begin{align}\label{lowparttlarge}
			&\Big[ \int^T_0 (1+t)^{\sigma-1} \Big(\hat{b}^f, \frac{i\xi\hat{a}^f}{1+\vert \xi\vert^2}\Big) dt \Big]^{1/2} \notag\\
			\le& \frac{\vert \xi \vert}{\sqrt{1+\vert \xi\vert^2}} \Big[ \ka_3 \Big(\int^T_0 (1+t)^\sigma \vert \hat{a}^f\vert^2 dt\Big)^{1/2}
			+C_{\ka_3} \Big( \int^T_0(1+t)^\sigma \vert \hat{b}^f\vert^2 dt\Big)^{1/2} \Big].
		\end{align}
		If $\vert \xi\vert \le 1$ and $\frac{1}{1+t}\geq |\xi|$, one has
		\begin{align*}
			&\Big[ \int^T_0 (1+t)^{\sigma-1} \Big(\hat{b}^f, \frac{i\xi\hat{a}^f}{1+\vert \xi\vert^2}\Big) dt \Big]^{1/2}\notag\\
			\le&\Big[ \int^T_0 |\xi|^{2-\sigma} |\hat{b}^f| |\hat{a}^f|dt \Big]^{1/2} \notag\\
			\le& |\xi|^{-\si/2}\Big[|\xi|\Big( \int^T_0 \vert \hat{a}^f\vert^2 dt\Big)^{1/2}+|\xi|\Big( \int^T_0\vert \hat{b}^f\vert^2 dt\Big)^{1/2}\Big],
		\end{align*}
		which further implies
	\begin{align}	\label{lowparttsmall}
			&\int_{\R^3}\Big[ \int^T_0 (1+t)^{\sigma-1} \Big(\hat{b}^f, \frac{i\xi\hat{a}^f}{1+\vert \xi\vert^2}\Big) dt \Big]^{1/2}d\xi\notag\\
		\le& (\int_{\R^3}|\xi|^{-\si p'/2 }d\xi)^{1/p'}\Big(\int_{\R^3}\Big[|\xi|^p\Big( \int^T_0 \vert \hat{a}^f\vert^2 dt\Big)^{p/2}+|\xi|^p\Big( \int^T_0\vert \hat{b}^f\vert^2 dt\Big)^{p/2}\Big]d\xi\Big)^{1/p}\notag\\
		\leq& C\Big\Vert \frac{\vert \nabla_x \vert}{\langle \nabla_x \rangle } (\hat{a}^f,\hat{b}^f)\Big \Vert_{L^p_\xi L^2_T},
		\end{align}
		by H\"older's inequality and the fact that $-\si p'/2>-3$.
		We combine \eqref{tmacro.a}, \eqref{highpart}, \eqref{lowparttlarge} and \eqref{lowparttsmall}, then choose $\ka_3$ to be small to get
		\begin{align}\label{macrot.a}
			&\int_{\R^3}\frac{\vert \xi\vert}{\sqrt{1+\vert \xi\vert^2}}\Big( \int^T_0(1+t)^\sigma \vert \hat{a}^f\vert^2dt\Big)^{1/2}d\xi\notag\\ 
			\le& C\Vert \hat{f}_0\Vert_{L^1_\xi L^2_v} +C\Vert(1+t)^\sigma  \hat{f} \Vert^2_{L^1_\xi L^\infty_T L^2_v}\notag\\
			&+C \int_{\R^3}\frac{\vert \xi\vert}{\sqrt{1+\vert \xi\vert^2}} \Big[\Big( \int^T_0(1+t)^\sigma \vert \hat{b}^f\vert^2dt\Big)^{1/2}+\Big(\int^T_0(1+t)^\sigma \vert \hat{c}^f\vert^2 dt\Big)^{1/2}\Big]d\xi \notag \\
			& +C \Vert(1+t)^\sigma (\mathbf{I}-\mathbf{P})\hat{f}\Vert_{L^1_\xi L^2_T H^{s*}_v}+\Big\Vert \frac{\vert \nabla_x \vert}{\langle \nabla_x \rangle } (\hat{a}^f,\hat{b}^f)\Big \Vert_{L^p_\xi L^2_T}.
		\end{align}
		Hence, we deduce \eqref{tmacro} by collecting \eqref{macrot.c}, \eqref{macrot.b} and \eqref{macrot.a}, and choosing $\ka_1$ and $\ka_2$ to be small. 
	\end{proof}

	Substituting $f=g_2$ and $H=\CL_B g_1+\Ga(g_2,g_2)$ into \eqref{tmacro} and using similar arguments as in the proof of Lemma \ref{lem: macro a priori}, we obtain the following result.
	\begin{lemma}\label{lem.twemp}
		Let $\max\{-3,-3/2-2s\}<\ga \leq1$, $0<s<1$, $3/2<p\le \infty$, $\sigma=3(1-1/p)-2\ep$ where $\ep>0$ is arbitrarily small and $g_2$ be a solution to \eqref{hatg2}. Then, it holds
		\begin{align}
			&\Big \Vert (1+t)^{\si/2} \frac{\vert \nabla_x \vert}{\langle \nabla_x \rangle} (\hat{a},\hat{b},\hat{c}) \Big\Vert_{L^1_\xi L^2_T}\notag \\
			\le& C\Vert (1+t)^{\sigma/2} \widehat{g_2}\Vert_{L^1_\xi L^\infty_T L^2_v}
			+C\Vert (1+t)^{\sigma/2} (\mathbf{I}-\mathbf{P})\widehat{g_2}\Vert_{L^1_\xi L^2_T H^{s*}_v} \notag \\
			&+C\Big\Vert \frac{\vert \nabla_x \vert}{\langle \nabla_x \rangle}(\hat{a},\hat{b},\hat{c})\Big\Vert_{L^p_\xi L^2_T}+C_{A,M}\|(1+t)^{\si/2}\widehat{g_1}\|_{L^1_\xi L^2_T L^2_v}\notag\\
			&+C\Vert (1+t)^{\sigma/2} \widehat{g_2}\Vert_{L^1_\xi L^\infty_T L^2_v} \Vert \widehat{g_2}\Vert_{L^1_\xi L^2_T H^{s*}_v},
			\label{ineq: macro time-weighted}
		\end{align}
		for any $T>0$, where $C_{A,M}$ depends only on $A$ and $M$ which are two constants in the definition of $L_B$.
	\end{lemma}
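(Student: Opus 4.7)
The plan is to apply the abstract macroscopic estimate \eqref{tmacro} with $f = \widehat{g_2}$ and the source term $\widehat{H} = \CL_B \widehat{g_1} + \hat{\Gamma}(\widehat{g_2}, \widehat{g_2})$, which is exactly the right-hand side of equation \eqref{hatg2}. Since the initial data satisfies $g_2(0,x,v) \equiv 0$, the contribution $\|\hat{f}_0\|_{L^1_\xi L^2_v}$ in \eqref{tmacro} simply vanishes, and the macroscopic-quantity term on the left of \eqref{tmacro} is precisely the one on the left of \eqref{ineq: macro time-weighted}. The remaining task is to take $L^1_\xi$ norms in \eqref{tmacro} and reduce the final integral
\[
\int_{\R^3} \Bigl( \int_0^T (1+t)^\sigma \frac{|(\widehat{H},\mu^{1/4})|^2}{1+|\xi|^2}\, dt \Bigr)^{1/2} d\xi
\]
to the last two terms on the right of \eqref{ineq: macro time-weighted}.

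For the linear part $\CL_B \widehat{g_1}$, the definition \eqref{defLB} produces a factor $\mu^{-1/2}\chi_M$ which, when paired against $\mu^{1/4}$, yields a bounded multiplier supported in $|v|\leq 2M$. Hence pointwise in $\xi$,
\[
|(\CL_B \widehat{g_1}, \mu^{1/4})_{L^2_v}| \leq C_{A,M}\, \|\widehat{g_1}\|_{L^2_v}.
\]
Discarding the harmless factor $1/(1+|\xi|^2)\leq 1$, inserting this into the integral above and exchanging order of integration via Minkowski gives precisely $C_{A,M}\|(1+t)^{\sigma/2}\widehat{g_1}\|_{L^1_\xi L^2_T L^2_v}$.

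For the nonlinear piece $\hat{\Gamma}(\widehat{g_2},\widehat{g_2})$, I would invoke the same bilinear functional inequality from \cite[Lemma 3.4]{DSY} that was already applied (with $p=\infty$) in the proof of Lemma \ref{lem: macro a priori}, namely
\[
\Bigl\| \Bigl(\int_0^T |(\hat{\Gamma}(\hat{f},\hat{g}),\phi)|^2\, dt\Bigr)^{1/2} \Bigr\|_{L^1_\xi} \leq C_\phi \|\hat{f}\|_{L^1_\xi L^\infty_T L^2_v}\|\hat{g}\|_{L^1_\xi L^2_T H^{s*}_v},
\]
applied with $\phi = \mu^{1/4}$. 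The time weight $(1+t)^{\sigma/2}$ is absorbed into the $L^\infty_T$ factor by placing it on the first copy of $\widehat{g_2}$, producing the contribution $C\,\Vert (1+t)^{\sigma/2}\widehat{g_2}\Vert_{L^1_\xi L^\infty_T L^2_v}\,\Vert \widehat{g_2}\Vert_{L^1_\xi L^2_T H^{s*}_v}$, matching the last term of \eqref{ineq: macro time-weighted}.

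The main subtlety is bookkeeping for the time weight in the bilinear $\hat{\Gamma}$ estimate: the weight must be placed on one factor only, and specifically on the $L^\infty_T$ factor (so that it commutes freely with the supremum in time), while the other factor is left unweighted in the $L^2_T H^{s*}_v$ norm. This asymmetric placement is what ties the estimate to the existing dissipation control on $\widehat{g_2}$ from Lemma \ref{g2hard} without generating a circular time-weighted dissipation term on the right-hand side. Once these three contributions are assembled via a linear combination following the same argument that produced \eqref{tmacro}, the inequality \eqref{ineq: macro time-weighted} follows.
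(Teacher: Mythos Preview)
Your proposal is correct and follows essentially the same approach as the paper: substitute $f=g_2$, $H=\CL_B g_1+\Ga(g_2,g_2)$ into \eqref{tmacro}, use $g_2(0)=0$ to drop the initial-data term, and then bound the source integral exactly as in Lemma~\ref{lem: macro a priori} with the time weight pushed onto the $L^\infty_T$ factor of the bilinear $\hat\Gamma$ estimate. The paper states its proof in one sentence by referring back to those arguments; you have simply written them out.
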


	\subsection{Decay estimates for $\ga+2s\geq 0$}
	
With Lemma \ref{g2hard} and Lemma \ref{lem.twemp}, we can prove the time-weighted estimates now. Note that now we require $\ga+2s\geq0$. Our main purpose of this subsection is to deduce Lemma \ref{timehard}.
\begin{lemma}\label{lem: micro time-weighted}
	Let $-2s\leq\ga\leq1$, $0<s<1$, $3/2<p\leq \infty$ and $\sigma=3(1-1/p)-2\ep$ where $\ep>0$ is arbitrarily small. For any $T>0$ and $\rho>1$,
	it holds that
	\begin{align}
		&\Vert  (1+t)^{\sigma/2} \widehat{g_2}\Vert_{L^1_\xi L^\infty_T L^2_v}  
		+\Vert (1+t)^{\sigma/2} (\mathbf{I}-\mathbf{P})\widehat{g_2} \Vert_{L^1_\xi L^2_T H^{s*}_v} \notag \\
		\le &C_{A,M}\Vert  (1+t)^{\sigma/2}\widehat{g_2}\Vert^\frac{1}{2}_{L^1_\xi L^\infty_T L^2_v}\Big(\int^T_0\|(1+t)^\rho g_1(t)\|^2_{X^*_{8-\ga/2}}dt\Big)^\frac{1}{4}\notag\\&+C\Vert(1+t)^{\sigma/2} \widehat{g_2}\Vert_{L^1_\xi L^\infty_T L^2_v}\Vert (\mathbf{I}-\mathbf{P})\widehat{g_2} \Vert_{L^1_\xi L^2_T H^{s*}_v}+C\Vert  (1+t)^{\sigma/2} \widehat{g_2}\Vert^2_{L^1_\xi L^\infty_T L^2_v}\notag\\&+C\eta \Big\Vert (1+t)^{\sigma/2} \frac{\vert \nabla_x \vert}{\langle \nabla_x \rangle } (\hat{a},\hat{b},\hat{c}) \Big\Vert_{L^1_\xi L^2_T}\notag\\
		&
		+ C_\eta  \Big(\|(\mathbf{I}-\mathbf{P})\widehat{g_2}\|_{L^1_\xi L^2_TH^{s*}_v}+\Big\|\frac{\vert \nabla_x \vert}{\langle \nabla_x \rangle}(\hat{a},\hat{b},\hat{c})\Big\|_{L^1_\xi L^2_T}+\Big\|\frac{\vert \nabla_x \vert}{\langle \nabla_x \rangle}(\hat{a},\hat{b},\hat{c})\Big\|_{L^p_\xi L^2_T}\Big),
		\label{ineq: micro time-weighted}
	\end{align}
	where $\eta>0$ is an arbitrarily small constant and $C_{A,M}$ depends only on $A$ and $M$ which are two constants in the definition of $L_B$.
\end{lemma}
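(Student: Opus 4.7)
The plan is to pair equation \eqref{hatg2} with $(1+t)^{\sigma}\bar{\widehat{g_2}}$ in $L^2_v$, take the real part, and integrate in $t\in[0,T]$. Writing $(1+t)^{\sigma}\partial_t\|\widehat{g_2}\|^2_{L^2_v}=\partial_t[(1+t)^{\sigma}\|\widehat{g_2}\|^2_{L^2_v}]-\sigma(1+t)^{\sigma-1}\|\widehat{g_2}\|^2_{L^2_v}$ and invoking the coercivity $(Lf,f)_{L^2_v}\le-\delta\|(\mathbf{I}-\mathbf{P})f\|^2_{H^{s*}_v}$ of \cite{AMUXY-2012-JFA}, one obtains
\begin{align*}
&\tfrac{1}{2}\|(1+t)^{\sigma/2}\widehat{g_2}(\xi)\|^2_{L^\infty_TL^2_v}+\delta\|(1+t)^{\sigma/2}(\mathbf{I}-\mathbf{P})\widehat{g_2}(\xi)\|^2_{L^2_TH^{s*}_v}\\
&\le\tfrac{\sigma}{2}\int_0^T(1+t)^{\sigma-1}\|\widehat{g_2}(t,\xi)\|^2_{L^2_v}\,dt+\int_0^T(1+t)^{\sigma}|\rmre(\CL_B\widehat{g_1}+\hat\Gamma(\widehat{g_2},\widehat{g_2}),\widehat{g_2})|\,dt.
\end{align*}
Taking square root and the $L^1_\xi$ norm via Minkowski's inequality produces the LHS of \eqref{ineq: micro time-weighted}.

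The $\CL_B\widehat{g_1}$ and $\hat\Gamma(\widehat{g_2},\widehat{g_2})$ contributions are treated by the time-weighted analogues of \eqref{LBg1g2} and \eqref{ga3g2}: Cauchy--Schwarz in $t$ together with the embedding used in \eqref{g22} (which transfers the $(1+t)^\rho$ weight onto $g_1$) produces the first RHS term, and two applications of \eqref{lem.ibn1} with the weight $(1+t)^{\sigma/2}$ placed alternately on the $L^\infty_T$ factor or on the test function $\widehat{g_2}$ produce the second RHS term $C\|(1+t)^{\sigma/2}\widehat{g_2}\|_{L^1_\xi L^\infty_T L^2_v}\|(\mathbf{I}-\mathbf{P})\widehat{g_2}\|_{L^1_\xi L^2_T H^{s*}_v}$. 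The third (squared) RHS term then arises by invoking \eqref{g2L1xi} to bound the dissipation-style factor $\|\widehat{g_2}\|_{L^1_\xi L^2_TH^{s*}_v}$ by $\|(\mathbf{I}-\mathbf{P})\widehat{g_2}\|_{L^1_\xi L^2_TH^{s*}_v}+C\|(1+t)^{\sigma/2}\widehat{g_2}\|_{L^1_\xi L^\infty_TL^2_v}$, valid since $\sigma>1$ exactly when $p>3/2$.

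The main new difficulty is the surplus term $\tfrac{\sigma}{2}\int_0^T(1+t)^{\sigma-1}\|\widehat{g_2}\|^2_{L^2_v}\,dt$ produced by differentiating the weight. I split $\widehat{g_2}=\mathbf{P}\widehat{g_2}+(\mathbf{I}-\mathbf{P})\widehat{g_2}$ and partition the $(t,\xi)$-domain by the indicator of $E=\{(t,\xi):\frac{|\xi|^2}{1+|\xi|^2}\ge\frac{1}{1+t}\}$. On $E$, $(1+t)^{\sigma-1}\le(1+t)^{\sigma}\frac{|\xi|^2}{1+|\xi|^2}$, so the microscopic piece is absorbed into the LHS dissipation (using $\ga+2s\ge 0$ so that $\|f\|_{L^2_v}\le C\|f\|_{H^{s*}_v}$), while the macroscopic piece produces, after Young's inequality and $L^1_\xi$, the term $C\eta\|(1+t)^{\sigma/2}\frac{|\nabla_x|}{\langle\nabla_x\rangle}(\hat{a},\hat{b},\hat{c})\|_{L^1_\xi L^2_T}$ with $\eta>0$ small enough for subsequent absorption through Lemma \ref{lem.twemp}. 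On the complement $E^c$, the constraint forces $t<|\xi|^{-2}$, hence $\int_0^{|\xi|^{-2}}(1+t)^{\sigma-1}\,dt\le C|\xi|^{-2\sigma}$; applying Cauchy--Schwarz in $t$ and then H\"older in $\xi$ with dual exponent $p'$, using $\sigma p'<3$---the reason behind the precise choice $\sigma=3(1-1/p)-2\ep_1$, ensuring $|\xi|^{-\sigma}\in L^{p'}(B_1)$---produces the remaining $C_\eta$ contributions: $\|(\mathbf{I}-\mathbf{P})\widehat{g_2}\|_{L^1_\xi L^2_TH^{s*}_v}$ from the microscopic part, and $\|\frac{|\nabla_x|}{\langle\nabla_x\rangle}(\hat{a},\hat{b},\hat{c})\|_{L^1_\xi L^2_T}$ and $\|\frac{|\nabla_x|}{\langle\nabla_x\rangle}(\hat{a},\hat{b},\hat{c})\|_{L^p_\xi L^2_T}$ from the macroscopic part (via Lemma \ref{lem: macro a priori}, where $|\xi|\sim\frac{|\xi|}{\langle\xi\rangle}$ at low frequencies). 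The main technical obstacle is precisely this low-frequency macroscopic analysis and the sharp matching of $\sigma$ to the integrability threshold $p'$.
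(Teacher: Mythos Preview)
Your overall strategy matches the paper's: derive the time-weighted energy identity, recycle the estimates from Lemma~\ref{lem: micro a priori} for the $\CL_B$ and $\hat\Gamma$ terms, and then analyse the surplus $\sqrt{\sigma}\int_{\R^3}\big(\int_0^T(1+t)^{\sigma-1}\|\widehat{g_2}\|^2_{L^2_v}dt\big)^{1/2}d\xi$ by splitting micro/macro and time/frequency. However, there is a genuine gap in your absorption step on the set $E$.

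Your partition $E=\{\frac{|\xi|^2}{1+|\xi|^2}\ge\frac{1}{1+t}\}$ contains no small parameter. On $E$ the microscopic part is bounded by $\frac{\sigma}{2}\int_0^T(1+t)^{\sigma}\|(\mathbf{I}-\mathbf{P})\widehat{g_2}\|^2_{L^2_v}dt\le \frac{C\sigma}{2}\|(1+t)^{\sigma/2}(\mathbf{I}-\mathbf{P})\widehat{g_2}\|^2_{L^2_TH^{s*}_v}$, but to absorb this into the coercive term with coefficient $\delta$ you would need $C\sigma/2<\delta$, which is not guaranteed (and $\sigma$ can be close to $3$). Likewise, the macroscopic part on $E$ yields $\big\|(1+t)^{\sigma/2}\frac{|\xi|}{\langle\xi\rangle}(\hat a,\hat b,\hat c)\big\|_{L^1_\xi L^2_T}$ directly, with no small factor; there is no Young's inequality here that produces an $\eta$. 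The paper resolves this by building $\eta$ into the splitting itself: for the microscopic part $J_1$ and the high-frequency macroscopic part $J_{21}$ it uses the purely temporal split $(1+t)^{\sigma-1}\le\frac{\eta}{\sqrt{\sigma}}(1+t)^{\sigma}+C_\eta$ (via the indicator of $(1+t)^{-1}\le\eta/\sqrt{\sigma}$), and for the low-frequency macroscopic part $J_{22}$ it uses the joint split $(1+t)^{\sigma-1}\le\frac{\eta|\xi|^2}{\sqrt{\sigma}}(1+t)^{\sigma}+C_\eta|\xi|^{-2(\sigma-1)}$. Either insert an $\eta$ into your set $E$, or---cleaner---follow the paper and treat the microscopic part with the time-only split (so that on the complement you get $C_\eta\|(\mathbf{I}-\mathbf{P})\widehat{g_2}\|_{L^1_\xi L^2_TH^{s*}_v}$ directly, without the stray factor $|\xi|^{-(\sigma-1)}$ your $E^c$ analysis would produce at low frequency).
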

\begin{proof}
	Similar arguments as in the proof of  Lemma \ref{lem: micro a priori} show that
	\begin{align}\label{tg2}
		&\Vert  (1+t)^{\sigma/2} \widehat{g_2}\Vert_{L^1_\xi L^\infty_T L^2_v}  
		+\Vert (1+t)^{\sigma/2} (\mathbf{I}-\mathbf{P})\widehat{g_2} \Vert_{L^1_\xi L^2_T H^{s*}_v} \notag \\
		\le& C_{A,M}\Vert  (1+t)^{\sigma/2}\widehat{g_2}\Vert^\frac{1}{2}_{L^1_\xi L^\infty_T L^2_v}\Big(\int^T_0\|(1+t)^\rho g_1(t)\|^2_{X^*_{8-\ga/2}}dt\Big)^\frac{1}{4}\notag\\
		&+C\Vert(1+t)^{\sigma/2} \widehat{g_2}\Vert_{L^1_\xi L^\infty_T L^2_v}\Vert (\mathbf{I}-\mathbf{P})\widehat{g_2} \Vert_{L^1_\xi L^2_T H^{s*}_v}\notag\\
		&+C\Vert  (1+t)^{\sigma/2} \widehat{g_2}\Vert^2_{L^1_\xi L^\infty_T L^2_v}+C\sqrt{\sigma} \int_{\mathbb{R}^3}\Big( \int^T_0 (1+t)^{\sigma -1} \Vert \widehat{g_2}\Vert_{L^2_v}^2 dt\Big)^{1/2} d\xi.
	\end{align}
It remains to estimate the last term above. It is direct to get
\begin{align}\label{J1J2}
	&\sqrt{\sigma} \int_{\mathbb{R}^3}\Big( \int^T_0 (1+t)^{\sigma -1} \Vert \widehat{g_2}\Vert_{L^2_v}^2 dt\Big)^{1/2} d\xi\notag\\
	\leq& \sqrt{\sigma} \int_{\R^3}\Big( \int^T_0 (1+t)^{\sigma -1} \Vert(\mathbf{I}-\mathbf{P}) \widehat{g_2}\Vert_{L^2_v}^2 dt\Big)^{1/2} d\xi\notag\\
	&+\sqrt{\sigma} \int_{\R^3}\Big( \int^T_0 (1+t)^{\sigma -1} \Vert\mathbf{P} \widehat{g_2}\Vert_{L^2_v}^2 dt\Big)^{1/2} d\xi\notag\\
	=&J_1+J_2.
\end{align}
For $J_1$, by \begin{align*}(1+t)^{\si-1}&\leq (1+t)^{\si-1}\mathbbm{1}_{(1+t)^{-1}\leq \eta/\sqrt\si}+(1+t)^{\si-1}\mathbbm{1}_{(1+t)^{-1}\geq \eta/\sqrt\si}\notag\\
	&\leq \frac{\eta}{\sqrt{\sigma}} (1+t)^\si +C_\eta,\end{align*}
it holds that
\begin{align}\label{J1}
	J_1\leq& \sqrt{\sigma} \int_{\R^3}\Big( \int^T_0 \{\frac{\eta}{\sqrt{\sigma}} (1+t)^\si +C_\eta\} \Vert(\mathbf{I}-\mathbf{P}) \widehat{g_2}\Vert_{L^2_v}^2 dt\Big)^{1/2} d\xi\notag\\
	\leq& \eta  \Vert (1+t)^{\sigma/2}(\mathbf{I}-\mathbf{P})\widehat{g_2} \Vert_{L^1_\xi L^2_T H^{s*}_v}+C_\eta  \|(\mathbf{I}-\mathbf{P})\widehat{g_2}\|_{L^1_\xi L^2_TH^{s*}_v}.
\end{align}
For $J_2$, we decompose
\begin{align}\label{J2decompose}
	J_2=&\sqrt{\sigma} \int_{|\xi|> 1}\Big( \int^T_0 (1+t)^{\sigma -1} \Vert\mathbf{P} \widehat{g_2}\Vert_{L^2_v}^2 dt\Big)^{1/2} d\xi\notag\\
	&+\sqrt{\sigma} \int_{|\xi|\leq 1}\Big( \int^T_0 (1+t)^{\sigma -1} \Vert\mathbf{P} \widehat{g_2}\Vert_{L^2_v}^2 dt\Big)^{1/2} d\xi\notag\\=&J_{21}+J_{22}.
\end{align}
Similar arguments as in \eqref{J1} show that
\begin{align}\label{J21}
	J_{21}\leq C\eta \Big\Vert (1+t)^{\sigma/2} \frac{\vert \nabla_x \vert}{\langle \nabla_x \rangle } (\hat{a},\hat{b},\hat{c}) \Big\Vert_{L^1_\xi L^2_T}+C_\eta \Big\Vert  \frac{\vert \nabla_x \vert}{\langle \nabla_x \rangle } (\hat{a},\hat{b},\hat{c}) \Big\Vert_{L^1_\xi L^2_T}.
\end{align}
By the fact that \begin{align*}(1+t)^{\si-1}&\leq (1+t)^{\si-1}\mathbbm{1}_{(1+t)^{-1}\leq \eta|\xi|^2/\sqrt\si}+(1+t)^{\si-1}\mathbbm{1}_{(1+t)^{-1}\geq \eta|\xi|^2/\sqrt\si}\notag\\
	&\leq \frac{\eta|\xi|^2}{\sqrt{\sigma}} (1+t)^\si +C_\eta|\xi|^{-2(\si-1)},\end{align*}
we get
\begin{align}\label{J22}
	J_{22}\leq& \sqrt{\sigma} \int_{|\xi|\leq 1}\Big( \int^T_0 \{\frac{\eta}{\sqrt{\sigma}} (1+t)^\si |\xi|^2 +C_\eta |\xi|^{-2(\si-1)}\} \Vert \mathbf{P}\widehat{g_2}\Vert_{L^2_v}^2 dt\Big)^{1/2} d\xi\notag\\
	\le&C\eta \Big\Vert (1+t)^{\sigma/2} \frac{\vert \nabla_x \vert}{\langle \nabla_x \rangle } (\hat{a},\hat{b},\hat{c}) \Big\Vert_{L^1_\xi L^2_T}+C_\eta
	\int_{|\xi|\leq 1}|\xi|^{-\si}\Big( \int^T_0 |\xi|^2 \Vert\mathbf{P} \widehat{g_2}\Vert_{L^2_v}^2 dt\Big)^{1/2} d\xi.
	\end{align}
For the second term on the right hand side in \eqref{J22}, an application of H\"older's inequality leads to
\begin{align}\label{J221}
	&C_\eta
	\int_{|\xi|\leq 1}|\xi|^{-\si}\Big( \int^T_0 |\xi|^2 \Vert\mathbf{P} \widehat{g_2}\Vert_{L^2_v}^2 dt\Big)^{1/2} d\xi\notag\\\leq& C_\eta
	\Big(\int_{|\xi|\leq 1}|\xi|^{-\si p'}d\xi\Big)^{1/p'}\Big(\int_{|\xi|\leq 1}\Big( \int^T_0 |\xi|^2 \Vert\mathbf{P} \widehat{g_2}\Vert_{L^2_v}^2 dt\Big)^{p/2}d\xi\Big)^{1/p} \notag\\
	\leq& C_\eta\Big\|\frac{\vert \nabla_x \vert}{\langle \nabla_x \rangle}(\hat{a},\hat{b},\hat{c})\Big\|_{L^p_\xi L^2_T},
\end{align}
where $1/p'+1/p=1$. The last inequality above holds since $\si p'>-3$ by our choice of $\si$ and $p$.
It follows by \eqref{J2decompose}, \eqref{J21}, \eqref{J22} and \eqref{J221} that
\begin{align}\label{J2}	
	J_2
	\leq& C\eta \Big\Vert (1+t)^{\sigma/2} \frac{\vert \nabla_x \vert}{\langle \nabla_x \rangle } (\hat{a},\hat{b},\hat{c}) \Big\Vert_{L^1_\xi L^2_T}\notag\\
	&
	+ C_\eta  \Big(\Big\|\frac{\vert \nabla_x \vert}{\langle \nabla_x \rangle}(\hat{a},\hat{b},\hat{c})\Big\|_{L^1_\xi L^2_T}+\Big\|\frac{\vert \nabla_x \vert}{\langle \nabla_x \rangle}(\hat{a},\hat{b},\hat{c})\Big\|_{L^p_\xi L^2_T}\Big).
\end{align}

Thus, \eqref{ineq: micro time-weighted} holds by collecting \eqref{tg2}, \eqref{J1J2}, \eqref{J1} and \eqref{J2}, and choosing $\eta$ to be small enough.
\end{proof}

Combining Lemma \ref{lem.twemp} and Lemma \ref{lem: micro time-weighted}, the following result can be directly deduced by a linear combination of \eqref{ineq: macro time-weighted} and \eqref{ineq: micro time-weighted}, and choosing $\eta$ to be small.
\begin{lemma}\label{timehard}
	Let $-2s\le\ga\le1$, $0<s<1$, $3/2<p\leq \infty$ and $\sigma=3(1-1/p)-2\ep$ where $\ep>0$ is arbitrarily small. For any $T>0$ and $\rho>1$,
	it holds that
	\begin{align}\label{esthatg2}
		&\Vert  (1+t)^{\sigma/2} \widehat{g_2}\Vert_{L^1_\xi L^\infty_T L^2_v}  
		+\Vert (1+t)^{\sigma/2} (\mathbf{I}-\mathbf{P})\widehat{g_2} \Vert_{L^1_\xi L^2_T H^{s*}_v}+\Big \Vert (1+t)^{\sigma/2} \frac{\vert \nabla_x \vert}{\langle \nabla_x \rangle} (\hat{a},\hat{b},\hat{c}) \Big\Vert_{L^1_\xi L^2_T} \notag \\
		\le &C_{A,M}\Vert  (1+t)^{\sigma/2}\widehat{g_2}\Vert^\frac{1}{2}_{L^1_\xi L^\infty_T L^2_v}\Big(\int^T_0\|(1+t)^\rho g_1(t)\|^2_{X^*_{8-\ga/2}}dt\Big)^\frac{1}{4}\notag\\&+C\Vert(1+t)^{\sigma/2} \widehat{g_2}\Vert_{L^1_\xi L^\infty_T L^2_v}\Vert (\mathbf{I}-\mathbf{P})\widehat{g_2} \Vert_{L^1_\xi L^2_T H^{s*}_v}+C\Vert  (1+t)^{\sigma/2} \widehat{g_2}\Vert^2_{L^1_\xi L^\infty_T L^2_v}\notag\\&+ C \Big(\|(\mathbf{I}-\mathbf{P})\widehat{g_2}\|_{L^1_\xi L^2_TH^{s*}_v}+\Big\|\frac{\vert \nabla_x \vert}{\langle \nabla_x \rangle}(\hat{a},\hat{b},\hat{c})\Big\|_{L^1_\xi L^2_T}+\Big\Vert \frac{\vert \nabla_x \vert}{\langle \nabla_x \rangle}(\hat{a},\hat{b},\hat{c})\Big\Vert_{L^p_\xi L^2_T}\Big)\notag\\
		&+C_{A,M}\|(1+t)^{\si/2}\widehat{g_1}\|_{L^1_\xi L^2_T L^2_v}.
	\end{align}
\end{lemma}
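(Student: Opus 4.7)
\medskip

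\noindent\textbf{Proof proposal.} The plan is to derive \eqref{esthatg2} by forming a suitable linear combination of the macroscopic time-weighted bound \eqref{ineq: macro time-weighted} from Lemma \ref{lem.twemp} and the microscopic time-weighted bound \eqref{ineq: micro time-weighted} from Lemma \ref{lem: micro time-weighted}, and then absorbing two sets of cross terms back into the left-hand side. Observe that the left-hand side of \eqref{esthatg2} consists precisely of the left-hand side of \eqref{ineq: micro time-weighted} together with the left-hand side of \eqref{ineq: macro time-weighted}, so the target is already set by adding these two estimates up to appropriate weights.

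\medskip

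The concrete combination is as follows. Fix a small constant $\kappa>0$ to be chosen, and add $\kappa$ times \eqref{ineq: macro time-weighted} to \eqref{ineq: micro time-weighted}. After this combination the left-hand side becomes
\[
\Vert (1+t)^{\sigma/2}\widehat{g_2}\Vert_{L^1_\xi L^\infty_T L^2_v}+\Vert (1+t)^{\sigma/2}(\mathbf{I}-\mathbf{P})\widehat{g_2}\Vert_{L^1_\xi L^2_T H^{s*}_v}+\kappa\Big\Vert (1+t)^{\sigma/2}\tfrac{|\nabla_x|}{\langle \nabla_x\rangle}(\hat a,\hat b,\hat c)\Big\Vert_{L^1_\xi L^2_T}.
\]
On the right-hand side the macroscopic quantity $\|(1+t)^{\sigma/2}\tfrac{|\nabla_x|}{\langle\nabla_x\rangle}(\hat a,\hat b,\hat c)\|_{L^1_\xi L^2_T}$ appears only through the $C\eta$-term from \eqref{ineq: micro time-weighted}, while the microscopic quantities $\|(1+t)^{\sigma/2}\widehat{g_2}\|_{L^1_\xi L^\infty_T L^2_v}$ and $\|(1+t)^{\sigma/2}(\mathbf{I}-\mathbf{P})\widehat{g_2}\|_{L^1_\xi L^2_T H^{s*}_v}$ come only from the $\kappa\cdot$\eqref{ineq: macro time-weighted} contribution with coefficient $\kappa C$. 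Choose first $\kappa$ so small that $\kappa C<\tfrac12$, and then $\eta$ so small that $C\eta<\tfrac{\kappa}{2}$. These two smallness choices allow the two linear macroscopic/microscopic cross terms to be absorbed into the left-hand side.

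\medskip

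The remaining terms on the right-hand side, namely the weighted control by $g_1$ (both $C_{A,M}\|\widehat{g_1}\|_{L^1_\xi L^2_T L^2_v}$-type and $C_{A,M}\|(1+t)^{\sigma/2}\widehat{g_1}\|_{L^1_\xi L^2_T L^2_v}$-type), the nonlinear quadratic contributions $\|(1+t)^{\sigma/2}\widehat{g_2}\|_{L^1_\xi L^\infty_T L^2_v}\cdot\|(\mathbf{I}-\mathbf{P})\widehat{g_2}\|_{L^1_\xi L^2_T H^{s*}_v}$ and $\|(1+t)^{\sigma/2}\widehat{g_2}\|_{L^1_\xi L^\infty_T L^2_v}^2$ and $\|(1+t)^{\sigma/2}\widehat{g_2}\|_{L^1_\xi L^\infty_T L^2_v}\cdot\|\widehat{g_2}\|_{L^1_\xi L^2_T H^{s*}_v}$, as well as the lower-order non-time-weighted pieces $\|(\mathbf{I}-\mathbf{P})\widehat{g_2}\|_{L^1_\xi L^2_TH^{s*}_v}$, $\|\tfrac{|\nabla_x|}{\langle\nabla_x\rangle}(\hat a,\hat b,\hat c)\|_{L^1_\xi L^2_T}$ and $\|\tfrac{|\nabla_x|}{\langle\nabla_x\rangle}(\hat a,\hat b,\hat c)\|_{L^p_\xi L^2_T}$, simply carry over into the statement of \eqref{esthatg2}. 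The $\kappa$-dependent constant in front of each of these pieces can be rebranded into the generic constant $C$.

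\medskip

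I expect the main bookkeeping obstacle to be the double absorption: after fixing $\kappa$ small, the choice of $\eta$ has to respect the already-fixed $\kappa$, so the two small parameters cannot be chosen independently. This ordering is standard but must be carried out explicitly. The rest of the argument is a purely linear manipulation of the two source inequalities and does not require any new estimate on $\hat{\Gamma}$, on $\CL_B$, or on the macroscopic quantities beyond what is already encoded in Lemma \ref{lem.twemp} and Lemma \ref{lem: micro time-weighted}.
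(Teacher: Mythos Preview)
Your proposal is correct and follows exactly the approach the paper takes: the paper states that the result ``can be directly deduced by a linear combination of \eqref{ineq: macro time-weighted} and \eqref{ineq: micro time-weighted}, and choosing $\eta$ to be small,'' which is precisely your $\kappa$-weighted combination with the two-step absorption. The only cosmetic point is that the cross term $\Vert (1+t)^{\sigma/2}\widehat{g_2}\Vert_{L^1_\xi L^\infty_T L^2_v}\,\Vert \widehat{g_2}\Vert_{L^1_\xi L^2_T H^{s*}_v}$ from \eqref{ineq: macro time-weighted} must be split via \eqref{g2L1xi} into the two quadratic terms already listed in \eqref{esthatg2}, but this is routine bookkeeping.
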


\subsection{Decay estimates for $\ga+2s< 0$}
In this case, we should estimate $g_2$ with an additional velocity weight. Then we need the following lemma on $L$, where the proof is given in \cite{GS, AMUXY-2012-JFA, DLYZ-VMB}.
\begin{lemma}\label{wgesL}
	It holds
	\begin{equation*}
		\left( Lg,\langle v\rangle^{2k}g\right)_{L^2_v}\geq
		\delta\Vert g \Vert^2_{H^{s*}_{v,k}}
		-C_k\Vert g\Vert_{L^2_v({B_R})}^2,
	\end{equation*}
	where $\de$, $C_k>0$, and  $B_R$ denotes the closed ball in $\R^3_v$ with center at the origin and radius $R>0$.
\end{lemma}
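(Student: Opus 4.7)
The plan is to establish the weighted coercivity by decomposing $L$ into a manifestly non-negative dissipative part and a remainder that concentrates on a bounded velocity ball, following the approach of \cite{AMUXY-2012-JFA,GS}. Recall that by definition $Lg=\mu^{-1/2}Q(\mu,\sqrt{\mu}g)+\mu^{-1/2}Q(\sqrt{\mu}g,\mu)$, so after the change of variables $f=\sqrt{\mu}g$ and using the standard pre-/post-collisional symmetrization, the inner product $(Lg,\langle v\rangle^{2k}g)_{L^2_v}$ can be rewritten as a symmetric double integral in $(v,u,\sigma)$. The first step is to perform this symmetrization carefully and produce the two building blocks of $\|g\|_{H^{s*}_{v,k}}^2$: the squared-difference term $\iiint B\,\mu(u)(\langle v'\rangle^k g(v')-\langle v\rangle^k g(v))^2\,dvdud\sigma$ together with the Maxwellian-defect term $\iiint B\,\langle u\rangle^{2k}g(u)^2(\mu(v')^{1/2}-\mu(v)^{1/2})^2\,dvdud\sigma$.

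Second, I would insert the weight $\langle v\rangle^k$ inside the collision operator by writing $\langle v\rangle^k g(v)-\langle v'\rangle^k g(v')=\langle v\rangle^k(g(v)-g(v'))+(\langle v\rangle^k-\langle v'\rangle^k)g(v')$ and its symmetric counterpart. Expanding the square and integrating, the leading term produces precisely $\delta\|g\|_{H^{s*}_{v,k}}^2$, while the cross terms generate commutator contributions that need to be controlled. The key pointwise bound is $|\langle v\rangle^k-\langle v'\rangle^k|\le C_k\langle v\rangle^{k-1}\langle u\rangle\sin(\theta/2)$ on the collision geometry, which together with the angular singularity $b(\cos\theta)\sim \theta^{-1-2s}$ and the fast decay of $\mu(u)$ allows these commutators to be absorbed by $\eta\|g\|_{H^{s*}_{v,k}}^2$ plus a term of the form $C_{\eta,k}\|g\|_{L^2_{v,k-1+\gamma/2}}^2$.

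Third, I would deal with the non-positive remainder (the compact/gain part of $L$) in the standard way: by the exponential decay of the Maxwellian in the collision kernel and basic integrability on $\R^3\times\S^2$, its contribution is bounded by $C_k\|g\|_{L^2_{v,-N}}^2$ for any $N\ge 0$. Then one uses the interpolation-type inequality
\begin{equation*}
\|g\|_{L^2_{v,k-1+\gamma/2}}^2\le \eta\|g\|_{H^{s*}_{v,k}}^2+C_{\eta,R}\|g\|_{L^2_v(B_R)}^2,
\end{equation*}
valid because $H^{s*}_{v,k}$ controls $L^2_{v,k+\gamma/2}$ up to the coercive loss, and the gap between $k-1+\gamma/2$ and $k+\gamma/2$ on the complement of $B_R$ is a decaying factor $\langle v\rangle^{-1}\le R^{-1}$. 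Choosing $\eta$ small and $R$ large then absorbs all error terms into the dissipation and leaves exactly the claimed inequality after renaming $\delta/2\mapsto\delta$.

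The main obstacle will be the commutator step: one has to track the weight $\langle v\rangle^k$ through both summands in the definition of $\|g\|_{H^{s*}_{v,k}}^2$, ensure that the non-cutoff angular singularity is integrated symmetrically so that the $\sin^2(\theta/2)$ gain from the Taylor expansion of the weight exactly compensates the $\theta^{-1-2s}$ singularity, and verify that the leftover from the Maxwellian-difference integral matches the second summand in the definition of $\|g\|_{H^{s*}_{v,k}}^2$ rather than being strictly larger. Once this bookkeeping is done, the localization on $B_R$ is routine.
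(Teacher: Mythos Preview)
The paper does not actually prove this lemma: it simply states it and defers to the cited references \cite{GS,AMUXY-2012-JFA,DLYZ-VMB}. Your sketch is exactly the argument carried out in those references---symmetrize the collision term, commute the weight $\langle v\rangle^k$ inside using the pointwise bound $|\langle v\rangle^k-\langle v'\rangle^k|\lesssim\langle v\rangle^{k-1}\langle u\rangle\sin(\theta/2)$, absorb the commutator errors by $\eta\|g\|_{H^{s*}_{v,k}}^2$ plus a lower-weight $L^2$ term, and finally localize the remainder to $B_R$ by the velocity decay of the extra weight gap. So your approach matches the intended (cited) one, and the outline is sound.

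One remark: the inequality as printed has a sign typo. With the sign convention of \eqref{definition L Gamma} the operator $L$ is non-positive (the paper itself quotes $(Lg,g)_{L^2_v}\le -\delta\|(\mathbf{I}-\mathbf{P})g\|_{H^{s*}_v}^2$), and the lemma is used in the proof of Lemma~\ref{g2l1soft} to produce dissipation on the left-hand side. The intended statement is therefore
\[
-\big(Lg,\langle v\rangle^{2k}g\big)_{L^2_v}\ \ge\ \delta\|g\|_{H^{s*}_{v,k}}^2 - C_k\|g\|_{L^2_v(B_R)}^2,
\]
which is what your argument naturally yields. Your write-up does not explicitly track this overall sign; when you formalize the symmetrization step you should confirm that the leading quadratic form coming from $-(Lg,\langle v\rangle^{2k}g)$ is the non-negative one matching $\|g\|_{H^{s*}_{v,k}}^2$.
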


We now bound the velocity weighted $L^1_\xi L^\infty_T L^2_v$ norm of $g_2$.
\begin{lemma}\label{g2l1soft}
	Let $k\geq 0$, $\max\{-3,-3/2-2s\}<\ga<2s$, $0<s<1$ and $g_2$ be a solution to \eqref{hatg2}.
	There exists $C_k>0$ such that 
	\begin{align}\label{g2soft}
		&\Vert \widehat{g_2}\Vert_{L^1_\xi L^\infty_T L^2_{v,k}}+\Vert (\mathbf{I}-\mathbf{P})\widehat{g_2} \Vert_{L^1_\xi L^2_T H^{s*}_{v,k}}+\Big\Vert \frac{\vert \nabla_x \vert}{\langle \nabla_x \rangle } (\hat{a},\hat{b},\hat{c}) \Big\Vert_{L^1_\xi L^2_T}\notag\\
		\le &C_{A,M}\|\widehat{g_2}\|^\frac{1}{2}_{L^1_\xi L^\infty_TL^2_{v,k}}\Big(\int^T_0\|(1+t)^\rho g_1(t)\|^2_{X^*_{k+8-\ga/2}}dt\Big)^\frac{1}{4}\notag\\&+C_k \Vert \widehat{g_2} \Vert_{L^1_\xi L^\infty_T L^2_{v,k}} \Vert(1+t)^{\sigma/2} \widehat{g_2}\Vert_{L^1_\xi L^\infty_T L^2_v}  \notag \\
		&+C_{A,M}\|\widehat{g_1}\|_{L^1_\xi L^2_T L^2_v}+C_k \Vert \widehat{g_2} \Vert_{L^1_\xi L^\infty_T L^2_{v,k}} \Vert(\mathbf{I}-\mathbf{P}) \widehat{g_2}\Vert_{L^1_\xi L^2_T H^{s*}_{v,k}},
	\end{align}
 where $C_{A,M}$ depends only on $A$ and $M$ which are two constants in the definition of $L_B$.
\end{lemma}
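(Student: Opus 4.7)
The plan is to mirror the Fourier-side energy argument of Lemma \ref{lem: micro a priori}, but now with the extra velocity weight $\langle v\rangle^{2k}$ and with the weighted coercivity of $L$ from Lemma \ref{wgesL} playing the role of the unweighted one. Concretely, I would take the inner product of \eqref{hatg2} with $\langle v\rangle^{2k}\overline{\widehat{g_2}}(t,\xi)$, integrate on $[0,T]\times\mathbb{R}^3_v$, and take real parts; the transport term $iv\cdot\xi$ drops, and Lemma \ref{wgesL} supplies the principal coercive control $\delta\Vert\widehat{g_2}\Vert^2_{L^2_TH^{s*}_{v,k}}$ modulo the localized error $C_k\Vert\widehat{g_2}\Vert^2_{L^2_T L^2_v(B_R)}$. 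The source $(\CL_B\widehat{g_1},\langle v\rangle^{2k}\widehat{g_2})$ is harmless because $\mu^{-1/2}A\chi_M\langle v\rangle^{2k}$ is bounded on $\{|v|\leq 2M\}$, so repeating \eqref{LBg1g2} and the Plancherel/Cauchy--Schwarz chain \eqref{g22}--\eqref{g2l1} produces the first term on the right of \eqref{g2soft}. The $\hat\Gamma$-nonlinearity is bounded by a weighted analogue of \eqref{lem.ibn1}: first apply \eqref{fgh} to each frequency slice $(\widehat{g_2}(\xi-\ell),\widehat{g_2}(\ell))$, then Cauchy--Schwarz in $t$, then Minkowski in the $\ell$-convolution, producing the final term $\Vert\widehat{g_2}\Vert_{L^1_\xi L^\infty_T L^2_{v,k}}\Vert(\mathbf{I}-\mathbf{P})\widehat{g_2}\Vert_{L^1_\xi L^2_T H^{s*}_{v,k}}$.

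The core difficulty lies in disposing of the localized error $C_k\Vert\widehat{g_2}\Vert^2_{L^2_v(B_R)}$. In the hard regime $\gamma+2s\geq 0$ one could absorb it into the dissipation through an $\eta$-trick, but in the present soft regime $\gamma+2s<0$ this direct absorption fails. Instead, I would split $\widehat{g_2}=\mathbf{P}\widehat{g_2}+(\mathbf{I}-\mathbf{P})\widehat{g_2}$. The microscopic piece $\Vert(\mathbf{I}-\mathbf{P})\widehat{g_2}\Vert_{L^2_v(B_R)}$ is controlled by a constant multiple of $\Vert(\mathbf{I}-\mathbf{P})\widehat{g_2}\Vert_{H^{s*}_{v,k}}$ (on a bounded ball all the relevant weights are comparable, and $\langle v\rangle^k\geq 1$), and is absorbed by the coercive LHS up to lower-order commutator terms. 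The macroscopic part gives $\int_0^T|(\hat a,\hat b,\hat c)|^2\,dt$; taking $L^1_\xi$, I decompose in frequency: on $|\xi|\geq 1$ the integrand is dominated by $\Vert\frac{|\nabla_x|}{\langle\nabla_x\rangle}(\hat a,\hat b,\hat c)\Vert_{L^1_\xi L^2_T}$, which is moved to the LHS after combining with \eqref{lowmacro} at $p=1$; on $|\xi|\leq 1$ I would insert the identity $1=(1+t)^{\sigma/2}(1+t)^{-\sigma/2}$ with $\sigma>1$ so that $(1+t)^{-\sigma}$ is time-integrable, producing precisely the cross term $\Vert\widehat{g_2}\Vert_{L^1_\xi L^\infty_T L^2_{v,k}}\Vert(1+t)^{\sigma/2}\widehat{g_2}\Vert_{L^1_\xi L^\infty_T L^2_v}$ appearing in \eqref{g2soft}.

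Taking finally a suitable linear combination of the weighted energy estimate above with \eqref{lowmacro} at $p=1$ and absorbing the small constants on the left, one concludes \eqref{g2soft}; the factor $(\int_0^T\Vert(1+t)^\rho g_1\Vert^2_{X^*_{k+8-\gamma/2}}\,dt)^{1/4}$ arises from applying \eqref{g22}--\eqref{g1l1xih} with weight $\langle v\rangle^{2k}$ in the same interpolation between $\langle\xi\rangle^{3+}$ and $\langle\xi\rangle^{-3/2-}$ used before. The main obstacle is the frequency decomposition disposing of the macroscopic part of the $B_R$-error: it is this step that forces the presence of the time-weighted quantity $\Vert(1+t)^{\sigma/2}\widehat{g_2}\Vert_{L^1_\xi L^\infty_T L^2_v}$ on the right-hand side and motivates the earlier estimate in Lemma \ref{lem.twemp}.
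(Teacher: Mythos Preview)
Your overall architecture is close to the paper's, but the step disposing of the low-frequency macroscopic part of the $B_R$-error contains a real gap. From your global weighted energy identity (testing \eqref{hatg2} against $\langle v\rangle^{2k}\overline{\widehat{g_2}}$), the error term is $C_k\int_0^T\Vert\widehat{g_2}(t,\xi)\Vert^2_{L^2_v(B_R)}\,dt$, whose macroscopic low-frequency contribution is $C_k\int_0^T|(\hat a,\hat b,\hat c)(t,\xi)|^2\,dt$. Inserting $(1+t)^{\sigma}(1+t)^{-\sigma}$ with $\sigma>1$ and taking square root followed by $L^1_{|\xi|\leq 1}$ gives
\[
C_k\int_{|\xi|\leq 1}\Big(\int_0^T|(\hat a,\hat b,\hat c)|^2\,dt\Big)^{1/2}d\xi\ \leq\ C_k\,\Vert(1+t)^{\sigma/2}\widehat{g_2}\Vert_{L^1_\xi L^\infty_T L^2_v},
\]
which is \emph{linear} in $g_2$ with a large constant $C_k$, not the quadratic cross term $\Vert\widehat{g_2}\Vert_{L^1_\xi L^\infty_T L^2_{v,k}}\Vert(1+t)^{\sigma/2}\widehat{g_2}\Vert_{L^1_\xi L^\infty_T L^2_v}$ you claim. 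There is no way to extract the extra factor $\Vert\widehat{g_2}\Vert_{L^1_\xi L^\infty_T L^2_{v,k}}$ here (any attempt to split $|(\hat a,\hat b,\hat c)|^2$ into two $L^\infty_T$ factors requires $\sigma>2$). This linear term cannot be absorbed: the combined system with Lemma~\ref{lemmasoft} becomes $\text{LHS}_k\lesssim \text{data}+\text{quadratic}+C_k\cdot\text{LHS}_{k'}$ with $C_k$ not small, and the bootstrap fails to close.

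The paper evades this precisely by \emph{not} running a single global weighted energy estimate. It splits in frequency \emph{before} testing: for $|\xi|\leq 1$ it applies $(\mathbf{I}-\mathbf{P})$ to \eqref{hatg2} first and tests against $\langle v\rangle^{2k}(\mathbf{I}-\mathbf{P})\overline{\widehat{g_2}}$, so the $B_R$-error is purely microscopic, namely $C_k\Vert(\mathbf{I}-\mathbf{P})\widehat{g_2}\Vert_{L^2_TL^2_v(B_R)}\leq C_k\Vert(\mathbf{I}-\mathbf{P})\widehat{g_2}\Vert_{L^1_\xi L^2_TH^{s*}_v}$, absorbed by a large multiple of the unweighted estimate \eqref{g2hardl1}. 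The macroscopic low-frequency contribution instead enters through the transport commutators $(\mathbf{I}-\mathbf{P})[iv\cdot\xi\mathbf{P}\widehat{g_2}]$ and $\mathbf{P}[iv\cdot\xi(\mathbf{I}-\mathbf{P})\widehat{g_2}]$, which carry an explicit factor $|\xi|$ and hence are bounded by $\Vert\frac{|\nabla_x|}{\langle\nabla_x\rangle}(\hat a,\hat b,\hat c)\Vert_{L^1_\xi L^2_T}$. For $|\xi|\geq 1$ one may test against the full $\langle v\rangle^{2k}\overline{\widehat{g_2}}$ since there $|\xi|/\langle\xi\rangle\geq 1/\sqrt{2}$ and the macroscopic $B_R$-error is directly controlled by the macroscopic dissipation. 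The product term in \eqref{g2soft} comes solely from the $\hat\Gamma$-nonlinearity via \eqref{g2L1xi}, not from the $B_R$-error. Your scheme would work if you adopt this frequency-adapted testing in place of the single global estimate.
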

\begin{proof}
	Our proof mainly contains three parts which are the high frequency part of $\widehat{g_2}$ and the low frequency part of $(\mathbf{I}-\mathbf{P}) \widehat{g_2}$. Recalling the low frequency part of $\mathbf{P}\widehat{g_2}$ is given in \eqref{lowmacro}, then we take suitable linear combination to get our result. We start with the low frequency part of $(\mathbf{I}-\mathbf{P}) \widehat{g_2}$.
	Recall that $g_2(0) = 0$, applying $(\mathbf{I}-\mathbf{P})$ to \eqref{hatg2}, multiplying with $\langle v\rangle^{2k} (\mathbf{I}-\mathbf{P}) \bar{\hat{g}}_2$, integrating over $[0,T]\times \mathbb{R}^3_v $ and using Lemma \ref{wgesL}, one gets
	\begin{align*}% \label{ineq: weight I-P 1}
		&\Vert (\mathbf{I}-\mathbf{P}) \widehat{g_2} \Vert_{L^\infty_T L^2_{v,k}} + \Vert (\mathbf{I}-\mathbf{P}) \widehat{g_2} \Vert_{L^2_T H^{s*}_{v,k}} 
		\\\le&  C\Vert( (\mathbf{I}-\mathbf{P}) \CL_B \widehat{g_1},\langle v\rangle^{2k}(\mathbf{I}-\mathbf{P})\widehat{g_2}) \Vert_{L^2_T}
		+ C_k\Vert (\mathbf{I}-\mathbf{P}) \widehat{g_2} \Vert_{L^2_T L^2_v(B_R)} \\
		+& C\Big(\int^T_0 \Big\vert \int_{\mathbb{R}^3}  \mathrm{Re}(\hat{\Gamma}(f,f)-(\mathbf{I}-\mathbf{P})[iv\cdot \xi \mathbf{P}\widehat{g_2}] \notag\\
		&\qquad\qquad\qquad\qquad+\mathbf{P}[iv\cdot \xi (\mathbf{I}-\mathbf{P}) \widehat{g_2}],\langle v\rangle^{2k}(\mathbf{I}-\mathbf{P})\widehat{g_2}))  dv\Big\vert dt\Big)^{1/2}.
	\end{align*}
Integrating over $\R^3_\xi$, the last two term on the right hand side above can be bounded by similar arguments as in \cite[Lemma 4.3]{DSY}, then we have
	\begin{align}\label{highmacrog20}
	&\Vert  (\mathbf{I}-\mathbf{P}) \widehat{g_2} \Vert_{L^1_{\vert \xi\vert\le 1} L^\infty_T L^2_{v,k}} 
	+ \Vert  (\mathbf{I}-\mathbf{P}) \widehat{g_2} \Vert_{L^1_{\vert \xi\vert\le 1} L^2_T H^{s*}_{v,k}} \notag \\
	\le&  C\Vert( (\mathbf{I}-\mathbf{P}) \CL_B \widehat{g_1},\langle v\rangle^{2k}(\mathbf{I}-\mathbf{P})\widehat{g_2}) \Vert_{L^1_{\vert \xi\vert\le 1}L^2_T}
	+ C_k\Vert (\mathbf{I}-\mathbf{P}) \widehat{g_2} \Vert_{L^1_\xi L^2_T H^{s*}_v} \notag \\
	&
	+ C_k \Big\Vert \frac{\vert \nabla_x \vert}{\langle \nabla_x \rangle } (\hat{a},\hat{b},\hat{c}) \Big\Vert_{L^1_\xi L^2_T}  +C_k \Vert \widehat{g_2} \Vert_{L^1_\xi L^\infty_T L^2_{v,k}} \Vert(1+t)^{\sigma/2} \widehat{g_2}\Vert_{L^1_\xi L^\infty_T L^2_v}\notag \\
	&+C_k \Vert \widehat{g_2} \Vert_{L^1_\xi L^\infty_T L^2_{v,k}} \Vert(\mathbf{I}-\mathbf{P}) \widehat{g_2}\Vert_{L^1_\xi L^2_T H^{s*}_v}.
\end{align}
Now we concentrate on $\Vert( (\mathbf{I}-\mathbf{P}) \CL_B \widehat{g_1},\langle v\rangle^{2k}(\mathbf{I}-\mathbf{P})\widehat{g_2}) \Vert_{L^1_{\vert \xi\vert\le 1}L^2_T}$.
By the definition of $\CL_B$ in \eqref{defLB} and the fact that $\Vert  \mathbf{P} g \Vert_{L^2_{v,k}}\le C \Vert g\Vert_{L^2_v}$, one has
\begin{align*}
	&\Big(\int^T_0\rmre((\mathbf{I}-\mathbf{P})\CL_B\widehat{g_1},\langle v\rangle^{2k}(\mathbf{I}-\mathbf{P})\widehat{g_2})(t,\xi)dt\Big)^\frac{1}{2}\notag\\
	\leq& C\Big(\int^T_0\|(\mathbf{I}-\mathbf{P})\CL_B\widehat{g_1}(t,\xi)\|_{L^2_{v,k}}\|(\mathbf{I}-\mathbf{P})\widehat{g_2}(t,\xi)\|_{L^2_{v,k}}dt\Big)^\frac{1}{2}\notag\\
	\leq& C\|(\mathbf{I}-\mathbf{P})\CL_B\widehat{g_1}(\xi)\|^\frac{1}{2}_{L^\infty_TL^2_{v,k}}\Big(\int^T_0\|(\mathbf{I}-\mathbf{P})\widehat{g_2}(t,\xi)\|_{L^2_{v,k}}dt\Big)^\frac{1}{2}\notag\\
	\leq& C_{A,M}\|\widehat{g_2}(\xi)\|^\frac{1}{2}_{L^\infty_TL^2_{v,k}}\Big(\int^T_0\|\widehat{g_1}(t,\xi)\|_{L^2_{v,k}}dt\Big)^\frac{1}{2},
\end{align*}
which yields by Cauchy-Schawarz inequality and Sobolev imbedding that
\begin{align}\label{highmacrog1}
	&\Vert( (\mathbf{I}-\mathbf{P}) \CL_B \widehat{g_1},\langle v\rangle^{2k}(\mathbf{I}-\mathbf{P})\widehat{g_2}) \Vert_{L^1_{\vert \xi\vert\le 1}L^2_T}\notag\\
	\leq& C_{A,M}\|\widehat{g_2}(\xi)\|^\frac{1}{2}_{L^1_\xi L^\infty_TL^2_{v,k}}\Big(\int_{\R^3}\int^T_0\|\widehat{g_1}(t,\xi)\|_{L^2_{v,k}}dtd\xi\Big)^\frac{1}{2}\notag\\
	\leq& C_{A,M}\|\widehat{g_2}(\xi)\|^\frac{1}{2}_{L^1_\xi L^\infty_TL^2_{v,k}}\Big(\int^T_0\int_{R^3_\xi}(1+t)^\rho \langle\xi\rangle^{3+}\|\widehat{g_1}(t,\xi)\|^2_{L^2_{v,k}}d\xi dt\Big)^\frac{1}{4}\notag\\
	\leq& C_{A,M}\|\widehat{g_2}(\xi)\|^\frac{1}{2}_{L^1_\xi L^\infty_TL^2_{v,k}}\Big(\int^T_0\|(1+t)^\rho g_1(t)\|^2_{H^2_xL^2_{v,k}}dt\Big)^\frac{1}{4}.
\end{align}
Recalling the definition of $X^*_k$ in \eqref{X*k}, then it follows from \eqref{highmacrog20} and \eqref{highmacrog1} that
\begin{align}%\label{highmacrog2}
	&\Vert  (\mathbf{I}-\mathbf{P}) \widehat{g_2} \Vert_{L^1_{\vert \xi\vert\le 1} L^\infty_T L^2_v} 
	+ \Vert  (\mathbf{I}-\mathbf{P}) \widehat{g_2} \Vert_{L^1_{\vert \xi\vert\le 1} L^2_T H^{s*}_{v,k}} \notag \\
	\le&  C_{A,M}\|\widehat{g_2}\|^\frac{1}{2}_{L^1_\xi L^\infty_TL^2_{v,k}}\Big(\int^T_0\|(1+t)^\rho g_1(t)\|^2_{X^*_{k+8-\ga/2}}dt\Big)^\frac{1}{4}
	+ C_k\Vert (\mathbf{I}-\mathbf{P}) \widehat{g_2} \Vert_{L^1_\xi L^2_T H^{s*}_v} 
	  \notag \\
	&+ C_k \Big\Vert \frac{\vert \nabla_x \vert}{\langle \nabla_x \rangle } (\hat{a},\hat{b},\hat{c}) \Big\Vert_{L^1_\xi L^2_T}+C_k \Vert \widehat{g_2} \Vert_{L^1_\xi L^\infty_T L^2_{v,k}} \Vert(1+t)^{\sigma/2} \widehat{g_2}\Vert_{L^1_\xi L^\infty_T L^2_v}  \notag \\
	&+C_k \Vert \widehat{g_2} \Vert_{L^1_\xi L^\infty_T L^2_{v,k}} \Vert(\mathbf{I}-\mathbf{P}) \widehat{g_2}\Vert_{L^1_\xi L^2_T H^{s*}_{v,k}}.\notag
\end{align}
We turn to the high frequency part of $\widehat{g_2}$. By similar calculations as above, we obtain
\begin{align}\label{lowg2}
	&\Vert \widehat{g_2} \Vert_{L^1_{\vert \xi\vert \ge 1} L^\infty_T L^2_{v,k}} + \Vert \widehat{g_2} \Vert_{L^1_{\vert \xi\vert \ge 1} L^2_T H^{s*}_{v,k}} \notag \\
	\le& C_{A,M}\|\widehat{g_2}\|^\frac{1}{2}_{L^1_\xi L^\infty_TL^2_{v,k}}\Big(\int^T_0\|(1+t)^\rho g_1(t)\|^2_{X^*_{k+8-\ga/2}}dt\Big)^\frac{1}{4} + C_k\Vert (\mathbf{I}-\mathbf{P}) \widehat{g_2} \Vert_{L^1_\xi L^2_T H^{s*}_v} 
	\notag \\
	&+ C_k \Big\Vert \frac{\vert \nabla_x \vert}{\langle \nabla_x \rangle } (\hat{a},\hat{b},\hat{c}) \Big\Vert_{L^1_\xi L^2_T}+C_k \Vert \widehat{g_2} \Vert_{L^1_\xi L^\infty_T L^2_{v,k}} \Vert(1+t)^{\sigma/2} \widehat{g_2}\Vert_{L^1_\xi L^\infty_T L^2_v}  \notag \\
	&+C_k \Vert \widehat{g_2} \Vert_{L^1_\xi L^\infty_T L^2_{v,k}} \Vert(\mathbf{I}-\mathbf{P}) \widehat{g_2}\Vert_{L^1_\xi L^2_T H^{s*}_{v,k}}.
\end{align}
We note that \eqref{g2hardl1} is also valid for soft potentials. Then taking suitable linear combination with \eqref{ineq: L^1_k micro a priori}, \eqref{lowmacro}, \eqref{g2hardl1} and \eqref{lowg2}, we get \eqref{g2soft}.
\end{proof}
Then we have the following result for $L^p_\xi$ norm of $\widehat{g_2}$. The proof is very similar to Lemma \ref{g2l1soft} and thus omitted.
\begin{lemma}
	Let $k\geq 0$, $\max\{-3,-3/2-2s\}<\ga<2s$, $0<s<1$, $3/2<p\leq\infty$ and $g_2$ be a solution to \eqref{hatg2}.
	There exist $C_k,C_{A,M}>0$ such that 
	\begin{align}\label{g2lpsoft}
		&\Vert \widehat{g_2}\Vert_{L^p_\xi L^\infty_T L^2_{v,k}}+\Vert (\mathbf{I}-\mathbf{P})\widehat{g_2} \Vert_{L^p_\xi L^2_T H^{s*}_{v,k}}+\Big\Vert \frac{\vert \nabla_x \vert}{\langle \nabla_x \rangle } (\hat{a},\hat{b},\hat{c}) \Big\Vert_{L^p_\xi L^2_T}\notag\\
		\le &C_{A,M}\Vert \widehat{g_2}\Vert^\frac{1}{2}_{L^p_\xi L^\infty_T L^2_{v,k}}\Vert (1+t)^\rho\widehat{g_1} \Vert^\frac{1}{2}_{L^p_\xi L^2_TL^2_{v,k}}+C_k \Vert \widehat{g_2} \Vert_{L^p_\xi L^\infty_T L^2_{v,k}} \Vert(1+t)^{\sigma/2} \widehat{g_2}\Vert_{L^1_\xi L^\infty_T L^2_v}  \notag \\
		&+C_{A,M}\|\widehat{g_1}\|_{L^p_\xi L^2_T L^2_v}+C_k \Vert \widehat{g_2} \Vert_{L^p_\xi L^\infty_T L^2_{v,k}} \Vert(\mathbf{I}-\mathbf{P}) \widehat{g_2}\Vert_{L^1_\xi L^2_T H^{s*}_{v,k}},
	\end{align}
	where $C_{A,M}$ depends only on $A$ and $M$ which are two constants in the definition of $L_B$.
\end{lemma}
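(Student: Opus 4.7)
The plan is to mirror the proof of Lemma \ref{g2l1soft}, replacing every $L^1_\xi$ norm on the left-hand side by its $L^p_\xi$ analogue while keeping some genuinely $L^1_\xi$ quantities on the right (which are already controlled by Lemma \ref{g2l1soft}). Following that proof, one splits $\xi\in\R^3$ into the regions $|\xi|\le 1$ and $|\xi|\ge 1$: on the low-frequency region a weighted energy identity is performed only on the microscopic part $(\mathbf{I}-\mathbf{P})\widehat{g_2}$, while on the high-frequency region the same identity is carried out directly on the full $\widehat{g_2}$ (the factor $|\xi|\ge 1$ makes the macro-micro coupling harmless). The missing low-frequency piece of $\mathbf{P}\widehat{g_2}$ is supplied by Lemma \ref{lem: macro a priori}, which already holds for arbitrary $1\le p\le\infty$.

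Concretely, I would apply $(\mathbf{I}-\mathbf{P})$ to \eqref{hatg2}, pair with $\langle v\rangle^{2k}(\mathbf{I}-\mathbf{P})\bar{\widehat{g_2}}$ in $L^2_v$, integrate in $t\in[0,T]$, and invoke the coercivity of $L$ from Lemma \ref{wgesL}. The driving term generated by $(\mathbf{I}-\mathbf{P})\CL_B\widehat{g_1}$, after Cauchy--Schwarz in time, gives the pointwise-in-$\xi$ bound
\[
C_{A,M}\|\widehat{g_2}(\xi)\|^{1/2}_{L^\infty_TL^2_{v,k}}\Bigl(\int_0^T\|\widehat{g_1}(t,\xi)\|^2_{L^2_{v,k}}\,dt\Bigr)^{1/4};
\]
taking the $L^p_\xi$ norm and using H\"older with exponents $(2p,2p)$ produces the first right-hand term of \eqref{g2lpsoft}. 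The nonlinear term $\hat\Gamma(\widehat{g_2},\widehat{g_2})$, by \eqref{lem.ibn1} and the convolution structure in $\xi$, reduces to $\int_{\R^3}\|\widehat{g_2}(\xi-\ell)\|_{L^\infty_TL^2_{v,k}}\|\widehat{g_2}(\ell)\|_{L^2_TH^{s*}_{v,k}}\,d\ell$, and Young's inequality $\|f*_\xi g\|_{L^p_\xi}\le\|f\|_{L^p_\xi}\|g\|_{L^1_\xi}$ converts this into $C_k\|\widehat{g_2}\|_{L^p_\xi L^\infty_TL^2_{v,k}}\|\widehat{g_2}\|_{L^1_\xi L^2_TH^{s*}_{v,k}}$. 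Splitting $\widehat{g_2}=(\mathbf{I}-\mathbf{P})\widehat{g_2}+\mathbf{P}\widehat{g_2}$ in the latter $L^1_\xi$ factor and absorbing the macroscopic piece exactly as in \eqref{g2L1xi} via $\|(1+t)^{\sigma/2}\widehat{g_2}\|_{L^1_\xi L^\infty_TL^2_v}$ yields the two nonlinear terms in \eqref{g2lpsoft}. The macro-micro cross terms $(\mathbf{I}-\mathbf{P})[iv\cdot\xi\mathbf{P}\widehat{g_2}]$ and $\mathbf{P}[iv\cdot\xi(\mathbf{I}-\mathbf{P})\widehat{g_2}]$ contribute the quantity $C_k\|\frac{|\nabla_x|}{\langle\nabla_x\rangle}(\hat a,\hat b,\hat c)\|_{L^p_\xi L^2_T}$, and the $L^2_v(B_R)$ defect in Lemma \ref{wgesL} is absorbed by the unweighted $L^p_\xi$ estimate \eqref{g2hardlp}. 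A suitable linear combination of the low-frequency microscopic estimate, the high-frequency full estimate, Lemma \ref{lem: macro a priori}, and \eqref{g2hardlp} then closes \eqref{g2lpsoft}.

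The main obstacle is the convolution nonlinearity: Young's inequality forces one of the two factors of $\widehat{g_2}$ to remain in $L^1_\xi$, so the $L^p_\xi$ estimate cannot close purely in its own norm but must invoke the $L^1_\xi$ bounds already supplied by Lemma \ref{g2l1soft}. Once this coupling is accepted, every other ingredient---the coercivity of $L$, the macroscopic fluid-type analysis, and the $\xi$-region splitting---transfers almost verbatim from the $L^1_\xi$ proof, and the computation is essentially routine.
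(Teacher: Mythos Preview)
Your proposal is correct and follows essentially the same route as the paper, which simply records that the proof ``is very similar to Lemma~\ref{g2l1soft} and thus omitted.'' One small slip: the pointwise-in-$\xi$ bound you wrote for the $\CL_B\widehat{g_1}$ term omits the time weight $(1+t)^\rho$ on $\widehat{g_1}$---as stated, Cauchy--Schwarz in $t$ would cost an unbounded factor $T^{1/2}$, so one must first insert $(1+t)^{-\rho}(1+t)^\rho$ (using $\rho>1/2$) exactly as in \eqref{g2Lp2}, after which the H\"older step with exponents $(2p,2p)$ gives precisely the first right-hand term of \eqref{g2lpsoft}.
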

With the $L^1_\xi\cap L^p_\xi$ estimate of $\widehat{g_2}$, we now turn to the time weighted estimate. Note that in this case, we should bound 
$$\Vert (1+t)^{\si/2}\widehat{g_2}\Vert_{L^1_\xi L^\infty_T L^2_{v,k}}+\Vert(1+t)^{\si/2} (\mathbf{I}-\mathbf{P})\widehat{g_2} \Vert_{L^1_\xi L^2_T H^{s*}_{v,k}}+\Big\Vert (1+t)^{\si/2}\frac{\vert \nabla_x \vert}{\langle \nabla_x \rangle } (\hat{a},\hat{b},\hat{c}) \Big\Vert_{L^1_\xi L^2_T}.
$$
Recall that in Lemma \ref{lem: micro time-weighted} for $0\leq\ga\leq1$, except for the additional term 
$$\sqrt{\sigma} \int_{|\xi|\leq 1}\Big( \int^T_0 (1+t)^{\sigma -1} \Vert \widehat{g_2}\Vert_{L^2_v}^2 dt\Big)^{1/2} d\xi,
$$
all other terms can be calculated in the same way as the case without the time weight. Now for $\ga<0$, the approach is similar as in  Lemma \ref{lem: micro time-weighted}, and we should focus on the extra terms induced by the derivative of time weight.
\begin{lemma}\label{lemmasoft}
	Let $k\geq 0$, $\max\{-3,-3/2-2s\}<\ga<2s$, $0<s<1$, $3/2<p\le \infty$, $g_2$ be a solution to \eqref{hatg2} and $\sigma=3\Big(1-\frac{1}{p}\Big) -2\ep$ where $\ep>0$ is arbitrarily small. There exist constants $C_k,C_{A,M}>0$ such that
\begin{align}\label{g2timesoft}
	&\Vert (1+t)^{\si/2}\widehat{g_2}\Vert_{L^1_\xi L^\infty_T L^2_{v,k}}+\Vert (1+t)^{\si/2}(\mathbf{I}-\mathbf{P})\widehat{g_2} \Vert_{L^1_\xi L^2_T H^{s*}_{v,k}}+\Big\Vert(1+t)^{\si/2} \frac{\vert \nabla_x \vert}{\langle \nabla_x \rangle } (\hat{a},\hat{b},\hat{c}) \Big\Vert_{L^1_\xi L^2_T}\notag\\
	\le &C_{A,M}\|(1+t)^{\si/2}\widehat{g_2}\|^\frac{1}{2}_{L^1_\xi L^\infty_TL^2_{v,k}}\Big(\int^T_0\|(1+t)^\rho g_1(t)\|^2_{X^*_{k+8-\ga/2}}dt\Big)^\frac{1}{4}+C_k  \Vert(1+t)^{\sigma/2} \widehat{g_2}\Vert^2_{L^1_\xi L^\infty_T L^2_{v,k}}  \notag \\
	&+C_k\|(1+t)^{\si/2}\widehat{g_1}\|_{L^1_\xi L^2_T L^2_v}+C_k \Vert(1+t)^{\si/2}\widehat{g_2} \Vert_{L^1_\xi L^\infty_T L^2_{v,k}} \Vert(\mathbf{I}-\mathbf{P}) \widehat{g_2}\Vert_{L^1_\xi L^2_T H^{s*}_{v,k}}\notag\\
	&+ C_k(\Vert  (\mathbf{I}-\mathbf{P}) \widehat{g_2} \Vert_{L^1_\xi L^2_T H^{s*}_{v,k+\si|\ga+2s|/2}}+\Big\Vert \frac{\vert \nabla_x \vert}{\langle \nabla_x \rangle } (\hat{a},\hat{b},\hat{c}) \Big\Vert_{L^1_\xi L^2_T}+\Big\Vert \frac{\vert \nabla_x \vert}{\langle \nabla_x \rangle } (\hat{a},\hat{b},\hat{c}) \Big\Vert_{L^p_\xi L^2_T}),
\end{align}	
where $C_{A,M}$ depends only on $A$ and $M$ which are two constants in the definition of $L_B$.
\end{lemma}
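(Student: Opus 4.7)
The plan is to mimic the scheme of Lemma \ref{lem: micro time-weighted} while carrying the velocity weight $\langle v\rangle^{2k}$ throughout, as was done in Lemma \ref{g2l1soft} for the untimewighted estimate. First I would apply $(\mathbf{I}-\mathbf{P})$ to \eqref{hatg2}, multiply by $(1+t)^{\sigma}\langle v\rangle^{2k}(\mathbf{I}-\mathbf{P})\bar{\hat{g_2}}$, and integrate in $t$ and $v$. Using the weighted coercivity of $L$ from Lemma \ref{wgesL}, together with the identity
\[
(1+t)^{\sigma}\partial_t|(\mathbf{I}-\mathbf{P})\widehat{g_2}|^2=\partial_t\bigl[(1+t)^{\sigma}|(\mathbf{I}-\mathbf{P})\widehat{g_2}|^2\bigr]-\sigma(1+t)^{\sigma-1}|(\mathbf{I}-\mathbf{P})\widehat{g_2}|^2,
\]
produces the target coercive norms $\|(1+t)^{\sigma/2}(\mathbf{I}-\mathbf{P})\widehat{g_2}\|_{L^2_TH^{s*}_{v,k}}^2$ and $\|(1+t)^{\sigma/2}(\mathbf{I}-\mathbf{P})\widehat{g_2}\|_{L^\infty_TL^2_{v,k}}^2$, modulo an error of the form $\sigma\int_0^T(1+t)^{\sigma-1}\|(\mathbf{I}-\mathbf{P})\widehat{g_2}\|_{L^2_{v,k}}^2\,dt$ coming from the time-weight derivative. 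The inhomogeneous contributions are treated exactly as in \eqref{highmacrog20}--\eqref{highmacrog1}: the $\CL_B\widehat{g_1}$ term is bounded by Cauchy--Schwarz, Minkowski and Sobolev embedding, producing $\|(1+t)^{\sigma/2}\widehat{g_2}\|^{1/2}_{L^1_\xi L^\infty_TL^2_{v,k}}\big(\int_0^T\|(1+t)^\rho g_1\|_{X^*_{k+8-\gamma/2}}^2dt\big)^{1/4}$, while the nonlinear $\hat{\Gamma}(\widehat{g_2},\widehat{g_2})$ term is absorbed using \eqref{lem.ibn1} with Minkowski as in \eqref{ga3g2}.

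Next I would reintroduce the macroscopic component: combining the weighted microscopic estimate with the time-weighted macroscopic identity \eqref{ineq: macro time-weighted} from Lemma \ref{lem.twemp} (applied with $H=\CL_Bg_1+\Gamma(g_2,g_2)$) gives, after a suitable linear combination, control of $\|(1+t)^{\sigma/2}\frac{|\nabla_x|}{\langle\nabla_x\rangle}(\hat a,\hat b,\hat c)\|_{L^1_\xi L^2_T}$ at the price of picking up $\|\frac{|\nabla_x|}{\langle\nabla_x\rangle}(\hat a,\hat b,\hat c)\|_{L^1_\xi L^2_T}+\|\frac{|\nabla_x|}{\langle\nabla_x\rangle}(\hat a,\hat b,\hat c)\|_{L^p_\xi L^2_T}$, which already appears on the right-hand side of \eqref{g2timesoft}. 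The split $|\xi|\le 1$ vs.\ $|\xi|\ge 1$ used in Lemma \ref{g2l1soft} remains essential: in the high-frequency regime $\|\widehat{g_2}\|_{L^2_{v,k}}$ is already microscopic-type up to the $\frac{|\nabla_x|}{\langle\nabla_x\rangle}$ factor, while in the low-frequency regime the macroscopic part is handled by \eqref{ineq: macro time-weighted}.

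The main obstacle is the residual term
\[
\sigma\int_{\R^3}\Big(\int_0^T(1+t)^{\sigma-1}\|(\mathbf{I}-\mathbf{P})\widehat{g_2}\|_{L^2_{v,k}}^2\,dt\Big)^{1/2}d\xi,
\]
which in the hard case $\gamma+2s\ge 0$ was directly dominated by the dissipation $\|(\mathbf{I}-\mathbf{P})\widehat{g_2}\|_{H^{s*}_{v,k}}$ via $\|\cdot\|_{L^2_v}\le C\|\cdot\|_{H^{s*}_v}$. For $\gamma+2s<0$ this embedding fails, so one has to pay in velocity weight. The key inequality I would use is the weighted interpolation
\[
(1+t)^{\sigma-1}\langle v\rangle^{2k}\le \eta\,(1+t)^{\sigma}\langle v\rangle^{2k+\gamma+2s}+C_{\eta}\langle v\rangle^{2k+\sigma|\gamma+2s|},
\]
obtained from Young's inequality applied on the sets $\{(1+t)^{-1}\langle v\rangle^{|\gamma+2s|}\lessgtr\eta\}$. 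The first piece is absorbed by the dissipation $\|(1+t)^{\sigma/2}(\mathbf{I}-\mathbf{P})\widehat{g_2}\|_{L^2_TH^{s*}_{v,k}}^2$ (since $\|f\|_{L^2_{v,k+(\gamma+2s)/2}}\le C\|f\|_{H^{s*}_{v,k}}$), and the second piece is precisely what generates the term $\|(\mathbf{I}-\mathbf{P})\widehat{g_2}\|_{L^1_\xi L^2_TH^{s*}_{v,k+\sigma|\gamma+2s|/2}}$ on the right-hand side of \eqref{g2timesoft}. An analogous split applied to the macroscopic remainder (restricted to $|\xi|\le 1$) combined with H\"older in $\xi$, exactly as in the $J_{22}$-estimate of Lemma \ref{lem: micro time-weighted}, explains the appearance of the $L^p_\xi L^2_T$ norm of $\frac{|\nabla_x|}{\langle\nabla_x\rangle}(\hat a,\hat b,\hat c)$. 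Finally, a small-constant argument (taking $\eta$ small, absorbing the $\|(1+t)^{\sigma/2}(\mathbf{I}-\mathbf{P})\widehat{g_2}\|_{L^2_TH^{s*}_{v,k}}$ term to the left-hand side) closes the estimate and yields \eqref{g2timesoft}.
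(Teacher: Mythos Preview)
Your proposal is correct and follows essentially the same route as the paper: a time-weighted version of the frequency-split argument of Lemma \ref{g2l1soft}, combined with Lemma \ref{lem.twemp} for the macroscopic piece, and the crucial velocity-weight/time-weight trade-off on the residual $(1+t)^{\sigma-1}$ term via the split $\{(1+t)^{-1}\lessgtr \eta\langle v\rangle^{\gamma+2s}\}$ (for the micro part) and $\{(1+t)^{-1}\lessgtr \eta|\xi|^2\}$ (for the macro low-frequency part). One small slip: in your stated interpolation inequality the second alternative should read $C_\eta\langle v\rangle^{2k+(\sigma-1)|\gamma+2s|}$ rather than $C_\eta\langle v\rangle^{2k+\sigma|\gamma+2s|}$; the missing $|\gamma+2s|/2$ of weight is then exactly recovered when you pass from $L^2_{v}$ to $H^{s*}_v$, which is what gives the final index $k+\sigma|\gamma+2s|/2$ in \eqref{g2timesoft}.
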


\begin{proof}
Similar arguments as in the proof of Lemma \ref{g2l1soft}, by dividing $(1+t)^{\si/2}\widehat{g_2}$ into high frequency part of $(1+t)^{\si/2}\widehat{g_2}$, the low frequency part of $(1+t)^{\si/2}(\mathbf{I}-\mathbf{P}) \widehat{g_2}$ and the low frequency part of $(1+t)^{\si/2}\mathbf{P}\widehat{g_2}$, taking linear combination, one gets
\begin{align}\label{g2ts}
&\Vert (1+t)^{\si/2}\widehat{g_2}\Vert_{L^1_\xi L^\infty_T L^2_{v,k}}+\Vert (1+t)^{\si/2}(\mathbf{I}-\mathbf{P})\widehat{g_2} \Vert_{L^1_\xi L^2_T H^{s*}_{v,k}}+\Big\Vert(1+t)^{\si/2} \frac{\vert \nabla_x \vert}{\langle \nabla_x \rangle } (\hat{a},\hat{b},\hat{c}) \Big\Vert_{L^1_\xi L^2_T}\notag\\
\le &C_{A,M}\|(1+t)^{\si/2}\widehat{g_2}\|^\frac{1}{2}_{L^1_\xi L^\infty_TL^2_{v,k}}\Big(\int^T_0\|(1+t)^\rho g_1(t)\|^2_{X^*_{k+8-\ga/2}}dt\Big)^\frac{1}{4}\notag \\
&+C_k  \Vert(1+t)^{\sigma/2} \widehat{g_2}\Vert^2_{L^1_\xi L^\infty_T L^2_{v,k}} +C_k \Vert(1+t)^{\si/2}\widehat{g_2} \Vert_{L^1_\xi L^\infty_T L^2_{v,k}} \Vert(\mathbf{I}-\mathbf{P}) \widehat{g_2}\Vert_{L^1_\xi L^2_T H^{s*}_{v,k}}\notag\\
& +C_k\|(1+t)^{\si/2}\widehat{g_1}\|_{L^1_\xi L^2_T L^2_v}+C\int_{\vert \xi\vert\le 1} \Big(\int^T_0 (1+t)^{\sigma-1}\Vert  (\mathbf{I}-\mathbf{P}) \widehat{g_2} \Vert_{L^2_{v,k}}^2 dt \Big)^{1/2} d\xi\notag\\
&+C\int_{\vert \xi\vert \ge 1} \big(\int^T_0 (1+t)^{\sigma-1} \Vert  \widehat{g_2} \Vert_{L^2_{v,k}}^2 dt \big)^\frac{1}{2} d\xi+C\int_{\mathbb{R}^3} \big(\int^T_0 (1+t)^{\sigma-1} \Vert \widehat{g_2}\Vert_{L^2_v}^2 dt \big)^\frac{1}{2}d\xi.
\end{align}
We only need to estimate the last three terms on the right hand side above. Denote
\begin{align*}
	J_3&=\int_{\vert \xi\vert\le 1} \Big(\int^T_0 (1+t)^{\sigma-1}\Vert  (\mathbf{I}-\mathbf{P}) \widehat{g_2} \Vert_{L^2_{v,k}}^2 dt \Big)^{1/2} d\xi,\notag\\
	J_4&=\int_{\vert \xi\vert \ge 1} \Big(\int^T_0 (1+t)^{\sigma-1} \Vert  \widehat{g_2} \Vert_{L^2_{v,k}}^2 dt \Big)^{1/2} d\xi,\notag\\
	J_5&=\int_{\mathbb{R}^3} \Big(\int^T_0 (1+t)^{\sigma-1} \Vert \widehat{g_2}\Vert_{L^2_v}^2 dt \Big)^{1/2} d\xi.
\end{align*}
We first consider $J_3$ and $J_4$ for two cases. When $\frac{1}{1+t}\leq \eta\langle v\rangle^{\ga+2s}$, by $1\leq 2\frac{\vert \xi \vert^2}{1+|\xi|^2}$ for $|\xi|\ge1$ and $\Vert \widehat{g_2}\Vert_{L^2_{v,k}}^2\leq C\Vert (\mathbf{I}-\mathbf{P})\widehat{g_2}\Vert_{L^2_{v,k}}^2+C\Vert \mathbf{P}\widehat{g_2}\Vert_{L^2_{v,k}}^2$, we have
\begin{align*}
	J_3+J_4\leq& C\int_{\R^3} \Big(\int^T_0 (1+t)^{\sigma-1}\Vert  (\mathbf{I}-\mathbf{P}) \widehat{g_2} \Vert_{L^2_{v,k}}^2 dt \Big)^{1/2} d\xi\notag\\&+C\int_{\vert \xi\vert\ge 1} \Big(\int^T_0 (1+t)^{\sigma-1}\frac{\vert \xi \vert^2}{1+|\xi|^2}\Vert  \mathbf{P} \widehat{g_2} \Vert_{L^2_{v,k}}^2dt \Big)^{1/2} d\xi\notag\\
	\leq& C\int_{\R^3} \Big(\int^T_0 (1+t)^{\sigma}\Vert  (\mathbf{I}-\mathbf{P}) \widehat{g_2} \Vert_{L^2_{k+\ga/2+s}}^2 dt \Big)^{1/2} d\xi\notag\\&+C\int_{\vert \xi\vert\ge 1} \Big(\int^T_0 (1+t)^{\sigma}\frac{\vert \xi \vert^2}{1+|\xi|^2}\Vert  \mathbf{P} \widehat{g_2} \Vert_{L^2_{k+\ga/2+s}}^2dt \Big)^{1/2} d\xi.
	\end{align*}
	By the fact that $\Vert  \mathbf{P} \widehat{g_2} \Vert_{L^2_{l}}\leq C_l|(\hat{a},\hat{b},\hat{c})|$, one gets
	\begin{align}\label{J3J4smallt}
	J_3+J_4&\leq C\eta \Vert (1+t)^{\sigma/2} (\mathbf{I}-\mathbf{P}) \widehat{g_2} \Vert_{L^1_\xi L^2_T H^{s*}_{v,k}}+C_k\eta\Big\Vert (1+t)^{\si/2}\frac{\vert \nabla_x \vert}{\langle \nabla_x \rangle } (\hat{a},\hat{b},\hat{c}) \Big\Vert_{L^1_\xi L^2_T}.
\end{align}
When $\frac{1}{1+t}> \eta\langle v\rangle^{\ga+2s}$, since $\si-1>0$, a direct calculation shows that $(1+t)^{\sigma-1}\leq C_\eta \langle v\rangle^{(\sigma-1)|\ga+2s|}$, which yields
\begin{align}\label{J3J4larget}
	J_3+J_4&\leq C\int_{\R^3} \Big(\int^T_0 (1+t)^{\sigma-1}\Vert  (\mathbf{I}-\mathbf{P}) \widehat{g_2} \Vert_{L^2_{v,k}}^2 dt \Big)^{1/2} d\xi\notag\\&\quad+C\int_{\vert \xi\vert\ge 1} \Big(\int^T_0 (1+t)^{\sigma-1}\frac{\vert \xi \vert^2}{1+|\xi|^2}\Vert  \mathbf{P} \widehat{g_2} \Vert_{L^2_{v,k}}^2dt \Big)^{1/2} d\xi\notag\\
	&\leq C_\eta \Vert  (\mathbf{I}-\mathbf{P}) \widehat{g_2} \Vert_{L^1_\xi L^2_T H^{s*}_{v,k+\si|\ga+2s|/2}}+C_{k,\eta}\Big\Vert \frac{\vert \nabla_x \vert}{\langle \nabla_x \rangle } (\hat{a},\hat{b},\hat{c}) \Big\Vert_{L^1_\xi L^2_T}.
\end{align}
Then it follows from \eqref{J3J4smallt} and \eqref{J3J4larget} that
\begin{align}\label{J3J4}
	J_3+J_4&\leq C\eta \Vert (1+t)^{\sigma/2} (\mathbf{I}-\mathbf{P}) \widehat{g_2} \Vert_{L^1_\xi L^2_T H^{s*}_{v,k}}+C_k\eta\Big\Vert (1+t)^{\si/2}\frac{\vert \nabla_x \vert}{\langle \nabla_x \rangle } (\hat{a},\hat{b},\hat{c}) \Big\Vert_{L^1_\xi L^2_T}\notag\\&\quad +C_\eta \Vert  (\mathbf{I}-\mathbf{P}) \widehat{g_2} \Vert_{L^1_\xi L^2_T H^{s*}_{v,k+\si|\ga+2s|/2}}+C_{k,\eta}\Big\Vert \frac{\vert \nabla_x \vert}{\langle \nabla_x \rangle } (\hat{a},\hat{b},\hat{c}) \Big\Vert_{L^1_\xi L^2_T}.
\end{align}
For $J_5$ we have
\begin{align}\label{J51}
	J_5&\leq\int_{|\xi|\leq1} \Big(\int^T_0 (1+t)^{\sigma-1} \Vert (\mathbf{I}-\mathbf{P})\widehat{g_2}\Vert_{L^2_v}^2 dt \Big)^{1/2} d\xi+\int_{|\xi|\leq1} \Big(\int^T_0 (1+t)^{\sigma-1} \Vert \mathbf{P}\widehat{g_2}\Vert_{L^2_v}^2 dt \Big)^{1/2} d\xi\notag\\
	&\quad+\int_{|\xi|\geq1} \Big(\int^T_0 (1+t)^{\sigma-1} \Vert \widehat{g_2}\Vert_{L^2_v}^2 dt \Big)^{1/2} d\xi\notag\\
	&\leq J_3+J_4+\int_{|\xi|\leq1} \Big(\int^T_0 (1+t)^{\sigma-1} \Vert \mathbf{P}\widehat{g_2}\Vert_{L^2_v}^2 dt \Big)^{1/2} d\xi.
\end{align}
Since $J_3+J_4$ is bounded by \eqref{J3J4}, we only need to estimate the last term on the right hand side above. We still consider it in two cases. When  $\frac{1}{1+t}\leq \eta|\xi|^2$, it is straightforward to get
\begin{align}\label{J5larget}
	\int_{|\xi|\leq1} \Big(\int^T_0 (1+t)^{\sigma-1} \Vert \mathbf{P}\widehat{g_2}\Vert_{L^2_v}^2 dt \Big)^{1/2} d\xi&\leq \eta\int_{|\xi|\leq1} \Big(\int^T_0 (1+t)^{\sigma}|\xi|^2 \Vert \mathbf{P}\widehat{g_2}\Vert_{L^2_v}^2 dt \Big)^{1/2}d\xi\notag\\
	&\leq C\eta\Big\Vert (1+t)^{\si/2}\frac{\vert \nabla_x \vert}{\langle \nabla_x \rangle } (\hat{a},\hat{b},\hat{c}) \Big\Vert_{L^1_\xi L^2_T}.
\end{align}
On the other hand, when $\frac{1}{1+t}\leq \eta|\xi|^2$, by $(1+t)^{\sigma-1}\leq C_\eta |\xi|^{-2(\si-1)}$ we have
\begin{align*}
	\int_{|\xi|\leq1} \Big(\int^T_0 (1+t)^{\sigma-1} \Vert \mathbf{P}\widehat{g_2}\Vert_{L^2_v}^2 dt \Big)^{1/2} d\xi&\leq C_\eta\int_{|\xi|\leq1} |\xi|^{-\si} |\xi| \Big(\int^T_0\Vert \mathbf{P}\widehat{g_2}\Vert_{L^2_v}^2 dt \Big)^{1/2}d\xi.
\end{align*}
Then an application of H\"older's inequality shows that
\begin{align*}
	&\int_{|\xi|\leq1} \Big(\int^T_0 (1+t)^{\sigma-1} \Vert \mathbf{P}\widehat{g_2}\Vert_{L^2_v}^2 dt \Big)^{1/2} d\xi\notag\\\leq& C_\eta\big(\int_{|\xi|\leq1} |\xi|^{-p^\prime\si}d\xi\big)^\frac{1}{p^\prime} \big(\int_{|\xi|\leq1}|\xi|^p \Big(\int^T_0\Vert \mathbf{P}\widehat{g_2}\Vert_{L^2_v}^2 dt \Big)^{p/2}d\xi\big)^\frac{1}{p}.
\end{align*}
Recalling $\sigma=3\Big(1-\frac{1}{p}\Big) -2\ep$, we have $0<p'\si<3$, which leads to
\begin{align}\label{J5smallt}
	&\int_{|\xi|\leq1} \Big(\int^T_0 (1+t)^{\sigma-1} \Vert \mathbf{P}\widehat{g_2}\Vert_{L^2_v}^2 dt \Big)^{1/2} d\xi\leq C_\eta\Big\Vert \frac{\vert \nabla_x \vert}{\langle \nabla_x \rangle } (\hat{a},\hat{b},\hat{c}) \Big\Vert_{L^p_\xi L^2_T}.
\end{align}
Then we combine \eqref{J5larget} and \eqref{J5smallt} to get
\begin{align}\label{J52}
	&\int_{|\xi|\leq1} \Big(\int^T_0 (1+t)^{\sigma-1} \Vert \mathbf{P}\widehat{g_2}\Vert_{L^2_v}^2 dt \Big)^{1/2} d\xi\notag\\
	\leq& C\eta\Big\Vert (1+t)^{\si/2}\frac{\vert \nabla_x \vert}{\langle \nabla_x \rangle } (\hat{a},\hat{b},\hat{c}) \Big\Vert_{L^1_\xi L^2_T}+C_\eta\Big\Vert \frac{\vert \nabla_x \vert}{\langle \nabla_x \rangle } (\hat{a},\hat{b},\hat{c}) \Big\Vert_{L^p_\xi L^2_T}.
\end{align}
It follows from \eqref{J3J4}, \eqref{J51} and \eqref{J52} that
\begin{align}\label{J5}
	J_5&\leq C\eta \Vert (1+t)^{\sigma/2} (\mathbf{I}-\mathbf{P}) \widehat{g_2} \Vert_{L^1_\xi L^2_T H^{s*}_{v,k}}+C_k\eta\Big\Vert (1+t)^{\si/2}\frac{\vert \nabla_x \vert}{\langle \nabla_x \rangle } (\hat{a},\hat{b},\hat{c}) \Big\Vert_{L^1_\xi L^2_T}\notag\\&\quad +C_\eta \Vert  (\mathbf{I}-\mathbf{P}) \widehat{g_2} \Vert_{L^1_\xi L^2_T H^{s*}_{v,k+\si|\ga+2s|/2}}+C_{k,\eta}\Big\Vert \frac{\vert \nabla_x \vert}{\langle \nabla_x \rangle } (\hat{a},\hat{b},\hat{c}) \Big\Vert_{L^1_\xi L^2_T}\notag\\
	&\quad+C_\eta\Big\Vert \frac{\vert \nabla_x \vert}{\langle \nabla_x \rangle } (\hat{a},\hat{b},\hat{c}) \Big\Vert_{L^p_\xi L^2_T}.
\end{align}
By \eqref{J3J4}, \eqref{J5} and \eqref{g2ts}, we see that \eqref{g2timesoft} follows by selecting a sufficiently small $\eta$. The proof of Lemma \ref{lemmasoft} is complete.
\end{proof}

\section{Proof of Theorem \ref{decay}}
In order to make the proof clearer, we define the norms in terms of $g_1$ and $g_2$:
\begin{align*}
	&\|g_1\|_{\hat{X}^{h}_k}:=\|e^{\la t}\widehat{g_1}\|_{L^p_\xi L^\infty_TL^2_{v,k}},\qquad\|g_1\|_{\hat{X}^{h*}_k}:=\Vert e^{\la t}\widehat{g_1}\Vert_{L^p_\xi L^2_TH^s_{v,k+\gamma/2}},\notag\\
	&\|g_2\|_{\hat{Y}^{h}_k}:=\Vert \widehat{g_2}\Vert_{L^1_\xi L^\infty_T L^2_v}+\Vert \widehat{g_2}\Vert_{L^p_\xi L^\infty_T L^2_v}+\Vert  (1+t)^{\sigma/2} \widehat{g_2}\Vert_{L^1_\xi L^\infty_T L^2_v}\notag\\
	&\|g_2\|_{\hat{Y}^{h*}_k}:=\Vert (\mathbf{I}-\mathbf{P})\widehat{g_2} \Vert_{L^1_\xi L^2_T H^{s*}_v}+\Big\Vert \frac{\vert \nabla_x \vert}{\langle \nabla_x \rangle } (\hat{a},\hat{b},\hat{c}) \Big\Vert_{L^1_\xi L^2_T}\notag\\
	&\qquad\qquad\quad\ +\Vert (\mathbf{I}-\mathbf{P})\widehat{g_2} \Vert_{L^p_\xi L^2_T H^{s*}_v}+\Big\Vert \frac{\vert \nabla_x \vert}{\langle \nabla_x \rangle } (\hat{a},\hat{b},\hat{c}) \Big\Vert_{L^p_\xi L^2_T}\notag\\
	&\qquad\qquad\quad\ +\Vert (1+t)^{\sigma/2} (\mathbf{I}-\mathbf{P})\widehat{g_2} \Vert_{L^1_\xi L^2_T H^{s*}_v}+\Big \Vert (1+t)^{\sigma/2} \frac{\vert \nabla_x \vert}{\langle \nabla_x \rangle} (\hat{a},\hat{b},\hat{c}) \Big\Vert_{L^1_\xi L^2_T}.
\end{align*}
The definitions above are mainly for the case $\ga>0$. For soft potentials, we should define the other group of norms as follows:
\begin{align*}
	&\|g_1\|_{\hat{X}^{s}_k}:=\|\widehat{g_1}\|_{L^p_\xi L^\infty_TL^2_{v,k+\ga/2-\rho\ga}}+\|(1+t)^\rho \widehat{g_1}\|_{L^p_\xi L^\infty_TL^2_{v,k}},\notag\\
	&\|g_1\|_{\hat{X}^{s*}_k}:=\Vert  \widehat{g_1}\Vert_{L^p_\xi L^2_TH^s_{v,k+\gamma-\rho\ga}}+\Vert (1+t)^\rho \widehat{g_1}\Vert_{L^p_\xi L^2_TH^s_{v,k+\gamma/2}},\notag\\
	&\|g_2\|_{\hat{Y}^{s}_{k}}:=\Vert \widehat{g_2}\Vert_{L^1_\xi L^\infty_T L^2_{v,k+\si|\ga+2s|/2}}+\Vert \widehat{g_2}\Vert_{L^p_\xi L^\infty_T L^2_{k}}+\Vert  (1+t)^{\sigma/2} \widehat{g_2}\Vert_{L^1_\xi L^\infty_T L^2_{v,k}}\notag\\
	&\|g_2\|_{\hat{Y}^{s*}_{k}}:=\Vert (\mathbf{I}-\mathbf{P})\widehat{g_2} \Vert_{L^1_\xi L^2_T H^{s*}_{v,k+\si|\ga+2s|/2}}+\Big\Vert \frac{\vert \nabla_x \vert}{\langle \nabla_x \rangle } (\hat{a},\hat{b},\hat{c}) \Big\Vert_{L^1_\xi L^2_T}\notag\\
	&\qquad\qquad\quad\ +\Vert (\mathbf{I}-\mathbf{P})\widehat{g_2} \Vert_{L^p_\xi L^2_T H^{s*}_{v,k}}+\Big\Vert \frac{\vert \nabla_x \vert}{\langle \nabla_x \rangle } (\hat{a},\hat{b},\hat{c}) \Big\Vert_{L^p_\xi L^2_T}\notag\\
	&\qquad\qquad\quad\ +\Vert (1+t)^{\sigma/2} (\mathbf{I}-\mathbf{P})\widehat{g_2} \Vert_{L^1_\xi L^2_T H^{s*}_{v,k}}+\Big \Vert (1+t)^{\sigma/2} \frac{\vert \nabla_x \vert}{\langle \nabla_x \rangle} (\hat{a},\hat{b},\hat{c}) \Big\Vert_{L^1_\xi L^2_T}.
\end{align*}

\begin{proof}[Proof of Theorem \ref{decay}]
We divide our proof to three parts. When $0\leq\ga\leq1$, $g_1$ has exponential time decay in $L^p_\xi$ and $g_2$ has polynomial decay in $L^1_\xi\cap L^p_\xi$. When $\ga<0$ and $\ga+2s\geq0$, both $g_1$ and $g_2$ decay in polynomial time weight. When $\ga+2s<0$ and $\ga+2s>-1$,  $g_1$ and $g_2$ have polynomial decay in time with additional velocity weight. Now we go to the details of the proof.
For $k>22$, we first choose $A$ and $M$ in the definition of $L_B$ \eqref{defLB} as in Lemma \ref{Q x nonlinear estimate 1} such that Theorem \ref{GE} holds. Then we see the constants $A$ and $M$ now depend only on $k$.

\medskip
\noindent{\it Case 1.} $0\leq\ga\leq1$.

 Combining \eqref{esthatg1h} and \eqref{esthatg2}, it holds that
\begin{align}\label{decayhard}
	&\|g_1\|_{\hat{X}^{h}_k}+\|g_1\|_{\hat{X}^{h*}_k}+\|g_2\|_{\hat{Y}^{h}_0}+\|g_2\|_{\hat{Y}^{h*}_0}\notag\\
	\leq& C_k\big(\|\widehat{g_0}\|_{L^p_\xi L^2_{v,k}}+ \int^T_0 \Vert e^{\la t}g_1 \Vert^2_{X^*_{k+8+2s}}dt+(\int^T_0\|e^{\la t} g_1(t)\|^2_{X^*_{8-\ga/2}}dt^\frac{1}{2}\notag\\
	&\qquad+\|e^{\la t}\widehat{g_1}\|_{L^1_\xi L^2_T L^2_v}\big)+C_k(\|g_1\|_{\hat{X}^{h}_k}+\|g_1\|_{\hat{X}^{h*}_k}+\|g_2\|_{\hat{Y}^{h}_0}+\|g_2\|_{\hat{Y}^{h*}_0})^2.
\end{align} 
Note that we have
\begin{align}\label{g1L1xi}
	\Vert e^{\la t}\widehat{g_1} \Vert_{L^1_\xi L^2_TL^2_v }&=\int_{\R^3}\big(\int^T_0   \Vert e^{\la t}\widehat{g_1}(t,\xi) \Vert^2_{L^2_v} dt\big)^\frac{1}{2}d\xi\notag\\
	&\leq C\big(\int^T_0  \int_{\R^3} \langle\xi\rangle^{3+}\Vert e^{\la t}\widehat{g_1}(t,\xi) \Vert^2_{L^2_v} d\xi dt\big)^\frac{1}{2}\notag\\
	&\leq C\big(\int^T_0 \Vert e^{\la t}\widehat{g_1}(t,\xi) \Vert^2_{X^*_{8-\ga/2}}dt\big)^\frac{1}{2}.
\end{align}
Then it follows from \eqref{decayhard}, \eqref{g1L1xi}, \eqref{GEH} and the fact $8+2s<10$ that there exists a constant $\ep_0$ such that if
\begin{align*}
	\|\widehat{g_0}\|_{L^p_\xi L^2_{v,k}}+\|g_0\|_{X_{k+10}}\leq \ep_0,
\end{align*}
then
\begin{align*}
	&\|g_1\|_{\hat{X}^{h}_k}+\|g_1\|_{\hat{X}^{h*}_k}+\|g_2\|_{\hat{Y}^{h}_0}+\|g_2\|_{\hat{Y}^{h*}_0}\notag\\
	\leq& C_k\big(\|\widehat{g_0}\|_{L^p_\xi L^2_{v,k}}+\|g_0\|_{X_{k+10}}),
\end{align*}
which implies \eqref{decayh}.

\medskip
\noindent{\it Case 2.} $\ga<0$ and $\ga+2s>0$.

Similar arguments as in \eqref{decayhard} and \eqref{g1L1xi} show that by \eqref{esthatg1s}, \eqref{esthatg1soft}, \eqref{g2hardl1}, \eqref{g2hardlp} and \eqref{esthatg2}, one has
\begin{align}%\label{decaysoft1}
	&\|g_1\|_{\hat{X}^{s}_k}+\|g_1\|_{\hat{X}^{s*}_k}+\|g_2\|_{\hat{Y}^{h}_0}+\|g_2\|_{\hat{Y}^{h*}_0}\notag\\
	\leq& C_k\big(\|\widehat{g_0}\|_{L^p_\xi L^2_{v,k+\ga/2-\rho\ga}}+ \int^T_0 \Vert (1+t)^\rho g_1 \Vert^2_{X^*_{k+\ga/2-\rho\ga+8+2s}}dt\notag\\
	&\qquad+(\int^T_0\|(1+t)^\rho g_1(t)\|^2_{X^*_{8-\ga/2}}dt)^\frac{1}{2}\big)\notag\\
	&+C_k(\|g_1\|_{\hat{X}^{s}_k}+\|g_1\|_{\hat{X}^{s*}_k}+\|g_2\|_{\hat{Y}^{h}_0}+\|g_2\|_{\hat{Y}^{h*}_0})^2.\notag
\end{align}
Hence, by choosing $\rho=3/2$, there exists a constant $\ep_0$ such that if
\begin{align*}
	\|\widehat{g_0}\|_{L^p_\xi L^2_{k-\ga}}+\|g_0\|_{X_{k+14}}\leq \ep_0,
\end{align*}
then
\begin{align}\label{decays1}
	&\|g_1\|_{\hat{X}^{h}_k}+\|g_1\|_{\hat{X}^{h*}_k}+\|g_2\|_{\hat{Y}^{h}_0}+\|g_2\|_{\hat{Y}^{h*}_0}\notag\\
	\leq& C_k\big(\|\widehat{g_0}\|_{L^p_\xi L^2_{k-\ga}}+\|g_0\|_{X_{k+14}}\big).
	\end{align}

\medskip
\noindent{\it Case 3.} $\ga+2s<0$ and $\ga+2s>-1$.
Similarly as above, by \eqref{esthatg1s}, \eqref{esthatg1soft}, \eqref{g2soft}, \eqref{g2lpsoft} and \eqref{g2timesoft}, we have
\begin{align}%\label{decaysoft2}
	&\|g_1\|_{\hat{X}^{s}_k}+\|g_1\|_{\hat{X}^{s*}_k}+\|g_2\|_{\hat{Y}^{s}_{k,j}}+\|g_2\|_{\hat{Y}^{s*}_{k,j}}\notag\\
	\leq& C_k\|\widehat{g_0}\|_{L^p_\xi L^2_{v,k+\ga/2-\rho\ga}}+ C_k\int^T_0 \Vert (1+t)^\rho g_1 \Vert^2_{X^*_{k+\ga/2-\rho\ga+8+2s}}dt\notag\\
	&+C_k\big(\int^T_0\|(1+t)^\rho g_1(t)\|^2_{X^*_{k+\si|\ga+2s|/2+8-\ga/2}}dt\big)^\frac{1}{2}\notag\\
	&+C_k(\|g_1\|_{\hat{X}^{s}_k}+\|g_1\|_{\hat{X}^{s*}_k}+\|g_2\|_{\hat{Y}^{s}_{k}}+\|g_2\|_{\hat{Y}^{s*}_{k}})^2.\notag
\end{align}
Hence, choosing $\rho=3/2$, by the fact that
$$
\min\{-\ga+8+2s,\si|\ga+2s|/2+8-\ga/2\}\le 14,
$$
there exists a constant $\ep_0$ such that if
\begin{align*}
	\|\widehat{g_0}\|_{L^p_\xi L^2_{k-\ga}}+\|g_0\|_{X_{k+14}}\leq \ep_0,
\end{align*}
then
\begin{align}\label{decays2}
	&\|g_1\|_{\hat{X}^{s}_k}+\|g_1\|_{\hat{X}^{s*}_k}+\|g_2\|_{\hat{Y}^{s}_{k,j}}+\|g_2\|_{\hat{Y}^{s*}_{k,j}}\notag\\
	\leq& C_k\big(\|\widehat{g_0}\|_{L^p_\xi L^2_{k-\ga}}+\|g_0\|_{X_{k+14}}\big).
\end{align}
Hence, we obtain \eqref{decays} by \eqref{decays1} and \eqref{decays2}. The proof of Theorem \ref{decay} is complete.
\end{proof}

\medskip
\noindent {\bf Acknowledgments:}\, CQC was partially supported by Research Centre for Nonlinear Analysis, Hong Kong Polytechnic University. RJD was partially supported by the General Research Fund (Project No.~14303321) from RGC of Hong Kong and a Direct Grant from CUHK. ZGL was supported by the Hong Kong PhD Fellowship Scheme (HKPFS).

%\newpage

\end{document}